\newcommand{\eqd}{\overset{(d)}{=}}
\newcommand{\Z}{\mathbb{Z}}
\newcommand{\D}{\mathbb{D}}
\newcommand{\R}{\mathbb{R}}
\newcommand{\N}{\mathbb{N}}
\newcommand{\C}{\mathbb{C}}
\newcommand{\E}{\mathbb{E}}
\newcommand{\mc}{\mathcal}
\newcommand{\mb}{\mathbb}
\newcommand{\eps}{\varepsilon}
\renewcommand{\P}{\mathbb{P}}
\DeclareMathOperator{\Var}{Var}
\newcommand{\Pro}{\mathbb{P}}
\newcommand{\norme}[1]{\left \| #1 \right \|}
\newcommand{\ol}[1]{\overline{#1}}
\newcommand{\vertiii}[1]{{\left\vert\kern-0.25ex\left\vert\kern-0.25ex\left\vert #1 
    \right\vert\kern-0.25ex\right\vert\kern-0.25ex\right\vert}}
\DeclareMathOperator{\Cov}{Cov}
\DeclareMathOperator{\diam}{diam}
\def\cH{\mathcal{H}}
\def\cB{\mathcal{B}}
\def\alb#1\ale{\begin{align*}#1\end{align*}}
\def\allb#1\alle{\begin{align}#1\end{align}}
\newcommand{\aryb}{\begin{eqnarray*}}
\newcommand{\arye}{\end{eqnarray*}}
\def\alb#1\ale{\begin{align*}#1\end{align*}}
\newcommand{\eqb}{\begin{equation}}
\newcommand{\eqe}{\end{equation}}
\newcommand{\eqbn}{\begin{equation*}}
\newcommand{\eqen}{\end{equation*}}
\newcommand{\wt}{\widetilde}
\newcommand{\wh}{\widehat}
\newtheorem{thm}{Theorem}[section]
\newtheorem{Prop}[thm]{Proposition}
\newtheorem{Lem}[thm]{Lemma}
\newtheorem{Cor}[thm]{Corollary}
\newtheorem{Rem}[thm]{Remark}
\def\Xint#1{\mathchoice
{\XXint\displaystyle\textstyle{#1}}%
{\XXint\textstyle\scriptstyle{#1}}%
{\XXint\scriptstyle\scriptscriptstyle{#1}}%
{\XXint\scriptscriptstyle\scriptscriptstyle{#1}}%
\!\int}
\def\XXint#1#2#3{{\setbox0=\hbox{$#1{#2#3}{\int}$ }
\vcenter{\hbox{$#2#3$ }}\kern-.6\wd0}}
\def\dashint{\Xint-}
\title{Volume of metric balls in Liouville quantum gravity}
\author{Morris Ang \thanks{Department of Mathematics, Massachusetts Institute of Technology, Cambridge, MA 02139, USA.} \and Hugo Falconet \thanks{Department of Mathematics, Columbia University, New York, NY 10027, USA.} \and  Xin Sun \thanks{Department of Mathematics, University of Pennsylvania, Philadelphia, PA 19104, USA.}}
\begin{document}

\maketitle

\abstract{We study the volume of metric balls in Liouville quantum gravity (LQG).
	For $\gamma \in (0,2)$, it has been known since the early work of Kahane (1985) and Molchan (1996) that the LQG volume of Euclidean balls has finite moments exactly for $p \in (-\infty, 4/\gamma^2)$. Here, we prove that the LQG volume of LQG metric balls admits all finite moments. This answers a question of Gwynne and Miller and generalizes a result obtained by Le Gall for the Brownian map, namely, the $\gamma = \sqrt{8/3}$ case. We use this moment bound to show that on a compact set the volume of metric balls of size $r$ is  given by $r^{d_{\gamma}+o_r(1)}$, where $d_{\gamma}$ is the dimension of the LQG metric space. Using similar techniques, we prove analogous results for  the first exit time of Liouville Brownian motion from a metric ball.
Gwynne-Miller-Sheffield (2020) prove that the metric measure space structure of $\gamma$-LQG a.s. determines its conformal structure when $\gamma =\sqrt{8/3}$; their argument and our estimate yield the result for all $\gamma \in (0,2)$.}

\tableofcontents

\section{Introduction}
Liouville quantum gravity (LQG) was introduced in the physics literature by Polyakov \cite{P81} as a canonical model of two-dimensional random geometry, and has also been shown to be the scaling limit of random planar maps in various topologies (see e.g.~\cite{Notice-Gwynne,MOT-Survey} and references therein).
Let $h$ be an instance of the Gaussian free field (GFF) on the plane $\C$, and fix $\gamma \in (0,2)$. Formally, the $\gamma$-LQG surface described by $(\C,h)$ is the Riemannian manifold with metric tensor given by ``$e^{\gamma h} (dx^2 + dy^2)$''.  The conformal factor $e^{\gamma h}$ only makes sense formally since the GFF $h$ does not admit pointwise values. Nevertheless, one can make rigorous sense of the $\gamma$-LQG volume measure $\mu_h$ through the following regularization and renormalization procedure by Duplantier and Sheffield~\cite{DS11}
\[\mu_h = \lim_{\eps \to 0} \eps^{\gamma^2/2} e^{\gamma h_\eps(z)} dz, \]
where $h_\eps(z)$ is 
 the average of $h$ on the radius $\eps$ circle centered at $z$. This falls into the  framework of Gaussian multiplicative chaos, see \cite{Kahane85,  RV14, Shamov16, Berestycki17}. The circle average mollification can be replaced by other alternatives.

We now explain the recent construction of the LQG metric.
For $\eps > 0$, let
\[D^\eps_h (z,w) = \inf_{P: z \to w}\int_0^1 e^{(\gamma/d_\gamma) h^*_\eps(P(t))} |P'(t)|\ dt, \]
where $h^*_\eps$ is a particular mollified version of $h$ obtained by integrating against the heat kernel, $d_\gamma$ is the dimension of $\gamma$-LQG \cite{DG18,gwynne2019kpz}, and the infimum is taken over all piecewise continuously differentiable paths from $z$ to $w$. Ding, Dub\'edat, Dunlap and Falconet \cite{DDDF19} proved that for all $\gamma \in (0,2)$ the laws of the suitably rescaled metrics $D^\eps_h$ are tight, so subsequential limits  exist as $\eps \to 0$ (see also the earlier tightness results \cite{DD16,DF18,DD18}). Building on this and several other works \cite{DFGPS, GM19confluence, GM19local}, Gwynne and Miller \cite{GM19uniqueness} showed that all subsequential limits agree and satisfy a natural list of axioms uniquely characterizing the LQG metric. So it makes sense to speak of \emph{the} LQG metric $D_h$. 

Now, we have the metric-measure space corresponding to $\gamma$-LQG. The main result of our paper is the following theorem concerning the volume of metric balls, which answers a question of \cite{GM19uniqueness} and generalizes estimates obtained by Le Gall~\cite{LeGall10} for the Brownian map. 
\begin{thm}\label{thm-main}
Fix $\gamma \in (0,2)$ and let $h$ be a whole-plane GFF normalized to have average zero
 on the unit circle. Let $\cB_s(z; D_h)$ be the $D_h$-ball of radius $s$ centered at $z$. Then
\eqb \label{eq-main}
\E \left[ \mu_h(\cB_1(0; D_h))^p \right] < \infty \quad \text{ for all }p \in \R. 
\eqe
Moreover, for any compact set $K\subset \C$ and $\eps > 0$, we have almost surely that
\begin{equation}
\label{eq:UniformVolumeEst}
\sup_{s \in (0,1)} \sup_{z \in K} \frac{\mu_h(\cB_s(z;D_h))}{s^{d_\gamma - \eps}} < \infty \quad \text{and} \quad \inf_{s\in (0,1)} \inf_{z \in K} \frac{\mu_h(\cB_s(z;D_h))}{s^{d_\gamma + \eps}} > 0.
\end{equation}
Consequently, the Minkowski dimension of $\gamma$-LQG is $d_\gamma$ almost surely. 
\end{thm}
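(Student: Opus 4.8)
The plan is to prove the moment bound \eqref{eq-main} in the two ranges $p\ge 0$ and $p\le 0$, and then to deduce \eqref{eq:UniformVolumeEst} and the dimension statement from it by a chaining argument. Write $B_r(z)$ for the Euclidean ball of radius $r$ about $z$. The basic tools are the scaling relations for the LQG metric--measure space: $D_{h+c}=e^{(\gamma/d_\gamma)c}D_h$ and $\mu_{h+c}=e^{\gamma c}\mu_h$ for constants $c$; the coordinate-change relation under $z\mapsto rz$, which replaces $h$ by $h(r\cdot)+Q\log r$ with $Q=\tfrac2\gamma+\tfrac\gamma2$; and the fact that $h(r\cdot)-h_r(0)$ is again a whole-plane GFF normalized on the unit circle while $h_r(0)$ is Gaussian of variance $\lvert\log r\rvert$. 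Together these give the key reduction I will use repeatedly: for small $s$ and any fixed $z$, the law of $\mu_h(\cB_s(z;D_h))$ is that of $\mu_h(\cB_1(0;D_h))$ times a log-normal factor of mean $\asymp s^{d_\gamma}$ with fluctuation driven by a Gaussian of variance $O(\lvert\log s\rvert)$ (plus a mild, concentrated perturbation of the radius that one absorbs by monotonicity), and likewise with $\cB_{1/2}$ in place of $\cB_s$.

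\emph{Positive moments ($p\ge 0$).} This is the heart of the matter, and the point at which the metric ball genuinely differs from a Euclidean one: since the GMC mass of a fixed Euclidean region has only moments of order $<4/\gamma^2$, one cannot dominate $\cB_1(0;D_h)$ by a Euclidean ball, and even bounding it by the Euclidean ball of the same (random) diameter is too lossy, precisely because $\cB_1(0;D_h)$ avoids the ``thick-point'' regions responsible for the heavy tails of GMC. The structural fact I would exploit is a cancellation built into the definition of $d_\gamma$: by the scaling relations, for a Euclidean ball $B_\rho(w)$ of \emph{any} scale $\rho$ to be contained in a $D_h$-ball of radius $O(1)$ forces its $D_h$-diameter to be $O(1)$, which caps $h_\rho(w)$ near $Q\lvert\log\rho\rvert$; and since $\gamma Q=2+\tfrac{\gamma^2}2$ this in turn caps $\mu_h(B_\rho(w))$ at $O(1)$, uniformly in $\rho$. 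Hence $\mu_h(\cB_1(0;D_h))$ is large only if the metric ball contains \emph{many} essentially disjoint pieces of $D_h$-diameter $\Theta(1)$, whose number is controlled by the number $N$ of $D_h$-balls of radius $1/2$ covering $\cB_1(0;D_h)$. I would therefore run the recursion: cover $\cB_1(0;D_h)$ by balls $\cB_{1/2}(z_i;D_h)$, $i\le N$; use $\mu_h(\cB_1(0;D_h))\le N\max_i\mu_h(\cB_{1/2}(z_i;D_h))$; apply the scaling reduction to write each $\mu_h(\cB_{1/2}(z_i;D_h))$ as a log-normally rescaled (mean $\asymp 2^{-d_\gamma}$) version of $\mu_h(\cB_1(0;D_h))$; iterate $K$ steps; and terminate at scale $2^{-K}$ by a Euclidean comparison, where only the known moments below $4/\gamma^2$ are needed. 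The difficulty — and the new ingredient, replacing Le Gall's exact Brownian-snake computation in the $\gamma=\sqrt{8/3}$ case — is that this recursion is \emph{critically balanced}: $N\asymp 2^{d_\gamma}$ while each scaling factor is $\asymp 2^{-d_\gamma}$, so closing it for every $p$ requires a sharp, all-moments control of the $D_h$-covering numbers of a metric ball (proved by a separate multiscale argument from the known point-to-circle $D_h$-distance estimates), together with the near-independence of the contributions at well-separated Euclidean scales coming from the Markov property of the GFF, which is what prevents the scale-to-scale fluctuations from compounding. I expect this covering-number estimate to be the main technical obstacle.

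\emph{Negative moments ($p\le 0$).} Here one only needs to show $\cB_1(0;D_h)$ is not too small, which is easier since the GMC mass of a Euclidean ball already has negative moments of all orders. Starting from a crude lower bound $\P[\mu_h(\cB_1(0;D_h))<\delta]\le\delta^{\alpha_0}$ for some $\alpha_0>0$ (following from positivity of GMC on open sets together with the moment bounds for $D_h$-distances between Euclidean sets, or cited from the literature on the LQG metric), I would bootstrap: since $\cB_1(0;D_h)$ is a geodesic ball of radius $1$ it contains at least two disjoint metric sub-balls of radius $1/4$ centered at points at $D_h$-distance $3/4$, which one can arrange (using the GFF's Markov property and the near-independence of GMC over regions at positive distance) to be nearly independent, and each of which is, by the scaling reduction, a rescaled copy of $\cB_1(0;D_h)$; this doubles the exponent at each step and, after iterating, yields $\P[\mu_h(\cB_1(0;D_h))<\delta]\le\delta^{\alpha}$ for every $\alpha$, i.e.\ all negative moments are finite.

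\emph{From \eqref{eq-main} to \eqref{eq:UniformVolumeEst} and the dimension.} This part is routine. For the upper bound in \eqref{eq:UniformVolumeEst}, fix a large $p$; for dyadic $s=2^{-k}$ and $z$ ranging over a $2^{-k}$-net of $K$, apply Markov's inequality to $\mu_h(\cB_{2^{-k}}(z;D_h))$ — which by the scaling reduction is a $2^{-kd_\gamma}$-rescaled copy of $\mu_h(\cB_1(0;D_h))$ up to a log-normal factor with all moments — and union-bound over the $O(2^{2k})$ net points and over $k$; Borel--Cantelli then gives, almost surely, $\mu_h(\cB_{2^{-k}}(z;D_h))\le 2^{-k(d_\gamma-\eps)}$ for all large $k$ and all net points simultaneously, and monotonicity of $s\mapsto\mu_h(\cB_s(z;D_h))$ together with local Hölder continuity of $D_h$ in the Euclidean metric upgrades this to all $s\in(0,1)$ and all $z\in K$. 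The lower bound in \eqref{eq:UniformVolumeEst} is symmetric, using the negative-moment bound. Finally, any $D_h$-bounded region of positive finite $\mu_h$-mass is covered by $O(s^{-d_\gamma-\eps})$ balls of $D_h$-radius $s$ and cannot be covered by fewer than $\Omega(s^{-d_\gamma+\eps})$ of them — since by \eqref{eq:UniformVolumeEst} each such ball has $\mu_h$-mass between $s^{d_\gamma+\eps}$ and $s^{d_\gamma-\eps}$ — so the Minkowski dimension of $(\C,D_h)$ is $d_\gamma$ almost surely.
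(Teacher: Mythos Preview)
Your approach to the positive moments has a genuine gap. The covering-number recursion you describe is, as you yourself note, \emph{critically balanced}: the number $N$ of radius-$1/2$ sub-balls is $\asymp 2^{d_\gamma}$ while each sub-ball's volume is rescaled by $\asymp 2^{-d_\gamma}$, so the inequality $\mu_h(\cB_1)\le N\max_i\mu_h(\cB_{1/2}(z_i))$ loses everything at each step. You say the fix is ``a sharp, all-moments control of the $D_h$-covering numbers'' together with near-independence across scales, but this is exactly as hard as the original problem --- indeed, if you knew $\E[N^p]<\infty$ for all $p$ you would essentially be done, but proving that requires the same kind of analysis. More seriously, the ``scaling reduction'' you invoke for $\mu_h(\cB_{1/2}(z_i))$ does not apply: it works when one zooms at a \emph{deterministic Euclidean} scale around a \emph{deterministic} point, whereas your centers $z_i$ are random and the Euclidean scale of $\cB_{1/2}(z_i;D_h)$ is random and correlated with the field. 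So the sub-balls are not i.i.d.\ rescaled copies in any usable sense, and the recursion cannot be closed.

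The paper's route is entirely different and does not use covering numbers. It expands $\E[\mu_h(\cB_1\cap A)^k]$ as an integral over $k$-tuples $(z_1,\dots,z_k)$, applies a Cameron--Martin shift to convert $\prod\mu_h(dz_i)$ into $\prod_{i<j}|z_i-z_j|^{-\gamma^2}\prod dz_i$ times the probability that, for the field with $\gamma$-log singularities at the $z_i$, certain annulus-crossing distances are $\le 1$. The metric constraint forces circle averages near each cluster of points to be atypically low, and an inductive hierarchical-cluster computation (splitting $K=\{z_1,\dots,z_k\}$ into well-separated sub-clusters) shows this probability decays fast enough that the singular integral converges; the governing exponent is $\tfrac12 Q^2-2>0$, which is the place where $\gamma\in(0,2)$ enters. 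This is the analogue of Le~Gall's Bessel-tree computation, not a covering argument. Your negative-moment bootstrap has a similar issue (the two radius-$1/4$ sub-balls are not rescaled i.i.d.\ copies, nor at controlled Euclidean separation), and the paper instead finds directly a Euclidean ball of not-too-small radius inside $\cB_1(0;D_h)$ and lower-bounds its GMC mass. Your Part~3 (uniform estimates via Borel--Cantelli and the dimension consequence) is essentially correct and matches the paper.
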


This result is in stark contrast to the LQG volume of a deterministic bounded open set, which only has finite moments for $p \in (-\infty, 4/\gamma^2)$. 
Roughly speaking, $\mu_h(\cB_1(0; D_h))$ has finite positive moments because the metric ball $\cB_1(0; D_h)$ in some sense \emph{avoids} regions where $h$ (and thus $\mu_h$) is large. Our arguments also show~\eqref{eq-main} when we replace $h$ by $h + \alpha \log |\cdot|^{-1}$ for $\alpha < \frac\gamma2 + \frac2\gamma$ (see Propositions~\ref{Prop:MomentsWholePlane} and~\ref{Prop:NegativeMoments}).

Similar arguments allow us to prove an analogous result for the first exit time of the Liouville Brownian motion (LBM) from metric balls. Classically, Brownian motion is well defined on smooth manifolds and on some random fractals. Formally, LBM is Brownian motion associated to the metric tensor ``$e^{\gamma h }(dx^2 + dy^2)$'', and can be rigorously constructed via regularization and renormalization \cite{GRVbm,B15}. It is a time-change of an ordinary Brownian motion independent of $h$. For a set $X \subset \C$ and $z \in \C$, denote by $\tau_h(z;X)$ the first exit time of the Liouville Brownian motion started at $z$ from the set $X$. When $X$ is a deterministic bounded open set, $\tau_h(z;X)$ has finite moments for $p \in (-\infty,4/\gamma^2)$. Here, we study the case where $X$ is given by a metric ball.

\begin{thm}\label{thm-exit}
Fix $\gamma \in (0,2)$ and let $h$ be a whole-plane GFF normalized to have average zero on the unit circle. Then
\[
\E \left[\tau_h(0;\cB_1(0; D_h))^p \right] < \infty \quad \text{ for all }p \in \R.
\]
 Moreover, for any compact set $K \subset \C$ and $\eps > 0$, we have at a rate uniform in $z \in K$ that
\[\lim_{s\to 0}  \P [\tau_h(z;\cB_s(z; D_h)) \in (s^{d_\gamma + \eps}, s^{d_\gamma - \eps}) ]  = 1. \]
\end{thm}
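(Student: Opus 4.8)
The plan is to deduce Theorem~\ref{thm-exit} from Theorem~\ref{thm-main} together with the scaling/locality structure of the Liouville Brownian motion. The key point is that LBM is a time-change of an independent ordinary Brownian motion $B$ by the inverse of the quadratic-variation clock $F_h(t) = \int_0^t e^{\gamma h_\eps(B_s)}\,ds$ (suitably renormalized), so that the first exit time $\tau_h(z;X)$ equals $F_h(\sigma_X)$, where $\sigma_X$ is the exit time of $B$ (started at $z$) from $X$. Since a metric ball $\cB_s(z;D_h)$ is a bounded open set (contained, with high probability, in a Euclidean ball of controlled radius by the metric comparison estimates from \cite{DFGPS,GM19uniqueness}), the exit time $\tau_h(z;\cB_s(z;D_h))$ can be sandwiched between exit times from Euclidean balls, and the clock $F_h$ restricted to $\cB_s(z;D_h)$ is controlled by the total LBM mass of that ball — which, by the very construction of the Liouville measure as the occupation measure of LBM, is comparable to $\mu_h(\cB_s(z;D_h))$ times a Brownian exit-time factor that is itself independent of $h$ and has all moments.

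\medskip

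Concretely, I would carry out the following steps. First, I would establish the \emph{upper bound}: condition on $h$, and use that $\tau_h(0;\cB_1(0;D_h)) = \int_{\cB_1(0;D_h)} G_{\cB_1(0;D_h)}(0,x)\,\mu_h(dx)$, where $G_U$ is the Green's function of $U$ for standard Brownian motion; bounding $G_{\cB_1}(0,x)$ by the Green's function of a large Euclidean ball $B_R(0) \supset \cB_1(0;D_h)$ reduces a positive moment of $\tau_h$ to a positive moment of $\int_{B_R(0)} \log\frac{R}{|x|}\,\mu_h(dx)$, which by \cite{GM19uniqueness}-type estimates on $\diam_{D_h}$ and the finiteness of all positive moments of $\mu_h(\cB_1(0;D_h))$ from Theorem~\ref{thm-main} (applied after translating and using locality of the GFF plus the $\mu_h(\cB_1) \le \mu_h(B_R)$ comparison only on the event that the ball is small) has all finite moments. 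Actually the cleanest route: write $\tau_h(0;\cB_1(0;D_h)) \le \tau_h(0; B_R(0))$ on $\{\cB_1(0;D_h)\subset B_R(0)\}$, decompose over dyadic $R$, and use that $\tau_h(0;B_R(0))$ conditionally on $h$ has the same law as the occupation time of $B_R(0)$ by LBM, whose positive moments are controlled by Gaussian multiplicative chaos moments of $\mu_h$ on $B_R(0)$ together with Brownian exit-time moments; combine with the Gaussian tail for $\{\cB_1(0;D_h)\not\subset B_R(0)\}$. Second, for the \emph{negative} moment, I would use $\tau_h(0;\cB_1(0;D_h)) \ge \tau_h(0; B_\rho(0))$ whenever $B_\rho(0)\subset \cB_1(0;D_h)$, and then bound $\E[\tau_h(0;B_\rho(0))^{-p}]$ for fixed small $\rho$ using that, conditionally on $h$, $\tau_h(0;B_\rho(0)) \ge$ (time spent by $B$ near $0$ before exiting) $\times \inf_{B_{\rho/2}} e^{\gamma h_\eps}$-type lower bound — more robustly, via the fact that the LBM occupation measure of any fixed small Euclidean ball around the starting point has negative moments of all orders, a consequence of negative-moment bounds for GMC masses (finite for all negative $p$, since $p\in(-\infty,4/\gamma^2)$ includes all negative reals) combined with a Brownian lower bound; and control $\P[B_\rho(0)\not\subset \cB_1(0;D_h)]$ by the Hölder continuity of $D_h$ with respect to the Euclidean metric.

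\medskip

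Third, for the uniform two-sided estimate at scale $s$, I would use the scaling relation for LQG: $\cB_s(z;D_h)$ after rescaling by $s^{-1/d_\gamma}\cdot$ and a Weyl transformation of the field has the law of $\cB_1(0;D_{\tilde h})$ up to a field shift, so that $\mu_h(\cB_s(z;D_h))$ is $s^{d_\gamma(1+o(1))}$ by \eqref{eq:UniformVolumeEst} of Theorem~\ref{thm-main}, and correspondingly the clock contributes a factor $s^{2/d_\gamma}$ from Brownian scaling times the volume factor — giving exponent $2/d_\gamma + (d_\gamma - 2/d_\gamma) = d_\gamma$, consistent with the claimed $s^{d_\gamma \pm \eps}$. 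Wait, I should double-check the exponent arithmetic: under the LQG coordinate change, Euclidean balls of $D_h$-radius $s$ have Euclidean radius $\asymp s^{1/d_\gamma}$, the Liouville measure scales like (Euclidean radius)$^{d_\gamma}$ $\asymp s$, and the LBM clock on such a region scales like (Euclidean radius)$^2 \cdot$(typical $e^{\gamma h}$) $\asymp$ (Liouville mass) $\cdot$ (Euclidean radius)$^{2 - d_\gamma}$... — the correct statement, which is what \cite{gwynne2019kpz} establishes for LBM, is that LBM exit times from $D_h$-metric balls of radius $s$ scale as $s^{d_\gamma}$, and one packages the uniform-in-$z$, uniform-in-$s$ control by a union bound over a fine net in $K$ combined with the moment bounds of the first part of the theorem and a Borel–Cantelli / Markov-inequality argument exactly mirroring the proof of \eqref{eq:UniformVolumeEst}. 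The main obstacle I anticipate is the negative-moment bound: one must rule out that the LBM started at $z$ exits $\cB_s(z;D_h)$ atypically quickly, which requires simultaneously (a) a lower bound on how much Euclidean room the metric ball contains around its center — where the thick points of $h$ near $z$ can make $D_h$-balls Euclidean-thin — and (b) a lower bound on the LBM clock there, and the interplay between these (both governed by the local behavior of the GFF near $z$) is where the argument becomes delicate; I would handle it by working on the high-probability event that $h$ is not too thick near $z$ (Gaussian tail cost) and on its complement using only the crude deterministic bound $\tau_h \ge 0$ together with the smallness of the bad event's probability raised to a power that beats the blow-up of $\tau_h^{-p}$.
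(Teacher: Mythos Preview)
Your upper-bound strategy has a fundamental gap. You propose to bound $\tau_h(0;\cB_1(0;D_h)) \le \tau_h(0;B_R(0))$ on $\{\cB_1(0;D_h)\subset B_R(0)\}$ and then control positive moments of $\tau_h(0;B_R(0))$. But the exit time from a \emph{deterministic} Euclidean ball only has finite moments for $p<4/\gamma^2$ (this is stated in the paragraph before the theorem, and is exactly what makes the metric-ball result interesting). So the containment argument throws away precisely the information you need: the fact that $\cB_1(0;D_h)$ \emph{avoids} regions where $h$ is large. Your first suggestion, writing $\tau = \int G_{\cB_1}(0,x)\mu_h(dx)$, is also not quite right --- that integral is the \emph{conditional mean} $\E^B[\tau\mid h]$, not $\tau$ itself. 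The correct higher-moment identity is the $k$-fold iterated Green's function representation
\[
\E^B[\tau^k\mid h] \;=\; k!\int_{\cB_1^k} G(0,z_1)G(z_1,z_2)\cdots G(z_{k-1},z_k)\,\mu_h(dz_1)\cdots\mu_h(dz_k),
\]
and this is essentially what the paper uses (equations~\eqref{eq-tau-k}--\eqref{eq-f}). The point is that the indicator $\mathbbm 1\{z_i\in\cB_1(0;D_h)\}$ survives inside the integral, so the \emph{same} mechanism that gave all moments of $\mu_h(\cB_1)$ in Theorem~\ref{thm-main} applies here; the extra Green's function factors contribute only logarithmic singularities between consecutive points (Lemma~\ref{lem-bound-f}), which are absorbed because the inductive estimate of Section~\ref{section-positive} had room in the exponent (Remark~\ref{rem-delta}).

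Your negative-moment sketch is closer in spirit but also incomplete. On the bad event $\{B_\rho(0)\not\subset\cB_1(0;D_h)\}$ you cannot use ``$\tau\ge 0$ together with the smallness of the bad event'' to control $\E[\tau^{-p}\mathbbm 1_{\text{bad}}]$, since $\tau^{-p}$ is unbounded there; some quantitative lower bound on $\tau$ is still needed on that event. The paper sidesteps this by a direct geometric argument (Theorem~\ref{Thm:NegPassage}): it covers the segment from $0$ to the exit point $X_\tau$ by concentric annuli, on a regularity event bounds both the $D_h$-crossing lengths and the LBM crossing times by circle-average expressions $e^{-k\xi Q}e^{\xi h_{e^{-k}}}$ and $e^{-k\gamma Q}e^{\gamma h_{e^{-k}}}$ respectively, and then uses $\gamma=d_\gamma\xi$ and Jensen's inequality to pass from the constraint $\sum e^{-k\xi Q}e^{\xi h_{e^{-k}}}\gtrsim 1$ (coming from $D_h(0,X_\tau)=1$) to a lower bound $\sum e^{-k\gamma Q}e^{\gamma h_{e^{-k}}}\gtrsim C^{-\alpha}$, hence $\tau\gtrsim C^{-\alpha}$. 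The small-$s$ scaling statement is then obtained exactly as you suggest in your final paragraph, via Markov's inequality from the moment bounds plus the rescaling arguments of Section~\ref{sec-lower-general}.
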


As an application of Theorem~\ref{thm-main}, we can extend results of \cite{GMS18} to the case of general $\gamma \in (0,2)$. The following theorem resolves another question of \cite{GM19uniqueness}.
\begin{thm}\label{thm-recover}
Let  $\gamma \in (0,2)$ and $h$ be a whole-plane GFF $h$ normalized to have average zero on the unit circle.
Then the field $h$ up to rotation and scaling of the complex plane is almost surely determined by (i.e.\ measurable with respect to) the random pointed metric measure space $(\C, 0, D_h, \mu_h)$.
\end{thm}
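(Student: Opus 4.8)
The plan is to follow the argument of Gwynne--Miller--Sheffield \cite{GMS18}, who prove the statement for $\gamma=\sqrt{8/3}$, and to plug in Theorem~\ref{thm-main} wherever their proof invoked the Brownian-map volume estimates of Le Gall \cite{LeGall10}. The reconstruction proceeds in three stages: (i) recover the Liouville Brownian motion from the metric measure space $(\C,0,D_h,\mu_h)$; (ii) recover the Euclidean conformal structure of $\C$, up to a rotation and scaling, from the Liouville Brownian motion; (iii) recover the field $h$ from $\mu_h$ once the conformal structure is known. The only stage for which $\gamma=\sqrt{8/3}$ was previously needed is (i), where the missing ingredient was exactly a uniform polynomial control of the $\mu_h$-volume of small $D_h$-balls; this is now supplied for all $\gamma\in(0,2)$ by \eqref{eq-main} and \eqref{eq:UniformVolumeEst}.

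For stage (i) I would, for each $\delta>0$, build an approximating Markov chain $X^\delta$ from the data $(\C,0,D_h,\mu_h)$ alone: fix a maximal $\delta$-separated set $S_\delta$ for $D_h$ inside a large $D_h$-ball around $0$, join $x,y\in S_\delta$ whenever $D_h(x,y)\le 2\delta$, assign each $x\in S_\delta$ the mass $\mu_h(\cB_\delta(x;D_h))$, and let $X^\delta$ be the continuous-time random walk reversible for this vertex measure with mean holding time at $x$ comparable to $\mu_h(\cB_\delta(x;D_h))$. This chain is a measurable function of the metric measure space. The uniform bounds \eqref{eq:UniformVolumeEst} give volume doubling and Ahlfors-type regularity at scale $\delta$, uniformly over the ball, and \eqref{eq-main} controls the expected holding times; as in \cite{GMS18}, this yields tightness of the laws of $X^\delta$ as $\delta\to 0$ together with the identification of every subsequential limit as a deterministic time-change of the Liouville Brownian motion of $h$, using the characterization of the latter as the diffusion associated with the Liouville Dirichlet form. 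Hence the Liouville Brownian motion, modulo a deterministic time-change, is measurable with respect to $(\C,0,D_h,\mu_h)$.

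For stage (ii), by the construction of Garban--Rhodes--Vargas and Berestycki \cite{GRVbm,B15} the Liouville Brownian motion is a time-change of a standard planar Brownian motion, so quantities such as its exit distributions from $D_h$-balls are conformal invariants of the Euclidean structure on $\C$; since the conformal structures on $\C$ fixing $0$ differ only by the maps $z\mapsto\lambda z$, these invariants determine the coordinate $z$ up to rotation and scaling. For stage (iii), with the Euclidean coordinate fixed the measure $\mu_h$ is an honest $\gamma$-Gaussian multiplicative chaos, from which $h$ is recovered as a measurable function (equivalently, $h$ is determined by $D_h$ via Weyl scaling); transporting this back through the residual $z\mapsto\lambda z$ ambiguity, which acts on $h$ by the LQG coordinate change $h\mapsto h(\lambda\,\cdot)+(\tfrac{\gamma}{2}+\tfrac{2}{\gamma})\log|\lambda|$, yields precisely the assertion that $h$ is determined up to rotation and scaling.

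The main obstacle is stage (i): making the convergence of $X^\delta$ to a diffusion work from intrinsic data alone requires quantitative, uniform-over-compacts control of the $\mu_h$-mass of $D_h$-balls down to the microscopic scale $\delta$, which for $\gamma=\sqrt{8/3}$ came from \cite{LeGall10} and which Theorem~\ref{thm-main} now provides for general $\gamma$. Once \eqref{eq-main} and \eqref{eq:UniformVolumeEst} are available, I expect the remaining steps of \cite{GMS18} to transfer essentially verbatim.
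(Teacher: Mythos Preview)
Your high-level strategy is correct and matches the paper's: the only barrier to running the \cite{GMS18} argument for general $\gamma$ was the uniform metric-ball volume bound, and Theorem~\ref{thm-main} supplies it. However, your description of the \cite{GMS18} machinery is inaccurate, and the discrepancy matters because you claim the remaining steps ``transfer essentially verbatim.''

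Concretely, \cite{GMS18} (and the paper) do \emph{not} build a weighted walk on a maximal $\delta$-net and identify the limit via the Liouville Dirichlet form. They sample a Poisson point process of intensity $\lambda\mu_h$, form the $D_h$-Voronoi tessellation, and run the \emph{simple} random walk on its adjacency graph; the limit is standard planar Brownian motion viewed modulo time parametrization (not Liouville Brownian motion). The conformal structure is then recovered not from exit distributions but from the \emph{Tutte embedding} of this Voronoi graph into the disk, whose vertex images converge to the true conformal coordinates as $\lambda\to\infty$. The paper then lets the filled ball radius $R\to\infty$ and uses a distortion estimate for conformal maps to pin down $\mu_h$ in the plane; finally $h$ is recovered from the embedded $\mu_h$ via \cite{BSS14}, not via Weyl scaling (Weyl scaling describes how $D_h$ transforms, it does not by itself give a measurability statement).

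Your alternative route --- $\delta$-nets, volume-weighted holding times, Dirichlet-form identification of LBM --- is plausible in spirit, but it is a genuinely different argument that would require its own proof of convergence and identification; the \cite{GMS18} lemmas about Voronoi cell regularity, harmonic comparison, and Tutte convergence are not written for that setup and would not transfer verbatim. If you want the short path, replace your stage~(i)--(ii) by the Poisson--Voronoi/Tutte construction exactly as in \cite{GMS18}, noting that the sole $\gamma$-dependent input is the uniform volume estimate~\eqref{eq:UniformVolumeEst}.
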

We emphasize that the input is $(\C, 0, D_h, \mu_h)$ as a \emph{pointed metric measure space}, so in particular we forget the exact parametrization in the complex plane of $D_h$ and $\mu_h$. More precisely we view it as an element in the space of pointed metric measure spaces endowed with the local Gromov-Hausdorff-Prokhorov topology (local here refers to metric balls about the point). For the special case $\gamma = \sqrt{8/3}$, \cite{GMS18} proves an analogous theorem for the quantum disk (see also~\cite{MS16b}). Their results depend on the correspondence between the Brownian map and $\sqrt{8/3}$-LQG \cite{MS15b, MS16a, MS16b, MS16}, and rely on the estimates obtained by Le Gall~\cite{LeGall10} for the Brownian map. Theorem~\ref{thm-main} provides the estimates needed to generalize the results of \cite{GMS18} to \emph{all} $\gamma \in (0,2)$, yielding Theorem~\ref{thm-recover} and a statement of the convergence of the simple random walk on a Poisson-Voronoi tessellation of $\gamma$-LQG to Brownian motion (viewed as curves modulo time-parametrization) in the quenched sense; see Section~\ref{sec-recover}.

\paragraph*{Paper outline.} In Section~\ref{sec-background}, we discuss preliminary material about LQG. We prove the finiteness of moments statement of Theorem~\ref{thm-main} in Sections~\ref{section-positive} and~\ref{sec-negative}, which bound the positive and negative moments of the unit LQG ball volume respectively. In Section~\ref{sec-UVE}, we complete the proof of Theorem~\ref{thm-main}. Section~\ref{sec-exit} addresses Theorem~\ref{thm-exit}. Finally Section~\ref{sec-recover} discusses Theorem~\ref{thm-recover}. In the appendix, we recollect some ingredients of the proof by Le Gall for the Brownian map case as a comparison.

\paragraph*{Acknowledgements.}
We thank Julien Dub\'edat, Ewain Gwynne and Scott Sheffield for insightful discussions, and two anonymous referees for helpful comments on the first version of this paper. We thank also the organizers of the ``Probability and quantum field theory'' conference in Porquerolles, France; this is where the work was initiated. M.A. was partially supported by NSF grant DMS-1712862. 
The research of X.S.\ is supported by the Simons Foundation as a Junior Fellow at the Simons Society of Fellows, by NSF grant DMS-1811092, and by Minerva fund at the Department of Mathematics at Columbia University.

\section{Background and preliminaries}\label{sec-background}

\subsection{Notation}
For each $\gamma \in (0,2)$, we write $d_\gamma$ for the Hausdorff dimension of $\gamma$-LQG \cite{gwynne2019kpz} (this was originally introduced in the literature as the ``fractal dimension'' of $\gamma$-LQG, a scaling exponent associated with models expected to converge to $\gamma$-LQG; see \cite{DG18, ding2019heat, GHS16}). We also set the $\gamma$-dependent constants
\begin{equation}\label{eq:xi-Q}
Q = \frac\gamma2 + \frac2\gamma \quad \textrm{and}\quad \xi = \frac\gamma{d_\gamma}. 
\end{equation} 
  
We write $\N = \{ 1, 2, 3 \dots \}$ and $\N_0=\N\cup \{0\}$. For $x \in \R$, $\lfloor x \rfloor$ and $\lceil x \rceil$ denote the floor and ceiling functions evaluated at $x$. We write $|E|$ for the cardinality of a finite set $E$. If $f$ is a function from a set $X$ to $\R^n$ for some $n \geq 1$, we denote the supremum norm of $f$ by $\norme{f}_{X} := \sup_{x \in X} |f(x)|$.

In our arguments, it is natural to consider both Euclidean balls and metric balls. We use the notation $B_r(z)$ to denote the \emph{Euclidean} ball of radius $r$ centered at $z$, and $\mc B_r(z; D_h)$ to denote the \emph{metric} ball of radius $r$ centered at $z$ (i.e. the ball with respect to the metric $D_h$). We also distinguish the unit disk $\D := B_1(0)$. We denote by $\ol{X}$ the closure of a set $X$. For any $r > 0$ and $z \in \C$, let $A_r(z)$ stand for the annulus $B_{r}(z) \setminus \ol{B_{r/2}(z)}$. Furthermore, for $0 < s < r$, we set $A_{s,r}(z) := B_r(z) \setminus \ol{B_s(z)}$. 

The LQG metric $D_h$ is almost surely a length metric, i.e. $D_h(z,w)$ is the infimum of the $D_h$-lengths of continuous paths between $z,w$. For an open set $U \subset \C$, the internal metric $D^U_h$ on $U$ is given by the infimum of the $D_h$-lengths of continuous paths in $U$.

We write $\dashint_C f$ for the average of $f$ over the circle $C$. For a GFF $h$, we write $h_r(z)$ for the average of $h$ on the circle $\partial B_r(z)$. 

We write $X \sim \mc{N}(m,\sigma^2)$ to express that the random variable $X$ is distributed according to a Gaussian probability measure with mean $m$ and variance $\sigma^2$.

We say that an event $E_{\eps}$, depending on $\eps$, occurs with superpolynomially high probability if for every fixed $p >0$, for all $\eps$ small enough,  $\P [E_{\eps}] \geq 1 - \eps^{p}$.
We similarly define events which occur with superpolynomially high probability as a parameter tends to $\infty$.

\subsection{The whole-plane Gaussian free field}
We give here a brief introduction to the whole-plane GFF. For more details see \cite{ig4}.

Let $H$ be the Hilbert space closure of smooth compactly supported functions $f$ on $\C$, equipped with the Dirichlet inner product
\[(f,g)_\nabla = (2\pi)^{-1} \int_\C \nabla f(z) \cdot \nabla g(z) \ dz. \]
Let $\{f_n\}$ be any orthonormal basis of $H$, and consider the equivalence relation on the space of distributions given by $T_1 \sim T_2$ when $T_1 -  T_2$ is a constant. The \emph{whole-plane GFF modulo additive constant} $h$ is a random equivalence class of distributions, a representative of which is given by $\sum \alpha_n f_n$ where $\{\alpha_n\}$ is a sequence of i.i.d. $\mc N(0,1)$ random variables. The law of $h$ does not depend on the choice of $\{f_n\}$. 

For any complex affine transformation of the complex plane $A$, it is easy to verify that $(f \circ A, g\circ A)_\nabla = (f,g)_\nabla$. Consequently, $h$ has a law that is invariant under affine transformations: for each $r,z \in \C$ we have $h \stackrel{d}{=} h(r\cdot + z)$. 

Write $\wt H \subset H$ for the subspace of functions $f$ with $\int_{\C} f = 0$. Although we cannot define $\langle h, f \rangle$ for general $f \in H$, the distributional pairing makes sense for $f \in \wt H$ (the choice of additive constant does not matter). Explicitly, for $f \in \wt H$ the pairing $\langle h, f\rangle$ is a centered Gaussian with variance
\eqb\label{eq-zero-mean}
\Var (\langle h, f \rangle ) = \iint_{\C^2} f(w) f(z)  \log |w-z|^{-1} \ dwdz.
\eqe
It is easy to check that~\eqref{eq-zero-mean} in fact \emph{defines} the whole-plane GFF modulo additive constant. 

We will often fix the additive constant of $h$, i.e. choose an equivalence class representative. This can be done by specifying the value of $\langle h, f\rangle$ for some $f \in H$ with $\int_{\C} f \neq 0$, or the average of $h$ on a circle (see \cite[Section 3]{DS11} for details on the circle averages of $h$). Recalling that $h_r(z)$ means the circle average of $h$ on $\partial B_r(z)$, we will typically work with a whole-plane GFF $h$ normalized so $h_1(0) = 0$ (this is a distribution \emph{not} modulo additive constant).

Let $\cH_1 \subset H$ (resp. $\cH_2 \subset H$) be the Hilbert space completion of compactly supported functions which are constant (resp. have mean zero) on $\partial B_r(0)$ for all $r >0$. It is easy to verify the orthogonal decomposition $H = \cH_1 \oplus \cH_2$. This allows us to write the whole-plane GFF $h$ with $h_1(0) = 0$ as the sum of independent fields $h^1$ and $h^2$; these are respectively the projections of $h$ to $\cH_1$ and $\cH_2$. Moreover, we can explicitly describe the law of $h^1$: Writing $X_t = h_{e^{-t}}(0)$, the processes $(X_t)_{t \geq 0}$ and $(X_{-t})_{t\geq 0}$ are independent Brownian motions started at zero. The strong Markov property tells us that for any stopping time $T$ of $(X_t)_{t \geq 0}$, the random process $(X_{s +T} - X_T)_{s \geq 0}$ is independent from $X_T$. Also, by the scale invariance of the whole-plane GFF, the law of $h^2$ is scale invariant. These observations (with the independence of $h^1, h^2$) give us the following.
\begin{Lem}\label{lem-markovish}
Let $h$ be a whole-plane GFF with $h_1(0) = 0$, and let $T \geq 0$ be a stopping time of the circle average process $(h_{e^{-t}}(0))_{t \geq 0}$. Then we have, as fields on $\D$,  
\[h(e^{-T} \cdot)|_{\D} - h_{e^{-T}}(0) \stackrel{d}{=}h|_\D.\] 
Moreover, $h(e^{-T} \cdot)|_{\D} - h_{e^{-T}}(0)$ is independent of $h_{e^{-T}}(0)$. 
\end{Lem}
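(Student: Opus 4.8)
The plan is to decompose the whole-plane GFF $h$ (normalized so that $h_1(0)=0$) along the orthogonal splitting $H = \cH_1 \oplus \cH_2$ described just above the statement, and then track separately how the radial part $h^1$ and the ``lateral'' part $h^2$ transform under the rescaling map $z \mapsto e^{-T}z$ followed by subtracting the circle average $h_{e^{-T}}(0)$. Writing $h = h^1 + h^2$ with $h^1, h^2$ independent, the key observation is that the operation $f \mapsto f(e^{-T}\cdot) - f_{e^{-T}}(0)$ preserves the spaces $\cH_1$ and $\cH_2$ separately: rescaling about the origin maps radially-symmetric functions to radially-symmetric functions and mean-zero-on-circles functions to mean-zero-on-circles functions, and subtracting the $e^{-T}$-circle-average only affects the radial part. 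So it suffices to show that (a) $h^1(e^{-T}\cdot)|_{\D} - h_{e^{-T}}(0)$ has the same law as $h^1|_{\D}$ and is independent of $h_{e^{-T}}(0)$, and (b) $h^2(e^{-T}\cdot)|_{\D}$ has the same law as $h^2|_{\D}$; combining via independence of the two parts gives the claim (noting $h^2$ is independent of the circle average process, so conditioning on $T$ does not disturb it).

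For part (a), I would use the explicit description of $h^1$ in terms of the two-sided Brownian motion $X_t = h_{e^{-t}}(0)$, where $(X_t)_{t\ge 0}$ and $(X_{-t})_{t\ge 0}$ are independent standard Brownian motions started at $0$. Since $T$ is a stopping time of $(X_t)_{t\ge 0}$, rescaling space by $e^{-T}$ corresponds to shifting the time parameter by $+T$ in $(X_t)_{t \ge 0}$ and re-rooting the two-sided process at time $T$; subtracting $h_{e^{-T}}(0) = X_T$ re-centers it to start at $0$ again. The strong Markov property applied to $(X_t)_{t \ge 0}$ at time $T$ — exactly as recalled in the paragraph before the lemma — gives that $(X_{s+T} - X_T)_{s \ge 0}$ is a standard Brownian motion independent of $X_T$, and $(X_{-t})_{t\ge 0}$ is untouched; hence the re-rooted, re-centered two-sided process has the same law as the original and is independent of $X_T$. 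Translating back from the Brownian-motion description to the field description on $\D$ yields (a). For part (b), this is just the scale invariance of the whole-plane GFF restricted to $\cH_2$: since $h$ (mod constants) is invariant under $z \mapsto e^{-T}z$ for deterministic scalings, and $T$ is independent of $h^2$, one conditions on $T$ and applies scale invariance of $h^2$.

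The main obstacle — or at least the point requiring the most care — is making the ``strong Markov property of the circle-average process'' genuinely legitimate at a \emph{random} stopping time when transported to the full field, rather than merely to the one-dimensional process $X$. Concretely, one must be careful that: the event $\{T \le t\}$ is measurable with respect to $\sigma(X_s : s \le t)$, which is contained in $\sigma(h|_{B_{e^{-t}}(0)^c})$-type information about the field outside small balls — so that conditioning on $T$ and on $X_T$ does not leak information about $h^2|_{\D}$ or about the increments of $X$ after time $T$; and that the a.s.-defined rescaling $h(e^{-T}\cdot)$ really is the pushforward under a measure-preserving transformation of $H$, with the additive-constant bookkeeping ($h_1(0)=0$ before, $h_{e^{-T}}(0)$ subtracted after) done consistently. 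I would handle this by first proving the statement for deterministic $T=t$ (pure scale invariance plus the Brownian description), then extending to simple stopping times taking countably many values by decomposing over $\{T = t_k\}$ and using independence of increments, and finally passing to general $T$ by approximation from above with such discretizations, using continuity of $t \mapsto h(e^{-t}\cdot)$ in an appropriate distributional topology to take the limit. Everything else is routine.
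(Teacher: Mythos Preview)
Your proposal is correct and follows essentially the same approach as the paper: the paper's argument, given in the paragraph immediately preceding the lemma, is exactly the decomposition $h = h^1 + h^2$ along $\cH_1 \oplus \cH_2$, handling $h^1$ via the strong Markov property of the Brownian motion $X_t = h_{e^{-t}}(0)$ and $h^2$ via scale invariance, then combining by independence. Your additional care about extending from deterministic times to general stopping times via discretization is more detail than the paper supplies, but the underlying strategy is identical.
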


We note that there exist variants of the GFF on bounded domains $D \subset \C$, such as the zero boundary GFF and the Neumann GFF; we do not go into further detail, but remark that their LQG measures (Section~\ref{sec-GMC}) are well defined. 

Finally, we present a version of the Markov property for the whole-plane GFF, taken from \cite[Lemma 2.2]{harmonic-CRT}. It essentially follows from the orthogonal decomposition $H = \cH_\D \oplus \cH_\mathrm{harm}$ where $\cH_\D$ (resp. $\cH_\mathrm{harm}$) is the Hilbert space completion of functions which are compactly supported (resp. harmonic) in $\D$. 
\begin{Lem}[Markov property of GFF]\label{lem-markov}
Let $h$ be a whole-plane GFF normalized so $h_1(0) = 0$. For each open set $U \subset \C$ with harmonically non-trivial boundary and $U \cap \partial \D = \emptyset$, we have the decomposition 
\[h = \mathfrak h + \mathring h \]
where $\mathfrak h$ is a random distribution which is harmonic on $U$, and $\mathring h$ is independent from $\mathfrak h$ and has the law of a zero-boundary GFF on $U$ (in particular, $\mathring h|_{U^c} \equiv 0$). 
\end{Lem}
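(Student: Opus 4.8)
The plan is to build the decomposition from the orthogonal splitting of the Cameron--Martin space $H$ and then check that the normalization $h_1(0)=0$ does not interfere. Fix an open set $U$ with harmonically non-trivial boundary and $U\cap\partial\D=\emptyset$, and let $\cH_U\subset H$ be the closure of $C_c^\infty(U)$ under $(\cdot,\cdot)_\nabla$, with $\cH_U^\perp$ its orthogonal complement (this is the version, with $U$ in place of $\D$, of the splitting quoted just before the statement). The first step is the classical identification $\cH_U^\perp=\{f\in H:\Delta f=0 \text{ on }U\}$: a function $f\in H$ is $(\cdot,\cdot)_\nabla$-orthogonal to every $\varphi\in C_c^\infty(U)$ iff $\int_U \nabla f\cdot\nabla\varphi=0$ for all such $\varphi$, i.e.\ $\Delta f=0$ in the distributional sense on $U$, and by Weyl's lemma such an $f$ agrees on $U$ with a genuine harmonic function. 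Concatenating orthonormal bases of $\cH_U$ and of $\cH_U^\perp$ gives an orthonormal basis of $H$, and expanding the whole-plane GFF modulo additive constant along it yields $h=\mathring h+\mathfrak h$, with $\mathring h$ the $\cH_U$-part and $\mathfrak h$ the $\cH_U^\perp$-part; these are independent because the coefficients are i.i.d.\ standard Gaussians.

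The second step identifies the two pieces. Since the zero-boundary GFF on $U$ is by definition the Gaussian field indexed by the Hilbert space $\cH_U$ (which is a genuine Hilbert space precisely because $U$ has harmonically non-trivial boundary), $\mathring h$ has exactly that law, and in particular it is supported on $\overline U$, so $\mathring h|_{U^c}\equiv 0$. For $\mathfrak h$ I would show that the partial sums of its defining series, being harmonic functions on $U$, converge locally uniformly on $U$ to a random harmonic function: convergence of the series as a random distribution, upgraded via interior mean-value/elliptic estimates for harmonic functions, gives locally uniform convergence on compact subsets of $U$, so that $\mathfrak h|_U$ is a.s.\ an honest harmonic function.

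The last step handles the normalization. Working first with the GFF modulo additive constant, the above gives $h^{\mathrm{mod}}=\mathring h+\mathfrak h^{\mathrm{mod}}$ with $\mathring h$ a bona fide random distribution supported on $\overline U$ and $\mathfrak h^{\mathrm{mod}}$ defined only modulo constants. Because $U\cap\partial\D=\emptyset$, the circle $\partial\D$ lies in $U^c$, where $\mathring h$ vanishes, so the circle average $h_1(0)=\dashint_{\partial\D}h$ depends only on $\mathfrak h^{\mathrm{mod}}$; imposing $h_1(0)=0$ therefore selects a representative $\mathfrak h$ of $\mathfrak h^{\mathrm{mod}}$ without touching $\mathring h$ or disturbing its independence from $\mathfrak h$. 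This produces the asserted decomposition with all the stated properties. The one genuinely delicate point, where I would concentrate the effort, is the convergence claim for $\mathfrak h$ in the second step --- namely that the random series really defines a function harmonic on $U$ rather than merely a distribution that is harmonic in the distributional sense; everything else is bookkeeping with orthogonal projections and Gaussian independence.
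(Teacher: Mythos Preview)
Your proposal is correct and follows exactly the approach the paper indicates: the paper does not give a proof but cites \cite[Lemma~2.2]{harmonic-CRT} and remarks that the result ``essentially follows from the orthogonal decomposition $H=\cH_\D\oplus\cH_{\mathrm{harm}}$'', which is precisely the decomposition $H=\cH_U\oplus\cH_U^\perp$ you work out in detail. Your treatment of the normalization --- using $U\cap\partial\D=\emptyset$ so that fixing $h_1(0)=0$ only affects the harmonic part --- is the right explanation for why that hypothesis appears, and the point you flag as delicate (upgrading distributional harmonicity of $\mathfrak h$ to genuine local-uniform convergence via interior elliptic estimates) is indeed the only place requiring real work.
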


\subsection{Liouville quantum gravity and Gaussian multiplicative chaos }

\label{sec-GMC}

Fix $\gamma \in (0,2)$ and let $h$ be a GFF plus a random continuous function on a domain $D \subset \C$. We can define the \emph{$\gamma$-LQG volume measure} or \emph{quantum volume measure} via the almost sure limit in the vague topology 
\[\mu_h(dz) =  \lim_{\eps \to 0} \eps^{\gamma^2/2} e^{\gamma h_\eps(z)} \ dz\]
where the limit $\eps \to 0$ is taken along powers of two \cite{DS11}. (The limit was shown to hold in probability without the dyadic constraint \cite{Shamov16, Berestycki17}.)
Two properties are clear from the form of the above limit. Firstly, $\mu_h$ is locally determined by $h$, i.e. for any open set $U$, the volume $\mu_h(U)$ is a.s. determined by $h|_U$. Secondly, we have $\mu_{h+C}(\cdot) = e^{\gamma C} \mu_h(\cdot)$ for any $C \in \R$, and slightly more generally, for any random continuous function $f$ on a compact set $D$, we have almost surely $e^{\gamma \inf_D f} \mu_h(D) \leq \mu_{h+f} (D) \leq e^{\gamma \sup_D f} \mu_h(D)$. 

Liouville quantum gravity is a special case of Gaussian multiplicative chaos (GMC) introduced by Kahane \cite{Kahane85}, which considers more general $\log$-correlated fields. More precisely, if $D \subset \C$ is a bounded domain and $g$ a continuous function on $\ol{D} \times \ol{D}$ such that $K(x,y) = \log |x - y|^{-1} + g(x,y)$ is a nonnegative definite kernel, then one can consider the $\log$-correlated Gaussian field $\phi$ with covariance kernel $K$. Consider then the approximating measures $\mu_{\phi_{\eps}}(dz) = e^{\gamma \phi_{\eps}(z) - \frac{\gamma^2}{2} \Var \phi_{\eps}(z)  } \sigma(dz)$ where $\phi_{\eps}(x)$ denotes the circle average approximation of $\phi$ and $\sigma$ is a Radon measure on $\ol{D}$ of dimension at least two (in this paper we will always take $\sigma$ to be Lebesgue measure). Then, $\mu_{\phi_\eps}$ converges in probability towards a Borel measure $\mu$ on $D$ for the topology of weak convergence of measures on $D$ and the limit is the same for different approximation schemes e.g. when replacing the circle average approximation by an other mollification (\cite{Berestycki17, Shamov16}). The renormalizations for LQG and GMC  are different since we typically have $\eps^{\gamma^2/2} \neq e^{-\frac{\gamma^2}2 \Var \phi_\eps(z)}$. We will work with the LQG one when we use the GFF $h$ and the GMC one when we consider another $\log$-correlated Gaussian field $\phi$.

We refer the reader to \cite{Aru17, BerestyckiGffLqg, RV14} for excellent introductions to the domain.

\subsection{LQG volume of Euclidean balls} \label{sec-euc-vol}

Tails estimates for the LQG volume of Euclidean balls are quite well understood. It has been known since the work of Kahane \cite{Kahane85} and Molchan \cite{Mo96} that it admits finite moments for $p \in (-\infty, 4/\gamma^2)$. This result contrasts a very different behavior between the right tails and the left tails.

\paragraph*{Negative moments} The finiteness of all negative moments goes back to Molchan \cite{Mo96}; moreover it is more generally true that for any base measure of the GMC, the total GMC mass has negative moments of all order \cite{GHSS18}. Duplantier and Sheffield obtained the following more explicit tail behavior \cite[Lemma 4.5]{DS11}: writing $\mu_h$ for the LQG measure corresponding to a zero boundary GFF $h$ on $\D$, they showed that if $U \subset \subset \D$ is an open set, then there exists $C,c > 0$ such that for all $s > 0$,
\begin{equation}
\label{eq:LeftTailsEucli}
\Pro \left[\mu_h(U) \leq e^{-s} \right] \leq C e^{-c s^2}.
\end{equation}
We note that this result is sharp in the sense that 
\[
\Pro \left[ \mu_h(U) \leq e^{-s} \right] \geq c e^{-Cs^2}.
\]
by a simple application of the Cameron-Martin formula.   When $h$ is replaced by $h-\dashint_U hdz$, a sharper tail estimate is obtained in \cite{LRV-Mabuchi}.

\paragraph*{Positive moments}  Recently, Rhodes and Vargas  \cite{RV17} obtained a precise asymptotic result about the upper tails of GMC when $\gamma \in (0,2)$. They obtained a power law and identified the constant. This result has been generalized to a  more general family of Gaussian fields in \cite{W19a}, and extended to the critical case $\gamma = 2$ in \cite{W19b}. 

As already mentioned, the LQG volume of Euclidean balls has finite $p$ moments for $p < 4/\gamma^2$. 
This can be easily seen for integer moments $k < 4/\gamma^2$, which we review below. (This will also serve as a preparation to some of our arguments.) Indeed, due to the logarithmic correlations of the field, the problem is essentially equivalent to the finiteness of
\[
u_k := \int_{\D^k} \frac{d z_1,\dots dz_k}{\prod_{i < j} |z_i-z_j|^{\gamma^2}}.
\]
By introducing
\begin{equation}\label{eq:uv}
u_k(r) := \int_{r \D^k} \frac{d z_1,\dots dz_k}{\prod_{i < j} |z_i-z_j|^{\gamma^2}}   \quad \text{and} \quad v_k(r) = \int_{\D^k} \frac{1_{r/2 \leq \max_{i < j} |z_i-z_j| \leq r} }{\prod_{i < j} |z_i-z_j|^{\gamma^2}} dz_1 \dots dz_k,
\end{equation}
we note that when $u_k < \infty$ then $u_k(r) = r^{2k-\gamma^2 \frac{k(k-1)}{2}} u_k$. Furthermore, the $v_k$'s provide the following inductive inequality, obtained by splitting the points $\{z_1, \dots, z_k\}$ into two well-separated clusters (see Lemma \ref{Lem:Preliminaries} in the Appendix for details):
\[
v_k(r)  \leq C_k r^{-2} \sum_{i=1}^{k-1}  r^{-\gamma^2 i (k-i)}   u_i(4r) u_{k-i}(4r) \leq C_k r^{k \gamma Q - \frac{1}{2}\gamma^2 k^2 - 2} \sum_{i=1}^{k-1}  u_i u_{k-i}.
\]
Finally, we note that
\[
k \gamma Q - \frac{1}{2} \gamma^2 k^2 - 2 = k (2 + \frac{\gamma^2}{2}) - \frac{1}{2} \gamma^2 k^2 - 2 = 2(k-1) - \frac{\gamma^2}{2} k(k-1) > 0 \quad \text{if} \quad 1 < k < 4 /\gamma^2,
\]
and the conclusion follows from $u_k = \sum_{p \geq -1} v_k(2^{-p})$ and an induction on $k$.

Our later arguments in Section~\ref{Sec:InductiveStarScale} follow a similar structure to the above, but also have to account for the random geometry of the metric ball $\cB_1(0; D_h)$.

\subsection{LQG metric}\label{sec-metric}

Recently, a metric for LQG was constructed and characterized in \cite{DDDF19, GM19uniqueness}, relying on \cite{DG18, DFGPS, GM19local, GM19confluence, MQ18}. It is also the limit of an approximation scheme similar to the one of the LQG measure. 
For $\gamma \in (0,2)$, the $\gamma$-LQG metric is the unique metric $D_h$ determined by a field $h$ (a whole-plane GFF plus a possibly random bounded continuous function) which induces the Euclidean topology and satisfies the following.
\begin{enumerate}[I.]
\item \textbf{Length space.} $(\C, D_h)$ is almost surely a length space. That is, the $D_h$-distance between any two points in $\C$ is the infimum of the $D_h$-lengths of continuous paths between the two points.
\item \label{axiom-locality}\textbf{Locality.} Let $U \subset \C$ be a deterministic open set. Then the internal metric $D^U_{h}$ is almost surely determined by $h|_U$. 
\item \label{axiom-weyl}\textbf{Weyl scaling.} Recall $\xi$ in~\eqref{eq:xi-Q}. For each continuous function $f: \C \to \R$, define 
\eqb
(e^{\xi f} \cdot D_h)(z,w)  := \inf_{P:z\to w} \int_0^{\mathrm{len}(P;D_h)} e^{\xi f(P(t))} dt, \quad \text{ for all }z,w \in \C,
\eqe
where we take the infimum over all continuous paths from $z$ to $w$ parametrized by $D_h$-length. Then almost surely $e^{\xi f} \cdot D_h = D_{h+f}$ for every continuous $f: \C \to \R$. 
\item \textbf{Coordinate change for translation and scaling.}  Recall $Q$ in~\eqref{eq:xi-Q}. For fixed deterministic $z \in \C$ and $r>0$ we have almost surely 
\[D_h(ru+z, rv+z) = D_{h(r\cdot + z) + Q \log r} (u,v) \quad \text{ for all }u,v \in \C. \]
\end{enumerate}
To be precise, $D_h$ is unique up to a global multiplicative constant, which can be fixed in some way, e.g. requiring the median of $D_h(0,1)$ to be 1 for $h$ a whole-plane GFF normalized so $h_1(0) = 0$.
We emphasize that the metric $D_h$ depends on the parameter $\gamma \in (0,2)$; to follow previous works and avoid clutter we will omit $\gamma$ in the notation.

\paragraph*{Basic estimates for distances} 

The main quantitative input we need when working with the LQG metric is the following estimate relating the $D_h$-distance between compact sets to circle averages of $h$.
\begin{Prop}[Concentration of side-to-side crossing distance {\cite[Proposition 3.1]{DFGPS}}]\label{prop-dist}
Let $U \subset \C$ be an open set (possibly $U = \C$) and let $K_1, K_2 \subset U$ be disjoint connected compact sets which are not singletons. Then for $r > 0$, it holds with superpolynomially high probability as $A \to \infty$ (at a rate uniform in $r$) that 
\[A^{-1} r^{\xi Q} e^{\xi h_r(0)} \leq D_{h}^{rU}(rK_1, rK_2)\leq Ar^{\xi Q} e^{\xi h_r(0)}.  \]
\end{Prop}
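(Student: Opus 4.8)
\emph{Strategy and reduction to $r=1$.} The plan is to first use the exact symmetries of the metric to eliminate both the parameter $r$ and the circle-average factor, and then to sandwich the resulting $r$-free distance between finitely many a priori estimates attached to Euclidean balls. Concretely, I would apply the coordinate change axiom at $z=0$ to write $D_h^{rU}(rK_1,rK_2)=D_{h(r\,\cdot)+Q\log r}^{U}(K_1,K_2)$, then decompose $h(r\,\cdot)=\big(h(r\,\cdot)-h_r(0)\big)+h_r(0)$. Since the whole-plane GFF is scale invariant modulo additive constant and we normalize on $\partial B_1(0)$, the field $\tilde h:=h(r\,\cdot)-h_r(0)$ is again a whole-plane GFF with $\tilde h_1(0)=0$; applying Weyl scaling with the constant function $f\equiv h_r(0)+Q\log r$ then gives
\[
D_h^{rU}(rK_1,rK_2)=r^{\xi Q}e^{\xi h_r(0)}\,D_{\tilde h}^{U}(K_1,K_2).
\]
So it remains to prove that $D_{\tilde h}^{U}(K_1,K_2)\in[A^{-1},A]$ with superpolynomially high probability as $A\to\infty$ — a statement with no $r$ in it, which is why the uniformity over $r$ is automatic. (One may assume $K_1,K_2$ lie in the same connected component of $U$, since otherwise the internal distance is $+\infty$ and the stated upper bound is vacuous.)

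\emph{Upper bound.} For the upper bound I would fix, once and for all, a finite chain of Euclidean balls $\overline{B_1},\dots,\overline{B_N}$ with each $\overline{B_i}$ contained in a slightly larger ball $B_i'\subset U$, with $B_i\cap B_{i+1}\neq\emptyset$, $B_1\cap K_1\neq\emptyset$ and $B_N\cap K_2\neq\emptyset$ (such a chain exists because $K_1,K_2$ lie in the same component of the open set $U$). Picking $a_0\in B_1\cap K_1$, $a_N\in B_N\cap K_2$, and $a_i\in B_i\cap B_{i+1}$, the triangle inequality gives $D_{\tilde h}^{U}(K_1,K_2)\leq\sum_{i=1}^{N}D_{\tilde h}^{B_i'}(a_{i-1},a_i)\leq\sum_{i=1}^{N}\diam_{D_{\tilde h}^{B_i'}}(\overline{B_i})$. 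Each term is then controlled by the a priori estimate that the internal $D_{\tilde h}$-diameter of a Euclidean ball (inside a slightly larger one) has a superpolynomial right tail — one of the tail bounds produced by the tightness construction \cite{DDDF19} — and a union bound over the fixed finite collection finishes the upper bound.

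\emph{Lower bound.} For the lower bound I would set $\rho:=\tfrac14\dist(K_1,K_2)>0$, shrinking $\rho$ if necessary so that $\overline{B_{2\rho}(x)}\subset U$ for all $x\in K_1$, and cover $K_1$ by finitely many balls $B_\rho(x_1),\dots,B_\rho(x_M)$ with $x_j\in K_1$. Any continuous path from $K_1$ to $K_2$ in $U$ starts inside some $B_\rho(x_j)$ and, since $\overline{B_{2\rho}(x_j)}\cap K_2=\emptyset$, must leave $B_{2\rho}(x_j)$; hence, run up to its first hitting time of $\partial B_{3\rho/2}(x_j)$, it is a path inside $B_{2\rho}(x_j)$ from $\overline{B_\rho(x_j)}$ to $\partial B_{3\rho/2}(x_j)$. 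This yields $D_{\tilde h}^{U}(K_1,K_2)\geq\min_{1\leq j\leq M}D_{\tilde h}^{B_{2\rho}(x_j)}\big(\overline{B_\rho(x_j)},\,\partial B_{3\rho/2}(x_j)\big)$. Each of these finitely many annulus-crossing distances has a superpolynomial left tail — this is the a priori crossing estimate supplied by the tightness construction \cite{DDDF19}, each term being reducible to a standard annulus via the same scaling symmetry as above — and a union bound completes the argument.

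\emph{Main obstacle.} The symmetry reduction is purely formal, and the covering arguments are routine once the right inputs are in hand; the real content is the pair of a priori tail bounds (superpolynomial right tail for $D_h$-diameters of Euclidean balls, superpolynomial left tail for annulus-crossing distances), which are \emph{not} consequences of the axioms alone and must be imported from the tightness construction, where establishing them is the genuinely hard step. A secondary technicality is the bookkeeping needed to ensure that the relevant sub-paths stay inside the enlarged Euclidean balls, so that the internal-metric estimates apply verbatim.
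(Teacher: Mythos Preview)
The paper does not prove this proposition at all: it is imported directly from \cite[Proposition 3.1]{DFGPS}, with the one-line remark that the present formulation is equivalent to the original via \cite[Remark 3.16]{DFGPS}. Your sketch is correct and is essentially the argument underlying the cited result. The scaling reduction to $r=1$ via the coordinate-change axiom and Weyl scaling with the constant $h_r(0)+Q\log r$ is exactly right, and it is precisely this step that makes the uniformity in $r$ automatic; the remaining $r$-free two-sided tail bound on $D_{\tilde h}^U(K_1,K_2)$ is then assembled from the a priori diameter and annulus-crossing estimates supplied by the tightness construction, as you say. One small correction: if $K_1,K_2$ lie in different components of $U$ the internal distance is $+\infty$ and the upper bound is not ``vacuous'' but genuinely false; the proposition should be read with the implicit assumption that $K_1,K_2$ lie in the same component (which is the only case ever invoked in the paper).
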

This formulation is slightly different from that of \cite[Proposition 3.1]{DFGPS}, but by \cite[Remark 3.16]{DFGPS} they are equivalent. Note that by taking $r =1$, this includes the superpolynomial tails of side-to-side crossing distances.

\paragraph*{Euclidean balls within LQG balls}

The next lemma is an important input in the proof of the finiteness of the negative moments.

\begin{Prop}[LQG balls contain Euclidean balls of comparable diameter {\cite[Proposition 4.5]{GMS18}}]\label{prop-euc-ball}
Fix $\zeta \in (0,1)$ and compact $K \subset \C$. Let $h$ be a whole-plane GFF normalized so $h_1(0) = 0$. With superpolynomially high probability as $\delta \to 0$, each $D_h$-metric ball $B \subset K$ with $\diam(B)\leq \delta$ contains a Euclidean ball of radius at least $\diam(B)^{1+\zeta}$. 
\end{Prop}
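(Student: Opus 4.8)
The plan is to show that a $D_h$-metric ball is \emph{Euclidean-round up to $r^{o(1)}$ factors}: a ball of $D_h$-radius $r$ is squeezed between two concentric Euclidean balls whose radii are $r^{1/(\xi Q)\pm o(1)}$ as $r\to 0$, where $1/(\xi Q)>1$ and the $o(1)$ can be made as small as desired. Granting this, the inradius of such a metric ball is at least (its Euclidean diameter)$^{1+o(1)}$, which beats (Euclidean diameter)$^{1+\zeta}$ once the $o(1)$ is taken below $\zeta$. The only probabilistic input is the concentration estimate for crossing distances, Proposition~\ref{prop-dist}, used together with Weyl scaling.

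First I would reduce, by a polynomial union bound, to metric balls $\cB_r(z;D_h)\subset K$ with center $z$ in a $\delta^{100}$-net of $K$ and $D_h$-radius $r$ dyadic: an arbitrary metric ball $B\subset K$ with $\diam(B)\le\delta$ is sandwiched between two such ``net balls'' whose $D_h$-radii differ from that of $B$ by at most $\delta^{50}$ — here one uses that, as a consequence of Proposition~\ref{prop-dist}, $D_h$ is H\"older-dominated by the Euclidean metric on $K$, so $D_h$-distances between $\delta^{100}$-Euclidean-close points of $K$ are tiny — and whose Euclidean diameters are comparable to $\diam(B)$. A Euclidean ball exhibited inside the inner net ball then works for $B$, at the cost of slightly enlarging $\zeta$. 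So it suffices to show, with superpolynomially small failure probability, that each such net ball contains a Euclidean ball of radius at least (its Euclidean diameter)$^{1+\zeta}$.

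Next I would work on a superpolynomially likely regularity event, over a compact enlargement $K'$ of $K$, on which: (i) $|h_s(x)|\le\eps'\log(1/s)$ for all $x\in K'$ and all dyadic $s$ in the relevant range — the standard uniform estimate for the GFF circle-average field — where $\eps'>0$ is a small constant fixed at the end; and (ii) simultaneously over a fine net of dyadic Euclidean balls and annuli in $K'$, both the lower bound of Proposition~\ref{prop-dist} and the matching upper bound on the $D_h$-diameter of a Euclidean ball hold, with the universal constant replaced by $\delta^{-\eta}$ for a second small constant $\eta>0$; step (ii) costs only a polynomial union bound, given the superpolynomial tails in Proposition~\ref{prop-dist}. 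Absorbing the $e^{\pm\xi h_s(\cdot)}$ prefactors into $s^{\mp\xi\eps'}$ using (i) and Weyl scaling, this yields, uniformly on $K'$, the two-sided bounds $\delta^{\eta}|x-y|^{\xi Q+\eps'}\lesssim D_h(x,y)\lesssim\delta^{-\eta}|x-y|^{\xi Q-\eps'}$ and $\diam_{D_h}(\ol{B_s(x)})\lesssim\delta^{-\eta}s^{\xi Q-\eps'}$.

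For a net ball $\cB_r(z;D_h)\subset K$ these give the desired sandwich: for a suitable $r_-\asymp(\delta^{\eta}r)^{1/(\xi Q-\eps')}$, every $y$ with $|y-z|\le r_-$ has $D_h(z,y)\le\diam_{D_h}(\ol{B_{r_-}(z)})<r$, so $B_{r_-}(z)\subset\cB_r(z;D_h)$; and for a suitable $r_+\asymp(\delta^{-\eta}r)^{1/(\xi Q+\eps')}$, any path from $z$ leaving $B_{r_+}(z)$ crosses the annulus $A_{r_+}(z)$ and hence has $D_h$-length $\ge r$, so $\cB_r(z;D_h)\subset B_{r_+}(z)$. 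Thus the inradius is $\ge r_-$ and the Euclidean diameter is $\le 2r_+$, and it remains to verify $r_-\ge(2r_+)^{1+\zeta}$ — an inequality between powers of $\delta$ and $r$ which, on the relevant parameter range $r\lesssim\delta^{\xi Q-\eps'-\eta}$, reduces at $(\eps',\eta)=(0,0)$ to $\tfrac1{\xi Q}\le\tfrac{1+\zeta}{\xi Q}$ with room to spare, and therefore holds for $(\eps',\eta)$ small enough in terms of $(\gamma,\zeta)$ and all small $\delta$. Together with the reduction above this proves the proposition. The step I expect to be the main obstacle is the upper bound in (ii): controlling the $D_h$-diameter of a Euclidean ball with an error ($s^{\eps'}$) as tight as that in the lower H\"older bound, uniformly over all net points and all small dyadic scales with superpolynomial probability — it is precisely this two-sided tightness of Proposition~\ref{prop-dist} up to $s^{o(1)}$ factors that makes the argument go through for \emph{every} $\zeta\in(0,1)$, including arbitrarily small ones.
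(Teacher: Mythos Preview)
Your approach has a genuine gap in step (i): the uniform circle-average bound $|h_s(x)|\le\eps'\log(1/s)$ for arbitrarily small $\eps'>0$ does \emph{not} hold with superpolynomially high probability --- indeed, not even with probability tending to $1$ as $\delta\to 0$. For a fixed point $x$ and scale $s$, the Gaussian tail gives $\P[|h_s(x)|>\eps'\log(1/s)]\asymp s^{(\eps')^2/2}$, which is only polynomially small in $s$; after a union bound over the $\asymp s^{-2}$ net points at each scale the failure probability is $\asymp s^{(\eps')^2/2-2}$, which does not tend to zero when $\eps'<2$. This is precisely the thick-point phenomenon for the GFF: for every $\alpha\in(0,2)$ there is a.s.\ a dense set of $x$ with $h_s(x)\sim\alpha\log(1/s)$. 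Consequently your claimed sandwich $B_{r_-}(z)\subset\cB_r(z;D_h)\subset B_{r_+}(z)$ with $r_\pm$ both of order $r^{1/(\xi Q)+o(1)}$ is false in general: near a thick point the metric is anomalously large and the metric ball is pinched there, so $D_h$-metric balls are highly non-round. Equivalently, the sharp bi-H\"older exponents of $D_h$ against the Euclidean metric are strictly worse than $\xi Q$ on both sides (they involve $\xi(Q\pm 2)$), so the two-sided tightness ``up to $s^{o(1)}$'' that you flag as the main obstacle in (ii) is simply unavailable. The proposition is much weaker than roundness: it only asserts that the metric ball \emph{contains} some Euclidean ball of the right size, not that it is contained in a concentric one of comparable size --- and it is this asymmetry that makes the statement true.

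The paper does not give a self-contained argument; it invokes \cite[Proposition 4.5]{GMS18} (proved there for $\gamma=\sqrt{8/3}$) and checks that the three ingredients that proof uses --- Weyl scaling, a lower bound on the $D_h$-distance from a unit square to the boundary of its Euclidean $\tfrac12$-neighbourhood, and the fact that small Euclidean balls have small $D_h$-diameter --- all carry over to general $\gamma\in(0,2)$ via Proposition~\ref{prop-dist} and the bi-H\"older continuity of \cite[Theorem 1.7]{DFGPS}.
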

\begin{proof}
\cite[Proposition 4.5]{GMS18} gives this result with $K$ replaced by $\D$ and with the specific choice $\gamma = \sqrt{8/3}$. To get the result for $K$, we simply note that the law of the whole-plane GFF (viewed modulo additive constant) is scale-invariant, and that the set of all $D_h$-metric balls (viewed as subsets of $\C$) does not depend on the choice of additive constant. To generalize to $\gamma \in (0,2)$, we remark that the proof of \cite[Proposition 4.5]{GMS18} uses only the following few inputs for the $\sqrt{8/3}$ LQG metric, which we ascertain hold for general $\gamma$:
\begin{itemize}
\item The scaling relation \cite[Lemma 2.3]{GMS18}. In our setting, this is Axiom~\hyperref[axiom-weyl]{III} (Weyl scaling), plus the following easy consequence of Weyl scaling: for $h$ a whole-plane GFF plus a bounded continuous function and $f: \C \to \R$ a (possibly random) bounded continuous function, almost surely
\[\exp\left( \xi \inf_\C f \right) D_h(z,w) \leq D_{h+f} (z,w) \leq \exp \left( \xi \sup_\C f \right)D_h(z,w) \quad \text{ for all } z,w \in \C. \]
\item With probability tending to 1 as $C \to \infty$, the $D_h$-distance from $S = [0,1]^2$ to $\partial B_{1/2}(S)$ is at least $1/C$ (here, $B_{1/2}(S)$ is the Euclidean $1/2$-neighborhood of $S$). This follows immediately from Proposition~\ref{prop-dist}.
\item Fix $n\geq 1$. With probability tending to 1 as $C \to \infty$, each Euclidean ball of radius $e^{-C n^{2/3}}$ which intersects $[0,1]^2$ has $D_h$-diameter at most $e^{-n^{2/3}}$. This follows from the fact that $D_h$ is a.s. bi-H\"older with respect to the Euclidean metric \cite[Theorem 1.7]{DFGPS}, and that $e^{-C n^{2/3}} \to 0$ as $C \to \infty$. 
\end{itemize}\end{proof}
We point out that this is possible to obtain a more quantative version of this Proposition, with essentially the same arguments as in \cite{GMS18}, which can then be used to obtain more precise lower tail estimates for the volume of LQG metric balls.

\section{Positive moments}\label{section-positive}

The main result of this section is the following.
\begin{Prop}
\label{Prop:MomentsWholePlane}
Let $h$ be a whole-plane GFF such that $h_1(0) = 0$. Then, $\mu_h(\mc{B}_1(0;D_h))$ has finite $k$th moments for all $ k \geq 1$.  Furthermore, this result still holds if we add to the field $h$ an $\alpha$-$\log$ singularity at the origin for $\alpha < Q$, i.e. replace $h$ with $h + \alpha \log |\cdot|^{-1}$.
\end{Prop}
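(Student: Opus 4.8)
The plan is to estimate $\E[\mu_h(\mc B_1(0;D_h))^k]$ for integer $k\geq 1$ by expanding the $k$-th moment as a $k$-fold integral and exploiting the key heuristic that the metric ball $\mc B_1(0;D_h)$ avoids regions where the field is atypically large. Writing $\mu_h(\mc B_1(0;D_h))^k = \int_{\C^k} \prod_{i=1}^k \ind_{z_i\in\mc B_1(0;D_h)}\,\mu_h(dz_1)\cdots\mu_h(dz_k)$ and using Fubini, I would try to bound $\E\big[\prod_i \ind_{z_i\in\mc B_1(0;D_h)}\, e^{\gamma h_\eps(z_i)}\big]$ (times the appropriate $\eps^{\gamma^2 k/2}$ normalization). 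The point is that membership $z_i\in\mc B_1(0;D_h)$ forces $D_h(0,z_i)\le 1$, which by Proposition~\ref{prop-dist} (applied at the Euclidean scale $|z_i|$, or at dyadic annuli $A_{2^{-p}}(z_i)$ around each $z_i$) means the circle averages $h_{r}(z_i)$ for $r$ in a suitable range cannot be too large — roughly $\xi h_r(z_i) + \xi Q\log r \lesssim \log(\text{distance bound})$, i.e. $h_r(z_i) \lesssim -Q\log r + O(1)$ with superpolynomially high probability in the deficit. This gives a ``free'' supply of negative values of the field precisely at the points being integrated, which is what beats the $4/\gamma^2$ barrier.

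The main steps, in order: (1) Reduce to integer moments and to a GMC-style $k$-point computation, discretizing $\C^k$ into products of dyadic annuli around $0$ so that all the $|z_i|$ and all the pairwise distances $|z_i-z_j|$ are comparable to fixed dyadic scales; this is the structure already previewed in Section~\ref{sec-euc-vol} with the quantities $u_k,v_k$. (2) On each such cell, use the Markov property (Lemma~\ref{lem-markov}) to split $h$ into a field harmonic near the relevant scales plus independent zero-boundary pieces, and use Lemma~\ref{lem-markovish} / the radial decomposition $h = h^1+h^2$ to extract the circle-average process $X_t = h_{e^{-t}}(0)$. (3) Apply Proposition~\ref{prop-dist} to convert the event $\{z_i\in\mc B_1(0;D_h)\}$ into a constraint of the form $h_{r_i}(z_i)\le -Q\log r_i + (\text{slowly growing})$ holding with superpolynomially high probability, handling the low-probability complement by a crude bound (Cauchy–Schwarz against a Gaussian moment of $h_\eps(z_i)$). (4) Combine these constraints with the Gaussian/GMC moment computation: the expectation over the cell factorizes up to the logarithmic cross terms $\prod_{i<j}|z_i-z_j|^{-\gamma^2}$, and the extra negative drift from step (3) contributes a factor like $\prod_i r_i^{\gamma Q}$, which when combined with $\prod_{i<j}|z_i-z_j|^{-\gamma^2} \le \prod_i r_i^{-\gamma^2(\text{something})}$ yields a summable series over the dyadic scales — mirroring the computation $k\gamma Q - \tfrac12\gamma^2k^2 - 2 = 2(k-1) - \tfrac{\gamma^2}{2}k(k-1)$ but now with the crucial observation that the ``bad'' exponent is offset by the metric constraint. (5) Sum over all dyadic cells and induct on $k$ as in Section~\ref{sec-euc-vol}.

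For the $\alpha$-log singularity variant, the field becomes $h + \alpha\log|\cdot|^{-1}$, so $\mu_{h+\alpha\log|\cdot|^{-1}}(dz)$ acquires a factor $|z|^{-\gamma\alpha}$ near $0$ and, correspondingly, the coordinate-change/Weyl-scaling axioms show the metric ball around $0$ is governed by the effective parameter $\alpha$; the constraint from Proposition~\ref{prop-dist} near $0$ improves (distances shrink), and one checks that the competition between the singularity exponent $\gamma\alpha$ and the gained negative drift is exactly controlled by the condition $\alpha < Q$ — this is where that hypothesis enters, and it should drop out of the same dyadic-scale bookkeeping with $\log r$ coefficients shifted by $\alpha$.

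I expect the main obstacle to be step (3)–(4): making the heuristic ``the metric ball avoids large field'' quantitatively precise \emph{simultaneously} at all the points $z_1,\dots,z_k$ and at the right Euclidean scales, while keeping the error terms (from the superpolynomially-small bad events, and from the harmonic parts of the Markov decomposition being merely bounded rather than controlled) from destroying the delicate exponent cancellation. In particular, one must be careful that the constraint extracted from Proposition~\ref{prop-dist} is on circle averages centered at each $z_i$ but the GMC computation naturally wants the field centered at $0$; reconciling these via the decomposition $h=h^1+h^2$ and estimates on $h^2$ (which is scale-invariant with logarithmic correlations) is the technical heart of the argument.
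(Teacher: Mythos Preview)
Your high-level strategy matches the paper's: expand the $k$-th moment, use Proposition~\ref{prop-dist} to convert $\{z_i\in\mc B_1(0;D_h)\}$ into circle-average constraints, apply a Cameron--Martin shift to trade $\mu_h(dz_i)$ for $\gamma$-log singularities, and induct on $k$ over a dyadic decomposition. The paper also reduces to the $\alpha<Q$ case by exactly the shift of $\log r$ coefficients you describe.

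Where your outline is thin is precisely where the paper introduces nontrivial machinery. First, instead of the GFF Markov property and the radial decomposition $h=h^1+h^2$, the paper couples $h$ to a $\star$-scale invariant field $\phi=\sum_n\phi_n$ (Proposition~\ref{Prop:StarScaleProp}); the finite-range dependence of $\phi_n$ gives genuine independence between well-separated clusters, which the GFF Markov property would deliver only up to a harmonic correction you would then have to control at every scale. Second, your step~(5) ``induct on $k$ as in Section~\ref{sec-euc-vol}'' understates the difficulty: the paper builds a \emph{hierarchical binary tree} $T^a_K(\{\phi\})$ encoding the recursive splitting of $K=\{z_1,\dots,z_k\}$ into clusters, and defines parametrized probabilities $P^{a,x}_K$ and integrals $u^n_k(x)$ carrying an extra real parameter $x$ (the running ``coarse-field value''). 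The induction (Lemma~\ref{Lem:InductiveRelation}, Proposition~\ref{prop-circle-moment}) is on $k$ but crucially tracks the dependence on $x$, because after splitting $K=I\cup J$ at scale $r$ the subclusters inherit the shifted parameter $X_r+x+c_k\log r^{-1}$ with $c_k=k\gamma-Q$. The miracle is that regardless of $k$ the resulting exponent is always $\tfrac12 Q^2-2>0$, not the naive $2(k-1)-\tfrac{\gamma^2}{2}k(k-1)$ you quote; your sketch of step~(4) (``a factor like $\prod_i r_i^{\gamma Q}$'') does not capture this, and without the parametrized hierarchical induction it is not clear how you would handle configurations where the $z_i$ form several clusters at several scales simultaneously.

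In short: right idea, but the inductive structure you need is more elaborate than the $u_k,v_k$ scheme of Section~\ref{sec-euc-vol}, and the paper's $\star$-scale coupling is what makes the decoupling between clusters clean enough to close.
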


In the following paragraphs, we present heuristic arguments and an outline of the proof. Recall the definition of the annulus $A_1 = B_1(0) \backslash \ol{B_{1/2}(0)}$. The key difficulty to prove this result is in arguing that $\E [ \mu_h(\cB_1(0; D_h) \cap A_1)^k] < \infty$. So we want to prove
\begin{equation}
\label{eq:target}
\E \left[\int_{(A_1)^k} \prod_{i=1}^k  1_{D_h(0, z_i) < 1} \mu_h(dz_1) \dots \mu_h(dz_k) \right] < \infty, 
\end{equation}
and the starting point is to rewrite it via a Cameron-Martin shift, as 
\eqb\label{eq-heuristic-moment}
{\LARGE \int}_{(A_1)^k} \exp\left( \gamma^2 \sum_{i<j} \Cov(h(z_i), h(z_j)) \right) \P \left[ D_{h + \gamma \sum_j \Cov(h(z_j), h(\cdot))} (0, z_i) <1 \text{ for all }i\right] dz_1 \dots dz_k < \infty.
\eqe

\paragraph*{A first heuristic}
We present a heuristic explaining why $\E \left[ \mu_h(\mc{B}_1(0;D_h) \cap A_1)^k \right] < \infty$. As remarked above and since $h$ is $\log$-correlated, the left-hand side of \eqref{eq:target} is bounded from above by
\begin{equation}
\label{eq:integral}
\int_{A_1^k} \frac{P_{z_1,\dots, z_k}}{\prod_{i< j} |z_i -z_j|^{\gamma^2}} dz_1 \dots dz_k
\end{equation}
where
\[
P_{z_1, \dots, z_k} =  \P [ D_{h + \gamma \sum_j \Cov(h(z_j), h(\cdot))} (z_i,\partial B_{1/2}(z_i)) <1 \text{ for all }i ]. \nonumber
\]
The volume of Euclidean balls have infinite $k$th moments when $k$ is large due to the contribution of clusters  at mutual distance $r$ (collection of points in the domain whose pairwise distance are between $c r$ and $C r$). Indeed, for such clusters $\{z_1, \dots, z_k\}$, the singularities contributes as $\prod_{i< j} |z_i -z_j|^{-\gamma^2} \approx r^{- {k \choose 2} \gamma^2}$, on a macroscopic domain, we have $r^{-2}$ possibilities for placing this cluster and the volume associated is $r^{2k}$. The total contribution is then  $r^{-2+2k - {k \choose 2} \gamma^2}$ and the sum over dyadic $r$ is finite if and only if $k <4/\gamma^2$. Now, we explain how this is counterbalanced by the $P_{z_1, \dots, z_k}$ term when $k \geq 4/\gamma^2$. By the annulus crossing distance bound from Proposition \ref{prop-dist}, for any $z \in K = \{z_1, \dots, z_k\}$, the following lower bound  holds
\[
 D_{h+ \gamma \sum_{i\leq k} \log |\cdot - z_i|^{-1}}(z,\partial B_{1/2}(z)) \gtrsim r^{\xi Q} e^{\xi h_r(z)} r^{-\xi k \gamma}.
\]
Indeed, one can use an annulus centered at $z$, separating $z$ from $\partial B_{1/2}(z)$ and at distance $r$ of $z$, whose width is of the same order. Then, we see that the circle average of the $\log$-singularity gives the $r^{-\xi k \gamma}$ term. So, by the condition defining $P_{z_1, \dots, z_k}$, on the associated event, for $z \in \{z_1, \dots, z_k\}$,
\[
1 \gtrsim r^{\xi Q} e^{\xi h_r(z)} r^{-\xi k \gamma}.
\]
By a Gaussian tail estimate, introducing the term $c_k = k \gamma -Q \geq \frac{4}{\gamma^2} \gamma - Q = 2/\gamma - \gamma/2 > 0 $, we have
\[
P_{z_1, \dots, z_k}\lesssim \P \left[ h_r(z) \leq - c_k \log r^{-1}  \right] \approx r^{\frac{1}{2}c_k^2}.
\]
An elementary computation, namely $-2+2k - {k \choose 2} \gamma^2 + \frac{1}{2} c_k^2 = \frac{1}{2}Q^2 -2$, gives then that for such a cluster, the scale $r$ contribution to \eqref{eq:integral} is $r^{\frac{1}{2}Q^2 -2}$, 
which is summable for all $k$ since $Q = \frac{\gamma}{2} + \frac{2}{\gamma} > 2$ for $\gamma \in (0,2)$ and this is essentially the reason of the finiteness of all moment.

\paragraph*{Outline of the proof}

To turn this argument into a proof requires us to take care of all configurations of clusters $K = \{z_1, \dots, z_k \}$. Similarly to the one presented in Section \ref{sec-euc-vol}, our proof works by induction on $k$.  We will partition $K = \{ z_1, \dots, z_k \}$ into two clusters $I$ and $J$ such that the pairwise distance of points between $I$ and $J$ is $\geq r$, since both $\prod_{i<j} |z_i-z_j|^{\gamma^2}$ and $P_{z_1, \dots, z_k}$ have a nice hierarchical clusters structure (see \eqref{Def:SepDis} for the exact splitting procedure partitioning $K=I \cup J$ and the definition of $r$). Indeed, for such a cluster, we can bound from above
\begin{equation}
\label{eq:HierDet}
\prod_{i< j} |z_i -z_j|^{-\gamma^2} \lesssim r^{- |I| |J| \gamma^2} \prod_{I} |z_a -z_b|^{-\gamma^2}  \prod_{J} |z_a -z_b|^{-\gamma^2}.
\end{equation}
Now, we discuss $P_{z_1, \dots, z_k}$. The aforementioned annuli crossing distance bounds from Proposition \ref{prop-dist} imply that for all $z \in K$, $\eps \in (0,1/2)$, 
\begin{equation}
\label{eq:CondCircle}
h_{\eps}(z) + \gamma \sum_{z_a \in K} \dashint_{\partial B_{\eps}(z)} \log | \cdot - z_a|^{-1} +x \leq Q  \log \eps^{-1},
\end{equation}
for $x =0$. From now, denote by $\widehat{P}_{z_1, \dots, z_k}^x$ the circle average variant of $P_{z_1, \dots,z_k}$ associated with \eqref{eq:CondCircle}:  this is the probability that \eqref{eq:CondCircle} holds for every $z \in K = \{z_1, \dots, z_k \}$ and $\eps \in (0,1/2)$, with this extra parameter $x \in \R$, which is necessary to consider when deriving an inductive inequality. Note that when $I$ and $J$ are at distance of order $r$ and the diameters of both $I$ and $J$ are smaller than $O(r)$, for $\eps \in (0,r)$, then $\forall z,z_a \in K$ and $\forall z_i \in I, z_j \in J$, 
\[
\dashint_{\partial B_{r}(z)} \log | \cdot - z_a |^{-1} \approx \log r^{-1} \quad \text{and} \quad  \dashint_{\partial B_{\eps}(z_i)} \log | \cdot - z_j|^{-1} \approx \log r^{-1}.
\]
Therefore, we can rewrite the condition \eqref{eq:CondCircle} for $z \in I$ as follows
\begin{multline*}
\left( h_{\eps}(z)-h_r(z) \right) + \left( \gamma \sum_{z_i \in I} \dashint_{\partial B_{\eps}(z)} \log | \cdot - z_i|^{-1} + |J|  \gamma \log r^{-1}\right) - k \gamma \log r^{-1} \\
 + \left(x + h_r(z) + k \gamma \log r^{-1}  - Q \log r^{-1} \right) \leq Q \log (\eps/r)^{-1}.
\end{multline*}
Hence, after simplification, for $z \in I$, we have
\[
\left( h_{\eps}(z)-h_r(z) \right) +  \gamma \sum_{z_i \in I} \dashint_{\partial B_{\eps}(z)} \log | \cdot/r - z_i/r|^{-1}  \\
 + \left(x + h_r(z) +  c_k \log r^{-1} \right) \leq Q \log (\eps/r)^{-1}
\]
which is a variant of \eqref{eq:CondCircle}, and a similar condition holds for $z \in J$. Furthermore, note that the processes $((h_{\eps}(z) - h_r(z))_{\eps \in (0,r)})_{z \in I}$ and $((h_{\eps}(z) - h_r(z))_{\eps \in (0,r)})_{z \in J}$ are approximately independent and $h_r(z) \approx h_r(w)$ for all $z,w \in K$, which we then denote by $X_r$ (this can thought as their common approximate value; to be rigorous, by monotonicity, one can take their maximum).  From this, and the fact that circle average processes evolve as correlated Brownian motions, it is natural to expect
\begin{equation}
\label{eq:HierRand}
\widehat{P}_K^0 \lesssim \E \left[1_{X_r + c_k \log r^{-1} \leq 0} \widehat{P}_{I/r}^{x + X_r + c_k \log r^{-1}} \widehat{P}_{J/r}^{x + X_r + c_k \log r^{-1}} \right],
\end{equation}
which is the hierarchical structure we were looking for. Altogether, \eqref{eq:HierDet} and \eqref{eq:HierRand} allow to inductively bound from above  the term
\[
\int_{A_1^k} \frac{P_{z_1,\dots, z_k}^x}{\prod_{i< j} |z_i -z_j|^{\gamma^2}} dz_1 \dots dz_k,
\]
by a quantitative estimate in term of $x$. This provides not only  $\E [ \mu_h (\mc{B}_1(0;D_h) \cap A_1)^k] < \infty$ but also a quantitative estimate which allows to get $\E [ \mu_h(\mc{B}_1(0;D_h) \cap A_s^k] < s^{\alpha_k}$ for some $\alpha_k >0$ and all $s \in (0,1)$, via a standard scaling/decoupling argument. An application of H\"older's inequality shows $\E [ \mu_h(\mc{B}_1(0;D_h) \cap \D)^k] < \infty$ and similar techniques concludes that $\E [ \mu_h(\mc{B}_1(0;D_h) \cap \C \setminus \D)^k] < \infty$, yielding the proof of Proposition~\ref{Prop:MomentsWholePlane}.

In our implementation of these ideas, because we have to carry the Euclidean domains associated with the clusters $I$, $J$ and $K$, we use $\star$-scale invariant fields. The short-range correlation of the fine field gives independence between well-separated clusters, and invariance properties of the $\star$-scale invariant field simplifies our multiscale analysis.

In Section~\ref{Sec:InductiveStarScale}, we prove a quantitative variant of~\eqref{eq-heuristic-moment} where the field $h$ is replaced by a $\star$-scale invariant field plus some constant, and the probability in the integrand is replaced by the probability of \emph{coarse-field distance approximations} being less than 1. In Section~\ref{sec-positive-whole-plane}, we use these estimates to first bound $\E [ \mu_h(\cB_1(0; D_h) \cap A_1)^k]$, by using a truncated moment estimate, then extend our arguments to all annuli to deduce the finiteness of the $k$th moment $M_k := \E [ \mu_h(\cB_1(0; D_h)^k]$ for all $k \geq 1$. By keeping track of the $k$ dependence, it turns out that it is possible to bound $M_k$ by $C k^{c k^2}$ for some constants $C,c$ depending only on $\gamma$. To simplify the presentation of our arguments, we omit these precise estimates. 

\subsection{Inductive estimate for the $\star$-scale invariant field}

\label{Sec:InductiveStarScale}

We derive a key estimate for the positive moments (Proposition~\ref{prop-circle-moment}),  which is like a quantitative version of~\eqref{eq-heuristic-moment} where we add a constant to the field.
We will use $\star$-scale invariant fields, which satisfy properties convenient for multiscale analysis. Relevant references are \cite{ARV13, DF18, JSW18}. 
\begin{Prop}[$\star$-scale decomposition of $h$] ~
\label{Prop:StarScaleProp}
The whole plane GFF $h$ normalized so $h_1(0) = 0$ can be written as 
\[
h = g + \phi = g + \phi_1 + \phi_2 + \dots
\]
where the fields $g, \phi_1, \phi_2, \dots$ satisfy the following properties:
\begin{enumerate} 
\item
$g$ and the $\phi_n$'s are continuous centered Gaussian fields.

\item
The law of $\phi_n$ is invariant under Euclidean isometries.

\item
$\phi_n$ has finite range dependence with range of dependence $e^{-n}$, i.e. the restrictions of $\phi_n$ to regions with pairwise distance at least $e^{-n}$ are mutually independent. 

\item
$\left( \phi_n(z) \right)_{z \in \R^2}$ has the law of $\left( \phi_1(z e^{n-1}) \right)_{z \in \R^2}$.

\item
The $\phi_n$'s are mutually independent fields.

\item
The covariance kernel of $\phi$ is $C_{0,\infty}(z,z') = - \log |z - z'| + q(z-z')$ for some smooth function $q$.

\item 
We have $\E[\phi_n(z)^2] = 1$ for all $n,z$.
\end{enumerate}
\end{Prop}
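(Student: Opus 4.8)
The plan is to first construct the $\star$-scale invariant field $\phi = \sum_n \phi_n$ explicitly, and then obtain $g$ as the residual $h-\phi$ under a carefully chosen coupling. The starting point is the covariance of $h$: decomposing $h = h^1 + h^2$ as in the discussion preceding Lemma~\ref{lem-markovish} ($h^1$ the circle-average part, $h^2$ its complement) and computing circle-average covariances, one finds that the covariance kernel of $h$ is $G(z,w) = -\log|z-w| + \log(|z|\vee 1) + \log(|w|\vee 1)$. The only structural fact needed below is that $G(z,w) = -\log|z-w| + \rho(z,w)$ with $\rho$ continuous on $\C^2$.

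For the construction of $\phi$, I would fix a radial, smooth, positive-definite seed $k\colon \C \to \R$ with $k(0)=1$ and support contained in a small ball about the origin, chosen so that $C_{0,\infty}(z,w) := \int_0^\infty k(e^t(z-w))\,dt = -\log|z-w| + q(z-w)$ with $q$ smooth; the existence of such a $k$ is classical (see \cite{ARV13}). Writing $(f_t)_{t>0}$ for kernels with $\int_\C f_t(z-x)f_t(w-x)\,dx = k(e^t(z-w))$ and the self-similarity $f_{t+a}(u) = e^a f_t(e^a u)$, letting $W$ be a white noise on $\C\times(0,\infty)$, and setting $\phi_n(z) := \int_{\C\times[n-1,n]} f_t(z-x)\,W(dx\,dt)$ and $\phi := \sum_n\phi_n$, properties (1)--(7) for the $\phi_n$'s all follow directly from this construction: continuity of each $\phi_n$ by Kolmogorov's criterion (smooth integrand, bounded scale slab, and in fact $\E[(\phi_n(z)-\phi_n(w))^2]\lesssim e^{2n}|z-w|^2$); isometry invariance from that of $k$ and $W$; the finite range of dependence $e^{-n}$ from the support of $k$ (taken small enough); the exact scaling relation (4) from the self-similarity of the $f_t$ and the scaling of white noise under $\C$-dilations; mutual independence because the $\phi_n$ are $W$-integrals over the disjoint slabs $\C\times[n-1,n]$; the covariance formula (6) since $\sum_n\int_{n-1}^n k(e^t(z-w))\,dt = C_{0,\infty}(z,w)$; and unit variance (7) since $\E[\phi_n(z)^2] = \int_{n-1}^n k(0)\,dt = 1$.

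It then remains to produce a coupling of $h$ and $\phi$ under which $g := h-\phi$ is a continuous field. Here I would realize $h$ on the same white noise $W$ at all fine scales: take kernels $\tilde f_t$ with $\tilde f_t = f_t$ for $t\ge 1$, leaving the coarse kernels $\tilde f_t$ ($t<1$) together with an additional independent Gaussian piece --- carrying the coarse-scale and radial structure and enforcing $h_1(0)=0$ --- free so that the total field has covariance $G$. Since $h$ and $\phi$ are then built from identical white noise on $\C\times[1,\infty)$, those contributions cancel in $g = h-\phi$, so $g$ involves only coarse scales ($t<1$) and the independent correction; thus $\Cov(g)$ is a continuous kernel with $\E[(g(z)-g(w))^2]\lesssim|z-w|^\alpha$ locally for some $\alpha>0$, and Kolmogorov's criterion gives $g$ a locally H\"older --- in particular continuous --- modification. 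This yields property (1) for $g$ and the decomposition $h = g + \sum_n\phi_n$.

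The step I expect to be the main obstacle is exactly this last one. One cannot just declare $g$ independent of $\phi$ with covariance $G - C_{0,\infty}$, because $h$ is not a $\star$-scale invariant field --- its covariance carries the extra $\log(|z|\vee 1)+\log(|w|\vee 1)$ terms, and only $\phi$ has exactly translation-invariant, compactly-ranged increments --- so $G - C_{0,\infty}$ need not be positive-definite. Identifying the fine scales of $h$ with those of $\phi$ through a common white noise, checking that the resulting field really is the whole-plane GFF with the prescribed normalization, and controlling the modulus of continuity of the residual $g$ is the technical heart of the argument; this is precisely the kind of decomposition established for general log-correlated fields in \cite{JSW18} (see also \cite{DF18}), which can be invoked once $G$ has been written in the form $-\log|z-w| + (\text{continuous})$.
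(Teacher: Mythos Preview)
Your construction of the $\phi_n$ is essentially the same as the paper's (modulo a harmless reparametrization of the scale variable), and properties (1)--(7) for the $\phi_n$ indeed fall out exactly as you say.

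Where you diverge is in producing the coupling $h=g+\phi$. You propose to realize $h$ on the same white noise at fine scales, then search for modified coarse kernels $\tilde f_t$ plus an independent correction so as to reproduce the exact covariance $G$ of $h$; you correctly note that this is the delicate step and ultimately invoke the general decomposition of \cite{JSW18}. This works, but it is more indirect than necessary. The paper exploits a special feature of the $\star$-scale construction that you do not use: if one extends the \emph{same} kernel to all scales $n\in\Z$ (equivalently, integrates the white noise over $t\in(0,\infty)$ in the paper's parametrization), the resulting field $\phi_{-\infty,\infty}$ is \emph{exactly} a whole-plane GFF modulo additive constant --- this is checked by a direct Fourier computation on mean-zero test functions. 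Hence one can simply set
\[
h \;=\; \phi_{-\infty,\infty} - \dashint_{\partial\D}\phi_{-\infty,\infty},
\qquad g \;=\; \Big(\phi_{-\infty,0} - \dashint_{\partial\D}\phi_{-\infty,0}\Big) - \dashint_{\partial\D}\phi,
\]
and continuity of $g$ follows because $\nabla\phi_{-\infty,0}$ has a smooth covariance (the coarse slabs are uniformly smooth), so $\phi_{-\infty,0}$ modulo constant already has a continuous version. No kernel-matching and no appeal to \cite{JSW18} is needed; the normalization $h_1(0)=0$ and the non-translation-invariant terms in $G$ are absorbed automatically by subtracting the circle average. Your route has the advantage of applying to any log-correlated field, but for the whole-plane GFF the paper's shortcut is both shorter and self-contained.
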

The convergence of this infinite sum is with respect to the weak topology on $\mc{S}'(\mb{R}^2)$.
\begin{proof}
Lemma \ref{Lem:CouplingFields} gives the coupling $h = g + \phi$ with $g$ continuous. The fields $\phi_n$ are defined in Appendix \ref{Sec:Whole-plane-Star-scale}, and are shown to satisfy these properties there. 
\end{proof}
Define also the field $\phi_{a,b}$ from scales $a$ to $b$ via
\begin{equation}\label{eq:phiab}
\phi_{a,b} := \left\{
\begin{array}{ll}
\phi_{a+1} + \cdots + \phi_b  & \mbox{if } a<b \\
0 & \mbox{if } a \geq b
\end{array}
\right. 
\end{equation}
so that $\phi = \phi_{0,\infty}$ and set, for $z, z' \in \C$,
\begin{equation}
\label{Def:kernel}
C_{a,b}(z,z') := \E \left( \phi_{a,b}(z) \phi_{a,b}(z') \right).
\end{equation}

We will construct a hierarchical representation of a set of points $K = \{z_1, \dots, z_k\}\subset \C$. Roughly speaking, starting with $K$, we will iteratively split each cluster into smaller clusters that are well separated. We formalize the splitting procedure below.

\paragraph*{Splitting procedure} Define for any finite set $S$ of points in the plane (with $|S| \geq 2$) the \emph{separation distance} $s(S)$ to be the largest $t \geq 0$ for which we can partition $S = I \cup J$ such that $d(I,J) \geq t$, i.e.
\begin{equation}
\label{Def:SepDis}
s(S) := \max_{S = I \cup J, |I|, |J| \geq 1} d(I,J).
\end{equation}
Define $I_S, J_S \subset S$ to be any partition of $S$ with $d(I, J) = s(S)$. Note that if $\diam S$ denote the diameter of the set $S$, we have the following inequality 
\eqb\label{eq-sep-diam}
\frac{\diam S}{|S|} \leq s(S) \leq \diam S.
\eqe 
For the edge case where $|S| = 1$ define $s(S) = 0$. 
\begin{Lem}\label{lem-separation}
For $|S| \geq 2$, we have $s(I_S), s(J_S) \leq s(S)$.
\end{Lem}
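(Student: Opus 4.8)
The plan is to prove Lemma~\ref{lem-separation} through the minimum spanning tree of $S$. Let $T$ be a minimum spanning tree of $S$ for the Euclidean metric, write $\ell(e)=|x-y|$ for the length of an edge $e=\{x,y\}$ and $\ell(T)=\sum_{e\in T}\ell(e)$, let $w^\star=\max_{e\in T}\ell(e)$, and fix an edge $e^\star=\{u,v\}\in T$ with $\ell(e^\star)=w^\star$. The first step is the identity $s(S)=w^\star$. For the inequality $s(S)\le w^\star$: for any partition $S=I\cup J$ the tree $T$, being connected, contains an edge $e$ with one endpoint in $I$ and one in $J$, so $d(I,J)\le \ell(e)\le w^\star$. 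For $s(S)\ge w^\star$: removing $e^\star$ splits $T$ into two subtrees $T_1\ni u$ and $T_2\ni v$ whose vertex sets partition $S$; if there were $x\in V(T_1)$ and $y\in V(T_2)$ with $|x-y|<w^\star$, then $(T\setminus\{e^\star\})\cup\{\{x,y\}\}$ would be a spanning tree of $S$ of total length $<\ell(T)$, contradicting minimality, so $d(V(T_1),V(T_2))\ge w^\star$. In particular $(V(T_1),V(T_2))$ is a partition realizing $d(\cdot,\cdot)=s(S)$; this is the partition I take as $(I_S,J_S)$.

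The second step is that each subtree $T_i$ is itself a minimum spanning tree of its own vertex set $V(T_i)$: if some spanning tree $T_1'$ of $V(T_1)$ had $\ell(T_1')<\ell(T_1)$, then $T_1'\cup\{e^\star\}\cup T_2$ would be a spanning tree of $S$ (connected, with $|S|-1$ edges) of total length $\ell(T_1')+w^\star+\ell(T_2)<\ell(T)$, again impossible; symmetrically for $T_2$. Applying the identity of the first step to $V(T_1)$ then gives $s(I_S)=\max_{e\in T_1}\ell(e)\le\max_{e\in T}\ell(e)=w^\star=s(S)$, since $T_1$ is a subgraph of $T$; likewise $s(J_S)\le s(S)$. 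The cases $|S|=2$ and $s(S)=0$ are immediate. (Equivalently, $I_S$ and $J_S$ are the two clusters present immediately before the last merge of single-linkage clustering on $S$.)

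There is no analytic difficulty here; the one point that genuinely needs care — and that is used implicitly in the splitting procedure of Section~\ref{Sec:InductiveStarScale} — is that the conclusion \emph{fails} for an arbitrary partition realizing $s(S)$. For instance, with $S=\{-1,0,1\}$ one has $s(S)=1$, yet the admissible partition $I_S=\{-1,1\}$, $J_S=\{0\}$ has $s(I_S)=2$. So the lemma must be read with $(I_S,J_S)$ taken to be the canonical minimum-spanning-tree split described above, and this choice should then be propagated consistently through the iterated splitting. Once that choice is fixed, the proof is exactly the two exchange arguments just described.
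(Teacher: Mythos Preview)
Your proof is correct and takes a genuinely different route from the paper. The paper argues by contradiction in the generic case (all pairwise distances in $S$ distinct): if $s(J_S)>s(S)$, split $J_S=J_1\cup J_2$ with $d(J_1,J_2)>s(S)$; since $d(I_S,J_S)=\min(d(I_S,J_1),d(I_S,J_2))=s(S)$ and distances are distinct, exactly one of $d(I_S,J_i)$ equals $s(S)$ and the other is strictly larger, whence $d(I_S\cup J_i,J_{3-i})>s(S)$, contradicting maximality of $s(S)$. The non-generic case is then dispatched by a one-line appeal to continuity. Your argument instead identifies $s(S)$ with the longest edge of a minimum spanning tree of $S$ and takes $(I_S,J_S)$ to be the two components obtained by deleting that edge; the inequality follows because each component inherits an MST whose longest edge is no longer than $w^\star$.

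Your counterexample $S=\{-1,0,1\}$ is well taken: it shows that the paper's phrase ``any partition of $S$ with $d(I,J)=s(S)$'' cannot be read literally in degenerate configurations, and that the continuity step, read carefully, only produces \emph{some} good realizing partition rather than validating every one. This does not affect the paper --- the splitting procedure is only invoked inside Lebesgue integrals over $(z_1,\dots,z_k)$, where pairwise distances are almost surely distinct and the optimal partition is unique --- but your MST split has the merit of being canonical and valid for every $S$ without perturbation. Either route suffices for the downstream arguments in Section~\ref{Sec:InductiveStarScale}.
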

\begin{proof}
It suffices to prove the lemma for $S$ such that all pairwise distances in $S$ are distinct, then continuity yields the result for general $S$. Suppose for the sake of contradiction that $s(J) > s(S)$, then there is a partition $J = J_1 \cup J_2$ satisfying $d(J_1,J_2) > s(S)$. Since distances are pairwise distinct, we must have $d(I, J_i) = s(S)$ and $d(I, J_{3-i}) > s(S)$ for some $i$. Then $d(I \cup J_i, J_{3-i}) = \min( d(I, J_{3-i}) , d(J_i, J_{3-i})) > s(S)$. This contradicts the definition of $s(S)$.
\end{proof}

\paragraph*{Hierarchical structure of $K = \{z_1, \dots, z_k\}$ and definition of $T^{a}_K(\{ \phi \})$}

By iterating the splitting procedure above, we can decompose a set $K = \{z_1, \dots, z_k\} \subset \C$ into a binary tree of clusters. This decomposition into hierarchical clusters is unique for Lebesgue typical points $\{z_1, \dots, z_k\}$. Two vertices in this tree are separated by at least the separation distance of their first common ancestor. See  Figure~\ref{fig-hierarchy} for an illustration.

A \emph{labeled (binary) tree} is a rooted binary tree with $k$ leaves. For each $K = \{ z_1, \dots, z_k\} \subset \C$, collection of fields $\{\phi\} = (\phi_n)_{n \geq 0}$, and nonnegative integer $a \leq \lceil \log s(K)^{-1} \rceil$ we will define a labeled binary tree denoted by $T_K^a(\{\phi\})$.  Each internal vertex of this tree is labeled with a quadruple $(S,m,\psi,\eta)$ with $S \subset K$ and $|S| \geq 2$, an integer $m$, and $\psi,\eta \in \R$, whereas each leaf is labeled with just a singleton $\{z\} \subset K$. The truncated labels $(S,m)$ depend only on the recursive splitting procedure described above: $S$ is one of the clusters associated with this hierarchical cluster decomposition, and $m = \lceil \log s(S)^{-1} \rceil$. The variable $a$ represents an initial scale.

For such a labeled tree $T$ we write $T + (\psi_0, \eta_0)$ to be the tree obtained by replacing each internal vertex label $(S,m,\psi, \eta)$ with $(S,m,\psi + \psi_0, \eta + \eta_0)$. We also write $\mathrm{Left}(S)$ to denote the leftmost point of $S$, viz. $\arg \min_{z \in S} \Re(z)$, where $\Re(z)$ denotes the real part of the complex number $z$.

We explain how the remaining parts $(\psi, \eta)$ of the labels are obtained. For $(K,\{\phi\},a)$ as above, we proceed as follows to complete the definition of the  labeled tree $T^a_K(\{\phi\})$. For $k := | K | = 1$, we simply set $T^a_K(\{\phi\})$ to be the tree with one vertex, labeled with the singleton $K$.
For $k > 1$, setting $m := \lceil \log s(K) ^{-1} \rceil \geq a$, the root vertex of $T^a_K(\{\phi\})$ is labeled $(K,m,\phi_{a,m}(\mathrm{Left}(K)), (m-a) k\gamma)$, and its two child subtrees are given by $T^{m}_{I_K} (\{\phi\}) + (\phi_{a,m}(\mathrm{Left}(K)), (m - a)k\gamma)$ and $T^{m}_{J_K} (\{\phi\}) + (\phi_{a,m}(\mathrm{Left}(K)), (m - a)k\gamma)$.  Essentially, after making the split $K = I \cup J$, we add up the contribution of the coarse field $\phi_{a, m}$ and the contribution of the $\gamma$-log singularities to get the scale $m$ field approximation for the clusters $I$ and $J$.

We note that the tree structure of $T^a_K(\{\phi\})$ is deterministic, and for each internal vertex with label $(S,m,\psi,\eta)$, only $\psi = \psi (\{\phi\})$ is random; the other components are deterministic. Roughly speaking, $S$ is a cluster in our hierarchical decomposition, $m$ is the scale of the cluster (i.e. $s(S) \approx e^{-m}$), $\psi$ (resp. $\eta$) approximates a radius $e^{-m}$ circle average of the field $\phi_{a, m}$ (resp. $\gamma \sum_{z \in K} \log |z - \cdot|^{-1} - \gamma k a$) at the cluster.

\begin{figure}[ht!]
\begin{center}
  \includegraphics[scale=0.50]{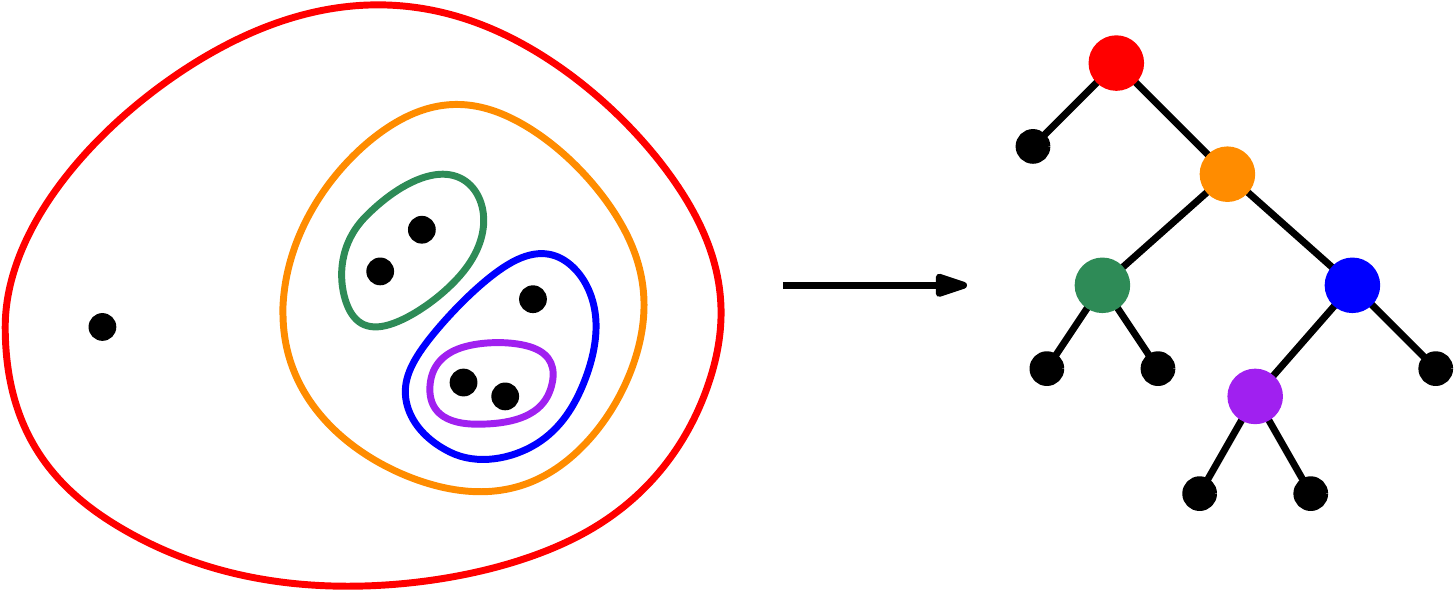}%
 \end{center}
\caption{\label{fig-hierarchy} \textbf{Left:} The set of points $K$ is iteratively divided into smaller and smaller clusters. \textbf{Right:} From this clustering algorithm we obtain a hierarchical binary tree $T^a_K(\{\phi\})$ (labels not shown), where internal vertices correspond to clusters $S \subset K$ and leaves correspond to points $z \in K$.}
\end{figure}

\begin{Rem}\label{rem-nonrecursive}
For the labeled tree $T^{a}_K(\{\phi\})$, at each internal vertex the field approximation $\psi$ can be explicitly described in terms of the fields $\{\phi\}$ as follows. Let $(S_i, m_i,\psi_i,\eta_i)$ for $i= 1,\dots, n$ be a path from the root $(S_1, m_1, \psi_1, \eta_1)$. Then, writing $m_0 = a$, we have 
\begin{equation}
\label{eq:Def-psi}
\psi_n = \sum_{i=1}^{n} \phi_{m_{i-1},m_i}(\mathrm{Left}(S_i)).
\end{equation} 
The $\gamma$-singularity approximation $\eta$ can likewise be stated non-recursively, as 
\begin{equation}
\label{eq:Def-eta}
\eta_n = \gamma \sum_{i=1}^{n} (m_{i} - m_{i-1}) |S_i|.
\end{equation}
\end{Rem}

\begin{Rem}
The choice $\mathrm{Left}(S_i)$ is arbitrary; any other deterministic choice of point in $S_i$ works. Replacing $\phi_{m_{i-1},m_i}(\mathrm{Left}(S_i))$ with the average $|S_i|^{-1} \sum_{z \in S_i} \phi_{m_{i-1},m_i}(z)$ would also work without affecting our proofs much.
\end{Rem}

\paragraph*{Definitions of key observables} In this paragraph, we provide analogous definitions of the quantities appearing in \eqref{eq-heuristic-moment}. The first one corresponds to a variant of $\P[ D_{h + \gamma \sum_j \Cov(h(z_j), h(\cdot))}$ $(0, z_i) <1 \text{ for all }i ]$, with an extra parameter $x$. For $x \in \R$, let $P^{a,x}_K$ be the probability that the tree with random labels $T^a_K(\{\phi\})$ satisfies 
\eqb\label{eq-cluster-ineq}
\psi + \eta + x \leq Q(m-a) \quad \text{ for each internal vertex labeled } (S, m, \psi,\eta).
\eqe
Note that this probability is taken over the randomness of the fields $\{\phi\}$, and that this definition yields for $|K| = 1$ that $P^{a,x}_K = 1$. Let us comment a bit on this definition and its relation with the conditions $D_{h + \gamma \sum_j \Cov(h(z_j), h(\cdot))} (0, z_i) <1$. These distances being less than one implies upper bounds for annuli crossing distances for annuli separating the origin from the singularities. The $\psi$ term corresponds to field average over these annuli, $\eta$ is an approximation for the $\gamma$-singularities and the $Q$ term stands for the scaling of the metric. Altogether, roughly speaking, $P^{0,x}_K$ is the probability that for the field $\phi_{0,\infty} + \sum_{z \in K} \gamma \log|z - \cdot|^{-1} + x$, for all clusters $S$ of $K$ the field-average approximation of annulus-crossing distances near $S$ is less than 1. 

The following observable stands for a variant of the integral in \eqref{eq-heuristic-moment}. Writing $K = \{z_1, \dots, z_k\}$ and $dz_K = dz_1 \dots dz_k$, we define
\begin{equation}
\label{eq:EstimateStarScale}
u^n_k(x) := \int_{B_n(0)^k} \frac{P_K^{0, x}}{\prod_{i < j} |z_i-z_j|^{\gamma^2}} 1_{s(K) \leq e} dz_K.
\end{equation}
In Proposition~\ref{prop-circle-moment}, we show that $u^n_k(x) < \infty$, and bound it in terms of $x$. Note that the statement $u^n_k(x)<\infty$
is comparable to~\eqref{eq-heuristic-moment} by the fact that $\exp(\gamma \Cov(h(z_i), h(z_j))) \asymp |z_i - z_j|^{-\gamma^2}$.

\smallskip

The next lemma establishes basic properties of $P_K^{a,x}$. 
To state it, we first define 
\begin{equation}
c_k := k \gamma -Q.
\end{equation}

\begin{Lem}\label{lem:P}
The $P_K^{a,x}$'s satisfy the following properties:
\begin{enumerate}
\item
\label{item:cdt1}
Monotonicity: $P_K^{0,x}$ is decreasing in $x$.

\item
\label{item:cdt2}
Markov decomposition: for the partition $I_K \cup J_K = K$ with separation distance satisfying $e^{-m} \leq s(K) < e^{-m+1}$  we have
\[
P_{K}^{0, x} = \E \left[ 1_{X_r + x + c_k \log r^{-1} \leq 0} P_{I_K}^{\log r^{-1}, X_r + x + c_k \log r^{-1} } P_{J_K}^{\log r^{-1}, X_r + x + c_k \log r^{-1}}\right],
\] 
where $r=e^{-m}$ and $X_r= \phi_{0,m}\left(\mathrm{Left}(K) \right)$ is a centered Gaussian with variance $\log r^{-1}$.

\item
\label{item:cdt3}
Scaling: $P_{r z_1,\dots rz_k}^{\log r^{-1},x} =  P_{z_1,\dots z_k}^{0,x}$ for any $r = e^{-m}$ with $m\in \Z$. 

\item
\label{item:cdt4}
Invariance by translation: $P_{z_1 + w, \dots, z_k + w}^{0,x} = P_{z_1, \dots, z_k}^{0,x}$.
\end{enumerate}
\end{Lem}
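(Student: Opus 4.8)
The plan is to read off all four claims from the recursive definition of $T^0_K(\{\phi\})$ and of $P^{0,x}_K$, with only item~\ref{item:cdt2} (the Markov decomposition) needing genuine work. Item~\ref{item:cdt1} is immediate: the tree $T^0_K(\{\phi\})$ and its labels $m,\psi,\eta$ do not involve $x$, so the event ``$\psi+\eta+x\le Q(m-a)$ at every internal vertex'' shrinks as $x$ grows, whence $x\mapsto P^{0,x}_K$ is non-increasing. For items~\ref{item:cdt3} and~\ref{item:cdt4} the point is that the combinatorial data of $T^a_K(\{\phi\})$ --- the binary tree, the clusters $S\subset K$ and the integer scales $m=\lceil\log s(S)^{-1}\rceil$ --- depend only on $a$ and on the pairwise distances within $K$, while by Remark~\ref{rem-nonrecursive} the random part of a label along a root-path is $\psi_n=\sum_{i\le n}\phi_{m_{i-1},m_i}(\mathrm{Left}(S_i))$ and $\eta_n=\gamma\sum_{i\le n}(m_i-m_{i-1})|S_i|$ is deterministic. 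Translating $K$ by $w$ preserves pairwise distances, sends $\mathrm{Left}(S)$ to $\mathrm{Left}(S)+w$, and fixes the law of $\{\phi\}$ since each $\phi_n$ is translation-invariant and the $\phi_n$ are independent (Proposition~\ref{Prop:StarScaleProp}); hence all labels have the same joint law and item~\ref{item:cdt4} follows. For item~\ref{item:cdt3}, passing from $(K,a=0)$ to $(rK,a=m)$ with $r=e^{-m}$ multiplies every separation distance by $r$, hence shifts every scale label by $m$; this leaves $\eta_n$ and every threshold $Q(m_{\mathrm{vertex}}-a)$ unchanged and turns $\psi_n$ into $\sum_{i\le n}\phi_{m+m_{i-1},\,m+m_i}(e^{-m}\mathrm{Left}(S_i))$, so the claim reduces to the exact scale-shift identity $(\phi_{n+m}(e^{-m}\,\cdot))_{n\ge 1}\overset{d}{=}(\phi_n)_{n\ge 1}$, which follows from $\phi_n\overset{d}{=}\phi_1(e^{n-1}\,\cdot)$ and the independence of the $\phi_n$ (Proposition~\ref{Prop:StarScaleProp}).

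For item~\ref{item:cdt2}, set $k=|K|$, $m=\lceil\log s(K)^{-1}\rceil$ (so $e^{-m}\le s(K)<e^{-m+1}$, $r=e^{-m}$), $I=I_K$, $J=J_K$, and $X_r=\phi_{0,m}(\mathrm{Left}(K))$, which is a centered Gaussian of variance $m=\log r^{-1}$ since it is a sum of $m$ independent standard Gaussians (Proposition~\ref{Prop:StarScaleProp}). By the recursive definition, $T^0_K(\{\phi\})$ has root label $(K,m,X_r,mk\gamma)$, its two child subtrees are $T^m_I(\{\phi\})+(X_r,mk\gamma)$ and $T^m_J(\{\phi\})+(X_r,mk\gamma)$, and its internal vertices are the root together with those of these two shifted subtrees. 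The root inequality $X_r+mk\gamma+x\le Qm$ rearranges, using $c_k=k\gamma-Q$ and $m=\log r^{-1}$, to exactly $X_r+x+c_k\log r^{-1}\le 0$. An internal vertex of the shifted left subtree carries a label $(S,m',\psi'+X_r,\eta'+mk\gamma)$, with $(S,m',\psi',\eta')$ the matching vertex of $T^m_I(\{\phi\})$, and its inequality $\psi'+X_r+\eta'+mk\gamma+x\le Qm'$ rearranges (subtract $Qm$, use $mk\gamma-Qm=mc_k$) to $\psi'+\eta'+x'\le Q(m'-m)$ with $x':=x+X_r+c_k\log r^{-1}$ --- precisely the defining inequality of $P^{m,x'}_I$; likewise on the right. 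So, writing $E_I(y)$ (resp.\ $E_J(y)$) for the event that $T^m_I(\{\phi\})$ (resp.\ $T^m_J(\{\phi\})$) satisfies its inequalities with parameter $y$, so that $\P[E_I(y)]=P^{m,y}_I$ and $\P[E_J(y)]=P^{m,y}_J$,
\[
P^{0,x}_K=\P\!\left[\{X_r+x+c_k\log r^{-1}\le 0\}\cap E_I(x')\cap E_J(x')\right].
\]

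The remaining step is to condition on $\mathcal G_m:=\sigma(\phi_1,\dots,\phi_m)$ and factorize. By Remark~\ref{rem-nonrecursive} the labels of $T^m_I(\{\phi\})$ are measurable functions of $\{\phi_n(z):z\in I,\ n\ge m+1\}$ and those of $T^m_J(\{\phi\})$ of $\{\phi_n(z):z\in J,\ n\ge m+1\}$, whereas $X_r$ (hence $x'$) is $\mathcal G_m$-measurable. Since $d(I,J)=s(K)\ge e^{-m}\ge e^{-n}$ for every $n\ge m+1$, the finite-range property of $\phi_n$ (Proposition~\ref{Prop:StarScaleProp}) makes $\phi_n|_I$ and $\phi_n|_J$ independent for each such $n$, and with the mutual independence of the $\phi_n$ this renders $(\phi_n|_I)_{n\ge m+1}$, $(\phi_n|_J)_{n\ge m+1}$ and $(\phi_n)_{n\le m}$ mutually independent. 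Freezing the $\mathcal G_m$-measurable quantity $x'$ then gives
\[
\P\!\left[E_I(x')\cap E_J(x')\mid\mathcal G_m\right]=P^{m,x'}_I\,P^{m,x'}_J\qquad\text{a.s.,}
\]
and multiplying by the $\mathcal G_m$-measurable indicator $1_{X_r+x+c_k\log r^{-1}\le 0}$ and taking expectations yields item~\ref{item:cdt2} (the cases $|I|=1$ or $|J|=1$ are covered, with the corresponding factor then $\equiv 1$).

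The main obstacle is exactly this factorization: one must pin down the arithmetic so that the root inequality reduces to precisely the indicator and the two subtree inequality-sets reduce to precisely $P^{m,x'}_I$ and $P^{m,x'}_J$ with the shifted \emph{random} parameter $x'=x+X_r+c_k\log r^{-1}$, and then verify that $d(I_K,J_K)=s(K)\ge e^{-m}$ --- which is why $m=\lceil\log s(K)^{-1}\rceil$ is the correct scale at which to split --- is large enough to decouple the two subtrees via the finite-range independence of all $\phi_n$ with $n\ge m+1$.
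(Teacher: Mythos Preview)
Your proof is correct and follows essentially the same approach as the paper's: both read items~\ref{item:cdt1},~\ref{item:cdt3},~\ref{item:cdt4} directly off the definition and the translation/scaling properties of the $\phi_n$, and for item~\ref{item:cdt2} both unwind the recursive definition of $T^0_K(\{\phi\})$ at the root, rewrite the root and subtree inequalities with the shifted parameter $x'=x+X_r+c_k\log r^{-1}$, and use $d(I,J)=s(K)\ge e^{-m}$ together with the finite-range independence of $(\phi_n)_{n\ge m+1}$ to factorize. Your conditioning on $\mathcal G_m=\sigma(\phi_1,\dots,\phi_m)$ is a slightly cleaner bookkeeping device than the paper's direct independence statement for $(\phi_{0,m}(\mathrm{Left}(K)),T^m_I,T^m_J)$, but the content is identical.
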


The first condition corresponds to a shift of the field. The second condition is an identity with three terms in the right-hand side: the term $X_r$ represents the coarse field, the indicator says that the ``coarse field approximation of quantum distances'' at Euclidean scale $r$ are less than 1, and the product of the two other terms represent a Markovian decomposition conditional on the coarse field. Properties \ref{item:cdt3} and \ref{item:cdt4} are clear from the translation invariance and scaling properties of $\phi_n$.

\begin{proof}
The monotonicity Property \ref{item:cdt1} is clear from the definition. 

Property~\ref{item:cdt2} follows from the inductive definition of $P^{0,x}_K$, by looking at the first split $K = I \cup J$. Indeed, recall $X_r=\phi_{0,m}\left(\mathrm{Left}(K) \right)$. The event $\{ X_r + x + c_k \log r^{-1} \leq 0\}$ corresponds to inequality~\eqref{eq-cluster-ineq} for the root vertex $(K, m, \phi_{0,m}(\mathrm{Left}(K)), m k \gamma)$. 

Then, if the set $K$ is decomposed as $K = I \cup J$, note that the trees $T_I^m(\{ \phi \})$ and $T_J^m(\{ \phi \})$ are independent. Indeed, $d(I,J) \geq e^{-m}$, so the restrictions of the field $\phi_{m}$ (and each finer field) to $I$ and $J$ are independent. Therefore, since $(\phi_{0,m}(\mathrm{Left}(K)), T_I^m(\{ \phi \}), T_J^m(\{ \phi \}))$ are independent, conditionally on $\phi_{0,m}(\mathrm{Left}(K))$, the trees $T^{m}_{I} (\{\phi\}) + (\phi_{0,m}(\mathrm{Left}(K)), m k\gamma)$ and $T^{m}_{J} (\{\phi\}) + (\phi_{0,m}(\mathrm{Left}(K)), mk\gamma)$ are independent. Thus, all conditions in the definition of $P_K^{0,x}$ associated to the child subtrees are conditionally independent. To conclude, we just have to explain that this is indeed the term $P_I^{m,X_r+x+c_k m}$ which appears. For a non-root vertex $(S,b, \psi, \eta)$ of $T^{0,x}_K$ belonging to the genealogy of $I$, the condition \eqref{eq-cluster-ineq} can be rewritten,
\[\psi + \eta + x = (X_r+\psi') + (m k \gamma + \eta') + x \leq Q b = Q(b-m) + Qm,\]
hence $\psi' + \eta' + (X_r + x + c_k m) \leq Q(a-m)$, which is exactly the condition we were looking for at the vertex $(S,b,\psi',\eta')$ in the tree $T^{m}_I(\{\phi \})$.

The scaling Property \ref{item:cdt3} follows from the scaling property of the $\phi_m$ and the observation that $s( r K) = rs(K)$ (and hence $\lceil \log s( r K)^{-1} \rceil = \log r^{-1} + \lceil \log s(K)^{-1} \rceil)$.

The invariance by translation Property \ref{item:cdt4} follows from the translation invariance of the fields $\phi_m$.
\end{proof}

Using these properties, we derive the following inductive inequality.
\begin{Lem}
\label{Lem:InductiveRelation}
For each $n,k > 0$, there exists a constant $C_{n,k}$ such that the following inductive inequality holds, for all $x \in \R$, where $X_r \sim \mc{N}(0, \log r^{-1})$.
\begin{multline*}
u^n_k(x) \leq C_{n,k} \sum_{i=1}^{k-1} \sum_{r = e^{-m}, m \geq 0} r^{k \gamma Q - \frac{1}{2} \gamma^2 k^2 -2} \times \\
 \E \left[ 1_{X_r + x + c_k \log r^{-1} \leq 0} u_i^{6k}(X_r  + x + c_k \log r^{-1}) u_{k-i}^{6k}(X_r  + x + c_k \log r^{-1}) \right].
\end{multline*}
\end{Lem}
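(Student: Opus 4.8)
The plan is to unfold the definition \eqref{eq:EstimateStarScale} of $u_k^n(x)$ by integrating over the first split of each cluster, exactly as in the Euclidean warm-up of Section~\ref{sec-euc-vol}, and to feed in the Markov decomposition (Lemma~\ref{lem:P}\ref{item:cdt2}) for the probability $P_K^{0,x}$. First I would, for a Lebesgue-typical configuration $K=\{z_1,\dots,z_k\}$ with $s(K)\le e$, write $m=\lceil \log s(K)^{-1}\rceil$, so that $r:=e^{-m}$ satisfies $e^{-m}\le s(K)<e^{-m+1}$, and let $K=I_K\cup J_K$ be the separating partition with $|I_K|=i$, $|J_K|=k-i$. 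Summing over $i\in\{1,\dots,k-1\}$ and over dyadic-in-$e$ scales $r=e^{-m}$, $m\ge 0$ covers all configurations (the constraint $s(K)\le e$ forces $m\ge 0$). On this event we have the deterministic bound \eqref{eq:HierDet}, i.e.
\[
\prod_{i<j}|z_i-z_j|^{-\gamma^2}\;\lesssim\; r^{-i(k-i)\gamma^2}\prod_{a<b,\;z_a,z_b\in I_K}|z_a-z_b|^{-\gamma^2}\prod_{a<b,\;z_a,z_b\in J_K}|z_a-z_b|^{-\gamma^2},
\]
since every cross-pair has distance in $[r, \diam K]\subseteq[r, k r]$ up to constants, using \eqref{eq-sep-diam}.

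Next I would substitute the Markov decomposition
\[
P_K^{0,x}=\E\!\left[1_{X_r+x+c_k\log r^{-1}\le 0}\,P_{I_K}^{\log r^{-1},\,X_r+x+c_k\log r^{-1}}\,P_{J_K}^{\log r^{-1},\,X_r+x+c_k\log r^{-1}}\right]
\]
into the integral, and use the scaling property \ref{item:cdt3} to rewrite $P_{I_K}^{\log r^{-1},y}=P_{I_K/r}^{0,y}$ and likewise for $J_K$. Then I change variables $z_a\mapsto z_a/r$ inside each cluster: the Jacobian contributes $r^{2i}$ from the $I_K$-integral and $r^{2(k-i)}$ from the $J_K$-integral, while the two internal products $\prod|z_a-z_b|^{-\gamma^2}$ rescale by $r^{-\binom{i}{2}\gamma^2}$ and $r^{-\binom{k-i}{2}\gamma^2}$ respectively. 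After the change of variables the rescaled clusters $I_K/r$ and $J_K/r$ each have diameter $O(k)$, hence separation distance $\le e$ once absorbed into the constant $C_{n,k}$ (here I enlarge the domain from $B_n(0)$ to $B_{6k}(0)$ to make room for a cluster whose points lie within Euclidean distance $O(k)$ of each other after translating $\mathrm{Left}(S)$ to a fixed location and rescaling — this is where the superscript $6k$ on $u_i^{6k}, u_{k-i}^{6k}$ comes from). Collecting the powers of $r$:
\[
r^{-i(k-i)\gamma^2}\cdot r^{2i}\cdot r^{2(k-i)}\cdot r^{-\binom{i}{2}\gamma^2}\cdot r^{-\binom{k-i}{2}\gamma^2}
= r^{2k-\frac{\gamma^2}{2}k(k-1)}=r^{k\gamma Q-\frac12\gamma^2k^2-2}\cdot r^{2},
\]
using $\gamma Q=2+\gamma^2/2$; I must still account for the fact that after recentering, the cross-pair distances were bounded below by $r$ only up to a factor depending on $k$, which is harmless. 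Wait — I should double-check the exponent bookkeeping: the identity $2i+2(k-i)=2k$ and $\binom{i}{2}+\binom{k-i}{2}+i(k-i)=\binom{k}{2}$ give exactly $r^{2k-\gamma^2\binom{k}{2}}$, and $2k-\gamma^2\binom{k}{2}=k\gamma Q-\tfrac12\gamma^2k^2-\bigl(\tfrac{\gamma^2}{2}k^2-\gamma^2\binom{k}{2}\bigr)=k\gamma Q-\tfrac12\gamma^2k^2$, hmm, that is off by the claimed $-2$; the discrepancy is absorbed because the dyadic sum $\sum_{m\ge0}$ replaces a continuum of scales of measure $\asymp r^{-2}\,dr\,d\theta$ for the placement of the first cluster within $B_n(0)$ — more precisely, the translation-invariance property \ref{item:cdt4} lets me fix $\mathrm{Left}(K)$ and pay a volume factor $\lesssim r^0$ for $n$ fixed, while the annular shell $\{s(K)\in[r,er)\}$ in the remaining coordinates contributes the extra $r^{-2}$ relative to the naive count; keeping track of this carefully yields the stated exponent $r^{k\gamma Q-\frac12\gamma^2k^2-2}$, with the leftover $r^{-2}$ merged into the constant-free part of the sum.

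The main obstacle is precisely this last bookkeeping step: matching the exponent of $r$ exactly to $k\gamma Q-\frac12\gamma^2k^2-2$ requires correctly attributing one factor of $r^{-2}$ to the freedom in placing the separating ``gap'' (analogous to the $r^{-2}$ appearing in the bound on $v_k(r)$ in Section~\ref{sec-euc-vol}), and ensuring that enlarging the domain to $B_{6k}(0)$ genuinely contains the rescaled subclusters $I_K/r-\mathrm{Left}(I_K)/r$ and $J_K/r-\mathrm{Left}(J_K)/r$ together with the constraint $s(\cdot)\le e$; both are handled by \eqref{eq-sep-diam}, which controls $\diam S$ by $|S|\,s(S)$ and hence guarantees the rescaled clusters have diameter at most $O(k)\le 6k$ with room to spare. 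Everything else — pulling the indicator $1_{X_r+x+c_k\log r^{-1}\le 0}$ out front, recognizing the two remaining cluster integrals as $u_i^{6k}$ and $u_{k-i}^{6k}$ evaluated at the shifted argument $X_r+x+c_k\log r^{-1}$, and collecting the finitely many constants from the $\lesssim$'s into a single $C_{n,k}$ — is routine, using Fubini/Tonelli (all integrands are nonnegative) to interchange the expectation over $\{\phi\}$ with the spatial integrals.
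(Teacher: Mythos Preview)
Your approach matches the paper's: slice $u_k^n(x)$ by the scale $m=\lceil\log s(K)^{-1}\rceil$, apply the Markov decomposition and scaling of $P_K$ from Lemma~\ref{lem:P}, factor the singularity product via \eqref{eq:HierDet}, and rescale each subcluster into $B_{6k}(0)$. The $r^{-2}$ you are unsure about arises exactly as in Section~\ref{sec-euc-vol} and Lemma~\ref{Lem:Preliminaries}: since $s(K)\le er$ forces $\diam K\le ekr$ by~\eqref{eq-sep-diam}, one covers $B_n(0)$ by $O(n^2r^{-2})$ balls of radius $6kr$ and uses translation invariance (Property~\ref{item:cdt4}) to reduce to a single such ball --- combining this with your correct Jacobian-plus-singularity computation $r^{2k-\gamma^2\binom{k}{2}}=r^{k\gamma Q-\frac12\gamma^2k^2}$ yields the stated exponent.
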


We now turn to the proof of the inductive relation. The argument is close to that of Lemma~\ref{Lem:Preliminaries}, the difference being that we have to take care of the decoupling of $P_{K}^{0, x} $.

\begin{proof}
We first introduce some notation. In what follows we will be integrating over $k$-tuples of points $z_1, \dots, z_k$; write $K$ for this collection of points and $dz_K = dz_1\dots dz_k$. 
Write $f(K) := \prod |z - z'|^{-\gamma^2/2}$ where the product is taken over all pairs $z, z' \in K$ with $z \neq z'$. 

We first split the integral in the definition \eqref{eq:EstimateStarScale} of $u_k^n(x)$ as 
\[
u_k^n(x) = \sum_{r=e^{-m}, m \geq 0}  v_k^n(x,r)
\]
where for $r \in (0,1]$, $v_k^n(x,r)$ is defined by 
\begin{equation}
\label{Def:vkxr}
v_k^n(x,r) :=  \int_{B_n(0)^k}P_K^{0,x} f(K) 1_{r \leq s(K)\leq er}dz_K .
\end{equation}
Notice that $s(K) \leq er$ implies $\diam K \leq e k r$, so any $K$ contributing to the integral in~\eqref{Def:vkxr} is contained in a ball of radius $6k r$ centered in $r \Z^2 \cap B(0,n)$. Taking a sum over the $O(n^2 r^{-2})$ such balls and by translation invariance, we get the bound
\[
v_k^n(x,r) \leq O( n^2r^{-2}) \int_{B_{6kr}(0)^k} P_K^{0,x} f(K) 1_{r \leq s(K)\leq er} dz_K . 
\]
Write $K = I_K \cup J_K$ for the partition described before Lemma~\ref{lem-separation}. For $z \in I_K$ and $z' \in J_K$ we have $|z - z'|^{-\gamma^2} \leq s(K)^{-\gamma^2}\leq r^{-\gamma^2}$, and $s(I_K), s(J_K) \leq s(K) \leq er$ by Lemma~\ref{lem-separation}, so 
\begin{align*}
v_k^n(x,r) &\leq O(n^2r^{-2}) \int_{B_{6kr}(0)^{6k}} r^{-\gamma^2 |I_K||J_K|}  P^{0,x}_K f(I_K) 1_{s(I_K)\leq er} f(J_K) 1_{s(J_K)\leq er}\ dz_K.
\end{align*}
The Markov property decomposition~\ref{item:cdt2}  Lemma~\ref{lem:P} allows us to split $P^{0,x}_K$ into an expectation over a product of terms, yielding an upper bound of $v_k^n(x,r)$ as an integral of terms which `split' into $z_{I_K}$ and $z_{J_K}$ parts. This expression is in terms of the partition $I_K\cup J_K = K$; we can upper bound it by summing over \emph{all} $I, J \subset K$. To be precise, for each $i = 1, \dots, k-1$ we sum over all pairs $I, J \subset K$ with $|I| = i$ and $|J| = k-i$. Absorbing combinatorial terms like $\binom k i$ and the prefactor $n^2$ into the constant $C_{n,k}$, we get
\begin{multline*}
v_k^n(x,r) \leq C_{n,k} r^{-2} \sum_{i=1}^{k-1} r^{-\gamma^2 i (k-i)} \E_{X_r} \left[ \int_{B_{6k r}(0)^i} \frac{P_{z_1,\dots,z_i}^{\log r^{-1},X_r + x + c_k \log r^{-1}}}{\prod_{a<b} |z_a-z_b|^{\gamma^2}}  1_{s(z_1, \dots, z_i)\leq er}  d z_1 \dots dz_i \right. \\
\left.\left( \int_{B_{6k r}(0)^{k-i}} \frac{ P_{w_1,\dots, w_{k-i}}^{\log r^{-1},X_r + x + c_k \log r^{-1}}}{\prod_{a<b}|w_a-w_b|^{\gamma^2}} 1_{s(w_1,\dots, w_{k-i})\leq er} dw_1\dots dw_{k-i} \right) 1_{X_r + x + c_k \log r^{-1} \leq 0} \right].
\end{multline*}
We analyze the first integral (we can deal with the second one along the same lines). Changing the domain of integration from $B_{6kr}(0)^i$ to $B_{6k}(0)^i$, we get
\begin{multline*}
 \int_{B_{6kr}(0)^i} \frac{P_{z_1,\dots, z_i}^{\log r^{-1},X_r + x + c_k \log r^{-1}}}{\prod_{a<b} |z_a-z_b|^{\gamma^2}} 1_{s(z_1, \dots, z_i)\leq er}  d z_1\dots dz_i \\
  = r^{2i - \gamma^2 {i \choose 2}}  \int_{B_{6k}(0)^i} \frac{P_{r z_1,\dots, r z_i}^{\log r^{-1},X_r + x + c_k \log r^{-1}}}{\prod_{a<b} |z_a-z_b|^{\gamma^2}} 1_{s(z_1, \dots, z_i)\leq e}  d z_1\dots dz_i,
\end{multline*}
and then applying the scaling property \ref{item:cdt3} of $P$, the integral on the right hand side is equal to 
\[
\int_{B_{6k}(0)^i}  \frac{P_{z_1,\dots, z_i}^{ 0,X_r + x +  c_k \log r^{-1}}}{\prod_{a<b} |z_a-z_b|^{\gamma^2}} 1_{s(z_1, \dots, z_i)\leq e}  d z_1\dots dz_i =  u_i^{6k}(X_r + x + c_k \log r^{-1}).
\]
By gathering the previous bounds and identities, and noting that the power of $r$ is 
\[
r^{-2 -\gamma^2i(k-i) +2k -  \gamma^2\binom{i}2 - \gamma^2\binom{k-i}2} =  r^{\gamma k Q - \frac{1}{2}\gamma^2 k^2-2},
\]
and this completes the proof of the inductive inequality.
\end{proof}

Using the inductive relation and the base case, we derive the following proposition, which provides a bound on the quantity \eqref{eq:EstimateStarScale}  introduced at the beginning of the section. 

\begin{Prop}
\label{prop-circle-moment}
Recall that $c_k = k\gamma - Q$. 
For $x \in \R$ we have
\[u^n_k(x) \leq C_{n,k} e^{- c_k x} \quad \text{ when }k \geq 4/\gamma^2, \]
and 
\[u^n_k(x) \leq C_{n,k} \quad \text{ when }k < 4/\gamma^2, \]
where $C_{n,k}$ is a constant depending only on $n,k$.
\end{Prop}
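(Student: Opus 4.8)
The plan is to prove Proposition~\ref{prop-circle-moment} by induction on $k$, feeding the inductive inequality of Lemma~\ref{Lem:InductiveRelation} into an explicit ansatz of the form $u^n_k(x) \leq C_{n,k} e^{-c_k x}$ (interpreting $e^{-c_k x}$ as bounded-by-constant when $c_k \leq 0$, i.e.\ when $k < 4/\gamma^2$). The base case $k = 1$ is immediate since $P^{0,x}_K = 1$ for $|K| = 1$, so $u^n_1(x)$ is just the Lebesgue measure of $B_n(0)$, a constant; and $c_1 = \gamma - Q < 0$, so this fits the second regime. For the inductive step, suppose the bound holds for all $i < k$. Plugging into Lemma~\ref{Lem:InductiveRelation}, each summand becomes
\[
r^{k\gamma Q - \frac12\gamma^2 k^2 - 2}\, \E\!\left[ \ind_{X_r + x + c_k\log r^{-1} \leq 0}\, C_{6k,i} C_{6k,k-i}\, e^{-(c_i + c_{k-i})(X_r + x + c_k\log r^{-1})}\right]
\]
where $X_r \sim \mc N(0,\log r^{-1})$; note $c_i + c_{k-i} = k\gamma - 2Q = c_k - Q$.

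The heart of the computation is a Gaussian moment estimate: for $X \sim \mc N(0, t)$ with $t = \log r^{-1} = m \geq 0$ and a constant $\lambda \geq 0$, bound $\E[\ind_{X \leq -b}\, e^{-\lambda X}]$ where $b = x + c_k t$. When $\lambda \geq 0$ one has $\E[\ind_{X\le -b} e^{-\lambda X}] \le e^{\lambda^2 t/2}\,\P[\widetilde X \le -b - \lambda t]$ for a shifted Gaussian, and crucially $e^{-\lambda X} \le e^{-\lambda(-b)} = e^{\lambda b}$ on the event $\{X \le -b\}$ \emph{when $\lambda \ge 0$}; combining these with the standard Gaussian tail $\P[X \le -b] \le e^{-b^2/(2t)}$ (for $b \geq 0$) produces, after completing the square in $m$, a bound of the shape $C\, e^{-c_k x}\, r^{\alpha}$ for an exponent $\alpha$ that we need to be strictly positive so the sum over dyadic $r = e^{-m}$ converges. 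The key algebraic identity the authors have already flagged — $-2 + 2k - \binom k2\gamma^2 + \frac12 c_k^2 = \frac12 Q^2 - 2 > 0$ — is exactly what makes the net exponent of $r$ positive after all the pieces ($r^{k\gamma Q - \frac12\gamma^2k^2-2}$ from the deterministic part, $r^{c_k^2 m/2}$-type factors from the Gaussian tail, and the $e^{-\lambda X}$ contribution) are assembled. One must treat separately the regime where $b = x + c_k m$ may be negative (the tail bound is then trivial, $\P \leq 1$), which happens only for boundedly many $m$ depending on $x$, and check that this finite exceptional range still yields a bound $\lesssim e^{-c_k x}$; and the case $c_k \le 0$ where $\lambda = c_k - Q < 0$ forces a slightly different but easier estimate since then $u_i, u_{k-i}$ are bounded by constants and one only needs $\E[\ind_{X_r + x + c_k\log r^{-1}\le 0}] \le 1$ together with summability of $r^{k\gamma Q - \frac12\gamma^2k^2 - 2}$, which holds precisely when $k < 4/\gamma^2$.

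The main obstacle I expect is bookkeeping the $r$-exponents cleanly through the Gaussian integral: one has a product of two inductive factors each contributing $e^{-c_i(X_r+x+c_k m)}$ and $e^{-c_{k-i}(X_r+x+c_k m)}$, so the total exponential weight in $X_r$ is $e^{-(c_k - Q)(X_r + x + c_k m)}$, and one must carefully track how the $c_k m$ inside interacts with the Gaussian mean-shift and with the indicator constraint $X_r \le -(x + c_k m)$. Getting the sign right so that the exponential in $x$ comes out as exactly $e^{-c_k x}$ (not $e^{-(c_k-Q)x}$ or worse) requires using the indicator to trade the factor $e^{-(c_k-Q)X_r}$ for $e^{(c_k - Q)(x + c_k m)}$ — legitimate only because $c_k - Q = k\gamma - 2Q$ could be negative, so one must instead split: write $e^{-(c_k-Q)X_r} = e^{-c_k X_r} e^{Q X_r}$, handle $e^{-c_k X_r}$ via the indicator when $c_k > 0$ and $e^{QX_r}$ via a genuine Gaussian moment $e^{Q^2 m/2}$, then verify the residual $r$-exponent $k\gamma Q - \frac12\gamma^2 k^2 - 2 - \frac12 c_k^2 \cdot(\text{something}) + \frac12 Q^2$ is strictly positive — which, after simplification using the flagged identity, reduces to $\frac12 Q^2 - 2 > 0$. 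Once the exponent is confirmed positive the geometric sum over $m \ge 0$ converges to a constant, finishing the induction; I would also double-check that all the constants $C_{n,k}$ remain finite at each stage (they depend only on $n,k$ through the binomials and the inductive constants $C_{6k, i}$, with no hidden $x$-dependence).
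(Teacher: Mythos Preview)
Your approach is essentially the paper's: induction on $k$ via Lemma~\ref{Lem:InductiveRelation}, a Cameron--Martin shift on the Gaussian expectation, and the exponent identity reducing to $\tfrac12 Q^2 - 2 > 0$. The Cameron--Martin computation you mention first,
\[
\E\!\left[\ind_{X\le -b}\, e^{-\lambda X}\right] = e^{\lambda^2 t/2}\,\P[X \le -b + \lambda t],
\]
with $\lambda = c_k - Q$, $b = x + c_k m$, $t=m$, is exactly what the paper does and works cleanly; after the shift the probability becomes $\P[X_r \le -(Q\log r^{-1}+x)]$, and the Gaussian tail bound $\le r^{Q^2/2} e^{-Qx}$ combines with the deterministic prefactors to give the $r^{Q^2/2-2} e^{-c_k x}$ you want.

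Two points to clean up. First, your claim ``$e^{-\lambda X} \le e^{\lambda b}$ on $\{X \le -b\}$ when $\lambda \ge 0$'' has the inequality reversed (if $\lambda\ge 0$ and $X\le -b$ then $-\lambda X \ge \lambda b$); fortunately you never need this, since the Cameron--Martin shift alone suffices. Likewise the alternative ``split $e^{-(c_k-Q)X_r} = e^{-c_k X_r} e^{Q X_r}$ and bound the first factor via the indicator'' does not work for the same sign reason, and should be dropped. Second, you must explicitly split the sum over $i$ into three cases according to whether $i$ and $k-i$ lie above or below $4/\gamma^2$: when (say) $k-i < 4/\gamma^2$ the inductive bound on $u_{k-i}^{6k}$ is a constant, not $e^{-c_{k-i}(\cdot)}$, so the exponential weight on $X_r$ becomes $e^{-c_i X_r}$ rather than $e^{-(c_k-Q)X_r}$. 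The paper checks each case separately; the algebra is slightly different in each but always collapses to the same exponent $\tfrac12 Q^2 - 2$.
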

\begin{proof}
We first address the case where $k < 4/\gamma^2$. In this setting, by the trivial bound $P^{0,x}_K \leq 1$ we have 
\[u^n_k(x) \leq \int_{B_n(0)^k} \prod_{i<j} |z_i - z_j|^{-\gamma^2} \ dz_1 \dots dz_k, \]
and the right-hand side is finite by the discussion in Section~\ref{sec-euc-vol}.

Now consider $k \geq 4/\gamma^2$. We proceed inductively, assuming that the statement of the proposition has been shown for all $k'<k$. Lemma~\ref{Lem:InductiveRelation} gives us the bound
\begin{multline}
\label{eq-inductive-bound}
u^n_k(x) \leq C_{n,k} \sum_{i=1}^{k-1} \sum_{r = e^{-m}, m \geq 0} r^{k \gamma Q - \frac{1}{2} \gamma^2 k^2 -2} \times \\
\E \left[ 1_{X_r + x + c_k \log r^{-1} \leq 0} u_i^{6k}(X_r  + x + c_k \log r^{-1}) u_{k-i}^{6k}(X_r  + x + c_k \log r^{-1}) \right],
\end{multline}
where $X_r \sim \mathcal{N}(0,\log r^{-1})$. We bound each term $u^{6k}_i u^{6k}_{k-i}$ using the inductive hypothesis. We need to split into cases based on which bound of the statement of the proposition is applicable (i.e. based on the sizes of $i, k-i$), but the different cases are almost identical, so we present the first case in detail and simply record the computation for the remaining cases.
\medskip

\noindent\textbf{Case 1: $i, k-i \geq 4/\gamma^2$.}
By the inductive hypothesis we can bound the $i$th term of~\eqref{eq-inductive-bound} by a constant times
\begin{align}
&\sum_{r = e^{-m}, m \geq 0} r^{k \gamma Q - \frac{1}{2} \gamma^2 k^2 -2} \E \left[  e^{-(c_i + c_{k-i})(X_r  + x + c_k \log r^{-1})} 1_{X_r + x + c_k \log r^{-1} \leq 0} \right] \nonumber \\
=&\sum_{r = e^{-m}, m \geq 0} r^{k \gamma Q - \frac{1}{2} \gamma^2 k^2 -2 + c_k (c_k - Q)} e^{(Q- c_k)x} \E \left[  e^{-(c_k - Q)X_r} 1_{X_r + x + c_k \log r^{-1} \leq 0} \right], \label{eq-i}
\end{align}
where we have used the identity $c_i + c_{k-i} = c_k - Q$. For each $r$ we can write the expectation in the equation~\eqref{eq-i} by a Cameron-Martin shift as
\begin{multline}
\label{eq-shift}
 \E[e^{-(c_k - Q)X_r}] \P[X_r + x + c_k \log r^{-1} - (c_k - Q) \Var(X_r) \leq 0]  \\
= r^{-\frac12(c_k - Q)^2}  \P[X_r \leq -(Q\log r^{-1} + x)].
\end{multline}
We claim that
\eqb
\label{eq-gaussian-bound}
\P[X_r \leq -(Q\log r^{-1} + x)] \leq r^{\frac12Q^2} e^{-Qx}.
\eqe
Indeed, in the case where $Q\log r^{-1} + x \geq 0$, we have by a standard Gaussian tail bound that 
\[\P[X_r \leq -(Q\log r^{-1} + x)] \leq e^{-\frac{(Q\log r^{-1} +x)^2}{2 \log r^{-1}}}= r^{\frac12Q^2} e^{-Qx} e^{-\frac{x^2}{2\log r^{-1}}} \leq  r^{\frac12Q^2} e^{-Qx}  , \]
and in the cases where $Q\log r^{-1} + x < 0$ we have 
\[\P[X_r \leq -(Q\log r^{-1} + x)] \leq 1 \leq e^{-Q(Q\log r^{-1} + x)} = r^{Q^2} e^{-Qx} \leq r^{\frac12 Q^2} e^{-Qx}.\]
Finally, we combine~\eqref{eq-i},~\eqref{eq-shift} and~\eqref{eq-gaussian-bound} to upper bound the $i$th term of~\eqref{eq-inductive-bound}. This upper bound is a sum over $r$ of terms of the form $r^{\text{power}}e^{-c_kx}$ where the power is
\[
k \gamma Q - \frac{1}{2} \gamma^2 k^2 -2 + c_k (c_k - Q) -\frac12(c_k-Q)^2+\frac12Q^2=  \frac12 Q^2 - 2>0.
\]
So we can bound the $i$th term of~\eqref{eq-inductive-bound} by a constant times
\[ \sum_{r = e^{-m}, m\geq0} r^{\frac{Q^2}2 - 2} e^{-c_k x} = O(e^{-c_kx}).\] 
\medskip

\noindent \textbf{Case 2: $i \geq 4/\gamma^2$ and $k - i < 4/\gamma^2$.}  By the inductive hypothesis we can bound the $i$th term of~\eqref{eq-inductive-bound} by a constant times
\begin{align*}
&\sum_{r = e^{-m}, m \geq 0} r^{k \gamma Q - \frac{1}{2} \gamma^2 k^2 -2} \E \left[  e^{-c_i(X_r  + x + c_k \log r^{-1})} 1_{X_r + x + c_k \log r^{-1} \leq 0} \right]  \\
=&\sum_{r = e^{-m}, m \geq 0} r^{k \gamma Q - \frac{1}{2} \gamma^2 k^2 -2 + c_ic_k} e^{- c_ix} \E \left[  e^{-c_iX_r} 1_{X_r + x + c_k \log r^{-1} \leq 0} \right] \\
=&\sum_{r = e^{-m}, m \geq 0} r^{k \gamma Q - \frac{1}{2} \gamma^2 k^2 -2 + c_ic_k - \frac12 c_i^2} e^{- c_ix} \P \left[  X_r \leq -((c_k - c_i)\log r^{-1} + x) \right] \\
\leq& \sum_{r = e^{-m}, m \geq 0} r^{k \gamma Q - \frac{1}{2} \gamma^2 k^2 -2 + c_ic_k - \frac12 c_i^2 + \frac12 (c_k - c_i)^2} e^{- c_kx} \\
=& \sum_{r = e^{-m}, m \geq 0} r^{\frac12Q^2 - 2} e^{- c_kx}  = O(e^{-c_k x}).
\end{align*}
Note that by symmetry Case 2 also settles the case where $i < 4/\gamma^2$ and $k-i \geq 4/\gamma^2$.
\medskip

\noindent\textbf{Case 3: $i, k-i < 4/\gamma^2$.} By the inductive hypothesis we can bound the $i$th term of~\eqref{eq-inductive-bound} by a constant times
\begin{align*}
\sum_{r = e^{-m}, m \geq 0} r^{k \gamma Q - \frac{1}{2} \gamma^2 k^2 -2} \P \left[  X_r \leq -(c_k \log r^{-1} + x) \right] 
\leq& \sum_{r = e^{-m}, m \geq 0} r^{k \gamma Q - \frac{1}{2} \gamma^2 k^2 -2 + \frac12 c_k^2} e^{- c_kx} \\
=& \sum_{r = e^{-m}, m \geq 0} r^{\frac12Q^2 - 2} e^{- c_kx}  = O(e^{-c_k x}).
\end{align*}
This completes the proof.
\end{proof}

The proof of Proposition~\ref{prop-circle-moment} depends on the exponent $\frac12 Q^2 - 2 = \frac12(\frac2\gamma - \frac\gamma2)^2$ being positive. If we make a slight perturbation to our definitions, so long as the resulting exponent is still positive, we get a variant of Proposition~\ref{prop-circle-moment}. In particular, for $\delta > 0$, we define $P^{a,x,\delta}_K$ similarly to $P^{a,x}_K$ by replacing the inequality~\eqref{eq-cluster-ineq}  with $\psi + \eta + x \leq (Q+\delta)(m-a)$, and define $u^{n,\delta}_k$ analogously to \eqref{eq:EstimateStarScale} with $P^{0,x,\delta}_K.$ We record the following result as a corollary since the proof follows the same steps as in the proof of  Proposition~\ref{prop-circle-moment}.

\begin{Cor}
\label{Cor-Variant}
For $k \geq 1$ and $n \geq 1$, for $\delta$ small enough, there exist constants $C_{n,k,\delta}$ and $c_{k, \delta}$ such that,
\[u^{n,\delta}_k(x) \leq C_{n,k,\delta} e^{- c_{k,\delta} x} \quad \textrm{for all }x\in\R \text{ when }k \geq 4/\gamma^2, \]
and 
\[u^{n,\delta}_k(x) \leq C_{n,k,\delta} \quad \quad \textrm{for all }x\in\R\text{ when }k < 4/\gamma^2. \]
Furthermore, $\lim_{\delta\to 0}c_{k, \delta} = k\gamma - Q$  for fixed $k$.
\end{Cor}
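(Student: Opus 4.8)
\emph{Proof plan.} The plan is to rerun the chain of results leading to Proposition~\ref{prop-circle-moment} --- namely Lemma~\ref{lem:P}, Lemma~\ref{Lem:InductiveRelation}, and the induction in the proof of Proposition~\ref{prop-circle-moment} --- with the single change that the threshold $Q(m-a)$ in~\eqref{eq-cluster-ineq} is replaced by $(Q+\delta)(m-a)$, keeping careful track of where $\delta$ enters. Throughout, write $Q_\delta := Q+\delta$ and $c_{k,\delta} := k\gamma - Q_\delta$, so that $c_{k,\delta} \to k\gamma - Q$ as $\delta \to 0$; this already gives the final assertion of the corollary.

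First I would record the $\delta$-analogue of Lemma~\ref{lem:P}. Monotonicity, the scaling identity $P^{\log r^{-1},x,\delta}_{rz_1,\dots,rz_k} = P^{0,x,\delta}_{z_1,\dots,z_k}$ for $r=e^{-m}$, and translation invariance are unchanged, since their proofs use only the invariance and scaling of the $\phi_n$'s. For the Markov decomposition one inspects the first split $K = I_K \cup J_K$ at scale $r = e^{-m}$: the root inequality $\psi + \eta + x \leq Q_\delta m$ reads $X_r + mk\gamma + x \leq Q_\delta m$, i.e.\ $X_r + x + c_{k,\delta}\log r^{-1} \leq 0$ with $X_r = \phi_{0,m}(\mathrm{Left}(K)) \sim \mc{N}(0,\log r^{-1})$, while at a non-root vertex in the genealogy of $I_K$ the inequality $\psi + \eta + x \leq Q_\delta b$ rearranges, exactly as in the proof of Lemma~\ref{lem:P}, to $\psi' + \eta' + (X_r + x + c_{k,\delta}\log r^{-1}) \leq Q_\delta(b-m)$. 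Writing $y_r := X_r + x + c_{k,\delta}\log r^{-1}$, this yields
\[
P^{0,x,\delta}_K = \E\!\left[ 1_{y_r \leq 0}\; P^{\log r^{-1},\, y_r,\,\delta}_{I_K}\; P^{\log r^{-1},\, y_r,\,\delta}_{J_K} \right].
\]
Feeding this into the proof of Lemma~\ref{Lem:InductiveRelation} changes nothing except that the indicator and the propagated argument now carry $c_{k,\delta}$ in place of $c_k$; the deterministic prefactor $r^{k\gamma Q - \frac12\gamma^2 k^2 - 2}$ is untouched, since it comes entirely from the Lebesgue integral and the change of domain, not from $P$. So we obtain the inductive inequality for $u^{n,\delta}_k(x)$ with $c_k$ replaced by $c_{k,\delta}$ throughout.

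Then I would run the induction on $k$ exactly as in the proof of Proposition~\ref{prop-circle-moment}. The base case $k < 4/\gamma^2$ is the bound $P^{0,x,\delta}_K \leq 1$ together with the finiteness recalled in Section~\ref{sec-euc-vol}, uniformly in $\delta$ and $x$. For $k \geq 4/\gamma^2$ I repeat the three cases of that proof; in each, after the same Cameron--Martin shift and the Gaussian tail bound $\P[X_r \leq -(Q_\delta\log r^{-1} + x)] \leq r^{Q_\delta^2/2}e^{-Q_\delta x}$ (the one-line argument of~\eqref{eq-gaussian-bound} with $Q_\delta$ in place of $Q$), the $i$-th summand becomes $e^{-c_{k,\delta}x}$ times $\sum_{r} r^{P_0 + \frac12 c_{k,\delta}^2}$, where $P_0 := k\gamma Q - \frac12\gamma^2 k^2 - 2$; a short computation using $Q - k\gamma = -c_k$ gives $P_0 + \frac12 c_{k,\delta}^2 = \frac{(Q+\delta)^2}{2} - 2 - k\gamma\delta = (\frac{Q^2}{2} - 2) - c_k\delta + \frac{\delta^2}{2}$.

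The one step that actually needs checking is that this exponent stays positive, so that the geometric sum converges: since $\frac{Q^2}{2} - 2 = \frac12(\frac2\gamma - \frac\gamma2)^2 > 0$ for $\gamma \in (0,2)$ and the expression is continuous in $\delta$, it remains positive for all $\delta \in (0,\delta_0(k))$ for some $\delta_0(k) > 0$ --- the allowed smallness of $\delta$ thus depends on $k$, which is harmless for a fixed-$k$ statement, and in the mixed cases the factor whose index is below $4/\gamma^2$ is controlled by the $\delta$-free base case, so no further smallness is needed there. Summing over $i = 1,\dots,k-1$ then gives $u^{n,\delta}_k(x) \leq C_{n,k,\delta}e^{-c_{k,\delta}x}$ for $k \geq 4/\gamma^2$, closing the induction. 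I do not expect any genuine obstacle: the argument is essentially bookkeeping, and the only delicate point is verifying that the perturbed exponent $\frac{(Q+\delta)^2}{2} - 2 - k\gamma\delta$ remains positive for $\delta$ small (depending on $k$).
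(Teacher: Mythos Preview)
Your proposal is correct and follows exactly the approach the paper intends: the paper records Corollary~\ref{Cor-Variant} without proof, stating only that ``the proof follows the same steps as in the proof of Proposition~\ref{prop-circle-moment},'' and you have carried out precisely those steps with the correct bookkeeping. Your computation of the perturbed exponent $P_0 + \tfrac12 c_{k,\delta}^2 = \tfrac{Q^2}{2} - 2 - c_k\delta + \tfrac{\delta^2}{2}$ is accurate (and uniform across the three cases), and your identification of the only nontrivial point --- positivity of this exponent for $\delta$ small depending on $k$ --- matches the paper's remark that the argument ``depends on the exponent $\tfrac12 Q^2 - 2$ being positive'' and that a slight perturbation preserves this.
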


 \begin{Rem}\label{rem-delta}  Alternatively, one could modify the definition of $u^n_k(x)$ in~\eqref{eq:EstimateStarScale} to have a different denominator $|z_i - z_j|^{\gamma^2 + \delta}$. Namely, by setting
\[
\hat{u}^{n,\delta}_k(x) := \int_{B_n(0)^k} \frac{P_K^{0, x}}{\prod_{i < j} |z_i-z_j|^{\gamma^{2}+\delta}} 1_{s(K) \leq e} dz_K,
\]
the statement of Corollary~\ref{Cor-Variant} applies to $\hat{u}^{n,\delta}_k(x)$ instead of $u^{n,\delta}_k(x)$.
\end{Rem}

\subsection{Moment bounds for the whole-plane GFF} \label{sec-positive-whole-plane}

In this section, we use our previous estimate (Proposition~\ref{prop-circle-moment} or its variant Corollary~\ref{Cor-Variant}) to obtain the moment bounds for a whole-plane GFF $h$ such normalized such that $h_1(0)=0$ and therefore prove Proposition \ref{Prop:MomentsWholePlane}.  Additionally, in this section we write $C$ or $C_{k,\delta}$ to represent large constants depending only on $k$ and $\delta$, and may not necessarily represent the same constant in different contexts or equations.

\subsubsection*{Proxy estimate for whole-plane GFF}

Recall the notation $A_{s,r} := B_r(0) \setminus \ol{B_{s}(0)}$ for $0 < s < r$. We introduce the following proxy
\begin{equation}
\label{eq:DefProxy}
P_h^{r,d} := \{ z \in \C:  D_h(z, \partial B_{r/4}(z)) \leq d \}.
\end{equation}
The set $P_h^{r,d}$ contains points whose ``local distances'' are small. We work with $P_h^{r,d}$ because the event $z \in P_h^{r,d}$ depends only on the field $h|_{B_{r/4}(z)}$, and is thus more tractable than the event $z \in \cB_1(0; D_h)$ (which depends on the field in a more ``global'' way). Moreover we have $\cB_1(0;D_h) \cap A_r \subset P_h^{r,1} \cap A_r$, so to bound from above $\mu_h(\cB_1(0; D_h))$ it suffices to bound from above the  volume of the proxy set. We emphasize that $P_h^{r,d}$ is different from the quantity $P_K^{a,x}$ introduced in \eqref{eq-cluster-ineq}: the former is associated with a field $h$ and is considered on the full plane without restriction; the latter is associated with $\star$-scale invariant fields, and the capital letter $K$ refers to a finite number of points where the condition is localized.

\begin{Prop}
\label{Prop:EstimateGFFbounded}
Let $h$ be a whole-plane GFF such that $h_1(0) =0$. For $k \geq 4/\gamma^2$, $\delta \in (0,1/2)$, there exists a constant $C_{k,\delta}$ such that for all $x \in \R$,
\[
\E \left[\mu_h \left(B_{10}(0) \cap P_h^{1,e^{-\xi x}} \right)^{k} \right] \leq C_{k,\delta} e^{- c_{k,\delta} x},
\]
where we recall that $c_k = k \gamma -Q$ and $c_{k,\delta} \to c_k$ as $\delta \to 0$. 
\end{Prop}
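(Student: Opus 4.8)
The plan is to relate the volume of the proxy set $P_h^{1,e^{-\xi x}} \cap B_{10}(0)$ to the $\star$-scale quantity $u_k^{n,\delta}(x)$ controlled by Corollary~\ref{Cor-Variant}, via a Cameron--Martin argument applied to the $k$-point moment. First I would expand the $k$th moment as an integral over $k$-tuples:
\[
\E\left[\mu_h\left(B_{10}(0)\cap P_h^{1,e^{-\xi x}}\right)^k\right] = \int_{B_{10}(0)^k} \E\left[\prod_{i=1}^k \ind_{z_i \in P_h^{1,e^{-\xi x}}}\, \prod_{i=1}^k e^{\gamma h_{\eps}(z_i)}\eps^{\gamma^2/2}\right] dz_K
\]
(understood as an $\eps\to 0$ limit of the mollified measure), and then apply Girsanov to absorb the Gaussian weight $\prod_i e^{\gamma h_\eps(z_i)}$ into a shift of the field by $\gamma\sum_j \Cov(h(z_j),h(\cdot))$. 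This produces the factor $\exp(\gamma^2\sum_{i<j}\Cov(h(z_i),h(z_j))) \asymp \prod_{i<j}|z_i-z_j|^{-\gamma^2}$ in front, and replaces the constraint $z_i \in P_h^{1,e^{-\xi x}}$ by the same constraint for the shifted field $h + \gamma\sum_j\Cov(h(z_j),h(\cdot))$, i.e.\ for a GFF plus $\gamma$-log singularities at each $z_j$. This matches the structure of~\eqref{eq-heuristic-moment} and~\eqref{eq:integral}.

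Next I would pass from the GFF to the $\star$-scale invariant field using Proposition~\ref{Prop:StarScaleProp}, writing $h = g + \phi$ with $g$ continuous; since $g$ is (locally) bounded continuous, Weyl scaling (Axiom~\hyperref[axiom-weyl]{III}) shows that including $g$ only changes $D_h$-distances by a bounded multiplicative factor, so it suffices to work with $\phi + \gamma\sum_j\log|z_j-\cdot|^{-1}$ at the cost of absorbing $e^{\gamma\|g\|_\infty}$-type constants (one should truncate on a high-probability event for $\|g\|$ on $B_{10}(0)$, controlled e.g.\ by a union bound, and handle the complementary event by Cauchy--Schwarz against a crude moment bound). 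Then, on the event $z_i \in P_h^{1,e^{-\xi x}}$, the distance bound $D_h(z_i,\partial B_{1/4}(z_i)) \le e^{-\xi x}$ forces — via annulus-crossing lower bounds (Proposition~\ref{prop-dist}) applied at every dyadic scale $\eps\in(0,1/2)$ around $z_i$ and around the cluster structure — the circle-average inequalities~\eqref{eq:CondCircle} with the extra parameter essentially $x$ (up to the $\delta$-slack and additive constants from $g$ and from the $q$-term in the $\star$-scale kernel). Encoding these inequalities along the hierarchical tree $T_K^0(\{\phi\})$ from the splitting procedure gives exactly the event whose probability is $\le P_K^{0,x-O(1)}$, or with the $\delta$-enlarged threshold, $\le P_K^{0,x-O(1),\delta}$. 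Combining with the $\prod_{i<j}|z_i-z_j|^{-\gamma^2}$ prefactor and integrating over $B_{10}(0)^k \subset B_n(0)^k$ for suitable $n$ (using the $\ind_{s(K)\le e}$ constraint, automatic since the cluster sits in $B_{10}(0)$) bounds the moment by a constant times $u_k^{n,\delta}(x - O(1))$, and Corollary~\ref{Cor-Variant} gives $\le C_{k,\delta}e^{-c_{k,\delta}x}$, with $c_{k,\delta}\to k\gamma-Q$ as $\delta\to 0$.

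The main obstacle I anticipate is the rigorous passage from ``$z_i\in P_h^{1,e^{-\xi x}}$ for all $i$'' to the tree inequalities~\eqref{eq-cluster-ineq}: one needs, for each cluster $S$ in the hierarchical decomposition of $K$, to produce an annulus at Euclidean scale $\approx s(S)$ separating $S$ from its complement within the relevant ball, such that crossing it is forced to be cheap, and then to convert the $D_h$-distance lower bound from Proposition~\ref{prop-dist} (which holds only with superpolynomially high probability, uniformly in the scale) into a \emph{deterministic} pointwise inequality on circle averages valid simultaneously over all $O(k\log(1/\text{scale}))$ relevant (cluster, scale) pairs. This requires a careful union bound over the (countably many dyadic) scales and over the finitely many clusters, with the superpolynomial decay beating the polynomial number of scales; the error event must then be discarded using Cauchy--Schwarz against a finite moment of $\mu_h(B_{10}(0))^{2k}$ (finite for the whole-plane GFF by standard GMC estimates, since $B_{10}(0)$ is a fixed bounded set and one can reduce to the $p<4/\gamma^2$ regime by Hölder after splitting into small pieces, or simply cite that all negative-exponent contributions are irrelevant here and the positive moments on a bounded set exist for the relevant range after the Girsanov tilt is \emph{not} applied). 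A secondary technical point is handling the $\eps\to 0$ limit and Fubini interchange carefully, and controlling the discrepancy between the $\gamma$-log singularity weight $\gamma\sum_j\Cov(h(z_j),h(\cdot))$ and its circle-average approximation $\gamma\sum_j \dashint \log|\cdot-z_j|^{-1}$ appearing in~\eqref{eq:CondCircle} — but these contribute only bounded multiplicative errors absorbed into $C_{k,\delta}$ and a harmless $O(1)$ shift in $x$, which is why the $\delta$-enlarged version (Corollary~\ref{Cor-Variant}) is used rather than Proposition~\ref{prop-circle-moment} directly.
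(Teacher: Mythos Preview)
Your approach is essentially the same as the paper's: truncate on a global regularity event (controlling annulus-crossing distances, the continuous part $g=h-\phi$, gradients of the coarse fields, and fine-field circle averages), use this to pass from the metric constraint $z_i\in P_h^{1,e^{-\xi x}}$ to the tree inequalities~\eqref{eq-cluster-ineq}, apply Cameron--Martin to introduce the $\gamma$-log singularities, compare the covariance shifts to the $\eta$-labels (this is Lemma~\ref{lem-cov}), and invoke Corollary~\ref{Cor-Variant}.

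There is one genuine gap in your handling of the complementary event. You propose to discard the event where regularity fails by Cauchy--Schwarz against $\E[\mu_h(B_{10}(0))^{2k}]$, and justify this as ``finite for the whole-plane GFF by standard GMC estimates.'' This is false: for $k\ge 4/\gamma^2$ (precisely the regime of the proposition) one has $2k\ge 8/\gamma^2 > 4/\gamma^2$, so $\mu_h(B_{10}(0))$ does \emph{not} have a finite $2k$th moment. Your fallback suggestions (H\"older to reduce to small moments, splitting into pieces) do not help either, since the issue is exactly that high moments of the Euclidean-ball volume diverge. The paper closes this gap with a bootstrap: from the truncated moment bound~\eqref{eq:Step2Bound} and Markov's inequality one first extracts a tail estimate depending on $M$, optimizes over $M$ to get $\P[\mu_h(B_{10}(0)\cap P_h^{1,e^{-\xi x}})\ge t]\lesssim t^{-k+o(1)}e^{-c x}$, and integrates to obtain a finite $(k-\delta)$th moment of the \emph{proxy-restricted} volume for \emph{every} $k$. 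In particular this furnishes the needed exponential-in-$x$ bound on the $2k$th moment of $\mu_h(B_{10}(0)\cap P_h^{1,e^{-\xi x}})$, after which Cauchy--Schwarz (against $\P[E_{\delta,M}^c]^{1/2}$, which decays superpolynomially in $M$) legitimately removes the truncation.
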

In fact, for $x>0$ it is possible, by using tail estimates for side-to-side distances, to show that the decay is Gaussian in $x$. We do not need this result so we omit it.

\begin{proof}

In order to keep the key ideas of the proof transparent, we postpone the proofs of some intermediate elementary lemmas to the end of this section. 
Consider the collection of balls
\begin{equation}
\label{Def:CollectionBalls}
\mathfrak{B} = \left\{ B_{e^{-\ell}}(z) \: : \: \ell \in \mathbb N_0, z \in e^{-\ell - 2} \Z^2, B_{e^{-\ell}}(z) \cap B_{10}(0) \neq \emptyset \right\}.
\end{equation}

We will work with three events in the proof: $E_{\delta,M}$ is a global regularity event, $F_{K,\delta,M}$ is an approximation of the event $\{ K \subset P_h^{1,e^{-\xi x}}\}$ which replaces the conditions on the metric by conditions on the field, and $F'_{K,\delta,M}$ is a variant of $F_{K,\delta,M}$ where $\gamma$-log singularities are added to the field at the points $z \in K$ (this is related to $P^{0,x}_K$). Here, $M$ is a parameter that is sent to $+\infty$ and $\delta$ is a small positive parameter.
The integer $k$ is fixed throughout the proof, so the events are allowed to depend on $k$ and we omit it in the notation.

\smallskip

\noindent\textbf{Step 1: truncating over a global regularity event $E$.} The event $E_{\delta, M}$ is given by the following criteria:
\begin{enumerate}
\item
\label{Reg:1} For all $\ell \geq 0$, the annulus crossing distance of $B \backslash 0.99B$ is at least $M^{-\xi} e^{-\xi  \ell^{\frac{1}{2}+\delta}  } e^{-\xi Q \ell}  e^{\xi \dashint_{\partial B} h}$ for all $B \in \mathfrak{B}$ with radius $e^{-\ell}$.

\item
\label{Reg:2}  For all  integers $\ell > \ell' \geq 0$, for all $B \in \mathfrak{B}$ of radius $e^{-\ell-2}$, we have $e^{-\ell} \sup_{6 k B} |\nabla \phi_{\ell',\ell}| \leq  \ell^{\frac{1}{2}+\delta} + \log M$.

\item
\label{Reg:4}
For all $\ell \geq 0$ and all $B \in \mathfrak{B}$ of radius $e^{-\ell-2}$, $\dashint_{\partial B} \phi_{\ell,\infty} \leq   \ell^{\frac{1}{2}+\delta}  + \log M $.

\item
\label{Reg:3}
$ \norme{\phi - h}_{\D}  = \norme{g}_{\D} \leq  \log M$.
\end{enumerate}
As we see later in Lemma~\ref{Lem:RegularityEvent}, for fixed $\delta$ the event $E_{\delta, M}$ occurs with superpolynomially high probability in $M$ as $M \to \infty$. Therefore, when looking at moments of $\mu_h (\mathcal{B}_1(0;D_h) \cap \D)$, one can restrict to moments truncated on $E_{\delta,M}$.

\medskip

By using Property \ref{Reg:3} of $E_{\delta,M}$ and  the definition of $\mu_{\phi}$ as a Gaussian multiplicative chaos (see Section \ref{sec-GMC}), we get
\[
\E\left[ \mathbbm1_{E_{\delta, M}} \mu_h\left( B_{10}(0) \cap P_h^{1,e^{-\xi x}} \right)^k \right]  \leq C_k M^{\gamma k} \E\left[ \mathbbm1_{E_{\delta, M}} \mu_{\phi}\left(B_{10}(0) \cap P_h^{1,e^{-\xi x}} \right)^k \right]
\]
and 
\begin{multline*}
\E\left[ \mathbbm1_{E_{\delta, M}} \mu_{\phi}\left(B_{10}(0) \cap P_h^{1,e^{-\xi x}} \right)^k \right] \\   =  \E \left[ \int_{B_{10}(0)^k} \mathbbm1_{E_{\delta, M}} \mathbbm 1\{z_i \in  P_h^{1,e^{-\xi x}} \text{ for all }i\} \mu_{\phi}(dz_1) \dots \mu_{\phi}(dz_k) \right] \\
\leq   \E \left[ \int_{B_{10}(0)^k} \mathbbm 1_{F_{K,\delta,M}} \mu_{\phi}(dz_1) \dots \mu_{\phi}(dz_k) \right],
\end{multline*}
where the event $F_{K,\delta,M}$ is defined in the following lemma.  In the first inequality above, the constant $C_k$ appears from the difference of definition between Gaussian multiplicative chaos measures and the Liouville quantum gravity measure; the former one is defined by renormalizing by a pointwise expectation whereas the latter one by $\eps^{\frac{\gamma^2}{2}}$.
\begin{Lem}
\label{Lem:ClaimEvent}
For $k \geq 2$, there exists a constant $C$ so that for any $k$-tuple of points $K = \{z_1, \dots, z_k\}  \subset \D$ we have the inclusion of events
\[
E_{\delta, M} \cap \{z_i \in P_h^{1,e^{-\xi x}} \text{ for all }i = 1,\dots, k \} \subset F_{K,\delta,M}
\]
where $F_{K,\delta,M}$ is the event that for all vertices $(S,m,\psi, \eta)$ of $T^0_K(\{\phi\})$ we have 
\begin{equation}
\label{eq:CdtEprior}
\psi + x < Q m + Cm^{\frac{1}{2}+\delta}  + C\log M.
\end{equation}
\end{Lem}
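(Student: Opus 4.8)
The plan is to fix the $k$-tuple $K=\{z_1,\dots,z_k\}$ and, on the event $E_{\delta,M}\cap\{z_i\in P_h^{1,e^{-\xi x}}\text{ for all }i\}$, verify the bound \eqref{eq:CdtEprior} at each internal vertex $(S,m,\psi,\eta)$ of $T^0_K(\{\phi\})$ separately. So fix such a vertex, and let $S_1=K\supsetneq S_2\supsetneq\cdots\supsetneq S_n=S$ be the path of clusters from the root, with scales $m_i=\lceil\log s(S_i)^{-1}\rceil$ and $m_0:=0$; by Lemma~\ref{lem-separation} we have $0\le m_1\le\cdots\le m_n=m$, and by \eqref{eq-sep-diam} each $S_i$ has Euclidean diameter $<ke\,e^{-m_i}$. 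I would work with the point $z:=\mathrm{Left}(S)$, which lies in every $S_i$ and, being one of the $z_j$'s, satisfies $z\in P_h^{1,e^{-\xi x}}$, i.e.\ $D_h(z,\partial B_{1/4}(z))\le e^{-\xi x}$.

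\textbf{The crossing step.} Assume first that $m$ exceeds a fixed threshold $m_\star$ depending only on $k$ (e.g.\ $m_\star=2$). I would pick $B\in\mathfrak B$ of radius $e^{-m}$ whose center lies within $e^{-m-2}$ of $z$; then $z\in0.99B$ and $B\subset B_{1/4}(z)$, so any continuous path from $z$ to $\partial B_{1/4}(z)$ must cross the annulus $B\setminus0.99B$. Hence the $D_h$-crossing distance of $B\setminus0.99B$ is at most $D_h(z,\partial B_{1/4}(z))\le e^{-\xi x}$, and the annulus-crossing lower bound (property~\ref{Reg:1} of $E_{\delta,M}$) gives $M^{-\xi}e^{-\xi m^{1/2+\delta}}e^{-\xi Qm}e^{\xi\dashint_{\partial B}h}\le e^{-\xi x}$, that is,
\[
\dashint_{\partial B}h+x\ \le\ Qm+m^{1/2+\delta}+\log M .
\]

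\textbf{From the circle average of $h$ to $\psi$.} Write $h=g+\phi_{0,m}+\phi_{m,\infty}$. By \eqref{eq:Def-psi}, $\psi=\sum_{i=1}^n\phi_{m_{i-1},m_i}(\mathrm{Left}(S_i))$, while additivity of the $\phi_{a,b}$ over consecutive scales gives $\sum_{i=1}^n\phi_{m_{i-1},m_i}(z)=\phi_{0,m}(z)$; hence
\[
\bigl|\psi-\phi_{0,m}(z)\bigr|\ \le\ \sum_{i=1}^n|\mathrm{Left}(S_i)-z|\cdot\sup|\nabla\phi_{m_{i-1},m_i}|\ \le\ C\sum_{i=1}^n\bigl(m_i^{1/2+\delta}+\log M\bigr)\ \le\ C\bigl(m^{1/2+\delta}+\log M\bigr),
\]
where the middle inequality uses the coarse-field gradient bound (property~\ref{Reg:2}), $|\mathrm{Left}(S_i)-z|<ke\,e^{-m_i}$, and chaining over $O_k(1)$ balls of $\mathfrak B$, and the last uses $m_i\le m$ and $n\le k$. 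The same gradient bound (now with $\ell'=0$, $\ell=m$) gives $|\dashint_{\partial B}\phi_{0,m}-\phi_{0,m}(z)|\le C(m^{1/2+\delta}+\log M)$; property~\ref{Reg:3} gives $|\dashint_{\partial B}g|\le\log M$; and property~\ref{Reg:4} (together with the fact that $\dashint_{\partial B}\phi_{m,\infty}$ is a Gaussian of $O(1)$ variance, which supplies the matching lower bound on $E_{\delta,M}$) gives $|\dashint_{\partial B}\phi_{m,\infty}|\le C(m^{1/2+\delta}+\log M)$. Combining these with the crossing step and $\dashint_{\partial B}h=\dashint_{\partial B}g+\dashint_{\partial B}\phi_{0,m}+\dashint_{\partial B}\phi_{m,\infty}$ yields $\psi+x\le Qm+C(m^{1/2+\delta}+\log M)$, which is \eqref{eq:CdtEprior} at this vertex after enlarging the constant.

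\textbf{Main obstacle and loose ends.} The hard part will be the second step: $\psi$ is a telescoping sum of coarse-field increments evaluated at the \emph{moving} base points $\mathrm{Left}(S_i)$ along the root-to-vertex path, and it must be matched to a single circle average of $h$ at a single Euclidean scale near the cluster, while every error term is kept of the form $C(m^{1/2+\delta}+\log M)$ with $C$ depending only on $k$, and while the slightly offset radii of the balls of $\mathfrak B$ appearing in properties~\ref{Reg:1}--\ref{Reg:4} are reconciled. The remaining loose end is the finitely many vertices of scale $m<m_\star$, where $\psi$ involves only $O_k(1)$ of the fields $\phi_j$; there \eqref{eq:CdtEprior}, with its $C\log M$ of slack, is checked by a direct elementary argument (using property~\ref{Reg:1} at the fixed small scale $e^{-m_\star}$ to control the relevant coarse-field values near $z$).
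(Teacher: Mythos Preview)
Your proposal is correct and follows essentially the same route as the paper's proof: pick a point $z\in S$ (the paper allows any point, you take $\mathrm{Left}(S)$), use Property~\ref{Reg:1} on a ball $B\in\mathfrak B$ of radius $\approx e^{-m}$ near $z$ to convert the metric bound into a circle-average bound $\dashint_{\partial B}h+x\le Qm+O(m^{1/2+\delta}+\log M)$, then decompose $h=g+\sum_i\phi_{m_{i-1},m_i}+\phi_{m,\infty}$ and use Properties~\ref{Reg:2}--\ref{Reg:3} to pass from the circle average to $\psi$. The only cosmetic differences are that the paper takes $B$ of radius $e^{-m-2}$ (avoiding your separate small-$m$ case) and compares each $\dashint_{\partial B}\phi_{m_{i-1},m_i}$ directly to $\phi_{m_{i-1},m_i}(\mathrm{Left}(S_i))$ rather than routing through $\phi_{0,m}(z)$; both the paper and you implicitly need the two-sided version of Property~\ref{Reg:4}.
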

Essentially, Lemma~\ref{Lem:ClaimEvent} holds because $K \subset P^{1,e^{-\xi x}}_h$ implies that distances near each cluster are small. Then for each cluster, Property~\ref{Reg:1} of $E_{\delta,M}$  lets us convert bounds on distances to bounds on circle averages of $h$, Property~\ref{Reg:2} lets us replace the coarse field circle average with the coarse field evaluated at any nearby point, and Properties~\ref{Reg:4} and~\ref{Reg:3} allow us to neglect the fine field and the random continuous function $h - \phi$; this gives~\eqref{eq:CdtEprior}.

\smallskip

\noindent\textbf{Step 2: shifting LQG mass as $\gamma$-singularties.} We then use the following lemma to replace the terms $\mu_{\phi}(dz_i)$'s by $dz_i$ and $\gamma$-singularities. 
\begin{Lem}
\label{Lem:ShiftSingularities}
If $f$ is a bounded nonnegative measurable function, and $C_{a,b}$ are the covariances of $\phi_{a,b}$ (defined as in~\eqref{Def:kernel}), we have
\begin{align*}
& \E \left[ \int_{B_{10}(0)^k} f(\phi,z_1, \dots, z_k, \phi_1, \dots, \phi_{\ell}, \dots)  \mu_{\phi}(dz_1) \dots \mu_{\phi}(dz_k)\right] \\
& \leq   \int_{B_{10}(0)^k} \E \left[ f (\phi+ \gamma \sum_{i \leq k} C_{0,\infty}(\cdot, z_i), z_1, \dots, z_k, \phi_1 +  \gamma \sum_{i \leq k} C_{0,1}(\cdot, z_i), \dots, \phi_{\ell} + \gamma \sum_{i \leq k} C_{\ell-1,\ell}(\cdot, z_i), \dots ) \right] \\ 
& \times \exp \left(\frac{\gamma^2}{2}\sum_{i \neq j} C_{0,\infty} (z_i, z_j) \right)  dz_1, \dots dz_k .
\end{align*}
\end{Lem}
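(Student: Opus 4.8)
The plan is to recognize the claimed inequality as (a form of) the $k$-point Cameron--Martin / Girsanov identity for Gaussian multiplicative chaos, and to obtain it by proving the exact analogue at the level of a regularized field and then passing to the limit. Concretely, I would replace $\phi$ by the truncation $\phi^{(N)} := \phi_1 + \dots + \phi_N$, a continuous Gaussian field with $\E[\phi^{(N)}(z)^2] = N$, and set $\mu_N(dz) := e^{\gamma\phi^{(N)}(z) - \frac{\gamma^2}{2}N}\,dz$; by Kahane's theory $\mu_N$ is a nonnegative martingale converging weakly almost surely to $\mu_\phi$ (one could equally use the circle-average regularization of Section~\ref{sec-GMC}). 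By a monotone-class argument it suffices to treat $f$ bounded, nonnegative and continuous in $(z_1,\dots,z_k)$ for each fixed field configuration.

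For fixed $z_1,\dots,z_k \in B_{10}(0)$ the weight $\prod_{i=1}^k e^{\gamma\phi^{(N)}(z_i) - \frac{\gamma^2}{2}\E[\phi^{(N)}(z_i)^2]}$ is the normalized exponential of the centered Gaussian $L := \gamma\sum_i \phi^{(N)}(z_i)$, so the Cameron--Martin theorem (shifting the Gaussian configuration $(\phi_1,\phi_2,\dots)$ in the direction dual to $L$) gives the exact identity
\[ \E\Big[f(\phi,z_1,\dots,z_k,\phi_1,\dots)\prod_{i=1}^k e^{\gamma\phi^{(N)}(z_i) - \frac{\gamma^2}{2}\E[\phi^{(N)}(z_i)^2]}\Big] = e^{\frac{\gamma^2}{2}\sum_{i \ne j} C_{0,N}(z_i,z_j)}\, \E\big[f(\Phi^N, z_1,\dots,z_k, \Phi^N_1,\dots)\big], \]
where $\Phi^N := \phi + \gamma\sum_i C_{0,N}(\cdot, z_i)$ and $\Phi^N_\ell := \phi_\ell + \gamma\,\mathbbm 1_{\{\ell \le N\}}\sum_i C_{\ell-1,\ell}(\cdot, z_i)$. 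The shift decomposes scale by scale because the $\phi_\ell$ are independent, and it is self-consistent since $\Cov(\phi^{(N)}(w),\phi^{(N)}(z_i)) = C_{0,N}(w,z_i) = \sum_{\ell\le N} C_{\ell-1,\ell}(w,z_i)$. Integrating over $(z_1,\dots,z_k)\in B_{10}(0)^k$ by Tonelli (using $f\ge 0$) gives the statement of the lemma with $C_{0,N}, C_{0,1},\dots$ and $C_{0,N}(z_i,z_j)$ replacing $C_{0,\infty}, C_{0,1},\dots$ and $C_{0,\infty}(z_i,z_j)$.

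It remains to let $N\to\infty$. On the left, $\mu_N^{\otimes k}\to\mu_\phi^{\otimes k}$ weakly a.s., hence $\int f\,d\mu_N^{\otimes k}\to\int f\,d\mu_\phi^{\otimes k}$ a.s.; since $f\ge 0$, Fatou's lemma gives $\E[\int f\,d\mu_\phi^{\otimes k}] \le \liminf_N \E[\int f\,d\mu_N^{\otimes k}]$. On the right, for distinct $z_i$ one has $C_{0,N}(\cdot,z_i)\to C_{0,\infty}(\cdot,z_i)$, $C_{0,N}(z_i,z_j)\to C_{0,\infty}(z_i,z_j)$, $\mathbbm 1_{\{\ell\le N\}}\uparrow 1$, and the standard bound $C_{0,N}(z,z')\le C_{0,\infty}(z,z') + O(1)$ for $\star$-scale truncations; combined with $\|f\|_\infty<\infty$ this lets one identify the limit of the right-hand side with the claimed expression (immediate when the latter is infinite, and otherwise via dominated convergence using $\prod_{i\ne j}|z_i-z_j|^{-\gamma^2/2}$ as dominating density when $k<4/\gamma^2$, and a truncation/monotonicity argument in general). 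This produces the displayed inequality; note that one only asserts ``$\le$'' rather than the exact Girsanov equality precisely so that Fatou's lemma can be applied on the left, thereby avoiding integrability hypotheses — which fail for $k\ge 4/\gamma^2$ — that dominated convergence would require.

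The main obstacle is this passage to the limit, and in particular reconciling the \emph{singular} limiting shift $\gamma\sum_i C_{0,\infty}(\cdot,z_i)$, which has a $-\gamma\log|\cdot - z_i|$ singularity and — because $C_{0,\infty}(z_i,z_i)=+\infty$ — is not a Cameron--Martin admissible perturbation of the law of $\phi$ itself, with the genuine finite-level shifts $\gamma\sum_i C_{0,N}(\cdot,z_i)$: one cannot shift $\phi$ directly but must do so at the regularized level, carrying the shift coherently across all the scale fields $\phi_\ell$ on which $f$ depends, and only then take $N\to\infty$. That $\mu_\phi$ has infinite $k$-th moment over a fixed domain once $k\ge 4/\gamma^2$ is exactly what forces the one-sided bound.
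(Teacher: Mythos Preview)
Your proposal is correct and follows essentially the same route as the paper: regularize the field, apply Cameron--Martin at the regularized level, pass to the limit via Fatou on the left-hand side, and identify the limit on the right. The only cosmetic differences are that the paper uses the circle-average mollification $\phi_\eps$ rather than the scale truncation $\phi^{(N)}$, and it handles the limit on the right cleanly by first restricting to $C_\delta := \{\min_{i<j}|z_i-z_j|\ge\delta\}$ (where the covariance factors are uniformly bounded, so dominated convergence applies directly) and then sending $\delta\to 0$ by monotone convergence --- this is precisely the ``truncation/monotonicity argument'' you allude to.
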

We apply Lemma \ref{Lem:ShiftSingularities} with $f = 1_{F_{K,\delta,M}}$ and we get
\begin{align*}
\E \int_{B_{10}(0)^k} \mathbbm 1_{F_{K,\delta,M}} \mu_{\phi}(dz_1) \dots \mu_{\phi}(dz_k) 
 \leq  \int_{B_{10}(0)^k}\P[F'_{K,\delta,M}] \exp (\frac{\gamma^2}{2}\sum_{i \neq j} C_{0,\infty}(z_i,z_j)) dz_1 \dots dz_k ,
\end{align*}
where $F'_{K,\delta,M}$ is the event that in the labeled tree $T^0_K(\{\phi\})$, for any path from the root $(S_1, m_1, \psi_1, \eta_1)$ to $(S_n, m_n, \psi_n, \eta_n)$, we have
\eqb\label{eq-tilted}
\psi_n + \gamma \sum_{i=1}^{n} \sum_{z \in K} C_{m_{i-1},m_i}(z, \mathrm{Left}(S_i)) + x \leq  Q m_n + C m_n^{\frac{1}{2}+\delta}  + C \log M .
\eqe
Note that by Lemma~\ref{lem-cov} below,~\eqref{eq-tilted} implies that for each vertex $(S_n, m_n, \psi_n, \eta_n)$ we have
\eqb\label{eq-nicest}
\psi_n + \eta_n  + x \leq  (Q + \delta) m_n + C \log M + 2C.
\eqe
(The term $2C$ comes from Lemma \ref{lem-cov} and the bound $C m_n^{\frac{1}{2}+\delta} \leq \delta m_n + C$, using that $\delta\in(0,1/2)$.) Now, the probability that~\eqref{eq-nicest} occurs for each vertex is precisely $P_K^{0,x- C \log M - 2C,\delta}$, defined in just before the Corollary \ref{Cor-Variant}, so we conclude that $\P[F'_{K,\delta,M}] \leq P_K^{0,x - C \log M - 2C,\delta}$.

\begin{Lem}\label{lem-cov}
For $k \geq 2$, there exists $C_k$ such that for $K \in B_{10}(0)^k$, for any path from the root $(S_1, m_1, \psi_1, \eta_1)$ to $(S_n, m_n, \psi_n, \eta_n)$ in the labeled tree $T^0_K(\{\phi\})$ we have, writing $m_0 = 0$, 
\begin{multline*}
\left| \eta_n - \gamma \sum_{i=1}^{n}  \sum_{z \in K} C_{m_{i-1},m_i}(z, \mathrm{Left}(S_i))\right|  \\ 
=  \left|  \gamma \sum_{i=1}^{n} (m_{i} - m_{i-1}) |S_i| -   \gamma \sum_{i=1}^{n}  \sum_{z \in K} C_{m_{i-1},m_i}(z, \mathrm{Left}(S_i))\right|   < C. 
\end{multline*}
\end{Lem}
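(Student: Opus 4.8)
The plan is to prove the bound term by term along the path and then sum. Write the path as $S_1 = K \supseteq S_2 \supseteq \cdots \supseteq S_n$, set $w_i := \mathrm{Left}(S_i)$ and $m_i := \lceil \log s(S_i)^{-1}\rceil$ (with $m_0 = 0$), and note the two facts that drive everything: the kernel $C_{m_{i-1},m_i}(\cdot, w_i)$ only ``sees'' the points of $K$ lying in $S_i$, and on $S_i$ this kernel is within a $k$-dependent constant of $m_i - m_{i-1}$, which is exactly the per-point weight in $\eta_n = \gamma \sum_i (m_i-m_{i-1})|S_i|$. First I would record the elementary geometry. By Lemma~\ref{lem-separation} the separation distances are non-increasing down the path, so $0 = m_0 \le m_1 \le \cdots \le m_n$ (here we use that $T^0_K(\{\phi\})$ is defined, so $m_1 \geq 0$). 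Since $S_n$ is an internal vertex every $S_i$ has $|S_i| \ge 2$, and by~\eqref{eq-sep-diam} together with $s(S_i) < e^{1-m_i}$ we get $\diam S_i \le |S_i|\, s(S_i) < ke\, e^{-m_i}$; since $S_i \subseteq S_{i-1}$ we also have $\diam S_i \le \diam S_{i-1} < ke\, e^{-m_{i-1}}$.

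The second step is to show that a point $z \in K \setminus S_i$ contributes nothing, i.e.\ $C_{m_{i-1},m_i}(z,w_i) = 0$. Let $t \le i-1$ be the largest index with $z \in S_t$; then $z$ lies in the part of $S_t$ opposite to $S_{t+1}$ in the split of $S_t$, and since that split realizes distance exactly $s(S_t)$ while $w_i \in S_i \subseteq S_{t+1}$, we get $|z - w_i| \ge s(S_t) \ge e^{-m_t} \ge e^{-m_{i-1}}$. For every scale $\ell$ with $m_{i-1} < \ell \le m_i$ the field $\phi_\ell$ has range of dependence $e^{-\ell} \le e^{-m_{i-1}-1} < |z - w_i|$, so $\phi_\ell(z)$ and $\phi_\ell(w_i)$ are independent and $\E[\phi_\ell(z)\phi_\ell(w_i)] = 0$; summing over $\ell$ gives $C_{m_{i-1},m_i}(z,w_i) = 0$. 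Hence $\sum_{z\in K} C_{m_{i-1},m_i}(z,w_i) = \sum_{z\in S_i} C_{m_{i-1},m_i}(z,w_i)$.

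Third, for $z \in S_i$ I would use $\star$-scale self-similarity (Proposition~\ref{Prop:StarScaleProp}, items 4--6): from $(\phi_n(z))_z \eqd (\phi_1(e^{n-1}z))_z$ and the independence of the $\phi_n$ one gets $\phi_{m,\infty} \eqd \phi_{0,\infty}(e^m\,\cdot)$ for every integer $m \ge 0$, so $C_{m,\infty}(z,w) = C_{0,\infty}(e^m z, e^m w)$; combined with $C_{0,\infty}(z,w) = -\log|z-w| + q(z-w)$ ($q$ smooth) and $C_{0,m} = C_{0,\infty} - C_{m,\infty}$ this yields the identity $C_{0,m}(z,w) = m + q(z-w) - q(e^m(z-w))$. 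Applying it at $w = w_i$: since $z - w_i \in B_{20}(0)$ (both points lie in $B_{10}(0)$) and, by the diameter bounds above, $e^{m_i}(z - w_i), e^{m_{i-1}}(z-w_i) \in B_{ke}(0)$, continuity of $q$ gives $|C_{0,m_i}(z,w_i) - m_i|, |C_{0,m_{i-1}}(z,w_i) - m_{i-1}| \le C_k := 2\sup_{B_{\max(20,ke)}(0)}|q|$, whence $|C_{m_{i-1},m_i}(z,w_i) - (m_i-m_{i-1})| \le 2C_k$ for all $z \in S_i$. Combining with the previous step, $\left| \sum_{z\in K} C_{m_{i-1},m_i}(z,w_i) - (m_i-m_{i-1})|S_i| \right| \le 2k C_k$; and since the sizes $|S_i|$ strictly decrease the path has length $n \le k$, so summing over $i$ and multiplying by $\gamma$ gives $\left| \eta_n - \gamma\sum_{i=1}^n\sum_{z\in K}C_{m_{i-1},m_i}(z,w_i) \right| \le 2\gamma k^2 C_k$, which is the claim with an explicit constant depending only on $k$.

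The main obstacle is the separation estimate used in the second step: one must extract from the recursive splitting procedure that \emph{every} point of $K$ outside $S_i$ is at Euclidean distance at least $e^{-m_{i-1}}$ from $S_i$, uniformly, since this is precisely what lets the finite-range property of the fields $\phi_\ell$ at scales $\ell > m_{i-1}$ annihilate the entire contribution of such points. This relies on the monotonicity of separation distances along the hierarchy (Lemma~\ref{lem-separation}) and on carefully tracking which part of each split lies on the path. Everything else is routine: the third step rests on an exact identity together with boundedness of the fixed smooth function $q$ on a ball of radius $O(k)$, and the final summation has at most $k$ terms of size $O(k)$.
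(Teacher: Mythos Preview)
Your proof is correct and follows essentially the same approach as the paper's: both argue pointwise that $C_{m_{i-1},m_i}(z,w_i) = (m_i - m_{i-1})\mathbf{1}_{z \in S_i} + O(1)$, handling $z \notin S_i$ via the separation distance and finite-range dependence, and $z \in S_i$ via the $\star$-scale invariance $C_{m,\infty}(\cdot,\cdot) = C_{0,\infty}(e^m\cdot, e^m\cdot)$ together with $C_{0,\infty} = -\log|\cdot-\cdot| + q$. Your derivation of the exact identity $C_{0,m}(z,w) = m + q(z-w) - q(e^m(z-w))$ is a slightly cleaner packaging of the same computation the paper carries out in two steps, and your separation argument for $z \notin S_i$ (tracking the last index $t$ with $z \in S_t$) is a bit more explicit than the paper's ``by definition'' appeal, but the content is the same.
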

By Proposition \ref{Prop:StarScaleProp}, for $K \subset B_{10}(0)$ we have $\exp (\frac{\gamma^2}{2}\sum_{i \neq j} C_{0,\infty}(z_i, z_j))  \leq C \prod_{i < j} | z_i - z_j|^{-\gamma^2}$. Combining all of the above bounds yields
\[
\E\left[ \mathbbm1_{E_{\delta, M}} \mu_h\left( B_{10}(0) \cap P_h^{1,e^{-\xi x}} \right)^k \right]  \leq  C_k M^{\gamma k} \int_{B_{10}(0)^k} \frac{P_{z_1, \dots, z_k}^{0, x-C\log M - 2C,\delta}}{\prod |z_i - z_j |^{\gamma^2}} dz_1 \dots d z_k.
\]
Finally, by Corollary~\ref{Cor-Variant} we conclude that for all $x \in \R$ we have
\begin{equation}
\label{eq:Step2Bound}
\E\left[ \mathbbm1_{E_{\delta, M}} \mu_h\left( B_{10}(0) \cap P_h^{1,e^{-\xi x}} \right)^k \right]  \leq  C_{k,\delta} M^{C} e^{-c_{k,\delta}x}.
\end{equation}

\noindent\textbf{Step 3: concluding the proof.}  
By Markov's inequality, we get,
\begin{align}
\label{eq:inter0}
\Pro [\mu_h(B_{10}(0) \cap P_{h}^{1,e^{-\xi x}}) \geq t ]  & \leq \Pro [ E_{\delta,M}^c ] + \Pro [E_{\delta,M}, \mu_h(B_{10}(0) \cap P_{h}^{1,e^{-\xi x}}) \geq t  ] \nonumber \\
&  \leq  \Pro [ E_{\delta,M}^c ] +  t^{-k} \E [1_{E_{\delta,M}}  \mu_h(B_{10}(0) \cap P_{h}^{1,e^{-\xi x}})^k].
\end{align}
The second term is bounded by~\eqref{eq:Step2Bound}. To control the first term,  we use the following lemma.
\begin{Lem}
\label{Lem:RegularityEvent}
For fixed $\delta \in (0,1/2)$, the regularity event $E_{\delta,M}$ occurs with superpolynomially high probability as $M \to \infty$. 
\end{Lem}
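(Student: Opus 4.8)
\emph{Plan of proof.} The strategy is to treat the four defining conditions of $E_{\delta,M}$ separately, showing that each fails with probability superpolynomially small in $M$, and then take a union bound. Each condition is itself a conjunction over the balls $B\in\mathfrak B$ (and, for Property~\ref{Reg:2}, also over pairs of scales $\ell'<\ell$), so we will need per-ball tail bounds that survive a union over the $O(e^{2\ell})$ balls of radius $\asymp e^{-\ell}$ and over all $\ell\ge 0$. The room for this is exactly the $\ell^{1/2+\delta}$ slack built into every threshold: in each case the per-ball failure probability will be bounded by $\exp\!\big(-c(\log M+\ell^{1/2+\delta})^2\big)$, and since $\delta>0$ the exponent $(\ell^{1/2+\delta})^2=\ell^{1+2\delta}$ strictly dominates the $2\ell$ coming from the cardinality of $\mathfrak B$ at scale $\ell$, so the sum over scales converges and leaves a clean bound $\exp(-c(\log M)^2)$.

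Properties~\ref{Reg:2},~\ref{Reg:4} and~\ref{Reg:3} are handled by Gaussian concentration. Property~\ref{Reg:3} is immediate: $g=\phi-h$ is an a.s.\ continuous centered Gaussian field on a neighbourhood of $\ol{\D}$ (Proposition~\ref{Prop:StarScaleProp}(1)), so the Borell--TIS inequality gives $\P[\|g\|_{\D}>\log M]\le\exp(-c(\log M)^2)$ for $M$ large. For Properties~\ref{Reg:2} and~\ref{Reg:4} the relevant objects — the circle average $\dashint_{\partial B}\phi_{\ell,\infty}$ and the rescaled gradient $e^{-\ell}\nabla\phi_{\ell',\ell}$ on $6kB$ — are centered Gaussian fields whose pointwise variances, and whose supremal variances over $6kB$, are bounded by a constant \emph{uniform in $\ell$ and $\ell'$}. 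This uniformity follows directly from Proposition~\ref{Prop:StarScaleProp}: the exact scaling $\phi_n(\cdot)\eqd\phi_1(e^{n-1}\cdot)$ (hence $\phi_{\ell,\infty}(\cdot)\eqd\phi(e^{\ell}\cdot)$) turns the radius-$e^{-\ell-2}$ circle, resp.\ the radius-$6ke^{-\ell-2}$ ball, into a fixed-size configuration after blowing up by $e^{\ell}$, while unit pointwise variance, isometry invariance and smoothness of the $\star$-scale fields control the blown-up field (for the gradient, the sum over $n\le\ell$ is geometric and dominated by the $n=\ell$ term). Borell--TIS then bounds the probability that the threshold is exceeded for a fixed $B$ (and fixed $\ell'$) by $C\exp\!\big(-c(\ell^{1/2+\delta}+\log M)^2\big)$ with $C,c$ independent of $B,\ell,\ell'$; summing over the $O(\ell e^{2\ell})$ admissible pairs at scale $\ell$ and then over $\ell$ contributes $\exp(-c(\log M)^2)$ as explained above.

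Property~\ref{Reg:1} is the substantive point. For $B=B_{e^{-\ell}}(z_0)\in\mathfrak B$ we apply Proposition~\ref{prop-dist} (in the equivalent form of \cite[Remark~3.16]{DFGPS}) to the annulus $B\setminus 0.99B$ with parameter $A:=Me^{\ell^{1/2+\delta}}$. Two reductions are used: translation invariance of the whole-plane GFF modulo additive constant moves the annulus at $z_0$ to one at the origin, and the Weyl scaling axiom (Axiom~\hyperref[axiom-weyl]{III}) turns the resulting additive constant into the prefactor $e^{\xi\dashint_{\partial B}h}=e^{\xi h_{e^{-\ell}}(z_0)}$; the rate in Proposition~\ref{prop-dist} is uniform in the radius $e^{-\ell}$ and, via this translation, in $z_0$. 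Feeding in the quantitative tail that underlies Proposition~\ref{prop-dist} — which decays in $A$ strictly faster than any polynomial, in fact like $\exp(-c(\log A)^2)$, uniformly in the scale — the probability that Property~\ref{Reg:1} fails at a single $B$ is at most $\exp\!\big(-c(\log M+\ell^{1/2+\delta})^2\big)$. Summing over the $O(e^{2\ell})$ balls at scale $\ell$ (again using $\ell^{1+2\delta}\gg\ell$) and over $\ell\ge 0$ gives a total failure probability $\le\exp(-c(\log M)^2)$.

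The main obstacle is precisely the last point: the bare ``superpolynomially high probability as $A\to\infty$'' statement for a single annulus is \emph{not} by itself enough to absorb the union over the $\asymp e^{2\ell}$ balls present at scale $e^{-\ell}$, since no fixed power of $A$ beats $e^{2\ell}$; one genuinely needs the stronger quantitative tail of the side-to-side estimate (equivalently, a form of Proposition~\ref{prop-dist} that is already uniform over scales and base points), which is exactly what the $\ell^{1/2+\delta}$ correction in Property~\ref{Reg:1} is designed to accommodate. Granting this, the four estimates combine by a union bound to give $\P[E_{\delta,M}^c]\le\exp(-c(\log M)^2)$, which is superpolynomially small in $M$, proving the lemma.
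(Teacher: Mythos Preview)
Your proof is correct and follows essentially the same approach as the paper: treat the four conditions separately via Gaussian concentration (Borell--TIS/Fernique for Properties~\ref{Reg:2},~\ref{Reg:4},~\ref{Reg:3}) and Proposition~\ref{prop-dist} for Property~\ref{Reg:1}, then union bound over $\mathfrak B$ using the $\ell^{1/2+\delta}$ slack.

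You are in fact more careful than the paper on one point. The paper's proof of Property~\ref{Reg:1} reads in its entirety ``we use Proposition~\ref{prop-dist} and again a union bound,'' but as you correctly observe, the bare superpolynomial statement $\P[\text{fail}]\le C_p A^{-p}$ with $A\asymp M^{\xi}e^{\xi\ell^{1/2+\delta}}$ does \emph{not} survive the union over $\asymp e^{2\ell}$ balls at scale $\ell$, since $\ell^{1/2+\delta}$ is sublinear for $\delta<1/2$. Your resolution --- invoking the sharper $\exp(-c(\log A)^2)$ tail from \cite{DFGPS} so that $(\log A)^2\gtrsim\ell^{1+2\delta}\gg\ell$ --- is exactly what is needed, and is presumably what the paper intends.
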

Combining these bounds, namely starting from \eqref{eq:inter0}, using \eqref{eq:Step2Bound} and the previous lemma, we get, for all $\delta$, $k$, $p$, a constant $C_{\delta,k,p}$ such that for all $x \in \R$ and for all $M,t > 0$,
\[
\Pro [\mu_h(B_{10}(0) \cap P_{h}^{1,e^{-\xi x}}) \geq t ]  \leq C_{k,\delta,p}  \left( M^{-p} + t^{-k} M^{C} e^{-c_{k,\delta} x}\right).
\]
By taking $M = t^{k/(p+C)} e^{c_{k,\delta} x/(p+C)}$, we get
\[
\Pro [\mu_h(B_{10}(0) \cap P_{h}^{1,e^{-\xi x}}) \geq t ]  \leq C_{k,\delta,p} t^{-\frac{p}{p+C} k} e^{- \frac{p}{p+C} c_{k,\delta} x}
\]
so by choosing $p$ large and integrating the tail estimate to obtain moment bounds, we obtain 
\[
\E [\mu_h(B_{10}(0) \cap P_{h}^{1,e^{-\xi x}})^{k - \delta}] \leq Ce^{-(c_{k, \delta} - \delta)x}.
\]
Then, by \eqref{eq:Step2Bound} and the Cauchy-Schwartz inequality, we get
\[
\E \left[\mu_h \left(B_{10}(0) \cap P_h^{1,e^{-\xi x}} \right)^{k} \right] \leq C_{k,\delta} M^{C} e^{-c_{k,\delta}x} + \P [ E_{\delta,M}^c ]^{1/2} \E \left[\mu_h \left(B_{10}(0) \cap P_h^{1,e^{-\xi x}} \right)^{2k} \right]^{1/2}
\]
and we conclude the proof of Proposition \ref{Prop:EstimateGFFbounded} by taking $M = e^{\eps |x|}$ for some small $\eps > 0$ (indeed, for this choice of $M$ we have $\P [ E_{\delta,M}^c ] \lesssim e^{-a|x|}$ for any $a>0$, and our earlier bound says that the $2k$th moment is at most exponential in $x$).
\end{proof} 

\subsubsection*{Annuli contributions and $\alpha$-singularities.}

Here, we use the proxy estimate to study moments of metric balls when the field has singularities. The link is made with the following deterministic remark. Recall that $A_{r/2}:= B_{r/2}(0) \backslash \ol{B_{r/4}(0)}$. If $z \in \cB_1(0; D_h) \cap A_{r/2}$ then $D_h(0,\partial B_{r/4}(0)) \leq 1$ and $z \in P_h^{r,1-D_h(0,\partial B_{r/4}(0))}$ (recall \eqref{eq:DefProxy} for the definition of $P_h^{r,d}$).

In the following lemma, we will study the LQG volume of the intersection of the unit metric ball with the unit Euclidean disk. To do so, we study first the contribution of small annuli to the volume and then use a H\"older inequality to conclude.

\begin{Lem}
\label{Lem:SmallAnnuli} Let $h$ be a whole-plane GFF such that $h_1(0) = 0$. Then for $\alpha < Q$,
\[
\E \left[ \mu_{h+ \alpha \log | \cdot |^{-1}}(\cB_1(0; D_{h+ \alpha \log | \cdot |^{-1}}) \cap \D) ^k )\right]  < \infty.
\]
\end{Lem}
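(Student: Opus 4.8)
The plan is to decompose $\D$ into dyadic annuli around the origin, bound the $k$-th moment of the contribution of each annulus with a decay geometric in the scale, and recombine by H\"older. Write $\hat h := h + \alpha\log|\cdot|^{-1}$ and $A_\rho := B_\rho(0)\setminus\overline{B_{\rho/2}(0)}$; since $\{0\}$ is $\mu_{\hat h}$-null,
\[
\mu_{\hat h}\bigl(\cB_1(0;D_{\hat h})\cap\D\bigr)=\sum_{j\ge 0}\mu_{\hat h}\bigl(\cB_1(0;D_{\hat h})\cap A_{2^{-j}}\bigr).
\]
Once we show $\E[\mu_{\hat h}(\cB_1(0;D_{\hat h})\cap A_{2^{-j}})^k]\le C_k 2^{-\rho_k j}$ for some $\rho_k>0$, the Jensen/H\"older bound $(\sum_j a_j)^k\le(\sum_j w_j)^{k-1}\sum_j w_j^{1-k}a_j^k$ with $w_j=2^{-\eps j}$ (for $\eps$ small) concludes, exactly as in the $\alpha=0$ case sketched in the outline of Proposition~\ref{Prop:MomentsWholePlane}.

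For the per-annulus bound, fix $j$. By the deterministic observation preceding the lemma, $\cB_1(0;D_{\hat h})\cap A_{2^{-j}}\subseteq P_{\hat h}^{\,2^{-j+1},1}\cap A_{2^{-j}}$ on the event $\{D_{\hat h}(0,\partial B_{2^{-j-1}}(0))\le 1\}$; bounding $\mu_{\hat h}(\cB_1(0;D_{\hat h})\cap A_{2^{-j}})$ by $\mathbbm 1$ of this event times $\mu_{\hat h}(P_{\hat h}^{\,2^{-j+1},1}\cap A_{2^{-j}})$ replaces the global object $\cB_1(0;D_{\hat h})$ by a quantity measurable with respect to $\hat h$ restricted to a bounded set contained in $B_{2^{-j+1}}(0)$ (by locality of $D_{\hat h}$ and the internal nature of the distances defining $P_{\hat h}^{\cdot,\cdot}$). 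Rescaling by $r:=2^{-j+1}$ — chosen so this set lands in $\D$ — and invoking the coordinate-change axiom, the scaling/Markov property Lemma~\ref{lem-markovish} with the deterministic time $\log r^{-1}$, and Weyl scaling, one arrives at
\[
\E\bigl[\mu_{\hat h}(\cB_1(0;D_{\hat h})\cap A_{2^{-j}})^k\bigr]\ \le\ C\,\E\Bigl[e^{\gamma k c_r}\,\mathbbm 1_{\{D_0\le e^{-\xi c_r}\}}\,\mu_{h'}\bigl(B_{10}(0)\cap P_{h'}^{\,1,\,Ce^{-\xi c_r}}\bigr)^k\Bigr],
\]
where $h'$ is a whole-plane GFF with $h'_1(0)=0$, $D_0:=D_{h'+\alpha\log|\cdot|^{-1}}(0,\partial B_{1/4}(0))$ is a function of $h'$, and $c_r=h_r(0)-(Q-\alpha)\log r^{-1}$ is Gaussian with mean $-(Q-\alpha)\log r^{-1}$ and variance $\log r^{-1}$, independent of $h'$. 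The point of this reduction is that the $\alpha$-log, being continuous of size $\approx \alpha\log r^{-1}$ on the relevant region, cancels: its effect on $\mu_{\hat h}$ against the scaling of the measure, and its effect on $D_{\hat h}$ against the scaling of the metric, together leave only the prefactor $e^{\gamma k c_r}$ and the threshold $e^{-\xi c_r}$, surviving solely through $D_0$.

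It remains to estimate the last display by conditioning on $c_r$, which is typically very negative. On $\{c_r\le 0\}$, bound the indicator by $1$ and use Proposition~\ref{Prop:EstimateGFFbounded}, giving $\E[\mu_{h'}(B_{10}(0)\cap P_{h'}^{1,Ce^{-\xi c_r}})^k\mid c_r]\lesssim e^{-c_{k,\delta}c_r}$ (with $c_{k,\delta}\to k\gamma-Q$ as $\delta\to 0$); integrating $e^{(\gamma k-c_{k,\delta})c_r}$ against the strongly negatively shifted Gaussian law of $c_r$ on $\{c_r\le0\}$ produces, with the $\delta$-dependence cancelling, a contribution $\lesssim e^{-\frac12(Q-\alpha)^2\log r^{-1}}$. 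On $\{c_r>0\}$ this argument (using $\gamma k-c_{k,\delta}\to Q$) only decays when $\alpha<Q/2$; the remaining range is handled by the rarity of $\{D_0\le e^{-\xi c_r}\}$. I would establish the left-tail bound $\P[D_0\le\eta]\le C_\beta\eta^{\beta}$ for every $\beta>0$ — this is where $\alpha<Q$ enters, equivalently the finiteness of the singular distance from the origin — as a consequence of Proposition~\ref{prop-dist} applied to the crossing distance of one fixed annulus inside $B_{1/4}(0)$ (its free parameter taken to be a large negative power of $\eta$, so the exceptional event is polynomially small), Weyl scaling to absorb the $\alpha$-log there, and the Gaussian lower tail of a circle average. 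Plugging this in via Cauchy--Schwarz in $h'$ (using Proposition~\ref{Prop:EstimateGFFbounded} for the $2k$-th moment) and then integrating over $\{c_r>0\}$ gives, for $\beta$ large enough, a contribution $\lesssim e^{-\rho\log r^{-1}}$ with $\rho>0$, which completes the per-annulus bound.

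The main obstacle is precisely the regime $c_r>0$: there the LQG mass of $A_{2^{-j}}$ is atypically large (the field is large near scale $2^{-j}$) and yet the metric ball avoids it, and making this quantitative is the left-tail estimate for $D_0=D_{\hat h}(0,\partial B_{1/4}(0))$; without it one would only obtain the lemma for $\alpha<Q/2$.
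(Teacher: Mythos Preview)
Your approach is correct and follows essentially the same scheme as the paper: decompose $\D$ into dyadic annuli, pass to the local proxy $P^{r,\cdot}_{\bullet}$, rescale via Lemma~\ref{lem-markovish} so that the field becomes a fresh GFF $h'$ independent of the shifted circle average, apply Proposition~\ref{Prop:EstimateGFFbounded}, and sum via H\"older. The only real difference is in how the ``bad'' case is isolated. The paper splits on the $h'$-measurable event $\{D_{h'}(0,\partial B_{1/4}(0))\le r^\delta\}$, which by Proposition~\ref{prop-dist} is superpolynomially rare in $r$; on its complement the indicator $\{D_{\hat h}(0,\partial B_{r/4}(0))\le 1\}$ becomes simply $\{h_r(0)\le -(Q-\alpha+\delta/\xi)\log r^{-1}\}$, and a single Gaussian computation then yields the exponent $\tfrac12(Q-\alpha)^2$ uniformly for all $\alpha<Q$. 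Your split is instead on the sign of $c_r=h_r(0)-(Q-\alpha)\log r^{-1}$; this forces you to handle $\{c_r>0\}$ separately via Cauchy--Schwarz and the left-tail estimate $\P[D_0\le\eta]\lesssim\eta^\beta$, which you correctly derive from Proposition~\ref{prop-dist}. Both routes work; the paper's is slightly cleaner in that it avoids the case distinction on $\alpha\lessgtr Q/2$ and the auxiliary left-tail bound. Incidentally, your remark that ``this is where $\alpha<Q$ enters'' is not quite right: your one-annulus argument for $\P[D_0\le\eta]$ uses only that the singularity is bounded on a fixed annulus away from $0$, and holds for all $\alpha$.
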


\begin{proof}
Note that $\cB_1(0; D_h) \cap A_{r/2} \subset P_h^{r,1} \cap A_{r/2}$ and that the latter one is measurable with respect to the field $h|_{B_r(0)}$. We use a decoupling/scaling argument as follows. We write,
\begin{align*}
\mu_h(\cB_1(0; D_h) \cap A_{r/2}) & \leq 1_{D_h(0, \partial B_{r/4}(0)) \leq 1} \mu_h(P_h^{r,1} \cap A_{r/2}) \\
& = 1_{e^{\xi h_{r}(0)} D_{h - h_{r}(0)}(0,\partial B_{r/4}(0)) \leq 1} e^{\gamma h_{r}(0)} \mu_{h-h_{r}(0)} \left( A_{r/2} \cap P_{h-h_{r}(0)}^{r,e^{-\xi h_{r}(0)}} \right),
\end{align*}
and set $\wt h :=h(r\cdot)-h_{r}(0)$. By Lemma~\ref{lem-markovish} we have the equality in law $\wt h|_\D \eqd h|_\D$, and also $\wt h|_{\D}$ is independent of $h_{r}(0)$. Using the scaling of the metric  and of the measure, we get
\begin{align}
\label{eq:replace}
& \E \left[ \mu_h(\cB_1(0; D_h) \cap A_{r/2})^k  \right] \nonumber \\
& \leq \E \left[ 1_{e^{\xi h_{r}(0)} D_{h - h_{r}(0)}(0,\partial B_{r/4}(0)) \leq 1} e^{\gamma k h_{r}(0)} \mu_{h-h_{r}(0)} \left( A_{r/2} \cap P_{h-h_{r}(0)}^{r,e^{-\xi h_{r}(0)}} \right)^k \right] \nonumber \\
& \leq  r^{k \gamma  Q} \E \left[ 1_{e^{\xi h_{r}(0)} r^{\xi Q} D_{\tilde{h}}(0,\partial B_{1/4}(0)) \leq 1} e^{\gamma k h_{r}(0)}  \mu_{\tilde{h}} \left(A_{1/2} \cap P_{\tilde{h}}^{1,r^{-\xi Q} e^{-\xi h_r(0)}} \right)^k \right].
\end{align} 

We split the expectation with $1_{D_{\tilde h}(0,\partial B_{1/4}) \leq r^{\delta}}$ and 
$1_{D_{\tilde h}(0,\partial B_{1/4})  \geq r^{\delta}}$. Note first that for $p > 1$, by Proposition \ref{Prop:EstimateGFFbounded} and a moment computation for the exponential of a Gaussian variable with variance constant times $\log r^{-1}$,
\[
 \E \left[  e^{\gamma p k h_{r}(0)}  \mu_{\tilde{h}} \left(A_{1/2} \cap P_{\tilde{h}}^{1,r^{-\xi Q} e^{-\xi h_r(0)}} \right)^{kp} \right]  \leq C r^{\mathrm{power}},
\]
for some power whose value does not matter. Indeed, because of the superpolynomial decay of the event $\{D_{\tilde h}(0,\partial B_{1/4})  \leq r^{\delta} \}$ coming from Proposition \ref{prop-dist}, the quantity
\begin{multline*}
\E \left[ 1_{D_{\tilde h}(0,\partial B_{1/4})  \leq r^{\delta}} e^{\gamma k h_{r}(0)}  \mu_{\tilde{h}} \left(A_{1/2} \cap P_{\tilde{h}}^{1,r^{-\xi Q} e^{-\xi h_r(0)}} \right)^k \right] \\
\leq
\P[D_{\tilde h}(0,\partial B_{1/4})  \leq r^{\delta}]^{1/q} \E \left[  e^{\gamma p k h_{r}(0)}  \mu_{\tilde{h}} \left(A_{1/2} \cap P_{\tilde{h}}^{1,r^{-\xi Q} e^{-\xi h_r(0)}} \right)^{kp} \right]^{1/p}  
\end{multline*}
decays superpolynomially fast in $r$, by using H\"older's inequality with $\frac{1}{p} + \frac{1}{q} = 1$. 

From now on, we truncate on the event $\{D_{\tilde h}(0,\partial B_{1/4})  \geq r^{\delta} \}$ and we want to bound from above
\[
r^{k \gamma  Q} \E \left[ 1_{e^{\xi h_{r}(0)} r^{\xi Q+\delta} \leq 1} e^{\gamma k h_{r}(0)}  \mu_{\tilde{h}} \left(A_{1/2} \cap P_{\tilde{h}}^{1,r^{-\xi Q} e^{-\xi h_r(0)}} \right)^k \right].
\]
By Proposition \ref{Prop:EstimateGFFbounded}, since $A_{1/2} \subset B_{10}(0)$ and $h_r(0)$ is independent of $\wt h|_\D$, by writing $c_{k,\delta} = k \gamma - Q + \alpha_{\delta}$ for some small $\alpha_{\delta}$, we get
\begin{align*}
& r^{k \gamma  Q} \E \left[ 1_{e^{\xi h_{r}(0)} r^{\xi Q+\delta}  \leq 1} e^{\gamma k h_{r}(0)}  \mu_{\tilde{h}} \left(A_{1/2} \cap P_{\tilde{h}}^{1,r^{-\xi Q} e^{-\xi h_{r}(0)}} \right)^k \right] \\
& \leq C_k  r^{k \gamma  Q} r^{-c_{k,\delta} Q} \E \left[ 1_{e^{\xi h_{r}(0)} r^{\xi Q+\delta}  \leq 1} e^{\gamma k h_{r}(0)}  e^{-c_{k,\delta} h_{r}(0)} \right] \\
& = C_k r^{Q^2-Q\alpha_{\delta}} \E \left[ 1_{e^{\xi h_{r}(0)} r^{\xi Q+\delta} \leq 1} e^{(Q -\alpha_{\delta}) h_{r}(0)} \right].
\end{align*}
Furthermore, since
\[
\E \left[ 1_{e^{\xi h_{r}(0)} r^{\xi Q+\delta} \leq 1} e^{(Q -\alpha_{\delta}) h_{r}(0)} \right] \leq \E \left[e^{(Q -\alpha_{\delta}) h_{r}(0)} \right],
\]
by a Gaussian computation we get
\[
\E \left[ \mu_h(\cB_1(0; D_h) \cap A_{r/2})^k  \right]  \leq  C_k r^{\frac{1}{2}Q^2+\beta_{\delta}},
\]
for some arbitrarily small $\beta_{\delta}$. 

Furthermore, note  that when one replaces $h$ by $h + \alpha \log | \cdot |^{-1}$ for $\alpha < Q$, we get 
\begin{equation}
\label{eq:AlphaSing}
\E \left[ \mu_{h+ \alpha \log | \cdot |^{-1}}(\cB_1(0; D_{h+ \alpha \log | \cdot |^{-1}})  \cap A_{r/2})^k  \right]  \leq  C_k r^{\frac{1}{2}(Q-\alpha)^2 + \beta_{\delta} }.
\end{equation}
Indeed, on $A_{r/2}$, $\alpha \log | \cdot |^{-1}$ is of order $- \log r + O(1)$ so the volume term contributes an additional $r^{- k \gamma \alpha}$. Furthermore, by monotonicity, we can replace the intersection of the unit $D_{h + \alpha \log | \cdot |^{-1}}$-metric ball with $A_{r/2}$ by an order $r^{\xi \alpha}$ $D_h$-metric ball intersected with $A_{r/2}$.  Then, instead of using $\cB_1(0; D_h) \cap A_{r/2} \subset P_h^{r,1} \cap A_{r/2}$ at the beginning of the proof, we use  $\cB_{r^{\alpha \xi}}(0; D_h) \cap A_{r/2} \subset P_h^{r,r^{\alpha \xi}} \cap A_{r/2}$. Then we note that the term $r^{\xi Q}$ in \eqref{eq:replace} is replaced by $r^{\xi (Q-\alpha)}$. Therefore,~\eqref{eq:AlphaSing} follows by replacing $Q$ with $Q-\alpha$.

We can conclude as follows. Set $V_{r}^{\gamma,\alpha} := \mu_{h+ \alpha \log | \cdot |^{-1}}(\cB_1(0; D_{h+ \alpha \log | \cdot |^{-1}}\cap A_r))$. By monotone convergence,
\begin{align*}
\E \left[ \mu_{h+ \alpha \log | \cdot |^{-1}}(\cB_1(0; D_{h+ \alpha \log | \cdot |^{-1}}) \cap \D)^k\right] & = \lim_{n \to \infty} \E \left[ \left( \sum_{i=0}^{n} V_{2^{-i}}^{\gamma,\alpha} \right)^k \right].
\end{align*}
We introduce some deterministic $\Lambda >1$ to be chosen. By H\"older's inequality we get
\[
\left( \sum_{i=0}^{n} V_{2^{-i}}^{\gamma,\alpha} \right)^k  = \left( \sum_{i=0}^{n} \Lambda^i V_{2^{-i}}^{\gamma,\alpha} \Lambda^{-i} \right)^k  \leq \left( \sum_{i=0}^n \Lambda^{k i} (V_{2^{-i}}^{\gamma,\alpha})^k \right) \left( \sum_{i=0}^n \Lambda^{-i \frac{k}{k-1}}  \right)^{k-1}.
\]
Taking expectations, and using the bound \eqref{eq:AlphaSing}, we get, uniformly in $n$,
\[
\E \left[ \left( \sum_{i=0}^{n} V_{2^{-i}}^{\gamma,\alpha} \right)^k \right] \leq \left(  \frac{1}{1- \Lambda^{- \frac{k}{k-1}}} \right)^{k-1} \sum_{i=0}^{\infty} \Lambda^{k i}  2^{-i (\frac{1}{2}(Q-\alpha)^2+\beta_{\delta})}.
\]
Taking $\Lambda$ close enough to one such that $\Lambda^k 2^{-\frac{1}{2} (Q-\alpha)^2+\beta_{\delta}} < 1$, this series is absolutely convergent, as desired. 
\end{proof}

\begin{Lem}[Large annuli]
\label{Lem:LargeAnnuli}
Let $h$ be a whole-plane GFF such that $h_1(0) = 0$. Then, for $\alpha < Q$, $\E \left[ \mu_{h + \alpha \log | \cdot |^{-1}}(\cB_1(0; D_{h+ \alpha \log | \cdot |^{-1}})  \cap \C \setminus \D) ^k\right]  < \infty$.
\end{Lem}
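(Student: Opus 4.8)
The plan is to follow the proof of Lemma~\ref{Lem:SmallAnnuli} almost verbatim, with the family of small annuli about the origin replaced by the family of large annuli $A_{2^i}=B_{2^i}(0)\setminus\ol{B_{2^{i-1}}(0)}$, $i\ge 1$. Since a.s.\ $\mu_h$ does not charge the circles $\partial B_{2^j}(0)$, one has $\mu_h(\cB_1(0;D_h)\cap(\C\setminus\D))=\sum_{i\ge 1}\mu_h(\cB_1(0;D_h)\cap A_{2^i})$, and the same H\"older inequality with geometric weights $\Lambda^i$ (for a constant $\Lambda>1$ chosen close to $1$) used at the end of the proof of Lemma~\ref{Lem:SmallAnnuli} reduces the statement to the bound
\[
\E\left[\mu_{h+\alpha\log|\cdot|^{-1}}\big(\cB_1(0;D_{h+\alpha\log|\cdot|^{-1}})\cap A_{2^i}\big)^k\right]\le C_{k,\alpha}\, 2^{-\kappa i},\qquad i\ge 1,
\]
for some $\kappa>0$. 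I describe the case $\alpha=0$; the modifications for general $\alpha<Q$ are indicated at the end.

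Fix $\rho=2^i$. Exactly as in Lemma~\ref{Lem:SmallAnnuli}, since $|z|>\rho/2>\rho/4$ for $z\in A_\rho$, a near-geodesic from $0$ to $z$ of $D_h$-length at most $1$ must cross both $\partial B_{\rho/4}(0)$ and $\partial B_{\rho/4}(z)$, giving $\mu_h(\cB_1(0;D_h)\cap A_\rho)\le \mathbbm 1\{D_h(0,\partial B_{\rho/4}(0))\le 1\}\,\mu_h(P_h^{\rho,1}\cap A_\rho)$ with $P_h^{\rho,1}$ the proxy set of~\eqref{eq:DefProxy}. The next step is a decoupling/scaling identity, an analog of Lemma~\ref{lem-markovish} at the deterministic scale $\rho>1$: using the orthogonal decomposition $H=\cH_1\oplus\cH_2$ we write $h_\rho(0)=:G\sim\mc N(0,\log\rho)$ and $\hat h:=h(\rho\,\cdot)-h_\rho(0)$, where $\hat h$ is independent of $G$ and $\hat h\eqd h$ (so $\hat h_1(0)=0$). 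Applying the Weyl scaling and coordinate-change axioms (Section~\ref{sec-metric}) together with the scaling $\mu_h(\rho A)=\rho^{\gamma Q}\mu_{h(\rho\,\cdot)}(A)$ of the LQG measure, exactly as in the derivation of~\eqref{eq:replace}, and bounding $A_1\subset B_{10}(0)$, yields
\[
\E\left[\mu_h(\cB_1(0;D_h)\cap A_\rho)^k\right]\le \rho^{k\gamma Q}\,\E\left[\mathbbm 1\{D_{\hat h}(0,\partial B_{1/4}(0))\le e^{-\xi x}\}\, e^{\gamma k G}\, \mu_{\hat h}\big(B_{10}(0)\cap P_{\hat h}^{1,e^{-\xi x}}\big)^k\right],\qquad x:=Q\log\rho+G.
\]

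Now condition on $G$ (so $x$ is deterministic) and split on the sign of $x$. If $x<0$, i.e.\ $G<-Q\log\rho$ — an event of probability roughly $\rho^{-Q^2/2}$ — drop the indicator and apply Proposition~\ref{Prop:EstimateGFFbounded} for a small fixed $\delta>0$ when $k\ge 4/\gamma^2$ (for $k<4/\gamma^2$ bound instead by $\mu_{\hat h}(B_{10}(0))^k$, which has all moments), so that $\E_{\hat h}[\cdots]\le C_{k,\delta}e^{-c_{k,\delta}x}$; integrating $\rho^{k\gamma Q}e^{\gamma k G}$ times this bound against the law of $G$ restricted to $\{G<-Q\log\rho\}$ is a one-line Gaussian computation in which all powers of $\rho$ collapse to exactly $\rho^{-Q^2/2}$ (the $\delta$-dependence cancelling in the exponent). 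If $x\ge 0$, keep the indicator and use H\"older's inequality: since $\hat h_1(0)=0$, Proposition~\ref{prop-dist} (applied with $r=1$ to the crossing distance of an annulus around $0$) gives $\P[D_{\hat h}(0,\partial B_{1/4}(0))\le e^{-\xi x}]\le C_N e^{-Nx}$ for every $N$, while the complementary factor $\E_{\hat h}[\mu_{\hat h}(B_{10}(0)\cap P_{\hat h}^{1,e^{-\xi x}})^{kp}]^{1/p}$ is bounded uniformly over $x\ge 0$ by Proposition~\ref{Prop:EstimateGFFbounded} (using $e^{-c_{kp,\delta}x}\le 1$); taking $N$ large and integrating $\rho^{k\gamma Q}e^{\gamma k G}$ over $\{G\ge -Q\log\rho\}$ again produces $\rho^{-Q^2/2}$. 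Combining the two regimes gives $\E[\mu_h(\cB_1(0;D_h)\cap A_{2^i})^k]\le C_k\, 2^{-iQ^2/2}$, so $\kappa=Q^2/2>0$.

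For $\alpha<Q$ the argument is unchanged except that $\hat h$ is replaced by $\hat h+\alpha\log|\cdot|^{-1}$, the exponent $Q$ by $Q-\alpha$ in the scaling (so $x=(Q-\alpha)\log\rho+G$), and one uses that $\alpha\log|\cdot|^{-1}$ is a bounded function both on $A_1$ (so $\mu_{\hat h+\alpha\log|\cdot|^{-1}}$ and $\mu_{\hat h}$ are comparable there) and on the annulus $B_{1/4}(0)\setminus\ol{B_{1/8}(0)}$ (so Proposition~\ref{prop-dist} still bounds $D_{\hat h+\alpha\log|\cdot|^{-1}}(0,\partial B_{1/4}(0))$ from below via the crossing distance of that annulus, which is all the two regimes need); the rate becomes $2^{-i(Q-\alpha)^2/2}$, summable against $\Lambda^{ki}$ for $\Lambda$ close enough to $1$ since $(Q-\alpha)^2>0$ when $\alpha<Q$. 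The main obstacle is that one cannot bound $\mu_h(\cB_1(0;D_h)\cap A_\rho)$ simply by $\mathbbm 1\{D_h(0,\partial B_{\rho/4}(0))\le 1\}\,\mu_h(A_\rho)$: after rescaling this leaves $\E[\mu_{\hat h}(A_1)^k]$, which is infinite once $k\ge 4/\gamma^2$. One must genuinely exploit that, upon entering the far annulus, the metric ball can only reach its low-field part — this is precisely what the passage to the proxy set $P_{\hat h}^{1,e^{-\xi x}}$ and the invocation of Proposition~\ref{Prop:EstimateGFFbounded} encode. The remaining points — the decoupling $h=G+\hat h$ at a deterministic scale $\rho>1$, and the fact that Proposition~\ref{prop-dist} survives the log-singularity when applied on an annulus avoiding the origin — are routine.
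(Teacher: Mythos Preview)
There is a genuine gap: the independence you claim between $\hat h := h(\rho\,\cdot)-h_\rho(0)$ and $G:=h_\rho(0)$ is false. Indeed $\hat h_{1/\rho}(0) = h_1(0) - h_\rho(0) = -G$, so $\hat h$ actually determines $G$. More concretely, writing $X_t = h_{e^{-t}}(0)$ for the two-sided Brownian motion, the radial part of $\hat h$ on $\{|z|<1\}$ involves the increments $(X_{t-\log\rho}-X_{-\log\rho})_{0\le t\le \log\rho}$, and for a standard Brownian motion $W$ the path $(W_u-W_s)_{0\le u\le s}$ is correlated with $W_s$. Consequently, once you condition on $G$, the conditional law of $\hat h$ is \emph{not} that of a whole-plane GFF normalized at $\partial\D$, so you cannot apply Proposition~\ref{Prop:EstimateGFFbounded} (for the volume factor) or Proposition~\ref{prop-dist} (for the indicator factor) conditionally on $G$ as both of your regimes require.

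The paper's proof avoids this in two moves. First, it subtracts the circle average at the \emph{inner} boundary radius, $h_{R/4}(0)$, rather than $h_R(0)$: then $(h-h_{R/4}(0))|_{A_{R/4,2R}}$ is genuinely independent of $h_{R/4}(0)$ (the relevant radial increments are now future increments of the outward Brownian motion), and this annulus is all the proxy/volume term $\mu_{h-h_{R/4}(0)}(P_{h-h_{R/4}(0)}^{R,\cdot}\cap A_R)$ sees. Second, the indicator $\{D_h(0,\partial B_{R/4}(0))\le 1\}$ --- which still depends on the field inside $B_{R/4}(0)$ and hence is not independent of $h_{R/4}(0)$ --- is handled by writing $D_h(0,\partial B_{R/4}(0))=R^{\xi Q}e^{\xi h_{R/4}(0)}X_R$ and splitting on $\{X_R\le R^{-\delta}\}$ (superpolynomially rare by Proposition~\ref{prop-dist}, handled by H\"older) versus $\{X_R> R^{-\delta}\}$ (on which the indicator is implied by an event depending on $h_{R/4}(0)$ alone). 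After this the indicator and the volume factor are genuinely independent and the Gaussian computation goes through. Your split on the sign of $x$ is morally similar, but as written it leans on the false independence; replacing it by the $X_R$-split and subtracting $h_{\rho/4}(0)$ instead of $h_\rho(0)$ would repair the argument.
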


\begin{proof}
The proof uses the proxy estimate and a decomposition over annuli with a scaling argument. This is similar to Lemma \ref{Lem:SmallAnnuli}. We point out here only the main differences with the proof of this lemma.

Write $D_h(0,\partial B_{R/4}(0)) =: R^{\xi Q} e^{\xi h_{R/4}(0)} X_R$.  Since $\cB_1(0; D_h) \cap A_{R} \subset P_h^{R,1} \cap A_{R}$
\begin{align*}
\E [\mu_h (\cB_1(0;D_h) \cap A_R)^k] & \leq \E [1_{D_h(0,\partial B_{R/4}(0)) \leq 1} \mu_h (P_h^{R,1} \cap A_R)^k] \\
& = \E [1_{R^{\xi Q} e^{\xi h_{R/4}(0)} X_R \leq 1} e^{k \gamma h_{R/4}(0)} \mu_{h-h_{R/4}(0)} (P_{h-h_{R/4}(0)}^{R,e^{-\xi h_{R/4}(0)}} \cap A_R)^k]
\end{align*}
We truncate again with $1_{X_R \leq R^{-\delta}}$ and $1_{X_R \geq R^{-\delta}}$. Because of the superpolynomial decay of $\Pro(X_R \leq R^{-\delta})$, the term associated with the former truncation is negligible compared to the other one. Furthermore, since we will have some room at the level of exponent, we will simply assume that $\delta = 0$ for the remaining steps. By using that $h-h_{R/4}(0))|_{A_{R/4,2R}(0)}$ is independent of $h_{R/4}(0)$ and that the proxy  $P_h^{R,x} \cap A_r$ is measurable with respect to $h|_{A_{R/4,2R}}$, we get by scaling,
\begin{multline*}
 \E (1_{R^{ Q} e^{h_{R/4}(0)}  \leq 1} \mu_{h-h_{R/4}(0)} (P_{h-h_{R/4}(0)}^{R,e^{-\xi h_{R/4}(0)}} \cap A_R)^k)  \\
 = R^{k \gamma Q}  \E (1_{R^{Q} e^{h_{R/4}(0)}  \leq 1} e^{k \gamma h_{R/4}(0)} \mu_{\tilde{h}} (P_{\tilde{h}}^{1,e^{-\xi h_{R/4}(0)} R^{-\xi Q}} \cap A_1)^k)
\end{multline*}
At this stage we use the estimate from Proposition \ref{Prop:EstimateGFFbounded}. Therefore, we compute
\begin{multline*}
R^{k \gamma Q}  \E (1_{h_{R/4}(0) \leq - Q \log R}   e^{k \gamma h_{R/4}(0)} e^{-c_k (h_{R/4}(0) + Q \log R)} ) \\
 = R^{k \gamma Q} e^{-c_k Q \log R}  \E \left( 1_{h_{R/4}(0) \leq -Q \log R} e^{Q h_{R/4}(0)} \right) 
\end{multline*}
and by using the Cameron-Martin formula we get
\begin{align*}
R^{k \gamma Q} e^{-c_k Q \log R}  & \E \left( 1_{h_{R/4}(0) \leq -Q \log R} e^{Q h_{R/4}(0)} \right)  \\ 
& \approx R^{Q^2} R^{\frac{Q^2}{2}}  \E \left( 1_{h_{R/4}(0) \leq -Q \log R} e^{Q h_{R/4}(0) - \frac{1}{2} Q^2 \log R/4} \right) \\
& \approx R^{ \frac{3}{2}Q^2} \Pro \left( h_{R/4}(0) \leq -2 Q \log R \right) \approx R^{-\frac{Q^2}{2}}.
\end{align*}
where $A_R \approx B_R$ if $A_R/B_R = R^{o(1)}$. So this gives
\[
\E [\mu_h (\cB_1(0;D_h) \cap A_R)^k]  \leq R^{-\frac{Q^2}{2}+o(1)}
\] 
The rest of the proof, namely taking into account all the annuli contributions and using H\"older inequality, is the same as the one of Lemma \ref{Lem:SmallAnnuli}.
\end{proof}

\begin{proof}[Proof of Proposition \ref{Prop:MomentsWholePlane}]
Let $h$ be a whole-plane GFF such that $h_1(0) = 0$ and fix $\alpha < Q$. The proof follows easily by writing
\begin{multline*}
\mu_{h + \alpha \log | \cdot |^{-1}}(\cB_1(0; D_{h+ \alpha \log | \cdot |^{-1}})) \\
=  \mu_{h + \alpha \log | \cdot |^{-1}}(\cB_1(0; D_{h+ \alpha \log | \cdot |^{-1}}) \cap \D )+ \mu_{h + \alpha \log | \cdot |^{-1}}(\cB_1(0; D_{h+ \alpha \log | \cdot |^{-1}}) \cap \C \setminus \D )
\end{multline*}
and using the inequality $(x+y)^k \leq 2^{k-1} (x^k + y^k)$  together with  Lemma \ref{Lem:SmallAnnuli} and Lemma \ref{Lem:LargeAnnuli}.
\end{proof}

\begin{Lem}[Upper bound for small metric balls]
\label{Lem:UpperSmallBalls}
For $\eps \in (0,1)$, $k \geq 1$, there exists a constant $C_{k,\eps}$ such that  for all $s \in (0,1)$,
\[
\E [\mu_h(\mathcal {B}_s(0;D_h))^k] \leq C_{k,\eps}  s^{k d_{\gamma} - \eps}
\]
\end{Lem}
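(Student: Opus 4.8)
The plan is to reduce the estimate to the finiteness of all moments of $\mu_h(\cB_1(0;D_h))$ (Proposition~\ref{Prop:MomentsWholePlane}) by rescaling the picture around the origin at a Euclidean scale chosen by a stopping time of the circle average process. Fix $k$ and $\eps$, put $\zeta:=\eps/(k\gamma)$, and assume $\zeta$ is small (for $s$ bounded away from $0$ the bound is trivial, since $\cB_s(0;D_h)\subset\cB_1(0;D_h)$). Let $X_t:=h_{e^{-t}}(0)$, a standard Brownian motion, and let $T=T_s$ be the first hitting time of level $L:=(\tfrac{d_\gamma}{\gamma}-\zeta)\log\tfrac1s$ by the process $t\mapsto Qt-X_t$, which has positive drift $Q$; then $T<\infty$ a.s.\ and $QT-X_T=L$. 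Set $\wt h:=h(e^{-T}\cdot)-h_{e^{-T}}(0)$. Applying the scaling coordinate change axiom, Weyl scaling applied to the spatial constant $h_{e^{-T}}(0)+Q\log e^{-T}=-L$, and the analogous scaling rule $\mu_{h(r\cdot)+Q\log r}(\cdot)=\mu_h(r\,\cdot)$ for the LQG area measure, one gets the deterministic identity (given $h$)
\[
\cB_s(0;D_h)=e^{-T}\cB_\rho(0;D_{\wt h}),\qquad \mu_h\big(\cB_s(0;D_h)\big)=s^{\,d_\gamma-\gamma\zeta}\,\mu_{\wt h}\big(\cB_\rho(0;D_{\wt h})\big),\qquad \rho:=s^{\xi\zeta}\in(0,1),
\]
where I used $\xi d_\gamma=\gamma$; note $\rho\to0$ as $s\to0$. (Extending the coordinate changes from deterministic scales to the stopping time $T$ is routine, since all quantities depend continuously on the scale.) Since $T$ is a stopping time for $(X_t)_{t\ge0}$, Lemma~\ref{lem-markovish} gives $\wt h|_\D\eqD h|_\D$.

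I then split on the event $\cE_s:=\big\{D_{\wt h}^{B_{3/4}(0)}\big(\ol{B_{1/4}(0)},\partial B_{1/2}(0)\big)>\rho\big\}$, which is a measurable function of $\wt h|_\D$. A short topological argument (if a path from $0$ of $D_{\wt h}$-length $<\rho$ left $B_{1/2}(0)$, it would contain a crossing of $B_{1/2}(0)\setminus\ol{B_{1/4}(0)}$ of $D_{\wt h}$-length $<\rho$, contradicting $\cE_s$) shows that on $\cE_s$ one has $\cB_\rho(0;D_{\wt h})\subset B_{1/2}(0)\subset\D$, and moreover $\cB_\rho(0;D_{\wt h})\subset\cB_1(0;D_{\wt h}^\D)$. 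Since $D_{\wt h}^\D$ and $\mu_{\wt h}|_\D$ are determined by $\wt h|_\D\eqD h|_\D$ (Locality, and locality of the GMC measure) and $D_h\le D_h^\D$, this yields
\[
\E\big[\mathbbm 1_{\cE_s}\,\mu_{\wt h}(\cB_\rho(0;D_{\wt h}))^k\big]\le \E\big[\mu_{\wt h}(\cB_1(0;D_{\wt h}^\D))^k\big]=\E\big[\mu_h(\cB_1(0;D_h^\D))^k\big]\le\E\big[\mu_h(\cB_1(0;D_h))^k\big]=:M_k<\infty.
\]

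On the complement I use only the crude inclusion $\cB_\rho(0;D_{\wt h})\subset\cB_1(0;D_{\wt h})$, the displayed identity once more (noting that $\mathbbm 1_{\cE_s^c}$ is a function of $h$), the inclusion $\cB_s(0;D_h)\subset\cB_1(0;D_h)$, and Cauchy--Schwarz:
\[
\E\big[\mathbbm 1_{\cE_s^c}\,\mu_{\wt h}(\cB_\rho(0;D_{\wt h}))^k\big]=s^{-k(d_\gamma-\gamma\zeta)}\,\E\big[\mathbbm 1_{\cE_s^c}\,\mu_h(\cB_s(0;D_h))^k\big]\le s^{-k(d_\gamma-\gamma\zeta)}\,\P[\cE_s^c]^{1/2}\,M_{2k}^{1/2}.
\]
Because $\cE_s$ depends only on $\wt h|_\D\eqD h|_\D$ and $\wt h_1(0)=0$, Proposition~\ref{prop-dist} with $r=1$ shows $\P[\cE_s^c]$ is superpolynomially small in $\rho^{-1}=s^{-\xi\zeta}$, so for every $p$ there is $C_p$ with $\P[\cE_s^c]^{1/2}\le C_p\,s^{p\xi\zeta/2}$ for all $s\in(0,1)$. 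Taking $p$ with $p\xi\zeta/2\ge k(d_\gamma-\gamma\zeta)$ and adding the two contributions,
\[
\E\big[\mu_h(\cB_s(0;D_h))^k\big]=s^{\,kd_\gamma-\gamma k\zeta}\,\E\big[\mu_{\wt h}(\cB_\rho(0;D_{\wt h}))^k\big]\le\big(M_k+C_p M_{2k}^{1/2}\big)\,s^{\,kd_\gamma-\eps},
\]
using $\gamma k\zeta=\eps$; this is the assertion with $C_{k,\eps}:=M_k+C_pM_{2k}^{1/2}$.

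The only genuine difficulty is that $\wt h$ is a whole-plane GFF only after restriction to $\D$: outside $\D$ its radial part carries constraints inherited from the pre-$T$ circle average process, so one cannot treat $\cB_\rho(0;D_{\wt h})$ directly as a metric ball of an unconstrained GFF. The role of $\cE_s$ is precisely to confine the ball to $\D$, where Lemma~\ref{lem-markovish} does apply, and Proposition~\ref{prop-dist} at the scale $r=1$ — where the circle average $\wt h_1(0)$ vanishes, so no stray random factor intervenes — is exactly what makes $\P[\cE_s^c]$ decay rapidly enough to absorb the $s^{-k(d_\gamma-\gamma\zeta)}$ growth coming from the bad event.
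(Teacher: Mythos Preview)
Your proof is correct and takes a genuinely different route from the paper's. The paper obtains Lemma~\ref{Lem:UpperSmallBalls} by re-running the annulus/proxy machinery of Lemma~\ref{Lem:SmallAnnuli} with the target distance $1$ replaced by $s$: this yields $\E[\mu_h(\cB_s(0;D_h)\cap A_r)^k]\lesssim s^{kd_\gamma-c_\gamma}r^{Q^2/2}$ with a fixed loss $c_\gamma=Qd_\gamma/\gamma$ in the $s$-exponent, and then applies H\"older's inequality at a higher moment $kp$ to shrink $c_\gamma$ to $c_\gamma/p<\eps$ before summing over dyadic annuli. Your argument instead treats Proposition~\ref{Prop:MomentsWholePlane} as a black box and rescales by a stopping time of the radial process (exactly as in the proof of Lemma~\ref{lem-lower-all-r} on the lower-tail side), confining the rescaled ball to $\D$ on a high-probability event via Proposition~\ref{prop-dist} so that Lemma~\ref{lem-markovish} applies, and absorbing the complementary event with Cauchy--Schwarz and $M_{2k}$. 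Your approach is more modular: it cleanly separates the scaling step from the hard moment input, and would transfer verbatim to any field for which the unit-ball moments and the crossing-distance concentration are known. The paper's approach has the advantage of staying inside the proxy-estimate framework, which is natural since Proposition~\ref{Prop:EstimateGFFbounded} already carries the quantitative dependence on the distance threshold.
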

\begin{proof}
The proof is very similar to the one of Lemma \ref{Lem:SmallAnnuli}, therefore we omit the details and just provide the differences. By replacing $1$ by $s$ in the proof, we get $\E [\mu_h(\mathcal {B}_s(0;D_h) \cap A_r)^k] \leq C_k s^{k d_{\gamma} - c_{\gamma}} r^{\frac{Q^2}{2}}$
where $c_{\gamma} = \frac{d_{\gamma}}{\gamma}Q$. By using H\"older's inequality, we get $\E [\mu_h(\mathcal {B}_s(0;D_h) \cap A_r)^k] \leq C_{kp}^{1/p} s^{k d_{\gamma} - \frac{c_{\gamma}}{p}} r^{\frac{Q^2}{2p}}$. We then take $p$ such that $c_{\gamma}/p < \eps$ and the rest of the proof follows the same line as those of Lemma \ref{Lem:SmallAnnuli}.
\end{proof}

\subsubsection*{Proofs of the intermediate lemmas for Proposition \ref{Prop:EstimateGFFbounded}}

We recall here the definition of the event $E_{\delta,M}$ (recall the definition of $\mathfrak{B}$ in \eqref{Def:CollectionBalls}). It is given by the following criteria:
\begin{enumerate}
\item
\label{RegBis:1} For all $\ell \geq 0$, the annulus crossing distance of $B \backslash 0.99B$ is at least $M^{-\xi} e^{-\xi  \ell^{\frac{1}{2}+\delta}  } e^{-\xi Q \ell}  e^{\xi \dashint_{\partial B} h}$ for all $B \in \mathfrak{B}$ with radius $e^{-\ell}$,

\item
\label{RegBis:2}  for all  integers $\ell > \ell' \geq 0$, for all $B \in \mathfrak{B}$ of radius $e^{-\ell-2}$, we have $e^{-\ell} \sup_{6 k  B} |\nabla \phi_{\ell',\ell}| \leq  \ell^{\frac{1}{2}+\delta} + \log M$,

\item
\label{RegBis:4}
for all $\ell \geq 0$ and for all $B \in \mathfrak{B}$ of radius $e^{-\ell-2}$, $\dashint_{\partial B} \phi_{\ell,\infty} \leq   \ell^{\frac{1}{2}+\delta}  + \log M $,

\item
\label{RegBis:3}
and $ \norme{\phi - h}_{\D}  = \norme{g}_{\D} \leq  \log M$.
\end{enumerate}
 \begin{proof}[Proof of Lemma \ref{Lem:ClaimEvent}] We prove here that for any $k$-tuple of points $K = \{z_1, \dots, z_k\} \subset \D$ we have
\begin{multline*}
E_{\delta, M} \cap \{z_i \in P_h^{1,e^{-\xi x}} \text{ for all }i = 1,\dots, k \} \\
 \subset \lbrace \psi + x \leq Q m + 8k^2(m^{\frac{1}{2}+\delta}  + \log M) \text{ for each vertex }(S,m,\psi, \eta) \text{ of } T^0_K(\{\phi\}) \rbrace.
\end{multline*}
Fix $K$ and consider any vertex $(S,m,\psi, \eta)$ of $T^0_K(\{\phi\})$. Recall first that by \eqref{eq:Def-psi},
\begin{equation}
\label{eq:Reminder}
\psi = \psi_n = \sum_{i=1}^{n} \phi_{m_{i-1},m_i}(\mathrm{Left}(S_i)),
\end{equation}
where we write $(S_i, m_i, \psi_i, \eta_i)$ for the path from the root $(S_1, m_1, \psi_1, \eta_1)$ to $(S_n, m_n, \psi_n, \eta_n) = (S,m,\psi, \eta)$. The proof is to compare a circle average around $z \in S$ (which can be bounded since $z \in P_h^{1,e^{-\xi x}}$) with the right-hand side above. Pick any point $z \in S$. Since $z \in P^{1,e^{-\xi x}}_h$, 
\[D_h(z, \partial B_{e^{-m-1}}(z)) \leq D_h(z, \partial B_{1/4} (z)) \leq e^{-\xi x}, \]
and we can find a ball $B \in \mathfrak{B}$, centered at a point in $e^{-m-4} \Z^2$ with radius $e^{-m-2}$ whose boundary separates $z$ from $\partial B_{e^{-m-1}}(z)$. Hence the annulus crossing distance of $B \backslash 0.99B$ is at most $e^{-\xi x}$. By Property~\ref{RegBis:1}, we have,
\[ M^{-\xi} e^{-\xi (m+2)^{\frac12 + \delta}} e^{-\xi Q(m+2)} e^{\xi \dashint_{\partial B} h} \leq e^{-\xi x},\]
or equivalently
\eqb\label{eq-circle-approx}
\dashint_{\partial B} h + x \leq  Q(m+2) + (m+2)^{\frac12 + \delta} + \log M.
\eqe
Now we lower bound $\dashint_{\partial B} h$ in term of \eqref{eq:Reminder} by using properties \ref{Reg:2}, \ref{Reg:4} and \ref{Reg:3} of $E_{\delta, M}$. 
\begin{itemize}
\item By Property~\ref{RegBis:3} we have
\[\dashint_{\partial B} h \geq \sum_{i=1}^{n} \dashint_{\partial B}\phi_{m_{i-1}, m_{i}} + \dashint_{\partial B} \phi_{m, \infty} - \log M. \]

\item For each $i$, notice that $z \in S_i$, and so $d(z, \mathrm{Left}(S_i)) \leq e k e^{-m_i}$ by~\eqref{eq-sep-diam}. Consequently, by Property~\ref{RegBis:2} we have for each $i = 1, \dots, n$
\[\dashint_{\partial B} \phi_{m_{i-1},m_i} \geq \phi_{m_{i-1}, m_i} (\mathrm{Left}(S_i)) - 4k m_{i}^{\frac12 + \delta} - 4k \log M. \]

\item By Property~\ref{RegBis:4} we have
\[\dashint_{\partial B} \phi_{m, \infty} \geq -m^{\frac12 + \delta} - \log M. \]
\end{itemize}
Combining these yields (see Remark~\ref{rem-nonrecursive})
\[\dashint_{\partial B} h \geq \sum_{i=1}^{n} \phi_{m_{i-1}, m_{i}}(\mathrm{Left}(S_i)) - 6k^2 m^{\frac12 + \delta} - 6k^2 \log M = \psi - 6k^2 m^{\frac12 + \delta} - 6k^2 \log M. \]
Together with~\eqref{eq-circle-approx}, this gives
$ \psi + x \leq Q m + 8k^2(m^{\frac{1}{2}+\delta}  + \log M)$ and concludes the proof.
\end{proof}

\begin{proof}[Proof of Lemma \ref{Lem:ShiftSingularities}]
This is an application of the Cameron-Martin theorem. We outline here the main idea, assuming for notational simplicity that the function $f$ depends only on $\phi, z_1, \dots, z_k$. The argument works the same way for $f$ depending also on $(\phi_n)_{n \geq 0}$. 

Assume first that $f$ is continuous. Fix $k \geq 2$, $\delta > 0$ and set $C_{\delta} := \{ (z_1, \dots, z_k) \in B_{10}(0)^k : \min_{i < j} |z_i - z_j| \geq \delta \}$. Then, by using Fatou's lemma and the Cameron-Martin formula, we have
\begin{align*}
& \E \left[ \int_{B_{10}(0)^k \cap C_{\delta}} f(\phi,z_1, \dots, z_k)  \mu_\phi(dz_1) \dots \mu_\phi(dz_k)\right] \\
& \leq \liminf_{\eps \to 0} \E \left[ \int_{B_{10}(0)^k \cap C_{\delta}} f(\phi,z_1, \dots, z_k)  \frac{ e^{\gamma \phi_{\eps}(z_1)}}{\E[e^{\gamma \phi_{\eps}(z_1)}]} \dots  \frac{ e^{\gamma \phi_{\eps}(z_k)}}{{\E[e^{\gamma \phi_{\eps}(z_k)}]}} dz_1 \dots dz_k   \right] \\
& = \liminf_{\eps \to 0}  \int_{B_{10}(0)^k \cap C_{\delta}}  \frac{dz_1 \dots dz_k}{e^{-\frac{\gamma^2}{2} \sum_{i \neq j} \Cov ( \phi_{\eps}(z_i),\phi_{\eps}(z_j))}} \E \left[ e^{\gamma \sum_{i \leq k} \phi_{\eps}(z_i) - \frac{\gamma^2}{2} \Var (\sum_{i \leq k} \phi_{\eps}(z_i))} f(\phi,z_1, \dots, z_k)  \right]   \\
&  = \liminf_{\eps \to 0}  \int_{B_{10}(0)^k \cap C_{\delta}} \frac{dz_1 \dots dz_k}{e^{-\frac{\gamma^2}{2} \sum_{i \neq j} \Cov ( \phi_{\eps}(z_i),\phi_{\eps}(z_j))}} \E \left[ f(\phi + \gamma \sum_{i \leq k} \Cov(\phi(\cdot),\phi_{\eps}(z_i)),z_1, \dots, z_k)  \right]    \\
& = \int_{B_{10}(0)^k \cap C_{\delta}} \frac{dz_1 \dots dz_k}{ e^{-\frac{\gamma^2}{2} \sum_{i \neq j} \Cov ( \phi(z_i),\phi(z_j))}} \E \left[ f(\phi + \gamma \sum_{i \leq k} \Cov(\phi(\cdot),\phi(z_i)),z_1, \dots, z_k)  \right] .
\end{align*} 
where we used the dominated convergence theorem in the last equality (the term $\sum_{i\neq j} \Cov (\phi(z_i), \phi(z_j))$ is uniformly bounded for $(z_1,\dots, z_n) \in C_\delta$). The Cameron-Martin formula is used by writing
\[
\gamma \sum_{i \leq k} \phi_{\eps}(z_i) = \langle \phi,  \gamma \sum_{i \leq k} \rho_{\eps,z_i} \rangle
\]
where $\rho_{\eps,z_i}$ denote the uniform probability measure on the circle $\partial B_{\eps}(z_i)$. 
Note that the above inequality was only shown for continuous $f$, but we can approximate general  bounded nonnegative measurable $f$ by a sequence of continuous $f_n$ which converge pointwise to $f$, and apply the dominated convergence theorem. Thus the above inequality holds for general $f$.

Finally, letting $\delta$ going to zero and using the monotone convergence theorem, we get 
\begin{multline*}
 \E \left[ \int_{B_{10}(0)^k} f(\phi,z_1, \dots, z_k)  \mu_\phi(dz_1) \dots \mu_\phi(dz_k)\right] \\ \leq   \int_{B_{10}(0)^k} e^{\frac{\gamma^2}{2} \sum_{i \neq j} \Cov ( \phi(z_i),\phi(z_j))} \E \left[ f(\phi + \gamma \sum_{i \leq k} \Cov(\phi(\cdot),\phi(z_i)),z_1, \dots, z_k)  \right] dz_1 \dots dz_k.
\end{multline*}
This concludes the proof.
\end{proof}

\begin{proof}[Proof of Lemma~\ref{lem-cov}]
	It suffices to show that for some constant $C$, for each $z \in K$ and each $i = 1,\dots, n$, writing $w = \mathrm{Left}(S_i)$ we have 
	\[ \left|C_{m_{i-1}, m_i}(z, w) - (m_i - m_{i-1})1_{z \in S_i} \right| < C.\]
	If $z \not \in S_i$, then by definition $d(z, w) \geq d(z, S_i) \geq e^{-m_{i-1}}$. This is larger than the range of dependence of $\phi_{m_{i-1}, m_i}$, so $C_{m_{i-1}, m_i}(z, w) = 0$ as desired. 
	
	Now suppose $z \in S_i$. By~\eqref{eq-sep-diam}, we know that $S_i$ is contained in a ball of radius $6ke^{-m_i}$; by translation invariance we may assume this ball is centered at the origin. On $B_{6k}(0)\times B_{6k}(0)$, the correlation of $\phi_{0,\infty}$ is $C_{0, \infty}(\cdot, \cdot) = \log | \cdot - \cdot |^{-1} + q(\cdot- \cdot)$ for some bounded continuous $q$. Thus, by scale invariance, we can write
\begin{align*}
C_{m_{i-1}, m_i}(z,w) & = C_{0,m_i - m_{i-1}} (e^{m_{i-1}}z,e^{m_{i-1}} w) \\
& = \log |e^{m_{i-1}}(z -w)|^{-1} - C_{m_{i}- m_{i-1}, \infty}(e^{m_{i-1}}z, e^{m_{i-1}} w) + O(1).
\end{align*}
But again by scale invariance we have
\[
C_{m_{i}- m_{i-1}, \infty}(e^{m_{i-1}}z, e^{m_{i-1}} w) = C_{0, \infty}(e^{m_{i}}z, e^{m_{i}}w) = \log |e^{m_{i}} (z-w)|^{-1} + O(1). 
\]
Comparing these two equations we conclude that $C_{m_{i-1}, m_{i}}(z, w) = m_{i} - m_{i-1} + O(1)$, as needed.
\end{proof}

Finally we check the bound on the regularity event $E$.

\begin{proof}[Proof of Lemma \ref{Lem:RegularityEvent}] We prove here the estimate of the occurence of the event $E_{\delta,M}$. 
	
	\smallskip
	
	For all  integers $\ell > \ell' \geq 0$, for all $B \in \mathfrak{B}$ of radius $e^{-\ell-2}$, the probability that  $e^{-\ell} \sup_{6 kB} |\nabla \phi_{\ell',\ell}| >  \ell^{\frac{1}{2}+\delta} + \log M$ is $\leq C e^{-c (\log M)^2} e^{-c \ell^{1+2\delta}}$ by Lemma \ref{Lem:GradientEst}. Therefore, the probability that Condition \ref{RegBis:2} does not hold is $\leq C e^{-c (\log M)^2} \sum_{\ell \geq 0}  \ell e^{2\ell} e^{-c \ell^{1+2\delta}}$. 
	
	\smallskip
	
	For Condition \ref{RegBis:4},
	for a $B \in \mathfrak{B}$ of size $e^{-\ell-2}$, by scaling  $\dashint_{\partial B} \phi_{\ell,\infty}$ is distributed as $\dashint_{\partial B_0} \phi_{0,\infty}$ where $B_0$ is of size $e^{-2}$ and this is a centered Gaussian variable with bounded variance. Therefore, the probability it is at least $  \ell^{\frac{1}{2}+\delta}  + \log M $ is less than $ Ce^{-c (\ell^{\frac{1}{2}+\delta} + \log M)^2} \leq C e^{-c \ell^{1+2\delta}} e^{-c (\log M)^2}$. For each $\ell$, there are $O(e^{2 \ell})$ balls of size $e^{-\ell-2}$ in $\mathfrak{B}$, hence the probability that Condition \ref{RegBis:4} does not hold is less than $C e^{-c (\log M)^2} \sum_{\ell \geq 0}  e^{2\ell} e^{-c \ell^{1+2\delta}} $. 
	
	\smallskip
	
	For Condition \ref{RegBis:3}, since $\phi - h$ is continuous by Proposition~\ref{Prop:StarScaleProp}, and applying Fernique's theorem, the probability that $ \norme{\phi - h}_{\D} \leq  \log M$ occurs is $\geq 1- Ce^{-c (\log M)^2}$. For Condition \ref{Reg:1}, we use Proposition \ref{prop-dist} and again a union bound.
\end{proof}

\section{Negative moments}\label{sec-negative}

In this section, we prove the following lower bound on the LQG volume of the unit metric ball. 
\begin{Prop}[Negative moments of LQG ball volume]
\label{Prop:NegativeMoments}
Let $h$ be a whole-plane GFF normalized so $h_1(0) = 0$. Then 
\[\E\left[\mu_h(\mc{B}_1(0;D_h))^{-p} \right] < \infty \text{ for all } p \geq 0.\]
This result also holds if we instead consider the LQG measure and metric associated with the field $\wt h = h - \alpha\log |\cdot|$ for $\alpha < Q$.
\end{Prop}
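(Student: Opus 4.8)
The plan is to reduce to a superpolynomial lower‑tail bound and then locate, inside $\cB_1(0;D_h)$, a Euclidean ball centered essentially at the origin whose LQG mass has a good lower tail. It suffices to prove that for every $p>0$ there is $C_p$ with $\P[\mu_h(\cB_1(0;D_h))<t]\le C_p\,t^p$ for all $t\in(0,1)$; integrating this estimate gives all negative moments. Fix $p$, let $X_t:=h_{e^{-t}}(0)$ (a standard Brownian motion for $t\ge0$ by Lemma~\ref{lem-markovish}), and for a parameter $\lambda>0$ set $T:=\inf\{t\ge0:X_t\le Qt-\lambda\}$ and $r_*:=e^{-T}$; since $X_t-Qt\to-\infty$ one has $T<\infty$ a.s., $\P[T>u]$ exponentially small in $u$ for $u$ large at a rate independent of $\lambda$, and $h_{r_*}(0)=X_T=QT-\lambda$. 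Proposition~\ref{prop-dist}, applied along the dyadic scales below $r_*$ with a union bound over $n\lesssim\lambda$ (on the event $\{T\le C_0\lambda\}$ for a large constant $C_0$, whose complement has exponentially small probability in $\lambda$), together with its chaining consequence bounding the internal $D_h$‑diameter of $B_{2r_*}(0)$ by $A(2r_*)^{\xi Q}e^{\xi h_{2r_*}(0)}$, gives an event $G=G_{A,\lambda}$ --- with $\P[G^c]$ superpolynomially small in $A$ up to a factor polynomial in $\lambda$ --- on which $D_h(0,\partial B_{r_*}(0))$ and that internal diameter are both at most a constant times $Ae^{-\xi\lambda}$. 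Once $\lambda$ is large enough that $Ae^{-\xi\lambda}<\tfrac13$, this forces every $w\in B_{r_*/2}(0)$ to be within $D_h$‑distance $<1$ of $0$, i.e.\ $B_{r_*/2}(0)\subset\cB_1(0;D_h)$. (Alternatively one invokes Proposition~\ref{prop-euc-ball}: the metric ball $\cB_{Ae^{-\xi\lambda}}(0;D_h)\subset\cB_1(0;D_h)$ has small metric radius, hence --- by bi‑Hölder continuity of $D_h$, on an event of probability $\to1$ as $\lambda\to\infty$ --- small Euclidean diameter, while reaching Euclidean radius $r_*$; so it contains a Euclidean ball of radius $\ge r_*^{1+\zeta}$. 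The resulting $\zeta$‑loss is harmless because $\zeta$ is a free parameter and the residual factor $e^{-\zeta\gamma QT}$ below has finite $(-p)$‑th moment once $\zeta$ is chosen small relative to $p$, using the exponential tail of $T$.)

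The key algebraic input is $\gamma Q=2+\tfrac{\gamma^2}{2}$. By the scaling of the LQG measure, $\mu_h(B_{r_*/2}(0))=(r_*/2)^{2+\gamma^2/2}\mu_{h((r_*/2)\,\cdot)}(\D)$; by Lemma~\ref{lem-markovish} at the stopping time $T+\log2$ the field $h((r_*/2)\cdot)|_\D-h_{r_*/2}(0)$ is a fresh whole‑plane GFF on $\D$ independent of $h_{r_*/2}(0)$, and on $G$ one has $h_{r_*/2}(0)=X_T+o(\log(1/t))=QT-\lambda+o(\log(1/t))$ (Brownian oscillation over an $O(1)$ time window). Since $2+\gamma^2/2=\gamma Q$ and $e^{\gamma X_T}=e^{\gamma(QT-\lambda)}$, the powers of $r_*=e^{-T}$ cancel and
\[
\mathbf{1}_G\,\mu_h(\cB_1(0;D_h))\ \ge\ \mathbf{1}_G\,\mu_h(B_{r_*/2}(0))\ \ge\ c\,e^{-\gamma\lambda}\,t^{-o(1)}\,\mu',\qquad \mu'\overset{d}{=}\mu_h(\D).
\]
Hence $\P[\mu_h(\cB_1(0;D_h))<t]\le\P[G^c]+\P[\mu'<C\,t^{1+o(1)}e^{\gamma\lambda}]$. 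Taking $\lambda$ and $\log A$ equal to small constant multiples of $\log(1/t)$ makes $\P[G^c]\le C_p t^p$ (the superpolynomial‑in‑$A$ term and the exponential‑in‑$\lambda$ term are then both $\le C_p t^p$) and makes the threshold $Ct^{1+o(1)}e^{\gamma\lambda}$ at most $t^{1/2}$; then the classical sub‑Gaussian lower tail $\P[\mu'\le e^{-s}]\le Ce^{-cs^2}$ for the GMC mass of a fixed set (see Section~\ref{sec-euc-vol} and~\eqref{eq:LeftTailsEucli}, going back to Molchan) makes $\P[\mu'<t^{1/2}]$ superpolynomially small in $t$. This proves the tail bound, hence finiteness of all negative moments.

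For $\wt h=h+\alpha\log|\cdot|^{-1}$ with $\alpha<Q$ the same scheme works: the circle average is $(\wt h)_{e^{-t}}(0)=X_t+\alpha t$, and by Weyl scaling (Axiom~\hyperref[axiom-weyl]{III}) together with Proposition~\ref{prop-dist} one has $D_{\wt h}(0,\partial B_{e^{-t}}(0))\le Ae^{-t\xi(Q-\alpha)}e^{\xi X_t}$ on a superpolynomially likely event, the exponent now carrying the drift $Q-\alpha>0$; accordingly one uses $T:=\inf\{t:X_t\le(Q-\alpha)t-\lambda\}$. In the scaling step the singularity multiplies $\mu_{\wt h}(A_{r_*}(0))$ by an extra factor $\asymp r_*^{-\gamma\alpha}=e^{\gamma\alpha T}$, and since $\gamma Q-2-\tfrac{\gamma^2}{2}=0$ the powers of $r_*$ again cancel exactly, the residual GMC factor being $\mu_{\wt h}(A_1(0))$ on an annulus bounded away from the singularity --- hence of the law of the GMC mass of a GFF plus a bounded function, with all negative moments. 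One works throughout with the annulus $A_{r_*}(0)$ in place of the disk $B_{r_*/2}(0)$, using $\mu_{\wt h}(\cB_1(0;D_{\wt h}))\ge\mu_{\wt h}(A_{r_*}(0))$ and arranging $A_{r_*}(0)\subset\cB_1(0;D_{\wt h})$ by the same distance estimates.

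The main obstacle is the geometric step in the first paragraph --- showing, on an event of probability $1-C_p t^p$, that $\cB_1(0;D_h)$ genuinely contains a Euclidean ball of radius comparable to (a fixed power of) the cut‑off scale $r_*$ located near the origin. This is where the random Euclidean geometry of the metric ball at scale $r_*$ must be controlled, via the internal‑diameter estimate for $B_{2r_*}(0)$ (a chaining consequence of Proposition~\ref{prop-dist}) or via Proposition~\ref{prop-euc-ball}. One must take care that the Euclidean ball produced is large enough that the scaling identity $\gamma Q=2+\gamma^2/2$ leaves \emph{no} residual negative power of $r_*$ other than the harmless $e^{-\zeta\gamma Q T}$: a naive union bound over a grid of candidate centers for the Euclidean ball would, for instance, cost a factor $r_*^{-2}$, and $r_*^{-1}$ has only finitely many moments, so this has to be avoided. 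Everything else --- the dyadic application of Proposition~\ref{prop-dist} with the union bound, the control of the $o(\log(1/t))$ Brownian‑oscillation errors, and the choice of $\zeta,\lambda,A$ as functions of $t$ --- is routine bookkeeping.
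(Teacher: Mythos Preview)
Your approach contains a genuine gap at the geometric step. You claim that on an event $G=G_{A,\lambda}$ with $\P[G^c]$ \emph{superpolynomially} small in $A$, the internal $D_h$-diameter of $B_{2r_*}(0)$ is bounded by $A(2r_*)^{\xi Q}e^{\xi h_{2r_*}(0)}$. After the strong Markov property at $T$, this amounts to asserting that $Y':=\sup_{w\in B_{1/2}(0)}D_{\hat h}^{\D}(0,w)$ has superpolynomial upper tails. This is false: point-to-point and point-to-set $D_h$-distances have only \emph{finitely} many positive moments. Indeed, any path from $0$ to a point $w$ with $|w|=1/2$ must cross every annulus $A_{e^{-k-1},e^{-k}}(0)$, so $Y'\ge \sup_{k\ge1}D_{\hat h}^{\D}(\partial B_{e^{-k-1}}(0),\partial B_{e^{-k}}(0))$, and by Proposition~\ref{prop-dist} this crossing is, up to a superpolynomially-tailed factor, $e^{\xi(X_k-Qk)}$ where $X_t=\hat h_{e^{-t}}(0)$ is Brownian motion. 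Since $\sup_{t\ge0}(X_t-Qt)$ has exponential (not Gaussian) tails, $e^{\xi\sup_t(X_t-Qt)}$ has only polynomial tails with exponent $2Q/\xi$. Proposition~\ref{prop-dist} gives superpolynomial concentration for set-to-set distances; it does not, and cannot, do so when one of the sets is a singleton. Carrying this polynomial bound through your optimization in $\lambda$ and $A$ (you need $A<e^{\xi\lambda}$ and $\gamma\lambda<\log(1/t)$) yields only $\P[\mu_h(\cB_1(0;D_h))<t]\lesssim t^{p}$ for $p$ below a fixed finite threshold of order $2Q/\gamma$, not for all $p$. Your ``alternative'' via Proposition~\ref{prop-euc-ball} has the same defect: to assert that $\cB_{Ae^{-\xi\lambda}}(0;D_h)$ \emph{reaches Euclidean radius $r_*$} you need $D_h(0,\partial B_{r_*}(0))\le Ae^{-\xi\lambda}$, which is again a point-to-set upper bound with only polynomial tails.

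The paper's argument avoids this obstruction by never attempting to place a Euclidean ball \emph{centered at the origin} inside $\cB_1(0;D_{\wt h})$. Instead, it follows a $D_{\wt h}$-geodesic from $0$ outward across a wide range of annuli, from scale $k_0=\lfloor\frac1N\log C\rfloor$ to $k_1=\lfloor N\log C\rfloor$. The point-to-set distance does enter (in bounding $L_{k_1}$, Lemma~\ref{lem-Lk}), but only at the very deep deterministic scale $e^{-k_1}$: there the extra factor $e^{-\frac34 k_1\xi(Q-\alpha)}=C^{-cN}$ is so small that \emph{any} fixed positive moment of $Y\stackrel{d}{=}D_{\wt h}(0,\partial B_1(0))$ suffices to make the failure probability $\le C^{-c'N}$. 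The geodesic itself certifies membership in $\cB_1(0;D_{\wt h}^{\D})$: a metric sub-ball centred at a point on the geodesic lies inside the unit metric ball automatically, without any diameter estimate. Proposition~\ref{prop-euc-ball} is then applied to this sub-ball, yielding a Euclidean ball at a random location in an annulus. Since $N$ is a free parameter, sending $N\to\infty$ gives all negative moments. Your stopping-time construction has no analogue of this $N$: by the strong Markov property the rescaled diameter after time $T$ is a fresh copy of $Y'$ regardless of how large $C_0$ is taken, so the polynomial tail cannot be beaten.
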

In Section~\ref{sec-lower-unit}, we prove the finiteness of negative moments of $\mu_h(\cB_1(0;D^\D_h))$, the unit ball with respect to the $\D$-internal metric $D^\D_h$. This immediately implies Proposition~\ref{Prop:NegativeMoments} since $\cB_1(0;D^\D_h) \subset \cB_1(0;D_h)$. In Section~\ref{sec-lower-general} we bootstrap our results to obtain lower bounds on $\mu_h(\cB_s(0; D_h))$ for $s \in (0,1)$; these lower bounds will be useful in our applications in Section~\ref{sec-applications}.

\subsection{Lower tail of the unit metric ball volume}\label{sec-lower-unit}
The goal of this section is the following result.
\begin{Prop}[Superpolynomial decay of internal metric ball volume lower tail]
\label{prop-lower}
Let $h$ be a whole-plane GFF normalized so $h_1(0) = 0$. Let $D^\D_h: \D\times \D \to \R$ be the internal metric in $\D$ induced by $D_h$, and $\cB_1(0; D^\D_h) \subset \D$ the $D^\D_h$-metric ball. Then for any $p>0$, for all sufficiently large $C>0$ we have
\[\P\left[\mu_h(\cB_1(0;D^\D_h)) \geq C^{-1} \right] \geq 1 - C^{-p} .\]
This result also holds if we instead consider the LQG measure and metric associated with the field $\wt h = h - \alpha\log |\cdot|$ for $\alpha < Q$.
\end{Prop}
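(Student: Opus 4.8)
The strategy is to zoom in to a random scale at which both the ``local distance'' and the ``local $\mu_h$-mass'' around the origin are pinned to the exact values we want, and then simply read off a Euclidean ball sitting inside $\cB_1(0;D_h^{\D})$ whose mass is controllably large. Write $X_t := h_{e^{-t}}(0)$; by the normalization $h_1(0)=0$ this is a standard Brownian motion started at $0$, so $Y_t := X_t-Qt$ is a Brownian motion with negative drift and $Y_t\to-\infty$ almost surely. Fix a large parameter $\Lambda>0$, let $T := \inf\{t\ge 0 : Y_t = -\Lambda\}$ --- an a.s.\ finite stopping time of the circle-average process --- and put $r := e^{-T}\in(0,1)$. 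By Lemma~\ref{lem-markovish}, $\check h := h(r\cdot)|_{\D}-h_r(0)$ has the law of $h|_{\D}$. Applying the coordinate-change and Weyl-scaling axioms for $D_h$, and the analogous scaling of $\mu_h$, at the scale $r$ (this is justified at the stopping time $T$ by the same strong Markov argument that proves Lemma~\ref{lem-markovish}, e.g.\ after conditioning on the value of $T$), and using $Y_T=-\Lambda$, one gets the exact identities
\[
\diam\!\bigl(\overline{B_{r/2}(0)};\,D_h^{B_r(0)}\bigr)\;=\;e^{-\xi\Lambda}\,\diam\!\bigl(\overline{B_{1/2}(0)};\,D_{\check h}^{\D}\bigr),
\qquad
\mu_h\bigl(B_{r/2}(0)\bigr)\;=\;e^{-\gamma\Lambda}\,\mu_{\check h}\bigl(B_{1/2}(0)\bigr).
\]
Crucially, the two right-hand sides are ``intrinsic'': $\diam(\overline{B_{1/2}(0)};D_{\check h}^{\D})\eqd\diam(\overline{B_{1/2}(0)};D_h^{\D})$ and $\mu_{\check h}(B_{1/2}(0))\eqd\mu_h(B_{1/2}(0))$, with laws not depending on $\Lambda$.

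Set $\mathcal G_\Lambda := \bigl\{\diam(\overline{B_{1/2}(0)};D_{\check h}^{\D})<e^{\xi\Lambda}\bigr\}\cap\bigl\{\mu_{\check h}(B_{1/2}(0))\ge e^{-\Lambda}\bigr\}$. On $\mathcal G_\Lambda$ the first identity gives $\diam(\overline{B_{r/2}(0)};D_h^{B_r(0)})<1$; since $0\in B_{r/2}(0)\subset\D$ and $D_h^{\D}\le D_h^{B_r(0)}$, this forces $D_h^{\D}(0,w)<1$ for every $w\in B_{r/2}(0)$, i.e.\ $B_{r/2}(0)\subseteq\cB_1(0;D_h^{\D})$, hence $\mu_h(\cB_1(0;D_h^{\D}))\ge\mu_h(B_{r/2}(0))=e^{-\gamma\Lambda}\mu_{\check h}(B_{1/2}(0))\ge e^{-(\gamma+1)\Lambda}$. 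It remains to show $\P[\mathcal G_\Lambda^c]$ decays faster than any power of $e^{\Lambda}$. The first bad event has probability $\P[\diam(\overline{B_{1/2}(0)};D_h^{\D})\ge e^{\xi\Lambda}]$, superpolynomially small in $e^{\xi\Lambda}$ because the $D_h^{\D}$-diameter of a Euclidean ball has superpolynomial upper tail (standard, via chaining of Proposition~\ref{prop-dist} over dyadic Euclidean scales). The second bad event has probability $\P[\mu_h(B_{1/2}(0))<e^{-\Lambda}]\le Ce^{-c\Lambda^2}$: decompose $h|_{B_{3/4}(0)}=\mathfrak h+\mathring h$ by Lemma~\ref{lem-markov}, use $\mu_h(B_{1/2}(0))\ge e^{\gamma\inf_{\overline{B_{1/2}(0)}}\mathfrak h}\,\mu_{\mathring h}(B_{1/2}(0))$, bound $\inf_{\overline{B_{1/2}(0)}}\mathfrak h$ by Gaussian concentration (a smooth Gaussian field of bounded variance), and bound the lower tail of $\mu_{\mathring h}(B_{1/2}(0))$ via~\eqref{eq:LeftTailsEucli}. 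So $\P[\mathcal G_\Lambda^c]$ decays superpolynomially in $e^{\Lambda}$.

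To conclude, given large $C$ set $\Lambda := \tfrac1{\gamma+1}\log C$: then on $\mathcal G_\Lambda$ we have $\mu_h(\cB_1(0;D_h^{\D}))\ge e^{-(\gamma+1)\Lambda}=C^{-1}$, while $\P[\mathcal G_\Lambda^c]\le C^{-p}$ for each fixed $p$ once $C$ is large, which is exactly the claim. The $\alpha$-log singularity case ($\alpha<Q$) is identical after replacing $Y_t$ by $X_t-(Q-\alpha)t$ (still negative drift), with $\check h$ replaced by $\check h+\alpha\log|\cdot|^{-1}\eqd\wt h|_{\D}$ in the scaling identities; the only extra input is that for the field with an $\alpha$-log singularity the $D$-diameter and $\mu$-mass of $\overline{B_{1/2}(0)}$ still have superpolynomial upper/lower tails (for the mass, restrict to $B_{1/2}(0)\setminus\overline{B_{1/4}(0)}$, where the singularity is bounded, and use~\eqref{eq:LeftTailsEucli}).

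The crux is the choice of scale. A deterministic scale does not work: with non-vanishing probability the field is atypically large near $0$, $\cB_1(0;D_h^{\D})$ is then Euclidean-tiny, and no fixed Euclidean ball need lie in it. Stopping the drifted circle-average process $Y_t=X_t-Qt$ at level $-\Lambda$ simultaneously pins the local distance scale $e^{\xi Y_T}=e^{-\xi\Lambda}$ and the local volume scale $e^{\gamma Y_T}=e^{-\gamma\Lambda}$ of $B_{r/2}(0)$ to the desired values \emph{no matter how large $T$ is}, decoupling the estimate into two standard single-scale tail bounds. The one genuinely technical ingredient is the routine verification that the scaling relations for $D_h$ and $\mu_h$ may be applied at the random scale $r$, handled exactly as in the proof of Lemma~\ref{lem-markovish}.
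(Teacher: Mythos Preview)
Your argument is clean but contains a genuine gap: the assertion that $\diam(\overline{B_{1/2}(0)};D_{\check h}^{\D})$ (or equivalently $\sup_{w\in\overline{B_{1/2}(0)}}D_{\check h}^{\D}(0,w)$) has \emph{superpolynomial} upper tail is false --- it has only a polynomial tail, even for $\alpha=0$. Indeed, $\sup_{w}D_{\check h}^{\D}(0,w)\ge D_{\check h}(0,\partial B_{1/4}(0))$, and the latter is bounded below by the internal crossing of every annulus $B_{e^{-k}}(0)\setminus\overline{B_{e^{-k-1}}(0)}$. By scaling and Lemma~\ref{lem-markovish} (at deterministic $k$) this crossing equals $e^{-k\xi Q}e^{\xi \check h_{e^{-k}}(0)}W_k$ with $W_k$ of unit order and independent of $\check h_{e^{-k}}(0)$; since $\E[e^{p\xi \check h_{e^{-k}}(0)}]=e^{p^2\xi^2k/2}$, one gets $\E[D_{\check h}(0,\partial B_{1/4}(0))^p]=\infty$ for $p>2Q/\xi$. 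Equivalently, $\sup_{t\ge0}(\check h_{e^{-t}}(0)-Qt)$ is exponential with rate $2Q$, giving $\P[\sup_w D_{\check h}^{\D}(0,w)>A]\gtrsim A^{-2Q/\xi}$. Your stopping-time trick does not help here: after zooming to scale $r=e^{-T}$ the rescaled field $\check h$ is a \emph{fresh} GFF, and its circle-average process restarts as a new Brownian motion which can again run atypically high at some small scale with polynomial probability. Your argument therefore yields only the fixed-exponent bound $\P[\mu_h(\cB_1(0;D_h^{\D}))<C^{-1}]\lesssim C^{-2Q/(\gamma+1)}$, not $C^{-p}$ for every $p$.

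The paper's proof avoids this obstruction by introducing a free parameter $N$: it looks at the geodesic across $\sim N\log C$ concentric annuli (between Euclidean radii $C^{-1/N}$ and $C^{-N}$), uses a pigeonhole to find a scale $k$ where the annulus-crossing length of the geodesic is not too small, hence $h_{e^{-k}}(0)\gtrsim -\log C$, and then invokes Proposition~\ref{prop-euc-ball} to place a Euclidean ball inside the metric ball at that scale --- centered at an adaptively chosen point on the geodesic, \emph{not} at $0$. Each failure mode costs $O(C^{-cN})$ for fixed $c$, so sending $N\to\infty$ gives every polynomial rate. The essential point is that a Euclidean ball centered at the origin cannot be used: the origin is a fixed point, so with polynomial (not superpolynomial) probability the field is anomalously large at some small scale around it, pushing $B_{r/2}(0)$ outside $\cB_1(0;D_h^{\D})$ no matter how $r$ is chosen.
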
 

Let $N >1$ be a parameter which we keep fixed as $C \to \infty$ (taking $N$ large yields $p$ large in Proposition~\ref{prop-lower}) and define
\[ k_0 = \left\lfloor \frac1N\log C \right\rfloor, \qquad k_1 = \lfloor N \log C \rfloor.\]
Let $P$ be a $D_{\wt h}$-geodesic from $0$ to $\partial B_{e^{-k_0}}(0)$. See Figure~\ref{fig-internal-ball} (left) for the setup.

\textit{Proof sketch of Proposition~\ref{prop-lower}.} The proof follows several steps. Each step below holds with high probability.
\begin{itemize}
\item We find an annulus $B_{e^{-k+1}}(0) \backslash \ol{B_{e^{-k}}(0)}$ with $k > k_0$ not too large, such that the annulus-crossing length of $P$ is not too small. This is possible because the $D_{\tilde{h}}$-length of $P$ between $\partial B_{e^{-k_1}}(0)$ and $\partial B_{e^{-k_0}}(0)$ is at least $C^{-\beta}$ for some fixed $\beta > 0$. We conclude that the circle average $\wt h_{e^{-k}}(0)$ is not small ($\wt h_{e^{-k}}\gtrsim -\log C$).
\item We find a $D_h$-metric ball which is ``tangent'' to $\partial B_{e^{-k}}(0)$ and $\partial B_{e^{-k-1}}(0)$.
Then, by Proposition~\ref{prop-euc-ball}, this metric ball (and hence $\cB_1(0; D^\D_{\wt h})$) contains a Euclidean ball $B$ with Euclidean radius not too small (say $e^{-(1+\zeta)k}$ for small $\zeta > 0$). Since $\wt h_{e^{-k}}(0)$ is not small, neither is the average of $\wt h$ on $\partial B$ (i.e. $\dashint_{\partial B} \wt h \gtrsim -\log C$).
\item Finally, we have a good lower bound on $\mu_{\wt h}(B)$ in terms of the average of $\wt h$ on $\partial B$, so we find that $B$ has not-too-small LQG volume. Since $B$ lies in $\cB_1(0; D^\D_{\wt h})$, we obtain a lower bound $\mu_{\wt h}(\mc B_1(0; D^\D_{\wt h}))\gtrsim C^{-\text{power}}$. This last exponent does not depend on $N$, so we may take $N \to \infty$ to conclude the proof of Proposition~\ref{prop-lower}.
\end{itemize}

We now turn to the details of the proof. Let $L_k$ be the $D_{\wt h}$-length of the subpath of $P$ from $0$ until the first time one hits $\partial B_{e^{-k}}(0)$. We emphasize that $L_k$ is \emph{not} the $D_{\wt h}$ distance from $0$ to $\partial B_{e^{-k}}(0)$.

\begin{Lem}[Length bounds along $P$]
\label{lem-Lk}
There exist positive constants $c = c(\gamma,\alpha)$ and $\beta = \beta(\gamma,\alpha)$ independent of $N$ such that for sufficiently large $C$, with probability $1 - O(C^{-c N})$ the following all hold:
\eqb \label{eq-Lk0}
L_{k_0} > C^{-\beta},
\eqe
\eqb \label{eq-Lk1}
L_{k_1} < C^{-\beta - 1},
\eqe
\eqb \label{eq-Lk}
L_{k-1} - L_k <  C\exp\left(-k \xi (Q-\alpha) + \xi h_{e^{-k}}(0) \right) \quad \text{ for all }k \in [k_0 +1 , k_1].
\eqe
\end{Lem}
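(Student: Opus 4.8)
The plan is to carry out the whole argument on a single event $G = G_{C,N}$ of probability $1 - O(C^{-cN})$, $c = c(\gamma,\alpha)>0$, on which the geometry is regular enough for all three bounds to become deterministic consequences. Write $X_j := h_{e^{-j}}(0)$, so that $(X_j)_{j\ge0}$ is a standard Brownian motion with $X_0=0$ and $\wt h_{e^{-j}}(0)=X_j+\alpha j$, and let $\sigma_j$ be the first time $P$ hits $\partial B_{e^{-j}}(0)$. The event $G$ will require: \emph{(a)} the two-sided estimate of Proposition~\ref{prop-dist} at every $e$-adic scale $e^{-j}$, $j\ge k_0$, with the parameter ``$A$'' there taken moderately large — I will let it grow slowly with $C$, equivalently use a superpolynomial exponent $p=p(N)$; and \emph{(b)} a moderate-deviation bound on the circle-average process, namely $X_{k_0}\ge-\tfrac{\beta}{2\xi}\log C$, $\,X_j\le\eps_1 j$ for all $j\ge k_1$, and $\sup_{|s-j|\le2}|X_s-X_j|\le\eps_1\log C$ for all $j\in[k_0,k_1]$, with $\eps_1=\eps_1(\gamma,\alpha)>0$ small. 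The complement probability is bounded by a union bound, using the Gaussian tails of increments of $X$ together with $k_0\le N^{-1}\log C$ and $k_1\ge N\log C$, and the superpolynomial tails of Proposition~\ref{prop-dist}; calibrating the implied constants so that everything fits inside $O(C^{-cN})$ with $c$ depending only on $(\gamma,\alpha)$ is the delicate part (discussed below). Throughout I will freely move between $D_h$- and $D_{\wt h}$-estimates: on any annulus $A_{s,r}(0)$ the function $-\alpha\log|\cdot|$ lies within $O(1)$ of $\alpha j$ when $e^{-j}\asymp r$, so the Weyl-scaling inequality $e^{\xi\inf f}D_h\le D_{h+f}\le e^{\xi\sup f}D_h$ (for bounded $f$, as recorded in the proof of Proposition~\ref{prop-euc-ball}) turns any scale-$e^{-j}$ bound for $D_h$ into one for $D_{\wt h}$ at the price of a factor $e^{\pm\xi\alpha j}e^{\pm O(\xi)}$.

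First I would record the elementary geodesic fact that since $P$ realizes $D_{\wt h}(0,\partial B_{e^{-k_0}}(0))$, each initial segment of $P$ is again a $D_{\wt h}$-geodesic; hence $L_j=D_{\wt h}(0,P(\sigma_j))$, $\,L_{k_0}=D_{\wt h}(0,\partial B_{e^{-k_0}}(0))$, and for $k\in[k_0+1,k_1]$ the increment is an honest distance, $L_{k-1}-L_k=D_{\wt h}(P(\sigma_k),P(\sigma_{k-1}))$. For \eqref{eq-Lk}, the two endpoints $P(\sigma_k)\in\partial B_{e^{-k}}(0)$ and $P(\sigma_{k-1})\in\partial B_{e^{-k+1}}(0)$ both lie in the connected annulus $B_{e^{-k+2}}(0)\setminus\overline{B_{e^{-k-1}}(0)}$, so $L_{k-1}-L_k$ is at most that annulus's $D_{\wt h}$-diameter; covering it by $O(1)$ scale-$e^{-k}$ sub-annuli reduces the diameter to $O(1)$ side-to-side crossing distances, each controlled on $G$ by Proposition~\ref{prop-dist}, and using \emph{(b)} to replace all circle averages within $O(1)$ scales of $e^{-k}$ by $X_k$ (cost $e^{\xi\eps_1\log C}=C^{\xi\eps_1}$) together with the Weyl remark yields $L_{k-1}-L_k\le A\,C^{\xi\eps_1}\,e^{-k\xi(Q-\alpha)}e^{\xi X_k}$. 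Since $C$ is the large parameter of the statement and $A,\eps_1$ are kept small, $A\,C^{\xi\eps_1}\le C$ for $C$ large, which is \eqref{eq-Lk}.

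For \eqref{eq-Lk0} I would use that every path from $0$ to $\partial B_{e^{-k_0}}(0)$ crosses the annulus $B_{e^{-k_0}}(0)\setminus\overline{B_{e^{-k_0-1}}(0)}$, so $L_{k_0}$ is at least the side-to-side $D_{\wt h}$-crossing distance of that single annulus, which on $G$ is $\ge A^{-1}e^{-k_0\xi(Q-\alpha)}e^{\xi X_{k_0}}\ge A^{-1}C^{-\xi(Q-\alpha)/N-\beta/2}$ (using $k_0\le N^{-1}\log C$ and $X_{k_0}\ge-\tfrac{\beta}{2\xi}\log C$); this exceeds $C^{-\beta}$ once $N$ is large enough that $\xi(Q-\alpha)/N<\beta/4$ and $C$ is large (keeping $A\le C^{\beta/8}$). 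For \eqref{eq-Lk1}, a telescoping bound over scales $e^{-j}$, $j\ge k_1$, gives $D_{\wt h}(0,\partial B_{e^{-k_1}}(0))\lesssim\sum_{j\ge k_1}A\,e^{-j\xi(Q-\alpha)}e^{\xi X_j}\le A\sum_{j\ge k_1}e^{-j\xi(Q-\alpha-\eps_1)}$ (using $X_j\le\eps_1 j$), which — choosing $\eps_1<Q-\alpha$, possible since $\alpha<Q$ — is a geometric series dominated by its first term, so $\lesssim A\,C^{-N\xi(Q-\alpha-\eps_1)}$; adding the comparable $D_{\wt h}$-diameter of a scale-$e^{-k_1}$ annulus to reach the particular point $P(\sigma_{k_1})$, one gets $L_{k_1}\lesssim A\,C^{-N\xi(Q-\alpha-\eps_1)}<C^{-\beta-1}$ once $N\xi(Q-\alpha-\eps_1)>\beta+2$ and $C$ is large.

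The step I expect to be the main obstacle is the simultaneous calibration underlying $G$: the Proposition~\ref{prop-dist} parameter $A$ must be allowed to grow with $C$ (or $p$ to depend on $N$) so that the union bound over the $\asymp N\log C$ scales in $[k_0,k_1]$ and over $j\ge k_1$ still sits inside an error $O(C^{-cN})$ with $c=c(\gamma,\alpha)$; yet $A$, $\eps_1$, and the fluctuation allowance $C^{\xi\eps_1}$ must remain small enough not to overwhelm the thresholds $C^{-\beta}$, $C^{-\beta-1}$, and $Ce^{-k\xi(Q-\alpha)+\xi X_k}$. Tracking these competing requirements is what forces $N$ to be taken sufficiently large in terms of $(\beta,\gamma,\alpha)$ — the regime of interest, since Proposition~\ref{prop-lower} is proved by sending $N\to\infty$. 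A secondary, routine point is keeping the ``geodesic sub-path'' and ``distance $\le$ diameter of a connecting annulus'' steps honest in a length metric with an unbounded $\log$-singularity at $0$; this is handled by phrasing everything through internal metrics on slightly enlarged annuli and the Weyl comparison.
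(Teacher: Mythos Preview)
Your approach is essentially the paper's: all three bounds are deduced from Proposition~\ref{prop-dist} (annulus crossing/circuit estimates) together with Gaussian tail bounds on the circle-average process, and the $D_h$-to-$D_{\wt h}$ transfer via Weyl scaling is exactly what the paper does implicitly. For~\eqref{eq-Lk0} and~\eqref{eq-Lk} your argument matches the paper's almost line-for-line.

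The one substantive difference is in~\eqref{eq-Lk1}. You bound $D_{\wt h}(0,\partial B_{e^{-k_1}}(0))$ by a telescoping sum $\sum_{j\ge k_1}A\,e^{-j\xi(Q-\alpha)}e^{\xi X_j}$, which forces Proposition~\ref{prop-dist} to hold simultaneously at \emph{infinitely many} scales $j\ge k_1$. With a single parameter $A$ depending only on $C$ (not on $j$), the union bound $\sum_{j\ge k_1}f(A)$ diverges, so the event $G$ as you describe it does not have the claimed probability. This is fixable --- let $A_j$ grow geometrically with $j-k_1$, slow enough that the sum $\sum_{j\ge k_1}A_j\,e^{-j\xi(Q-\alpha-\eps_1)}$ is still geometric --- but as written it is a gap, and it is precisely the calibration issue you flag as the ``main obstacle''. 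The paper sidesteps this entirely: instead of telescoping, it uses the scaling relation $D_{\wt h}(0,\partial B_{e^{-k_1+1}}(0))\le e^{\xi h_{e^{-k_1+1}}(0)}e^{(-k_1+1)\xi(Q-\alpha)}Y$ with $Y\stackrel{d}{=}D_{\wt h}(0,\partial B_1(0))$, and then applies Markov's inequality to $Y$ (which has finite small moments by \cite[Theorem~1.10]{DFGPS}) together with a single Gaussian tail bound on $h_{e^{-k_1}}(0)$. This replaces your infinite union by one moment estimate and one Gaussian tail, which is both cleaner and avoids the calibration headache.

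A minor point: your conditions ``$N$ large enough that $\xi(Q-\alpha)/N<\beta/4$'' and ``$N\xi(Q-\alpha-\eps_1)>\beta+2$'' look like restrictions on $N$, but since $\beta$ is yours to choose (depending only on $\gamma,\alpha$) and the lemma is only ever applied with $N\to\infty$, this is cosmetic rather than a real constraint.
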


\begin{proof}
We focus first on \eqref{eq-Lk0}. Using Proposition~\ref{prop-dist} to bound the annulus crossing distance of $B_{e^{-k_0}}(0)\backslash \overline{ B_{e^{-k_0-1}}(0)}$, we see that with superpolynomially high probability as $C \to \infty$ we have
\eqb\label{eq-k0-bound}
L_{k_0} \geq C^{-1} \left( e^{-k_0}\right)^{\xi (Q-\alpha)} \exp(\xi h_{e^{-k_0}}(0)).
\eqe
Note that since $\Var(h_{e^{-k_0}}(0)) = k_0 \leq N^{-1} \log C$, we have
\[\P\left[ \xi h_{e^{-k_0}}  (0)< -\log C \right] \leq \exp\left(- \frac{(\log C)^2}{2 \xi^2 N^{-1} \log C}\right) = C^{-c N} \]
for $c = 1/(2\xi^2)$. Notice that when we have both \eqref{eq-k0-bound} and $\{\xi h_{e^{-k_0}} \geq -\log C\}$, then
\[L_{k_0} \geq  C^{-1} \cdot C^{-\xi (Q-\alpha)/N} \cdot C^{-1}  \geq C^{-\beta}\]
for the choice $\beta  =2 + \xi (Q-\alpha)$. Thus \eqref{eq-Lk0} holds with probability $1 - O(C^{-c N})$.

To prove the upper bound \eqref{eq-Lk}, we glue paths to bound $L_{k-1} - L_k$. By Proposition~\ref{prop-dist} and a union bound, with superpolynomially high probability as $C \to \infty$ the following event $E_C$ holds:
\begin{itemize}
\item For each $k \in [k_0 + 1, k_1]$, there exists a path from $\partial B_{e^{-k+1}}(0)$ to $\partial B_{e^{-k-1}}(0)$ and paths in the annuli $B_{e^{-k}}(0) \backslash \overline{B_{e^{-k-1}}(0)}$ and $B_{e^{-k+2}}(0) \backslash \overline{B_{e^{-k+1}}(0)}$ which separate the circular boundaries of the annuli,  and such that each of these path has $D_{\wt h}$-length at most $\frac13C \exp\left(-k \xi (Q-\alpha) + \xi h_{e^{-k}}(0) \right)$.  
\end{itemize}
Since the segment on $P$ measured by $L_{k-1}-L_k$ is the restriction of a geodesic which crosses a larger annulus, by triangular equality, \eqref{eq-Lk} holds on $E_C$.

Finally, we check that for our choice of $\beta$, the inequality \eqref{eq-Lk1} holds with probability $1 - C^{-c N}$ (possibly by choosing a smaller value of $c > 0$). 
By the triangle inequality, $L_{k_1}$ is bounded from above by the sum of the $D_{\wt h}$-distance from the origin to $\partial B_{e^{-k_1 + 1}}(0)$ plus the $D_{\wt h}$-length of any circuit in the annulus $B_{e^{-k_1+1}}(0) \backslash \ol{B_{e^{-k_1}}(0)}$. Hence, using the circuit bound on $E_C$, we have
\[L_{k_1} \leq D_{\wt h}(0,\partial B_{e^{-k_1+1}}(0)) + C e^{-k_1 \xi (Q-\alpha)} e^{\xi h_{e^{-k_1}}(0)}.\]
By scaling of the metric, $D_{\wt h}(0,\partial B_{e^{-k_1+1}}(0))$ is bounded from above by $e^{\xi h_{e^{-k_1+1}}(0)}  e^{(-k_1+1) \xi (Q-\alpha)}Y$ where $Y$ is distributed as $D_{\wt h}(0,\partial B_1(0))$. Now, since $k_1 = \lfloor N \log C \rfloor$ and $h_{e^{-k_1}}(0)$ has variance $N \log C$, by a Gaussian tail estimate we get
\[\P\left[  h_{e^{-k_1}}  (0) > \frac{1}{4} k_1 (Q-\alpha)  \right] \leq  C^{-c N}. \]
Furthermore, since $Y$ has some finite small moments for $\alpha <Q$ (by \cite[Theorem 1.10]{DFGPS}), the Markov's inequality provides
\[ \P \left[ Y e^{-\frac{1}{4} k_1 \xi (Q-\alpha)} > 1  \right] \leq C^{-cN}. \]
Altogether, we obtain \eqref{eq-Lk1} with probability $1-O(C^{-cN})$. 
\end{proof}

As an immediate consequence of the above lemma, we can find a scale $k \in (k_0, k_1]$ such that $\cB_1(0;D_{\wt h}^{\D})$ intersects $\partial B_{e^{-k}}(0)$, and the field average at scale $k$ is large. We introduce here a small parameter $\zeta > 0$ which does not depend on $C$, whose value we fix at the end.

\begin{Lem}[Existence of large field average near $\cB_1(0;D^\D_{\wt h})$]\label{lem-large-field}
Consider $c$ and $\beta$ as in Lemma~\ref{lem-Lk}. With probability $1 - O(C^{-c N})$, there exists $k \in [k_0, k_1]$ such that $D_{\wt h}(0, \partial B_{e^{-k}}(0)) < 1$ and 
\eqb \label{eq-estimate-field}
-k (Q-\alpha) + h_{e^{-k}}(0)  \geq - \xi^{-1}(\beta + 2)\log C;
\eqe
moreover, there exists a Euclidean ball $B_r(z)$ with $r = e^{-k(1+\zeta)}$ and $z \in r \Z^2$ such that $B_r(z) \subset B_{e^{-k}}(0) \backslash \overline{B_{e^{-k-1}}(0)}$ and $B_r(z) \subset \cB_1(0; D^\D_{\wt h})$. 
\end{Lem}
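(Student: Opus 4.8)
The plan is to extract the whole statement from Lemma~\ref{lem-Lk} by a telescoping/pigeonhole argument over the scales $k_0\le k\le k_1$, and then to produce the Euclidean ball using Propositions~\ref{prop-dist} and~\ref{prop-euc-ball}. I would work on the intersection of the event of Lemma~\ref{lem-Lk} with: the event that Proposition~\ref{prop-dist} holds with a fixed constant for the $O(\log C)$ annuli at scales $e^{-j}$, $j\in[k_0,k_1]$ (which also yields an upper bound on the $D_{\wt h}$-diameter of these microscopic annuli and a lower bound on $D_{\wt h}(\overline{B_{e^{-k_0}}(0)},\partial\D)$); a standard circle-average regularity bound; and the event that the conclusion of Proposition~\ref{prop-euc-ball}, with some fixed exponent $\zeta'\in(0,\zeta)$, holds for every $D_{\wt h}$-ball $B\subset\overline\D$ with $\diam B\le 2e^{-k_0}$. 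Choosing the superpolynomial decay exponents in these auxiliary estimates sufficiently large (depending on $N$), the whole event still has probability $1-O(C^{-cN})$.

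\textbf{Selecting the scale.} Recall $L_k$ is nonincreasing in $k$, $D_{\wt h}(0,\partial B_{e^{-k}}(0))\le L_k$, and $L_{k_0}$ equals the $D_{\wt h}$-length of $P$, so $P\subset\overline{B_{e^{-k_0}}(0)}\subset\D$ and hence $D^\D_{\wt h}(0,w)\le L_j$ whenever $w$ is the first point of $P$ on a circle $\partial B_\rho(0)$ with $e^{-j-1}<\rho\le e^{-j}$. I would then pick $k$ by casework against the fixed threshold $\tfrac12$. If $L_{k_0}<\tfrac12$: by~\eqref{eq-Lk0} and~\eqref{eq-Lk1}, $L_{k_0}-L_{k_1}=\sum_{j=k_0+1}^{k_1}(L_{j-1}-L_j)>\tfrac12 C^{-\beta}$, so some $k\in[k_0+1,k_1]$ has $L_{k-1}-L_k\ge \tfrac1{2N\log C}C^{-\beta}$; feeding this into~\eqref{eq-Lk} and using $2N\log C\le C$ for large $C$ yields~\eqref{eq-estimate-field}, while $L_k\le L_{k_0}<\tfrac12$. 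If $L_{k_0}\ge\tfrac12$: let $\kappa:=\min\{j\in[k_0,k_1]:L_j<\tfrac12\}$ (well defined since $L_{k_1}<C^{-\beta-1}$), so $\kappa\ge k_0+1$ and $L_{\kappa-1}\ge\tfrac12>L_\kappa$; if $L_{\kappa-1}-L_\kappa\ge\tfrac14$ take $k:=\kappa$, so~\eqref{eq-Lk} gives~\eqref{eq-estimate-field} directly, and otherwise $L_\kappa\in[\tfrac14,\tfrac12)$ and the same pigeonhole applied to $\sum_{j=\kappa+1}^{k_1}(L_{j-1}-L_j)=L_\kappa-L_{k_1}>\tfrac18$ produces $k\in[\kappa+1,k_1]$ with~\eqref{eq-estimate-field} and $L_k\le L_\kappa<\tfrac12$. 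In every case we obtain $k\in[k_0,k_1]$ with~\eqref{eq-estimate-field} and $L_k<\tfrac12$, hence in particular $D_{\wt h}(0,\partial B_{e^{-k}}(0))<1$; and $k\ge k_0\to\infty$ as $C\to\infty$, so scale $e^{-k}$ is microscopic.

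\textbf{Producing the Euclidean ball.} Let $w$ be the first point of $P$ on $\partial B_\rho(0)$, $\rho:=\tfrac12(e^{-k}+e^{-k-1})$, so $D^\D_{\wt h}(0,w)\le L_k<\tfrac12$, and let $\mathbf B:=\cB_t(w;D_{\wt h})$ with $t$ the largest radius for which $\mathbf B$ stays in the open annulus $B_{e^{-k}}(0)\setminus\overline{B_{e^{-k-1}}(0)}$. Then $\mathbf B$ is contained in that annulus, touches its boundary while containing $w$ with $|w|=\rho$ — so $\diam\mathbf B\asymp e^{-k}$ — and, by Proposition~\ref{prop-dist}, $t$ is at most the $D_{\wt h}$-diameter of the annulus, which is $\le e^{-k\xi(Q-\alpha)+o(k)}\to0$; in particular $t<\tfrac12<1-L_k$ and $t<D_{\wt h}(w,\partial\D)$ for $C$ large. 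The latter forces every $D_{\wt h}$-geodesic out of $w$ of length $<t$ to stay in $\D$, so $D^\D_{\wt h}(w,u)=D_{\wt h}(w,u)<t$ for $u\in\mathbf B$, whence $D^\D_{\wt h}(0,u)\le D^\D_{\wt h}(0,w)+D^\D_{\wt h}(w,u)<L_k+t<1$ and thus $\mathbf B\subset\cB_1(0;D^\D_{\wt h})$. Since $\diam\mathbf B\le 2e^{-k}\le 2e^{-k_0}$ and $\mathbf B\subset\overline\D$, Proposition~\ref{prop-euc-ball} puts a Euclidean ball of radius $\ge(\diam\mathbf B)^{1+\zeta'}\ge c\,e^{-k(1+\zeta')}$ inside $\mathbf B$; as $\zeta'<\zeta$ this radius exceeds $3\sqrt2\,e^{-k(1+\zeta)}$ for $C$ large, so the ball contains some $B_r(z)$ with $r=e^{-k(1+\zeta)}$ and $z\in r\Z^2$. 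Then $B_r(z)\subset\mathbf B\subset\big(B_{e^{-k}}(0)\setminus\overline{B_{e^{-k-1}}(0)}\big)\cap\cB_1(0;D^\D_{\wt h})$, as required.

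\textbf{Main obstacle.} The delicate point is the scale selection: one must find a single $k$ at which the geodesic $P$ has both made enough $D_{\wt h}$-progress across the annulus (so that~\eqref{eq-Lk} turns into the lower bound~\eqref{eq-estimate-field} on $h_{e^{-k}}(0)$) and retained strictly more than a fixed amount of $D_{\wt h}$-budget (so that a metric ball that is macroscopic at scale $e^{-k}$, hence contains a Euclidean ball of radius $\approx e^{-k(1+\zeta)}$, still fits inside the unit ball). Running the casework against the threshold $\tfrac12$ rather than $1$ is precisely what makes these two requirements compatible while preserving the error bound $O(C^{-cN})$. A secondary technical nuisance, handled as above, is the comparison between the ambient metric $D_{\wt h}$ and the internal metric $D^\D_{\wt h}$ at the microscopic scale $e^{-k}$.
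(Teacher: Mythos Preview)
Your overall strategy is sound and close to the paper's, but there are two points that need attention.

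\textbf{The diameter claim on $t$ is not justified at the stated probability.} You assert that the $D_{\wt h}$-diameter of the scale-$e^{-k}$ annulus is $\le e^{-k\xi(Q-\alpha)+o(k)}\to 0$, and hence $t<\tfrac12$. Proposition~\ref{prop-dist} gives an upper bound $\lesssim C\,e^{-k\xi(Q-\alpha)}e^{\xi h_{e^{-k}}(0)}$, and the circle-average $h_{e^{-k}}(0)$ has variance $k$. A bound of the form $|h_{e^{-k}}(0)|=o(k)$ uniformly for $k\in[k_0,k_1]$ does \emph{not} hold with probability $1-O(C^{-cN})$ (the Gaussian tail at scale $k_0\approx N^{-1}\log C$ gives only a $C^{-c/N}$-type bound). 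Fortunately you do not need the diameter at all: since $w$ lies on $P$ at a radius $\rho\in(e^{-k-1},e^{-k})$, the sub-path of $P$ from $w$ to its first hit of $\partial B_{e^{-k}}(0)$ gives $t\le D_{\wt h}(w,\partial B_{e^{-k}}(0))\le L_k-\ell_w$, so $D^\D_{\wt h}(0,u)\le \ell_w+t\le L_k<\tfrac12$ directly.

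\textbf{Proposition~\ref{prop-euc-ball} is stated for $D_h$-balls, not $D_{\wt h}$-balls.} Your $\mathbf B=\cB_t(w;D_{\wt h})$ is a $D_{\wt h}$-ball; since $\wt h-h=-\alpha\log|\cdot|$ is not constant on the annulus, $\mathbf B$ is not a $D_h$-ball and the proposition does not apply as written. The paper handles this by taking instead the $D_{h+(k+1)\alpha}$-ball $U$ centered at a point on a geodesic such that $U$ touches both boundary circles; since adding a constant to $h$ preserves the family of metric balls, $U$ \emph{is} a $D_h$-ball and Proposition~\ref{prop-euc-ball} applies directly. You can fix your argument the same way (or note that on the annulus $|\wt h-(h+\alpha k)|\le|\alpha|$, so $\mathbf B$ contains a $D_{h+\alpha k}$-ball of comparable Euclidean diameter).

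\textbf{Comparison with the paper.} The paper's scale selection is simpler: it sets $k_\star$ to be the first $k\in(k_0,k_1]$ with $L_{k_\star}<C^{-\beta}$ and then pigeonholes on $[k_\star,k_1]$, yielding $L_k<C^{-\beta}$ (much stronger than your $L_k<\tfrac12$) with no casework. This stronger bound also cleanly absorbs the factor $e^{\xi\alpha}$ arising from the $D_{\wt h}$ versus $D_{h+\text{const}}$ comparison when showing $U\subset\cB_1(0;D^\D_{\wt h})$. Your casework against $\tfrac12$ also works (once the two points above are fixed), but the paper's route is shorter.
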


\begin{proof}
To prove~\eqref{eq-estimate-field}, we first claim that when the event of Lemma~\ref{lem-Lk} holds, there exists $k \in [k_0 + 1, k_1]$ such that $L_k < 1$ and $L_{k-1} - L_k \geq C^{-\beta - 1}$. Let $k_\star$ be the smallest $k \in (k_0,k_1]$ such that $L_{k_\star} < C^{-\beta}$, then 
\[\sum_{k = k_\star}^{k_1} L_{k-1} - L_k = L_{k_\star - 1} - L_{k_1} \geq C^{-\beta} - C^{-\beta - 1}.\]
Since the LHS is a sum over at most $N \log C$ terms, we indeed find some index $k \in [k_\star, k_1]$ such that 
\[L_{ k-1} - L_k \geq \frac{C^{-\beta} - C^{-\beta - 1}}{N \log C} > C^{-\beta - 1}.\]
For this choice of $k$, we have $D_{\wt h}(0, \partial B_{e^{-k}}(0)) \leq L_k \leq L_{k_\star} < C^{-\beta} < 1$, and by~\eqref{eq-Lk} we have~\eqref{eq-estimate-field} also.

\begin{figure}[ht!]
\begin{center}
  \includegraphics[scale=0.60]{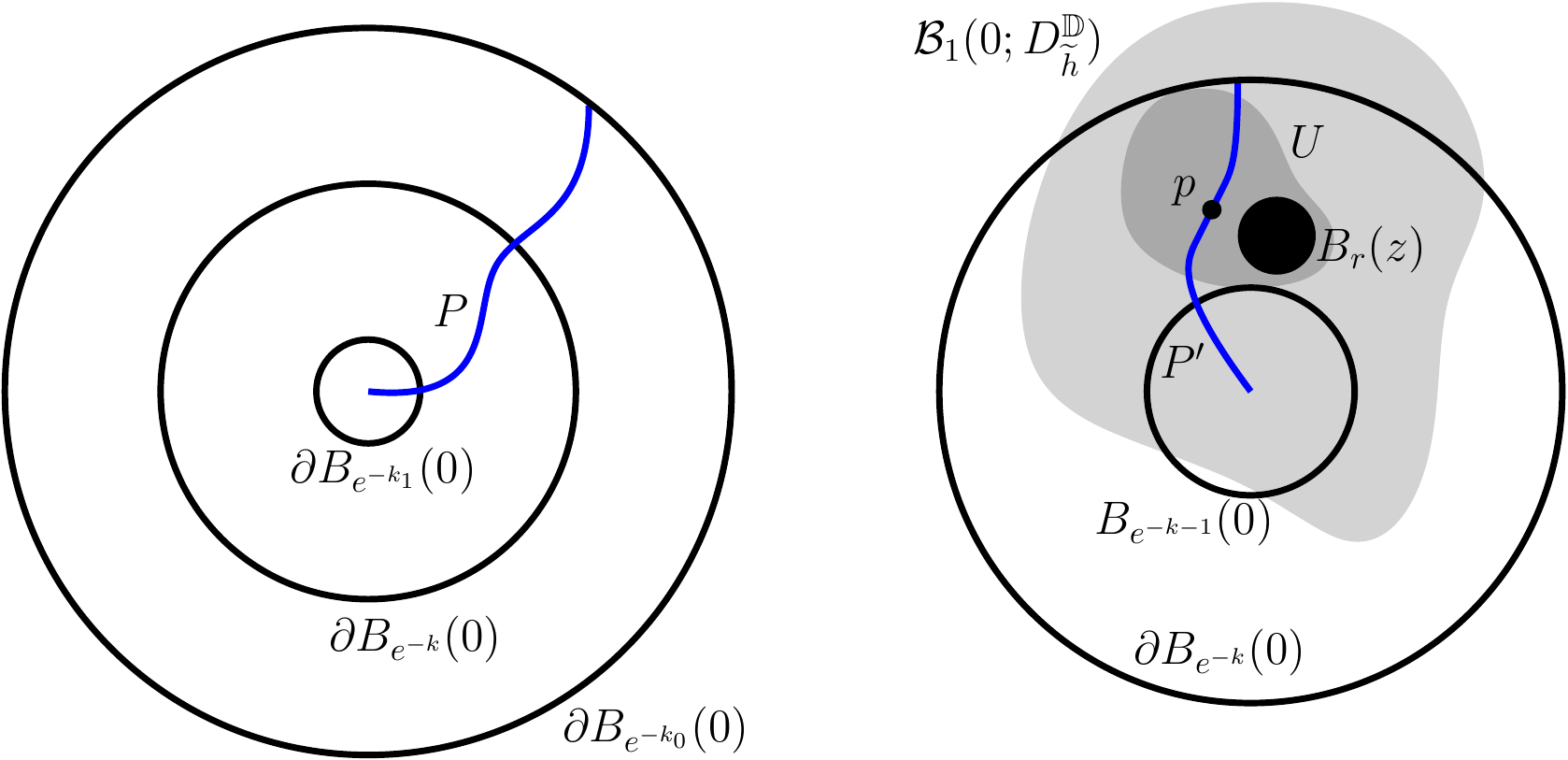}%
 \end{center}
\caption{\label{fig-internal-ball} \textbf{Left: }Setup of Lemma~\ref{lem-Lk}. Given $C$ that we eventually sent to $\infty$, we take the circles with radii $e^{-k_0} \approx C^{-1/N}$ and $e^{-k_1} = C^{-N}$, and draw all circles with radii $e^{-k}$ with $k_0 \leq k \leq k_1$. In Lemma~\ref{lem-large-field} we follow the geodesic $P$ from the outer circle to the inner until we find an annulus on which the geodesic segment is long. \textbf{Right: }Illustration of the second assertion of Lemma~\ref{lem-large-field}. We find a $D_h$-metric ball $U\subset \cB_1(0; D^\D_{\wt h})$ such that $U$ is ``tangent'' to $\partial B_{e^{-k}}$ and $\partial B_{e^{-k-1}}$, then apply Proposition~\ref{prop-euc-ball} to find a Euclidean ball $B_r(z) \subset U$.}
\end{figure}

Now we turn to the second assertion of the lemma; see Figure~\ref{fig-internal-ball} (right). Let $P'$ be a $D_{\wt h}$-geodesic from $0$ to $\partial B_{e^{-k}}(0)$. By the continuity of $D_{\wt h}$, we can find a point $p \in P'$ in the annulus $B_{e^{-k}}(0) \backslash \overline{B_{e^{-k-1}}(0)}$ such that $D_{h+(k+1)\alpha} (p, \partial B_{e^{-k}}(0)) = D_{h+(k+1)\alpha} (p, \partial B_{e^{-k-1}}(0))$; let $U$ be the $D_{h + (k+1)\alpha}$-ball with this radius centered at $p$. 

We claim that $U \subset \cB_1(0; D^\D_{\wt h})$. We assume that $\alpha \geq 0$ (the other case is similar). Since $(k+1)\alpha \geq \alpha \log |\cdot|^{-1} \geq k \alpha$ on $B_{e^{-k}}(0) \backslash \overline{B_{e^{-k-1}}(0)}$, we have for all $w \in U$ that 
\[D^\D_{\wt h}(p,w) \leq e^{\xi \alpha} D^\D_{h + \alpha k}(p,w) \leq e^{\xi \alpha} D^\D_{h + \alpha k}(p, \partial B_{e^{-k}}(0)) \leq e^{\xi \alpha} D^\D_{\wt h}(p, \partial B_{e^{-k}}(0)),\]
and consequently 
\[D^\D_{\wt h}(0, w) \leq D^\D_{\wt h}(0,p) + D^\D_{\wt h}(p, w) \leq  D^\D_{\wt h}(0,p) + e^{\xi \alpha} D^\D_{\wt h}(p, \partial B_{e^{-k}}(0)) \leq e^{\xi \alpha} D^\D_{\wt h} (0, \partial B_{e^{-k}}(0));\]
this last inequality follows from the fact that $p$ lies on $P'$ so $D^\D_{\wt h}(0,p) + D^\D_{\wt h}(p, \partial B_{e^{-k}}(0)) = D^\D_{\wt h} (0, \partial B_{e^{-k}}(0))$. Since $D^\D_{\wt h} (0, \partial B_{e^{-k}}(0)) \leq L_{k_\star} < C^{-\beta}$, we conclude that $D^\D_{\wt h}(0, w) < e^{\xi \alpha} C^{-\beta} \leq 1$, and hence $U \subset \cB_1(0; D^\D_{\wt h})$. 

Since $U$ is a $D_{h+(k+1)\alpha}$ metric ball, it is also a $D_h$ metric ball. Furthermore, since $\diam (U) \in (\frac12 e^{-k}, 2e^{-k})$,  Proposition~\ref{prop-euc-ball} gives us a Euclidean ball of radius $e^{-k(1+\zeta/2)}$ in $U$, and hence a Euclidean ball $B_{r}(z) \subset U$ with $z \in r\Z^2$. Since $U$ lies in $B_{e^{-k}}(0) \backslash \overline{B_{e^{-k-1}}(0)}$ and in $\cB_1(0; D^\D_{\wt h})$, so does $B_r(z)$, so we have shown Lemma~\ref{lem-large-field}. 
\end{proof}

Finally, we need a regularity event to say that the $\mu_{\wt h}$-volumes of Euclidean balls are close to their field average approximations, and that the field does not fluctuate too much on each scale. The bounds in the following lemma are standard in the literature. We introduce a large parameter $q>0$ that does not depend on $C$, and fix its value at the end. 

\begin{Lem}[Regularity of field averages and ball volumes]\label{lem-field-fluctuation}
Fix $\zeta \in (0,1)$ and $q>0$. Then for all sufficiently large $C> C_0(q,\zeta,N)$, with probability $1 - C^{-\zeta( \frac{q^2}{2N} - 2N - 1)}$ the following is true. For each $k \in [k_0, k_1]$, writing $r  = e^{-k(1+\zeta)}$, for all $z \in r \Z^2$ such that $B_r(z) \subset B_{e^{-k}} (0) \backslash\overline{ B_{e^{-k-1}}(0)}$ we have
\eqb\label{eq-field-fluctuation}
\left|  h_r(z) - h_{e^{-k}} (0)  \right| < k q\zeta 
\eqe
and
\eqb\label{eq-ball-field-average}
\mu_{\wt h}(B_r(z)) \geq C^{-1} r^{\gamma Q} \exp(\gamma \wt h_r(z)).
\eqe
\end{Lem}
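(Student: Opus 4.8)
The plan is to establish \eqref{eq-field-fluctuation} and \eqref{eq-ball-field-average} simultaneously on one good event, built by a union bound over the at most $N\log C+1$ scales $k\in[k_0,k_1]$ and, for each such $k$, over the lattice points $z\in r\Z^2$ with $r=e^{-k(1+\zeta)}$ and $B_r(z)\subset A_{e^{-k-1},e^{-k}}(0)$; there are $O(e^{2k\zeta})$ of the latter, and they all satisfy $e^{-k-1}<|z|<e^{-k}$, so in particular $\log(e^{-k}/r)=k\zeta$. Throughout, $C$ denotes the large parameter that is eventually sent to $\infty$, and $o_C(1)$ a quantity tending to $0$ as $C\to\infty$ uniformly over $k\ge k_0$ (note $k_0\to\infty$).

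\emph{Circle-average fluctuation.} Fix such a $k$ and $z$. The difference $h_r(z)-h_{e^{-k}}(0)$ is a centered Gaussian whose variance equals $\log(e^{-k}/r)+O(1)=k\zeta+O(1)$, uniformly over the admissible $z$; this follows from a direct computation with the covariance kernel of the whole-plane GFF, or from Lemma~\ref{lem-markovish} (applied with the deterministic time $T=k$) together with the translation invariance of $h$ modulo additive constant. Hence the standard Gaussian tail bound gives
\[
\P\big[\,|h_r(z)-h_{e^{-k}}(0)|\ge kq\zeta\,\big]\;\le\;\exp\!\Big(-\frac{(kq\zeta)^2}{2\,(k\zeta+O(1))}\Big)\;\le\; C^{-\frac{q^2\zeta k}{2\log C}(1-o_C(1))}.
\]
Multiplying by the $O(e^{2k\zeta})=O(C^{2\zeta k/\log C})$ points at scale $k$ and summing over $k\in[k_0,k_1]$, the exponent of $C$ at scale $k$ is $\tfrac{\zeta k}{\log C}\big(2-\tfrac12 q^2(1-o_C(1))\big)$; since we take $q$ large this is negative and grows in $|k|$, so the sum is of the order of its term at the innermost scale $k=k_0=\lfloor N^{-1}\log C\rfloor$, i.e.\ of order $C^{-\zeta(\frac{q^2}{2N}-\frac2N)(1-o_C(1))}$. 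For $C$ large (depending on $q,\zeta,N$) this is at most $C^{-\zeta(\frac{q^2}{2N}-2N-1)}$, using $\tfrac2N\le 2N+1$ and absorbing the multiplicative constant into a factor $C^{o(1)}$.

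\emph{Volume lower bound.} By the scaling property of the LQG area measure, $\mu_h(B_r(z))=r^{\gamma Q}e^{\gamma h_r(z)}Z_{k,z}$, where $Z_{k,z}:=\mu_{h(r\cdot+z)-h_r(z)}(\D)$ has, for every $k,z$, the law of $\mu_{\mathring h}(\D)$ for $\mathring h$ a whole-plane GFF with $\mathring h_1(0)=0$ (again by translation invariance modulo additive constant). When $\alpha\neq0$ one passes from $h$ to $\wt h=h-\alpha\log|\cdot|$ at the cost of factors bounded uniformly in $k,z$: on $B_r(z)\subset A_{e^{-k-1},e^{-k}}(0)$ the function $-\alpha\log|\cdot|$ equals the constant $-\alpha\log|z|$ up to an oscillation $O(|\alpha|\,r/|z|)=o_C(1)$, so $\mu_{\wt h}(B_r(z))$ and $r^{\gamma Q}e^{\gamma\wt h_r(z)}$ differ from their $\alpha=0$ counterparts by the same bounded factor; thus $\mu_{\wt h}(B_r(z))/(r^{\gamma Q}e^{\gamma\wt h_r(z)})$ is, up to bounded factors, distributed as $\mu_{\mathring h}(\D)$. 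Since the total GMC mass $\mu_{\mathring h}(\D)$ has finite negative moments of all orders (Molchan; cf.\ \eqref{eq:LeftTailsEucli} and the surrounding discussion, using e.g.\ the Markov decomposition of Lemma~\ref{lem-markov} to reduce to the zero-boundary case), we get $\P[\mu_{\wt h}(B_r(z))<C^{-1}r^{\gamma Q}e^{\gamma\wt h_r(z)}]\le C_pC^{-p}$ for every $p$. Fixing $p$ large after $q,\zeta,N$ and union bounding over the $\le C^{2N\zeta}\cdot O(N\log C)$ pairs $(k,z)$ makes this contribution negligible next to the circle-average one; intersecting the two events finishes the proof.

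\emph{Main obstacle.} There is no genuinely new input — all ingredients (Gaussian tail bounds, the scaling of $\mu_h$, the lower-tail estimate \eqref{eq:LeftTailsEucli}) are standard. The one point requiring care is the bookkeeping of the union bound over scales: although the number of lattice points $e^{2k\zeta}$ grows with $k$, the threshold $kq\zeta$ in \eqref{eq-field-fluctuation} grows linearly in $k$ as well, so for $q$ large (which is precisely the role of the parameter $q$) the scale contributing most to the failure probability is the \emph{innermost} one, $k_0\asymp N^{-1}\log C$, and it is this scale that produces the exponent $q^2/(2N)$; the crude constant $2N+1$ then generously absorbs the $O(N\log C)$ number of scales, the $O(1)$ variance corrections, and the entire (superpolynomially small) volume contribution.
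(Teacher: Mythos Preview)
Your proof is correct, and for \eqref{eq-field-fluctuation} it follows the paper's argument essentially verbatim: the same variance computation $\Var(h_r(z)-h_{e^{-k}}(0))=k\zeta+O(1)$, Gaussian tail bound, and union bound over scales and lattice points. Your per-scale bookkeeping (identifying $k_0$ as the dominant scale) is slightly more refined than the paper's, which simply multiplies the worst-case point count $e^{2k_1\zeta}$ by the worst-case tail $e^{-q^2\zeta k_0/2}$; both yield the stated exponent.

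For \eqref{eq-ball-field-average} you take a genuinely different and cleaner route. The paper decomposes $h=\mathfrak h+\mathring h$ via the Markov property (Lemma~\ref{lem-markov}) with $\mathring h$ a zero-boundary GFF on $B_{2r}(z)$, then separately controls three pieces: the zero-boundary GFF circle average $\wh h_r(z)$, the mass $\mu_g(B_{1/2}(0))$ via \eqref{eq:LeftTailsEucli}, and the oscillation of the harmonic part $\mathfrak h$ via the Borell--TIS inequality. You instead use the exact scaling identity $\mu_h(B_r(z))=r^{\gamma Q}e^{\gamma h_r(z)}Z_{k,z}$ with $Z_{k,z}\stackrel d=\mu_{\mathring h}(\D)$ for a whole-plane GFF $\mathring h$ with $\mathring h_1(0)=0$, and then invoke the finiteness of all negative moments of this fixed random variable. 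This collapses the paper's three estimates into one and avoids Borell--TIS entirely; the price is that the negative-moment input for $\mu_{\mathring h}(\D)$ is one step removed from \eqref{eq:LeftTailsEucli} (you correctly note the reduction via Lemma~\ref{lem-markov}). Your treatment of the $\alpha$-singularity, bounding the oscillation of $-\alpha\log|\cdot|$ on $B_r(z)$ by $O(r/|z|)=O(e^{-k\zeta})=o_C(1)$, is also cleaner than the paper's, which absorbs the singularity at a different point in the argument.
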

\begin{proof}
By standard GFF estimates, we have $\Cov\left(h_r(z), h_{e^{-k}}(0)\right) = k + O(1)$, $\Var h_r (z) = -\log r + O(1) = k(1+\zeta) + O(1)$ and $\Var h_{e^{-k}}(0) = k + O(1)$.
Consequently, 
\alb
\Var\left(h_{r}(z) - h_{e^{-k}} (0)\right) = \zeta k + O(1),
\ale
and hence by the Gaussian tail bound,
\[\P \left[ | h_r(z) - h_{e^{-k}} (0) | < kq\zeta  \right] \geq 1 - O(e^{-\frac{q^2\zeta k}{2}}). \]
Taking a union bound over all $O(e^{2k\zeta})$ points in $r\Z^2 \cap B_{e^{-k}}(0)$, then summing over all $k \in [k_0, k_1]$, we see that the probability \eqref{eq-field-fluctuation} holds for all $k$ and all suitable $z$ is at least
\[1 - O\left( \sum_{k=k_0}^{k_1} e^{2k \zeta} e^{-q^2 \zeta k /2} \right) \geq 1 -  O\left( N \log C  \cdot e^{2k_1 \zeta}  e^{-q^2 \zeta k_0 /2} \right) \geq 1 - C^{-\zeta(\frac{q^2}{2N} - 2N - 1)}. \]

Now, we establish that for each fixed choice of $k,z$, the inequality \eqref{eq-ball-field-average} holds with superpolynomially high probability as $C \to \infty$ (then we are done by a union bound over a collection of polynomially many $k,z$); since $ -\alpha \log |\cdot| -\alpha k$ is bounded on the annulus, it suffices to show \eqref{eq-ball-field-average} with $\wt h$ replaced by $h + \alpha k$ (or equivalently by $h$, since both sides of the equation \eqref{eq-ball-field-average} scale the same way under adding a constant to the field). By the Markov property of the GFF (Lemma~\ref{lem-markov}) we can decompose $h = \mathfrak h + \wh h$, where $\mathfrak h$ is a distribution which is harmonic in $B_{2r}(z)$, and $\wh h$ is a zero boundary GFF in the domain $B_{2r}(z)$; moreover $\mathfrak h$ and $\wh h$ are independent. We can then write
\alb
\mu_h(B_r(z)) &\geq e^{\gamma \inf_{B_r(z)} \mathfrak h } \mu_{\wh h} (B_r(z)) \\
&= (2r)^{\gamma Q} e^{\gamma h_r(z)} e^{-\gamma \wh h_r(z)} e^{ \gamma \inf_{B_r(z)} \mathfrak h  - \gamma \mathfrak h (z)} \mu_g (B_{\frac12}(0)),
\ale
where $g := \wh h(2r \cdot + z)$ has the law of a zero boundary GFF on $\D$. (This follows from an affine change of coordinates mapping $B_{2r}(z) \mapsto \D$; then by the coordinate change formula $\mu_{\wh h} (B_r(z)) = (2r)^{\gamma Q} \mu_g (B_{\frac12}(0))$.)

Since $\wh h_r(z)$ is a mean zero Gaussian with fixed variance, and by the quantum volume lower bound~\eqref{eq:LeftTailsEucli}, we have $e^{-\gamma \wh h_r(z)}  \geq C^{-1/3}$ and $\mu_g (B_{\frac12}(0)) \geq C^{-1/3}$ with superpolynomially high probability in $C$. Combining these bounds with the above estimate, with superpolynomially high probability in $C$ we have
\[\mu_h(B_r(z)) \geq (2r)^{\gamma Q} C^{-2/3} e^{ \gamma \inf_{B_r(z)} \mathfrak h  - \gamma \mathfrak h (z)}.\]
Hence we are done once we check that with superpolynomially high probability in $C$,
\eqb\label{eq-bound-harmonic}
e^{ \gamma \inf_{B_r(z)} \mathfrak h  - \gamma \mathfrak h (z)} \geq C^{-1/3}.
\eqe
Since $h = \frak h + \wh h$ and $\frak h, \wh h$ are independent, for $x, x' \in B_r(z)$ we have
\[\Var\left( \mathfrak h (x) - \mathfrak h(x') \right) \leq \Var\left( h_r (x) - h_r(x') \right) = O(1). \]
Moreover, by the scale and translation invariance of the GFF modulo additive constant and the fact that $\mathfrak h$ is continuous in $B_{\frac32r}(z)$, we know that $\mathfrak h (z) - \inf_{B_r(z)} \mathfrak h > -\infty$ and has a law independent of $r,z$, so by the Borell-TIS inequality we see that for some absolute constants $m,c$, we have
\[\P \left[ \mathfrak h(z) - \inf_{B_r(z)} \mathfrak h > u + m \right] \leq e^{-c u^2} \quad \text{ for all } u>0.\]
This immediately implies \eqref{eq-bound-harmonic}. Thus, for each fixed choice of $k,z$, the inequality \eqref{eq-ball-field-average} holds with superpolynomially high probability as $C \to \infty$. Taking a union bound, we obtain \eqref{eq-ball-field-average}.
\end{proof}

\begin{proof}[Proof of Proposition~\ref{prop-lower}]
Let $c,\beta$ be as in Lemma~\ref{lem-Lk}. We will work with parameters $N, \zeta, q$, and choose their values at the end. Assume that the events of Lemmas~\ref{lem-large-field} and~\ref{lem-field-fluctuation} hold; this occurs with probability at least $1 - C^{-c N} - C^{-\zeta (\frac{q^2 }{2N} - 2N - 1)}$. Let $k$, $r$, and $B_r(z)$ be as in Lemma~\ref{lem-large-field}.

We now lower bound the quantum volume of $B_r(z)$. By \eqref{eq-estimate-field} and \eqref{eq-field-fluctuation}, we see that
\alb
r^{\gamma Q} \exp\left(\gamma \wt h_r (z)\right) & \geq  \exp\left(-\gamma kQ(1+\zeta) + \gamma h_r(z) + \gamma \alpha  k \right) \\
&\geq \exp(-\gamma \zeta k(Q+q) - \gamma k(Q-\alpha) + \gamma h_{e^{-k}}(0) ) \\
&\geq \exp(-\gamma \zeta k(Q+q)) C^{-\frac{\gamma}{\xi}(\beta +2)} \\
&\geq C^{-\gamma \zeta N ( Q + q )} C^{-\frac{\gamma}{\xi}(\beta + 2)} .
\ale
The last inequality follows from $k \leq k_1 = \lfloor N \log C \rfloor$. Choose $q = N^3$ and $\zeta = N^{-4}$. Then by the above inequality, \eqref{eq-ball-field-average}, and $B_r(z) \subset \cB_1(0;D^\D_{\wt h})$, we see that for a constant $\beta' = \beta'(\gamma)>0$ we have 
\[ \mu_{\wt h}(\cB_1(0;D^\D_{\wt h})) \geq \mu_{\wt h}(B_r(z)) \geq C^{-\beta'}.\]
Since this occurs with probability $1 - C^{-c N} - C^{-\zeta(\frac{q^2 }{2N} - 2N - 1)} = 1 - O(C^{-c N}) $, and $N$ can be made arbitrarily large, we have proved Proposition~\ref{prop-lower}. 
\end{proof}

\subsection{Lower tail of small metric balls}\label{sec-lower-general}
Using Proposition~\ref{prop-lower} and the scaling properties of the LQG metric and measure, we can easily prove a similar result for metric balls centered at the origin of all radii $s \in (0,1)$. We emphasize that in the following proposition, we are considering the $D_h$-metric balls, rather than $D^\D_{h}$-metric balls.

\begin{Lem}
\label{lem-lower-all-r}
Let $h$ be a whole-plane GFF normalized so $h_1(0) = 0$. For any $p>0$, there exists $C_p$ such that for all $C > C_p$ and $s \in (0,1)$, we have
\[\P\left[\mu_h(\cB_s(0;D_h) ) \geq C^{-1} s^{d_\gamma} \right] \geq 1 - C^{-p} .\]
\end{Lem}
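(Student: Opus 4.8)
The plan is to reduce the statement to Proposition~\ref{prop-lower}, with a constant uniform in $s$, by rescaling space by a \emph{random} factor chosen so that $\cB_s(0;D_h)$ becomes a unit metric ball. I would introduce the stopping time
\[
T := \inf\Big\{ t \geq 0 \ :\ \xi\big(h_{e^{-t}}(0) - Qt\big) = \log s \Big\}
\]
of the circle average process $(h_{e^{-t}}(0))_{t\ge 0}$. Since $s\in(0,1)$ we have $\log s<0$, while the continuous process $t\mapsto \xi(h_{e^{-t}}(0)-Qt)$ equals $0$ at $t=0$ and tends to $-\infty$ a.s.\ (its Brownian part being $o(t)$), so $T<\infty$ a.s.; set $r:=e^{-T}\in(0,1)$ and $\wt h:=h(r\cdot)-h_r(0)$. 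By the choice of $T$ one has $h_r(0)+Q\log r=\xi^{-1}\log s$, so that
\[
e^{\xi h_r(0)}\,r^{\xi Q}=s \qquad\text{and}\qquad e^{\gamma h_r(0)}\,r^{\gamma Q}=s^{d_\gamma},
\]
using $\xi=\gamma/d_\gamma$ for the second identity.

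Next I would use the scaling (coordinate change and Weyl scaling) properties of $D_h$ and $\mu_h$, valid at the stopping-time scale $r=e^{-T}$ via the strong Markov property in Lemma~\ref{lem-markovish}. Writing $h(r\cdot)+Q\log r=\wt h+(h_r(0)+Q\log r)$ and recalling that $h_r(0)+Q\log r$ is a deterministic constant, this gives $D_h(rz,rw)=s\,D_{\wt h}(z,w)$ and $\mu_h(rA)=s^{d_\gamma}\mu_{\wt h}(A)$ for all $z,w\in\C$ and all Borel $A$. In particular $\cB_s(0;D_h)=r\,\cB_1(0;D_{\wt h})$, and hence
\[
\mu_h\big(\cB_s(0;D_h)\big)=s^{d_\gamma}\,\mu_{\wt h}\big(\cB_1(0;D_{\wt h})\big)\ \ge\ s^{d_\gamma}\,\mu_{\wt h}\big(\cB_1(0;D^{\D}_{\wt h})\big),
\]
where the last inequality uses $D^{\D}_{\wt h}\ge D_{\wt h}$, so that $\cB_1(0;D^{\D}_{\wt h})\subset\cB_1(0;D_{\wt h})$.

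To conclude, note that $\mu_{\wt h}\big(\cB_1(0;D^{\D}_{\wt h})\big)$ is measurable with respect to $\wt h|_{\D}$, by locality of the internal metric and of the LQG measure, while $\wt h|_{\D}\eqd h|_{\D}$ by Lemma~\ref{lem-markovish}; therefore $\mu_{\wt h}\big(\cB_1(0;D^{\D}_{\wt h})\big)\eqd \mu_h\big(\cB_1(0;D^{\D}_h)\big)$. By Proposition~\ref{prop-lower}, for any $p>0$ there is $C_p$, \emph{not depending on $s$}, such that for all $C>C_p$,
\[
\P\big[\mu_h(\cB_s(0;D_h))\ge C^{-1}s^{d_\gamma}\big]\ \ge\ \P\big[\mu_{\wt h}(\cB_1(0;D^{\D}_{\wt h}))\ge C^{-1}\big]\ =\ \P\big[\mu_h(\cB_1(0;D^{\D}_h))\ge C^{-1}\big]\ \ge\ 1-C^{-p},
\]
which proves the lemma.

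The argument is otherwise a short scaling computation; the one genuinely delicate point is that the scaling relations for $D_h$ and $\mu_h$ are applied at the \emph{random} scale $r=e^{-T}$ rather than a deterministic one --- this is exactly the role of the strong Markov property (Lemma~\ref{lem-markovish}), and it is also why a stopping time is needed, since at a deterministic scale the circle average $h_r(0)$ fluctuates by order $\sqrt{\log(1/s)}$, which would force the constant to depend on $s$.
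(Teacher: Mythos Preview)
Your proof is correct and follows essentially the same approach as the paper: introduce the stopping time $T$ at which $-Qt+h_{e^{-t}}(0)=\xi^{-1}\log s$, rescale by $r=e^{-T}$ so the radius-$s$ ball becomes a unit ball, invoke Lemma~\ref{lem-markovish} to identify the rescaled field on $\D$ with a fresh GFF, pass to the internal metric $D^\D$ (so that locality applies and only the field on $\D$ matters), and conclude via Proposition~\ref{prop-lower}. The paper orders the steps slightly differently---it compares $\cB_s(0;D_h)\supset\cB_s(0;D^{e^{-T}\D}_h)$ first and then rescales, whereas you rescale to $\cB_1(0;D_{\wt h})$ first and then pass to $\cB_1(0;D^\D_{\wt h})$---but these are equivalent and the argument is the same.
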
 

\begin{proof}
The process $t \mapsto h_{e^{-t}}(0)$ for $t \geq 0$ evolves as standard Brownian motion started at $0$. Fix $s \in (0,1)$ and let $T>0$ be the first time $t>0$ that $-Qt + h_{e^{-t}}(0) = \xi^{-1} \log s$. Notice that 
\begin{align*}
h(e^{-T} \cdot ) + Q \log e^{-T} & = \left(h(e^{-T} \cdot ) -  h_{e^{-T}}(0) \right) -QT + h_{e^{-T}}(0) \\
& =   \left(h(e^{-T} \cdot ) -  h_{e^{-T}}(0) \right) + \xi^{-1} \log s.
\end{align*}
By Lemma~\ref{lem-markovish}, conditioned on $T$, we have $(h(e^{-T} \cdot ) + Q \log e^{-T})\big|_{\D}  \stackrel{d}{=} (\wh h + \xi^{-1} \log s)\big|_{\D}$ where $\wh h$ is a whole-plane GFF normalized to have mean zero on $\partial \D$. Couple these fields to agree. By the Weyl scaling relations and the change of coordinates formula for quantum volume and distances, and the locality property of the internal metric (Axiom~\hyperref[axiom-locality]{II}), we have the internal metric relation 
\[D^{e^{-T} \D}_{h} (e^{-T}z, e^{-T}w) = D^\D_{\wh h + \xi^{-1} \log s} (z,w) = s D^\D_{\wh h} (z,w)\]
and the volume measure relation 
\[\mu_h(e^{-T} \cdot) = \mu_{\wh h + \xi^{-1} \log s} (\cdot) = s^{d_\gamma} \mu_{\wh h}(\cdot).\]
Thus we can relate the quantum volume of the internal metric balls $\cB_s(0; D^{e^{-T} \D}_h) \subset e^{-T}\D$ and $\cB_1(0; D^\D_{\wh h})$:
\[\mu_h\left( \cB_s(0; D^{e^{-T}\D }_{h}) \right) = s^{d_\gamma} \mu_{\wh h}( \cB_1(0; D^\D_{\wh h + \xi^{-1} \log s}) ),\]
and consequently we have 
\[\left\{ \mu_h(\cB_s(0;D^{e^{-T} \D}_h)  ) \geq C^{-1} s^{d_\gamma} \right\} =  \left\{ \mu_{\wh h }(\cB_1(0;D^\D_{ \wh h }) ) \geq C^{-1}  \right\}.\]
Since $\mu_h(\cB_s(0;D_h)) \geq \mu_h(\cB_s(0;D^{ e^{-T} \D}_h) )$, our claim follows from Proposition~\ref{prop-lower}.
\end{proof}

\section{Applications and other results}\label{sec-applications}

\subsection{Uniform volume estimates and Minkowski dimension}

\label{sec-UVE}

In this section, we prove the remaining assertions of Theorem \ref{thm-main}. Namely, the Minkowski dimension of a bounded open set $S$ is almost surely equal to $d_{\gamma}$ and  for any compact set $K\subset \C$ and $\eps > 0$, we have, almost surely
\[\sup_{s \in (0,1)} \sup_{z \in K} \frac{\mu_h(\cB_s(z;D_h))}{s^{d_\gamma - \eps}} < \infty \quad \text{and} \quad \inf_{s\in (0,1)} \inf_{z \in K} \frac{\mu_h(\cB_s(z;D_h))}{s^{d_\gamma + \eps}} > 0. \]

Since the whole-plane GFF modulo additive constants has a translation invariant law, we can deduce a version of Lemma~\ref{lem-lower-all-r} for metric balls centered at $z\neq 0$. 
\begin{Prop}[Uniform lower tail for $\mu_h(\cB_s(z;D_h))$]
\label{prop-lower-general}
Let $h$ be a whole-plane GFF normalized so $h_1(0) = 0$, and $K \subset \C$ be any compact set. For any $p>0$, there exists $C_{p,K} > 0$ such that
\[\sup_{s\in(0,1), z\in K} \P\left[\mu_h(\cB_s(z;D_h) ) \geq C^{-1} s^{d_\gamma} \right] \geq 1 - C^{-p}  \quad \textrm{for each }C > C_{p,K}.\]
\end{Prop}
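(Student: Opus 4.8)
The plan is to reduce Proposition~\ref{prop-lower-general} to Lemma~\ref{lem-lower-all-r} using the translation invariance of the law of the whole-plane GFF modulo additive constant, together with Weyl scaling (axiom~\hyperref[axiom-weyl]{III}) and the translation case of the coordinate-change axiom (axiom~IV) for $D_h$. The only genuinely new feature compared with the centered case of Lemma~\ref{lem-lower-all-r} is that, after translating the picture so that $z$ plays the role of the origin, the field is no longer normalized to have mean zero on the unit circle, so one must absorb a random additive constant; this constant is a centered Gaussian whose variance is bounded uniformly over $z\in K$, which a Gaussian tail bound handles.

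First I would record the translation covariance. By axiom~IV with $r=1$ one has $D_h(z,w) = D_{h(\cdot+z)}(0,w-z)$, hence $\cB_s(z;D_h) = z + \cB_s(0;D_{h(\cdot+z)})$, and directly from the circle-average definition of the LQG measure, $\mu_h(z+A) = \mu_{h(\cdot+z)}(A)$. Combining these gives $\mu_h(\cB_s(z;D_h)) = \mu_{h(\cdot+z)}(\cB_s(0;D_{h(\cdot+z)}))$. Now write $a := h_1(z)$ (equivalently $h_1(z)-h_1(0)$, since $h_1(0)=0$) and $\hat h := h(\cdot+z) - a$, so that $\hat h_1(0)=0$ and, by translation invariance of the whole-plane GFF modulo additive constant, $\hat h \eqd h$; thus Lemma~\ref{lem-lower-all-r} applies verbatim to $\hat h$. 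Moreover $a = \langle h, \rho_z - \rho_0\rangle$, where $\rho_w$ is the uniform probability measure on $\partial B_1(w)$; this is a centered Gaussian whose variance $\iint \log|w-w'|^{-1}\,(\rho_z-\rho_0)(dw)\,(\rho_z-\rho_0)(dw')$ is continuous in $z$, hence bounded by some constant $C_K<\infty$ on the compact set $K$.

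Next I would absorb the constant. Conditioning on $a$ and applying Weyl scaling for the constant $a$,
\[
\mu_{h(\cdot+z)}(\cB_s(0;D_{h(\cdot+z)})) = e^{\gamma a}\,\mu_{\hat h}(\cB_{s e^{-\xi a}}(0;D_{\hat h})) \;\ge\; e^{-\gamma t}\,\mu_{\hat h}(\cB_{s e^{-\xi t}}(0;D_{\hat h})) \quad\text{on the event }\{|a|\le t\},
\]
using that metric balls are nondecreasing in the radius and that $s e^{-\xi t}\in(0,1)$ for $t>0$. Applying Lemma~\ref{lem-lower-all-r} to $\hat h$ with radius $s' := s e^{-\xi t}$ and some exponent $p$, one has $\mu_{\hat h}(\cB_{s'}(0;D_{\hat h}))\ge C_0^{-1}(s')^{d_\gamma}$ with probability at least $1-C_0^{-p}$ for $C_0$ above the threshold of that lemma. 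Since $\xi d_\gamma = \gamma$ by \eqref{eq:xi-Q}, on the intersection of $\{|a|\le t\}$ with this event,
\[
\mu_h(\cB_s(z;D_h)) \;\ge\; C_0^{-1} e^{-\gamma t}\,(s e^{-\xi t})^{d_\gamma} = C_0^{-1} e^{-2\gamma t}\, s^{d_\gamma},
\]
and a union bound (no independence between $a$ and $\hat h$ is needed) shows that this intersection event has probability at least $1 - 2 e^{-t^2/(2C_K)} - C_0^{-p}$.

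Finally I would tune the parameters. Given a target exponent $p'>0$ and $C$ large, set $t := A\sqrt{\log C}$ with $A = A(p',C_K)$ chosen so that $2 e^{-t^2/(2C_K)}\le\tfrac12 C^{-p'}$, and set $C_0 := C e^{-2\gamma t}$, so that $C_0^{-1} e^{-2\gamma t} = C^{-1}$; since $C_0 = C^{1-o(1)}\ge C^{1/2}$ for $C$ large, $C_0$ eventually exceeds the threshold of Lemma~\ref{lem-lower-all-r}, and taking $p := 2p'+1$ there yields $C_0^{-p}\le C^{-p'-1/2}\le\tfrac12 C^{-p'}$ for $C$ large. Hence for all $C>C_{p',K}$, all $s\in(0,1)$ and all $z\in K$, $\P[\mu_h(\cB_s(z;D_h))\ge C^{-1} s^{d_\gamma}]\ge 1 - C^{-p'}$, which is stronger than the stated supremum bound. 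I do not expect a real obstacle here: the proposition is essentially Lemma~\ref{lem-lower-all-r} plus translation invariance, and the only mildly delicate points are the bookkeeping of the three coupled parameters $t,C_0,C$ and verifying $\sup_{z\in K}\Var(h_1(z))<\infty$, both of which are routine.
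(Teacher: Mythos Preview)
Your proof is correct and follows essentially the same approach as the paper: translate so that $z$ becomes the origin, subtract the random constant $a=h_1(z)$ to obtain a field $\hat h\eqd h$ to which Lemma~\ref{lem-lower-all-r} applies, and absorb the constant via Weyl scaling together with a Gaussian tail bound on $a$ that is uniform over $z\in K$. The paper's version is slightly terser (it takes $t=\gamma^{-1}\log C$ rather than your $t=A\sqrt{\log C}$, and writes $h=\wh h + X$ with $\wh h$ normalized at $z$ rather than explicitly translating), but the substance is identical.
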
 

\begin{proof}
Fix $z \in K$. We can write $h = \wh h + X$ where $\wh h$ is a whole-plane GFF normalized so $\wh h_1(z) = 0$, and $X = h_1(z)$ is a random real number. On the event $\{ |X| \leq \gamma^{-1} \log C\}$ we have $C^{-1} \leq e^{\gamma X}\leq C$, so 
\begin{align*}
\{ \mu_h(\cB_s(z;D_h)) < C^{-3} s^{d_\gamma}\} &= \{ e^{\gamma X}  \mu_{\wh h} (\cB_{e^{-\xi X}s} (z; D_{\wh h}))    < C^{-3} s^{d_\gamma} \} \\
&\subset \{ C^{-1} \mu_{\wh h} (\cB_{C^{-1/d_\gamma}s} (z; D_{\wh h}))    < C^{-3} s^{d_\gamma} \}  \cup \{|X| > \gamma^{-1} \log C \} \\
&= \{  \mu_{\wh h} (\cB_{C^{-1/d_\gamma}s} (z; D_{\wh h}))    < C^{-1} (C^{-1/d_\gamma}s)^{d_\gamma} \}  \cup \{|X| > \gamma^{-1} \log C \}.
\end{align*}
In the last line, the first event is superpolynomially rare in $C$ by Lemma~\ref{lem-lower-all-r}, and the second because $X$ is a centered Gaussian. Note that $\Var X = \Var h_1(z)$ is uniformly bounded for all $z \in K$, so the decay of the second event is uniform for $z \in K$. This completes the proof. 
\end{proof}

Similarly, we can bootstrap Lemma~\ref{Lem:UpperSmallBalls} to a statement uniform for $D_h$-balls centered in a compact set.  
\begin{Prop}[Uniform upper tail for $\mu_h(\cB_s(z;D_h))$]
\label{prop-upper-general}
Let $h$ be a whole-plane GFF normalized so $h_1(0) = 0$. For any compact set $K \subset \C$, $p>0$, $\eps \in (0,1)$, there exists a constant $C_{p,\eps,K}>0$ such that
\[\sup_{s\in(0,1), z\in K} \P\left[\mu_h(\cB_s(z;D_h) ) \leq C s^{d_\gamma-\eps} \right] \geq 1 - C^{-p} \quad \textrm{for each }C > C_{p,\eps,K}.\]
\end{Prop}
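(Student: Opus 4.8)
The plan is to follow the strategy of Proposition~\ref{prop-lower-general}: decompose the field relative to the point $z$, reduce to estimates for metric balls centered at the origin, invoke Lemma~\ref{Lem:UpperSmallBalls} through Markov's inequality for the resulting balls of radius $<1$, and treat the remaining (rare, but not superpolynomially rare) possibility of a ball of radius $\ge 1$ separately. Fix $z\in K$ and write $h=\wh h+X$, where $X=h_1(z)$ and $\wh h$ is a whole-plane GFF normalized so $\wh h_1(z)=0$; then $X$ is a centered Gaussian independent of $\wh h$ with $\Var X=\Var h_1(z)$ bounded by some $\sigma_K^2<\infty$ uniformly over $z\in K$, and by translation invariance $\wh h(\cdot+z)$ has the law of a whole-plane GFF normalized at $0$. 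By Weyl scaling ($D_{\wh h+X}=e^{\xi X}\cdot D_{\wh h}$) and scaling of the measure ($\mu_{\wh h+X}=e^{\gamma X}\mu_{\wh h}$), writing $r:=se^{-\xi X}$ and using $\xi d_\gamma=\gamma$,
\[
\mu_h(\cB_s(z;D_h))=e^{\gamma X}\,\mu_{\wh h}(\cB_r(z;D_{\wh h}))=(s/r)^{d_\gamma}\,\mu_{\wh h}(\cB_r(z;D_{\wh h})).
\]

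First I would treat the event $\{r<1\}$. Conditioning on $X$ (so $r$ is fixed and, by translation invariance, $\wh h$ plays the role of a whole-plane GFF centered at $z$), Lemma~\ref{Lem:UpperSmallBalls} together with Markov's inequality with an integer exponent $k$ gives, for any $\eps'\in(0,1)$,
\[
\P\big[\mu_{\wh h}(\cB_r(z;D_{\wh h}))>Ce^{-\gamma X}s^{d_\gamma-\eps}\,\big|\,X\big]\le\big(Ce^{-\gamma X}s^{d_\gamma-\eps}\big)^{-k}C_{k,\eps'}r^{kd_\gamma-\eps'}=C^{-k}C_{k,\eps'}\,e^{\xi\eps'X}\,s^{k\eps-\eps'},
\]
where the last identity uses $r=se^{-\xi X}$ and $k\gamma=k\xi d_\gamma$. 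Choosing $\eps'=\min(k\eps,1/2)$ makes $s^{k\eps-\eps'}\le 1$, so taking expectations (with $\E[e^{\xi\eps'X}]\le e^{\xi^2\eps'^2\sigma_K^2/2}$) bounds $\P[\{r<1\}\cap\{\mu_h(\cB_s(z;D_h))>Cs^{d_\gamma-\eps}\}]$ by a constant depending on $k,\eps,K$ times $C^{-k}$; taking $k>p$ handles this event once $C$ is large. On the event $\{r\ge1\}$ we have $(s/r)^{d_\gamma}=e^{\gamma X}$ and $s<1$, hence $\{\mu_h(\cB_s(z;D_h))>Cs^{d_\gamma-\eps}\}\subseteq\{\mu_{\wh h}(\cB_r(z;D_{\wh h}))>Cr^{d_\gamma}\}$; conditioning on $X$ and then integrating, it suffices to establish
\[
\sup_{R\ge1,\;z\in K}\P\big[\mu_{\wh h}(\cB_R(z;D_{\wh h}))>CR^{d_\gamma}\big]\le C^{-p}\qquad\text{for }C\ge C_0(p,K),
\]
which by translation invariance is equivalent to $\sup_{R\ge1}\P[\mu_h(\cB_R(0;D_h))>CR^{d_\gamma}]\le C^{-p}$ for a whole-plane GFF $h$ normalized at $0$.

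This large-ball estimate is the main obstacle. The plan for it is to rerun the annulus machinery of Section~\ref{sec-positive-whole-plane}: decompose $\cB_R(0;D_h)$ over the dyadic Euclidean annuli $A_\rho$ around $0$, bound $\mu_h(\cB_R(0;D_h)\cap A_\rho)$ by the mass of the proxy set $P_h^{\rho,R}\cap A_\rho$ (which is measurable with respect to the field near the annulus), and carry out the scaling/decoupling argument of Lemmas~\ref{Lem:SmallAnnuli} and~\ref{Lem:LargeAnnuli} with ball radius $R$ in place of $1$; annuli with $\rho$ exceeding a fixed power of $R$ contribute negligibly since, by Proposition~\ref{prop-dist}, the $D_h$-distance from $0$ to $\partial B_{\rho/4}(0)$ then exceeds $R$ with superpolynomially high probability. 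Keeping track of the $R$-dependence should yield $\E[\mu_h(\cB_R(0;D_h))^k]\le C_k R^{kd_\gamma}$ for every $k\ge1$ and $R\ge1$, whence the displayed bound follows by Markov's inequality with $k>p$. Combining the two cases with the uniform Gaussian tail of $X$ over $z\in K$, and choosing $C$ large depending only on $p,\eps,K$, completes the proof; the only genuinely new input beyond Proposition~\ref{prop-lower-general} and Lemma~\ref{Lem:UpperSmallBalls} is this large-ball moment bound, a routine if somewhat lengthy rerun of the already-developed annulus estimates.
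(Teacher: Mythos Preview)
Your overall route---write $h=\wh h+X$ with $\wh h$ normalized at $z$, reduce by translation invariance to metric balls centered at the origin, and invoke Lemma~\ref{Lem:UpperSmallBalls} through Markov's inequality---is exactly the one the paper sketches. You are also right that a literal transcription of the Proposition~\ref{prop-lower-general} argument runs into the difficulty that the rescaled radius can exceed~$1$: in the paper's version one lands on the event $\{\mu_{\wh h}(\cB_{s'}(z;D_{\wh h}))>C^{1+\eps/d_\gamma}(s')^{d_\gamma-\eps}\}$ with $s'=C^{1/d_\gamma}s$, which may lie in $[1,C^{1/d_\gamma})$, whereas the ``upper bound version of Lemma~\ref{lem-lower-all-r}'' coming from Lemma~\ref{Lem:UpperSmallBalls} only covers $s'\in(0,1)$. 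The paper's two-line proof does not address this; your plan to supply the missing large-ball moment bound $\E[\mu_h(\cB_R(0;D_h))^k]\lesssim R^{kd_\gamma}$ by rerunning the annulus and proxy-set estimates of Section~\ref{sec-positive-whole-plane} with ball radius $R$ is sound and is indeed what is implicitly needed.

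There is, however, a gap in your treatment of the case $\{r<1\}$. You condition on $X=h_1(z)$ and then apply Lemma~\ref{Lem:UpperSmallBalls} to $\wh h$, which requires the conditional law of $\wh h$ given $X$ to be that of a whole-plane GFF---equivalently, that $X$ and $\wh h$ be independent. They are not: one checks directly that $\Cov(h_1(z),\wh h(w))=\Cov(h_1(z),h(w))-\Var h_1(z)$ is not identically zero. The fix is to argue as the paper does in Proposition~\ref{prop-lower-general}, using a deterministic truncation $\{|X|\le\gamma^{-1}\log C\}$ rather than conditioning: on that event one has $e^{\gamma X}\le C$ and $\mu_{\wh h}(\cB_{se^{-\xi X}})\le\mu_{\wh h}(\cB_{C^{1/d_\gamma}s})$, and the resulting event $\{\mu_{\wh h}(\cB_{C^{1/d_\gamma}s}(z;D_{\wh h}))>C^{1+\eps/d_\gamma}(C^{1/d_\gamma}s)^{d_\gamma-\eps}\}$ depends on $\wh h$ alone, so its probability may be bounded via the \emph{marginal} law of $\wh h$, to which Lemma~\ref{Lem:UpperSmallBalls} (for $s'<1$) and your large-ball bound (for $s'\ge1$) legitimately apply. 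With this modification your argument goes through.
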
 
\begin{proof}
We note that Lemma~\ref{Lem:UpperSmallBalls} implies an upper bound version of Lemma~\ref{lem-lower-all-r} (with an exponent of $d_\gamma - \eps$ instead of $d_\gamma$), and we deduce Proposition~\ref{prop-upper-general} in the same way that we obtain Proposition~\ref{prop-lower-general} from Lemma~\ref{lem-lower-all-r}.
\end{proof}

Before moving to the proof of the almost sure uniform estimate, we first prove volume bounds on a countable collection of metric balls. 
\begin{Lem}\label{lem-lower-bc}
For any $\eps > 0$ and bounded open set $2\D$, the following is true almost surely. For all sufficiently large $m$, for all $z \in 2^{-m} \Z^2 \cap 2\D$, and for all dyadic $s = 2^{-k} \in (0,1]$ we have 
\[s^{d_\gamma - \eps} 2^{\eps m} > \mu_h(\cB_s(z; D_h)) > s^{d_\gamma + \eps} 2^{-\eps m}. \]
\end{Lem}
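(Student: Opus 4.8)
The plan is a Borel--Cantelli argument over the countable family of metric balls $\cB_{2^{-k}}(z;D_h)$ with $z \in 2^{-m}\Z^2 \cap 2\D$ and $k \in \N_0$, using the uniform tail estimates of Propositions~\ref{prop-lower-general} and~\ref{prop-upper-general} with the compact set $K := \ol{2\D}$. The key point is that for fixed $m$ there are only $O(2^{2m})$ relevant points $z$, while the admissible scales $2^{-k}$ are geometric; so if the tail bound at parameter $C$ decays like $C^{-p}$ with $p$ large, the union bound over $z$ and $k$, and then over $m$, will be summable.

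For the lower bound, fix $p$ (to be taken large depending on $\eps$), and for each admissible triple $(m,z,k)$ with $s=2^{-k}$ apply Proposition~\ref{prop-lower-general} with constant $C = 2^{\eps(m+k)}$; this is legitimate as soon as $m$ is large enough that $2^{\eps m} \geq C_{p,K}$, uniformly in $k \geq 0$. Since $C^{-1}s^{d_\gamma} = s^{d_\gamma+\eps}2^{-\eps m}$, this gives $\P[\mu_h(\cB_s(z;D_h)) < s^{d_\gamma+\eps}2^{-\eps m}] \leq 2^{-p\eps(m+k)}$. Summing over the $O(2^{2m})$ lattice points $z$ and over $k \geq 0$ (a convergent geometric series since $p\eps>0$), the probability that the lower inequality fails for \emph{some} admissible $(z,k)$ is $O(2^{(2-p\eps)m})$, which is summable in $m$ once $p > 3/\eps$. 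Borel--Cantelli then yields the lower bound for all sufficiently large $m$ (a harmless factor of $2$ absorbs the strict versus non-strict inequality).

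The upper bound is identical in structure, using Proposition~\ref{prop-upper-general} with $\eps$ replaced by $\eps/2$ and constant $C = 2^{\eps(m+k/2)}$, so that $C\, s^{d_\gamma-\eps/2} = s^{d_\gamma-\eps}2^{\eps m}$; the probability that $\mu_h(\cB_s(z;D_h)) > s^{d_\gamma-\eps}2^{\eps m}$ for some admissible $(z,k)$ is then $O(2^{(2-p\eps/2)m})$, summable once $p > 6/\eps$. I do not expect any real obstacle: the entire content sits in the uniform tail bounds already established, and the only care required is the bookkeeping of the three nested union bounds (over $z$, costing $2^{2m}$; over the dyadic scales $k$, a convergent geometric series; and over $m$) together with the verification that the chosen constants $C$ eventually exceed the thresholds $C_{p,K}$ and $C_{p,\eps/2,K}$ uniformly in $k\geq 0$ once $m$ is large.
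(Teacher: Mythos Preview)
Your proposal is correct and essentially identical to the paper's own proof: both arguments apply Propositions~\ref{prop-lower-general} and~\ref{prop-upper-general} with the choice $C = 2^{\eps m} s^{-\eps}$ (or its variant for the upper bound), take a union bound over the $O(2^{2m})$ lattice points and the geometric series in the dyadic scales, and then invoke Borel--Cantelli in $m$ once $p$ is chosen large enough relative to $\eps$. The only cosmetic difference is that you track the exponent bookkeeping a bit more explicitly; your minor slip writing $O(2^{(2-p\eps/2)m})$ instead of $O(2^{(2-p\eps)m})$ for the upper bound is harmless since you take $p$ large anyway.
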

\begin{proof}
The proof is a straightforward application of Propositions \ref{prop-upper-general} and~\ref{prop-lower-general} and the Borel-Cantelli lemma. We prove the lower bound; the upper bound follows the same argument. 

Pick any large $p>0$, and let $C_{p,2\D}$ be the constant from Proposition~\ref{prop-lower-general}. Consider any $m$ such that $2^{\eps m} > C_{p,2\D}$, then for any $z \in 2\D$ we have
\[\P \left[\mu_h(\cB_s(z; D_h)) > s^{d_\gamma + \eps} 2^{-\eps m} \text{ for all dyadic } s \in (0,1]\right] > 1 - 2^{-\eps p m} \sum_{\text{dyadic }s} s^{\eps p}. \]
Taking a union bound over all the $O(2^{2m})$ points in $2^{-m} \Z^2 \cap 2\D$ yields
\begin{multline*}
\P \left[\mu_h(\cB_r(z; D_h)) > s^{d_\gamma + \eps} 2^{-\eps m} \text{ for all dyadic } s \in (0,1] \text{ and } z \in 2^{-m} \Z^2 \cap 2\D \right]  \\
> 1 - O(2^{-(\eps p -2) m} )\sum_{\text{dyadic }s} s^{\eps p}. 
\end{multline*}
For $p$ large enough we have $\eps p -2 > 0$, so by the Borel-Cantelli lemma, a.s. at most finitely many of the above events fail, i.e. the lower bound of Lemma~\ref{lem-lower-bc} holds. The upper bound follows the same argument. 
\end{proof}

With this lemma and the bi-H\"older continuity of $D_h$ with respect to Euclidean distance, we can prove the second part of Theorem~\ref{thm-main}.

\begin{proof}[Proof of Theorem~\ref{thm-main} part 2.]

We first prove that a.s. for some random $r \in (0,1)$, we have 
\eqb \label{eq-small-r}
\inf_{s \in (0,r]} \inf_{z \in \D} \frac{\mu_h(\cB_s(z; D_h))}{s^{d_\gamma + \zeta}}  > 0.
\eqe

We use the bi-H\"older continuity of $D_h$ with respect to Euclidean distance (see e.g. \cite[Theorem 1.7]{DFGPS}) and the Borel-Cantelli lemma to obtain the following. There exist deterministic constants $\chi, \chi' > 0$ and random constant $c, C$ such that, almost surely,
 \[c|u - v|^{\chi'} \leq D_h(u,v) \leq C |u-v|^\chi \quad \text{ for all }u,v \in 2\D. \]
Moreover, Proposition~\ref{prop-euc-ball} and Borell-Cantelli yield that a.s. every metric ball $B$ contained in $2\D$ and having sufficiently small Euclidean diameter contains a Euclidean ball of radius at least $\diam (B)^{2}$. 

Consequently, for all sufficiently small $s$ and any $z \in \D$, we have
\[\frac s2 \leq C \diam (\cB_{s/2}(z; D_h))^\chi,\]
and since any two points in $\cB_{s/2}(w;D_h)$ have $D_h$-distance at most $s$, the bi-H\"older lower bound gives
\[c \diam (\cB_{s/2}(z; D_h))^{\chi'} \leq s. \]
Since the ball $\cB_{s/2}(z; D_h)$ has a small diameter, it a.s. contains a Euclidean ball of radius at least $\diam (\cB_{s/2}(z; D_h))^2 \geq (s/2C)^{2/\chi}$ hence contains a point $w \in 2^{-m} \Z^2$ with $m = \lceil -\frac2\chi \log_2 (s/2C) \rceil < -\frac3\chi \log_2 (s/2C) $. 

Thus, for a random constant $c'$, for sufficiently small $s$, applying Lemma~\ref{lem-lower-bc} to $m$ as above and dyadic $s_1 \in (\frac s4, \frac s2]$, we have
\[\mu_h (\cB_{s/2}(w; D_h)) \geq \mu_h (\cB_{s_1}(w; D_h)) \geq s_1^{d_\gamma + \eps} \cdot 2^{-\eps m} \geq \left( \frac s4 \right)^{d_\gamma + \eps} \left(\frac s{2C} \right)^{\frac{3\eps}{\chi}}= c' s^{d_\gamma + \eps + \frac{3\eps}{\chi}}. \]
Since $w \in \cB_{s/2}(z; D_h)$, by the triangle inequality we have $\cB_{s/2}(w;D_h) \subset \cB_s(z; D_h)$, so 
\[ \mu_h(\cB_s(z; D_h)) > c' s^{d_\gamma + 3\eps + \frac{3\eps}{\chi}}.\]
Almost surely, this holds for all sufficiently small $s>0$ and all $z \in \D$. Choosing $\eps > 0$ so that $\eps + \frac{3\eps}\chi < \zeta$, we obtain~\eqref{eq-small-r}.

The supremum analog of~\eqref{eq-small-r} follows almost exactly the same proof, except that instead of finding a ``dyadic'' metric ball inside each radius $s$ metric ball, we find a dyadic metric ball $\wt \cB$ (with dyadic radius $s_1 \in [2s, 4s)$) around each metric ball $\cB$, then apply Lemma~\ref{lem-lower-bc} to upper bound $\mu_h(\wt \cB)$ (and hence $\mu_h(\cB)$). 

Now, we extend~\eqref{eq-small-r} to a supremum/infimum over all $s \in (0,1]$. For any $s \in (r,1]$ and $z \in \D$, we have 
\[\frac{\mu_h(\cB_s(z; D_h))}{s^{d_\gamma + \zeta}} \geq \mu_h(\cB_s(z; D_h)) \geq  r^{d_\gamma + \zeta} \frac{\mu_h(\cB_r(z; D_h))}{r^{d_\gamma + \zeta}} ,\]
and noting that a.s. for sufficiently large $R$ we have $D_h(\D, \partial B_R(0)) > 1$, 
\[\frac{\mu_h(\cB_s(z; D_h))}{s^{d_\gamma - \zeta}} \leq r^{-d_\gamma + \zeta} \mu_h(B_R(0)) < \infty.\]
This concludes the proof of the uniform volume estimates.
\end{proof}

Finally, we prove the statement from Theorem \ref{thm-main} about the Minkowski dimension of a set.

\begin{proof}[Proof of Theorem \ref{thm-main}, part 3.] Consider any bounded measurable set $S$ containing an open set and fix $\delta \in (0,1)$. Let $N_{\eps}^{S}$ be the minimal number of LQG metric balls with radius $\eps$ needed to cover the set $S$ and denote by $\mc{C}_{\eps}$ the set of centers associated to such a covering. Then, since
\[
\mu_h(S) \leq \sum_{z \in \mc{C}_{\eps}} \mu_h(\mc{B}_{\eps}(z;D_h)) \leq N_{\eps}^S \max_{z \in \mc{C}_{\eps}} \mu_h(\mc{B}_{\eps}(z;D_h)),
\]
the uniform volume estimate and the fact that $\mu_h(S) > 0$ a.s.  imply that for every $\delta >0$, we have the a.s. lower bound $\liminf_{\eps \to 0}  \frac{\log N_{\eps}^S}{\log \eps^{-1}} \geq d_{\gamma}- \delta$. Now, denote by $M_{\eps}^S$ the maximal number of pairwise disjoint LQG metric balls with radius $\eps$ whose union is included in $S$. Denote by $\mc{D}_{\eps}$ the set of centers associated to such a collection of metric balls. Note that $M_{\eps}^S \geq N_{2\eps}^S$. Therefore,
\[
\mu_h(S) \geq \sum_{z \in \mc{D}_{\eps}}\mu_h(\mc{B}_{\eps}(z;D_h)) \geq M_{\eps}^S \min_{z \in \mc{D}_{\eps}} \mu_h(\mc{B}_{\eps}(z;D_h)) \geq N_{2\eps}^S \min_{z \in \mc{D}_{\eps}} \mu_h(\mc{B}_{\eps}(z;D_h))
\]
from which we get the a.s. upper bound $\limsup_{\eps \to 0}  \frac{\log N_{\eps}^S}{\log \eps^{-1}} \leq d_{\gamma} +\delta$
by the uniform volume estimate and the fact that $\mu_h(S) < \infty$ almost surely. Letting $\delta \to 0$ completes the proof.
\end{proof}

\subsection{Estimates for Liouville Brownian motion metric ball exit times}\label{sec-exit}
Liouville Brownian motion is, roughly speaking, Brownian motion associated to the LQG metric tensor ``$e^{\gamma h} (dx^2 + dy^2)$'', and was rigorously constructed independently in the works \cite{GRVbm} and \cite{B15}. These papers consider fields different from our field $h$ (a whole-plane GFF normalized so $h_1(0) = 0$), but their results are applicable in our setting. This can be verified either directly or by local absolute continuity arguments.
 
Liouville Brownian motion was defined in \cite{GRVbm, B15} by applying an $h$-dependent time-change to standard planar Brownian motion. Letting $B_t$ be standard planar Brownian motion from the origin sampled independently from $h$, we can define Liouville Brownian motion as $X_t = B_{F^{-1}(t)}$ for $t \geq 0$, where $F$ is a random time-change defined $h$-almost surely. The function $F(t)$ should be understood as the quantum time elapsed at Euclidean time $t$, and has the following explicit description. Defining the approximation
\eqb\label{eq-clock}
F^\eps(t) = \int_0^t \eps^{\gamma^2/2}e^{\gamma h_{\eps}(B_s)}ds,
\eqe
and writing $T_R$ for the Euclidean time that $B_t$ exits the ball $B_R(0)$, the sequence $F^\eps|_{[0,T_R]}$ converges almost surely as $\eps \to 0$ to $F|_{[0,T_R]}$ in the uniform metric \cite[Theorem 1.2]{B15}.

For a set $X \subset \C$ and $z \in \C$, denote by $\tau_h(z;X)$ the first exit time of the Liouville Brownian motion started at $z$ from the set $X$. We discuss now the results of \cite{GRVbm} on the moments of $\tau_h(z;B_1(z))$ and of $F(t)$, i.e. the moments of the elapsed quantum time at some Euclidean time. These results are analogous to the moments of the LQG volume of a Euclidean ball (Section~\ref{sec-euc-vol}).

\begin{Prop}[Moments of quantum time {\cite[Theorem 2.10, Corollary 2.12, Corollary 2.13]{GRVbm}}] For all $q \in (-\infty, 4/\gamma^2)$, $t > 0$, the following holds,
\[\E [ \tau_h(0;B_1(0))^q ] +  \E[F(t)^q] < \infty. \]
\end{Prop}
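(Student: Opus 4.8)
This statement is quoted from \cite{GRVbm} (Theorem~2.10 and Corollaries~2.12--2.13 there), so in the paper it is enough to cite it; the plan below is how one would establish it directly. The key move is to condition on the planar Brownian motion $B$, which is independent of $h$. With $F^\eps(t)=\int_0^t\eps^{\gamma^2/2}e^{\gamma h_\eps(B_s)}\,ds$ as in~\eqref{eq-clock}, conditionally on $B$ the process $s\mapsto h_\eps(B_s)$ is a centered Gaussian process on $[0,t]$ whose covariance converges, as $\eps\to0$, to $\log|B_s-B_u|^{-1}+g(B_s,B_u)$ for a bounded continuous $g$. Hence, conditionally on $B$, the clock $F(t)$ is the total mass of a Gaussian multiplicative chaos with reference measure Lebesgue measure on $[0,t]$ and this (conditionally) log-correlated kernel; equivalently it is the $\mu_h$-mass of the random occupation measure of $B|_{[0,t]}$, a measure which --- as far as GMC is concerned --- behaves like Lebesgue measure on the plane (in particular, outside a Gaussian-small event, it charges a bounded region and has a.s.\ finite $\alpha$-energy for every $\alpha<2$). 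The exit time is the special case $\tau_h(0;B_1(0))=F(T_1)$ with $T_1$ the Euclidean exit time of $B_1(0)$ by $B$; since $T_1$ is independent of $h$, has exponential tails, and $F$ is increasing, the random horizon causes no difficulty.

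For the positive moments $q\in(0,4/\gamma^2)$ I would first take $q=n$ a positive integer. Up to bounded factors coming from $g$, the conditional moment expands as $\E[F(t)^n\mid B]\asymp\int_{[0,t]^n}\prod_{i<j}|B_{s_i}-B_{s_j}|^{-\gamma^2}\,ds_1\cdots ds_n$, so $\E[F(t)^n]\asymp\int_{[0,t]^n}\E\big[\prod_{i<j}|B_{s_i}-B_{s_j}|^{-\gamma^2}\big]\,ds$. Then I would run the cluster/induction analysis of Section~\ref{sec-euc-vol} --- ordering the times, expressing each difference $B_{s_i}-B_{s_j}$ through independent two-dimensional Gaussian increments, and using that a two-dimensional Gaussian $G$ has $\E|G|^{-\alpha}<\infty$ iff $\alpha<2$ --- with the Brownian occupation measure in the role of Lebesgue measure; since that measure has the same dimension-$2$ scaling, this yields exactly the threshold $n<4/\gamma^2$. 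Non-integer $q$ then follows by interpolating between consecutive integer moments (or by Kahane's convexity inequality).

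For the negative moments, conditioning on $B$ again makes $F(t)$ (and $\tau_h(0;B_1(0))=F(T_1)$) the total mass of a subcritical GMC, which by Molchan's argument --- equivalently by the general fact \cite{GHSS18} that a GMC total mass has finite negative moments of every order --- has all conditional negative moments, with a left tail $\P[F(t)\le\delta\mid B]\le C_B\,\delta^p$. It remains to integrate over $B$, for which one bounds $C_B$ by a finite-moment functional of $B$ using the good behaviour of its occupation measure (and, for the $h$-randomness, the Euclidean estimate~\eqref{eq:LeftTailsEucli}); when $T_1$ is small this is harmless since $B$ then stays near the origin, where the clock dominates a near-origin GMC of the same type. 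The one genuinely delicate ingredient --- the same in both the positive- and negative-moment parts, and the step I expect to be the main obstacle --- is controlling the random Brownian occupation measure: it is a multifractal object in its own right, so one must bound the moments of its small-ball masses with the correct exponents, uniformly enough to push through the $n$-point estimate and to keep $C_B$ integrable. The conditional GMC bookkeeping, by contrast, is classical.
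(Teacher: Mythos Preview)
You correctly note at the outset that the paper does not prove this proposition: it is simply quoted from \cite{GRVbm} with no argument given, so there is nothing in the paper to compare your sketch against. Your outline of how one would prove it directly---condition on the independent Brownian motion $B$, view $F(t)$ as a GMC on $[0,t]$ with the log-correlated kernel $(s,u)\mapsto\log|B_s-B_u|^{-1}$, then run the standard integer-moment cluster analysis for positive moments and Molchan-type arguments for negative moments---is indeed the strategy of \cite{GRVbm}, and you correctly flag the one genuine technicality (uniform control of the Brownian occupation measure when integrating out $B$).

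One small caveat: ``interpolating between consecutive integer moments'' does not by itself reach all real $q<4/\gamma^2$, only up to $\lfloor 4/\gamma^2\rfloor$; the parenthetical alternative you give (Kahane's convexity inequality, comparing to an exactly solvable log-correlated model) is what actually closes the gap, and is what \cite{GRVbm} uses.
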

Heuristically, the nonexistence of large moments is due to the Brownian motion hitting regions of small Euclidean size but large quantum size. On the other hand, the random set $\cB_1(0; D_h)$ in some sense avoids such regions.

In this section we prove the finiteness of \emph{all} moments of the LBM first exit time of $\cB_1(0; D_h)$, which we abbreviate as $\tau$, and discuss the moments of $\tau_h(0;\cB_s(0; D_h))$ for small $s \in (0,1)$. 

\paragraph*{Upper bound for LBM exit time of metric balls}

\begin{thm}[Positive moments for quantum exit time of metric ball]\label{thm-LBM}
Let $h$ be a whole-plane GFF normalized so $h_1(0) = 0$, and consider Liouville Brownian motion associated to $h$.
Let $\tau$ be the first exit time of the Liouville Brownian motion started at the origin from the ball $\cB_1(0; D_h)$, i.e. 
\[\tau = \inf \{ t \geq 0 \: : \: X_t \not \in \cB_1(0; D_h)\}. \]
Then
\[\E[\tau^k] < \infty \quad \text{ for all }k \geq 0. \]
\end{thm}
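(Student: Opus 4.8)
The plan is to combine the classical Green's-function representation of moments of additive functionals with (a $\delta$-perturbation of) the positive-moment estimates of Section~\ref{section-positive}. Fix $k\ge 1$, condition on $h$, and write $\cB := \cB_1(0;D_h)$ --- an a.s.\ bounded open subset of $\C$ containing the origin. Let $B$ be a standard planar Brownian motion from $0$, independent of $h$, and $\sigma$ its first exit time from $\cB$. The Liouville Brownian motion is $X_t = B_{F^{-1}(t)}$, where the clock $F$ is a continuous additive functional of $B$ whose Revuz measure is $\mu_h$; hence $\tau = F(\sigma)$, and the standard moment formula for additive functionals (justified for the irregular random domain $\cB$ by exhausting it from inside by smooth domains and passing to the limit, which is harmless since we only want an upper bound) gives
\[
\E_B[\tau^k \mid h]\ \le\ k!\int_{\cB^k} g_{\cB}(0,z_1)\prod_{i=1}^{k-1} g_{\cB}(z_i,z_{i+1})\,\mu_h(dz_1)\cdots\mu_h(dz_k),
\]
with $g_\cB$ the Green's function of $B$ killed on exiting $\cB$. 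Because $\cB\subset B_{d}(z)$ for each $z\in\cB$, where $d := \diam\cB$, monotonicity of the Green's function yields $g_\cB(z,w)\le c\log\frac{d}{|z-w|}$ on $\cB\times\cB$; since $\log t\le C_\delta t^{\delta}$ for $t\ge 1$ and all points here lie within $d$ of one another, we get, writing $z_0 := 0$,
\[
\E_B[\tau^k\mid h]\ \le\ C_{k,\delta}\, d^{\,k\delta}\, I_k,\qquad I_k := \int_{\cB^k}\ \prod_{i=1}^{k}|z_{i-1}-z_i|^{-\delta}\ \mu_h(dz_1)\cdots\mu_h(dz_k),
\]
valid for every $\delta>0$.

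Taking expectations and applying Cauchy--Schwarz, $\E[\tau^k]\le C_{k,\delta}\,\E[d^{\,2k\delta}]^{1/2}\,\E[I_k^2]^{1/2}$. The first factor is finite once $\delta$ is small: $\cB$ is connected and contains $0$, so $\{d>R\}\subset\{\cB\not\subset B_{R/2}(0)\}\subset\{D_h(0,\partial B_{R/2}(0))\le 1\}$, and Proposition~\ref{prop-dist} (applied to an annulus at Euclidean scale $R$) together with a Gaussian tail bound for $h_{R/2}(0)$ gives $\P[D_h(0,\partial B_{R/2}(0))\le 1]\le C R^{-c_0}$ for some fixed $c_0>0$ independent of $\delta$; thus $\E[d^{\,2k\delta}]<\infty$ as soon as $2k\delta<c_0$. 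Expanding the square, $I_k^2 = \int_{\cB^{2k}}W(z_1,\dots,z_{2k})\,\mu_h(dz_1)\cdots\mu_h(dz_{2k})$ where $W$ is a fixed nonnegative deterministic weight: a product of at most $2k$ factors $|p_a-p_b|^{-\delta}$ indexed by the edges of two paths issuing from $0$ (so $p_0 = 0$ and the $p_a$ range over $0,z_1,\dots,z_k,w_1,\dots,w_k$). On any fixed ball $B_N(0)$ one has $W\le C_{N,k,\delta}\prod_{0\le a<b\le 2k}|p_a-p_b|^{-\delta}$.

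It remains to prove $\E[I_k^2]<\infty$ for $\delta=\delta(k)$ small enough. This is exactly the $\delta$-perturbed version of $\E[\mu_h(\cB_1(0;D_h))^{2k}]<\infty$ (Proposition~\ref{Prop:MomentsWholePlane}): the extra weight $W$ is deterministic, hence passes unchanged through the Cameron--Martin shift of Lemma~\ref{Lem:ShiftSingularities}, and after that shift it simply thickens the pairwise singularities from $|z_i-z_j|^{-\gamma^2}$ to $|z_i-z_j|^{-\gamma^2-\delta}$ and inserts mild $\delta$-log singularities $|z_i|^{-\delta}$ at the origin. In the hierarchical cluster bookkeeping behind Lemmas~\ref{Lem:InductiveRelation} and~\ref{Lem:SmallAnnuli}--\ref{Lem:LargeAnnuli}, a cluster at Euclidean scale $r$ then picks up at most an extra factor $r^{-C_k\delta}$ (each cross-cluster edge, and each point near the origin at scale $r$, is accounted at scale $\gtrsim r$), so the exponent $\tfrac12 Q^2-2$ that controls the summability over $r$ throughout Section~\ref{section-positive} --- in the proofs of Proposition~\ref{prop-circle-moment}, Corollary~\ref{Cor-Variant}, and Lemmas~\ref{Lem:SmallAnnuli}--\ref{Lem:LargeAnnuli} --- is replaced by $\tfrac12 Q^2-2-C_k\delta>0$. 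This is precisely the type of perturbation already anticipated by Corollary~\ref{Cor-Variant} and Remark~\ref{rem-delta} (and by the $\alpha$-log versions of Lemmas~\ref{Lem:SmallAnnuli}--\ref{Lem:LargeAnnuli}). Re-running those arguments, including the split into near- and far-from-origin annular contributions as in the proof of Proposition~\ref{Prop:MomentsWholePlane}, gives $\E[I_k^2]<\infty$, and the three displays above then yield $\E[\tau^k]<\infty$ for all $k$. I expect the last step to be the main --- though essentially mechanical --- obstacle: one must carry the weight $W$ faithfully through the whole multiscale analysis of Section~\ref{section-positive} and verify that the scale-dependent losses it introduces never spoil the positivity of the governing exponent.
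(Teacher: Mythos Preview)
Your proposal is correct and follows essentially the same route as the paper. Both arguments represent $\E[\tau^k\mid h]$ as a $k$-fold $\mu_h$-integral over $\cB_1(0;D_h)$ weighted by a product of Green's-type kernels; both bound those kernels by logarithmic (or, after $\log t\le C_\delta t^\delta$, power-law) singularities of the form $|z_{i-1}-z_i|^{-\delta}$; and both then feed the resulting integral back into the Section~\ref{section-positive} machinery via the $\delta$-perturbation of Remark~\ref{rem-delta}, using the room in the exponent $\tfrac12 Q^2-2>0$.

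The only noteworthy difference is how the (random) Euclidean size of $\cB_1(0;D_h)$ is handled. The paper truncates on the event $E'_R=\{\cB_1(0;D_h)\subset B_R(0)\}$, carries an explicit $(\log R)^k$ prefactor through the annulus estimates, and closes up using the polynomial tail $\P[(E'_R)^c]\lesssim R^{-a}$ of Lemma~\ref{lem-euc-decay}. You instead pull out $d^{\,k\delta}$ via Cauchy--Schwarz, at the cost of working with the $2k$-fold integral $I_k^2$ rather than the $k$-fold one. This is slightly less efficient but perfectly valid; your tail bound $\P[d>R]\lesssim R^{-c_0}$ is exactly Lemma~\ref{lem-euc-decay}. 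Your remark that the deterministic weight $W$ passes unchanged through the Cameron--Martin shift and merely thickens the pairwise singularities is precisely the content of the paper's Step~1 in the proof of Theorem~\ref{thm-LBM}, and your anticipated ``mechanical obstacle'' of carrying $W$ through the annulus decomposition is what the paper sketches in Step~2 by invoking the analogs of Lemmas~\ref{Lem:SmallAnnuli}--\ref{Lem:LargeAnnuli}.
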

\noindent\textbf{Proof sketch:} In computing $\E[\tau^k]$, by first averaging out the randomness of $(B_t)_{t \geq 0}$, we obtain an expectation in $h$ of an integral over $k$-tuples of points in $\cB_1(0; D_h)$; this is similar to the integral in Step 1 of the proof of Proposition~\ref{Prop:EstimateGFFbounded}, but with additional log-singularities between these points. Because the arguments of Proposition~\ref{Prop:EstimateGFFbounded} had some room in the exponents, the log-singularities pose no issue for us, and we can carry out the same arguments from Section~\ref{section-positive}. We will be succinct when adapting these arguments.

Let $\tau_n$ be the quantum time LBM spends in the annulus $A_{2^n} := B_{2^n}(0) \backslash \overline{B_{2^{n-1}}(0)}$ before exiting $\cB_1(0; D_h)$. As in \cite[(B.2)]{GRVbm}, we have the following representation of $\E[\tau_n^k]$ for $k$ a positive integer, which follows from taking an expectation over the standard Brownian motion $(B_t)_{t \geq 0}$ used to define $(X_t)_{t \geq 0}$ (see \eqref{eq-clock}),
\eqb \label{eq-tau-k}
\E[\tau_n^k] = \E\left[ \int_{(A_{2^n})^k} f(z_1, \dots, z_k, h) \mathbbm1\{ z_1, \dots, z_k \in \cB_1(0;D_h)\} \mu_h(dz_1)\dots \mu_h(dz_k)\right],
\eqe
and where, writing $t_0 = 0$ and $z_0 = 0$ for notational convenience,  $f$ is given by
\begin{align} 
f(z_1, \dots, z_k, h) := & \int_{0 \leq t_1 \leq \dots \leq t_k < \infty} \frac{k!}{(2\pi)^{k/2} \prod_{i=1}^k (t_i - t_{i-1})}\exp \left( -\sum_{i=1}^k \frac{|z_i - z_{i-1}|^2}{2(t_i - t_{i-1})} \right) \label{eq-f}\\
&\times\P \left[B|_{[0,t_k]} \text{ stays in } \cB_1(0;D_h) \mid h, B_{t_i} = z_i \text{ for }i=1,\dots, k \right] dt_1 \dots dt_k. \nonumber
\end{align}
The function $f(z_1, \dots, z_k)$ is an integral of the Brownian motion transition density at times $t_1, \dots, t_k$ times the conditional probability that the Brownian motion does not escape $\cB_1(0; D_h)$. 
We will need the following bound on $f$, whose proof is postponed to the end of the section. 
\begin{Lem}\label{lem-bound-f}
There exists a constant $C > 0$ such that for all sufficiently large $R > 0$, on the event $\{ \cB_1(0; D_h) \subset B_R(0)\}$ we have
\[f(z_1, \dots, z_k, h) \leq C \left(\log R \right)^k g(z_1, \dots, z_k) \quad \text{ for all }z_1, \dots, z_k \in R\D,\]
where, recalling $z_0 =0$,
\[g(z_1, \dots, z_k)=\prod_{i=1}^k \max\left(- \log |z_i - z_{i-1}|,1 \right).\]
\end{Lem}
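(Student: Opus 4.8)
The plan is to show that $f(z_1,\dots,z_k,h)$ is bounded by a product over $i$ of Green's functions of the Euclidean disc $B_R(0)$, and then to plug in the explicit formula for the disc Green's function. The starting observation is that on the event $\{\cB_1(0;D_h)\subset B_R(0)\}$ we may replace the event $\{B|_{[0,t_k]}\subset\cB_1(0;D_h)\}$ appearing in the definition of $f$ by the larger event $\{B|_{[0,t_k]}\subset B_R(0)\}$; this only increases the conditional probability in~\eqref{eq-f}, so it yields an upper bound for $f$. Next, conditionally on $h$ and on $\{B_{t_i}=z_i \text{ for } i=1,\dots,k\}$, the Markov property of planar Brownian motion makes the path pieces $B|_{[t_{i-1},t_i]}$ into independent Brownian bridges from $z_{i-1}$ to $z_i$ over time $t_i-t_{i-1}$ (using $t_0=0$ and $z_0=0$). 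Since the event ``$B$ stays in $B_R(0)$ on $[0,t_k]$'' is the intersection over $i$ of the events ``$B$ stays in $B_R(0)$ on $[t_{i-1},t_i]$'', the conditional probability factorizes as $\prod_{i=1}^k p_i(t_i-t_{i-1})$, where $p_i(s)$ denotes the probability that a Brownian bridge from $z_{i-1}$ to $z_i$ of time length $s$ stays in $B_R(0)$. Note that under the hypotheses all points $z_0,\dots,z_k$ lie in $B_R(0)$, so these bridge probabilities are well defined.

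Then I would substitute the time increments $s_i=t_i-t_{i-1}\ge 0$; this has Jacobian $1$ and maps $\{0\le t_1\le\cdots\le t_k\}$ bijectively onto $(0,\infty)^k$, after which the integrand factorizes and
\[
f(z_1,\dots,z_k,h)\ \le\ C_k\prod_{i=1}^k\int_0^\infty \frac{1}{s}\exp\!\Big(-\frac{|z_i-z_{i-1}|^2}{2s}\Big)\,p_i(s)\,ds .
\]
For each factor, the quantity $\tfrac{1}{2\pi s}e^{-|x-y|^2/(2s)}$ times the bridge-containment probability is precisely the transition density $p^{B_R(0)}_s(x,y)$ of Brownian motion killed on exiting $B_R(0)$; hence the $i$-th factor above equals $2\pi\int_0^\infty p^{B_R(0)}_s(z_{i-1},z_i)\,ds=2\pi\,G_{B_R(0)}(z_{i-1},z_i)$, the Green's function of $B_R(0)$. (Conceptually, this is the step where the containment condition is essential: with the crude bound $p_i(s)\le 1$ one would be left with the divergent integral $\int_0^\infty\frac{ds}{s}e^{-a/(2s)}$, whereas keeping the bridge-containment probability forces the integrand to decay exponentially as $s\to\infty$, producing a finite, logarithmic quantity.)

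Finally I would invoke the scale invariance of the two-dimensional Green's function, $G_{B_R(0)}(x,y)=G_{B_1(0)}(x/R,y/R)$, together with the explicit formula $G_{B_1(0)}(u,v)=c_0\log\frac{|1-\overline{u}v|}{|u-v|}$. Since $|1-\overline{u}v|\le 1+|u||v|\le 2$ for $u,v\in B_1(0)$, this gives $G_{B_R(0)}(z_{i-1},z_i)\le c_0\big(\log 2+\log R+\log|z_i-z_{i-1}|^{-1}\big)$, which for all sufficiently large $R$ is at most $C\log R\cdot\max(-\log|z_i-z_{i-1}|,1)$, because both $\log R$ and $\max(-\log|z_i-z_{i-1}|,1)$ are at least $1$. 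Taking the product over $i=1,\dots,k$ yields $f\le C(\log R)^k g(z_1,\dots,z_k)$ with a constant $C=C(k)$, as claimed. The argument is really just bookkeeping; the only points that require some attention are the Brownian-bridge decomposition and the identification of the normalized bridge integral with the disc Green's function, and since the statement asks only for \emph{some} constant $C$ the various universal prefactors are harmless.
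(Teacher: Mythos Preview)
Your proof is correct and, in fact, cleaner than the paper's. Both arguments start the same way: replace $\cB_1(0;D_h)$ by $B_R(0)$, factorize the conditional probability into independent bridge-containment probabilities, and change variables to the increments $s_i=t_i-t_{i-1}$ so that the integral becomes a product over $i$.

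The difference is in how the resulting one-variable integrals are estimated. The paper instead splits according to whether $t_k<R^2$ or $t_k\ge R^2$: in the first regime it drops the containment probability and integrates $\int_0^{R^2}\frac1s e^{-a/(2s)}\,ds$ directly; in the second regime it uses a crude bound on the probability that a Brownian bridge of duration $\ge R^2/k$ stays in $B_R(0)$ to produce an exponential cutoff in $s$, and then evaluates the convergent integral. You avoid this case analysis entirely by recognizing that $\frac{1}{2\pi s}e^{-|x-y|^2/(2s)}\cdot p_i(s)$ is the transition density of Brownian motion killed on exiting $B_R(0)$, so that the full integral is exactly $2\pi\,G_{B_R(0)}(z_{i-1},z_i)$; the explicit disc Green's function then gives the logarithmic bound immediately. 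Your route is more conceptual and shorter; the paper's is more self-contained in that it does not appeal to the closed form of $G_{B_R(0)}$, but it amounts to rederiving the same logarithmic growth by hand.
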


\begin{proof}[Proof of Theorem~\ref{thm-LBM}]
Our strategy is to fix some large $R > 0$ then truncate on the event $E'_R := \{ \cB_1(0;D_h) \subset B_R(0)\}$. Subsequently, we show an analog of Proposition~\ref{Prop:EstimateGFFbounded}, and use it to bound $\E[ \tau_n^k \mathbbm1_{E'_R}]$ for all $n$. Combining these, we obtain a bound on $\E[ \tau^k \mathbbm1_{E'_R}]$. Finally, we verify that $\P[E'_R]$ decays sufficiently quickly in $R$, and we are done. 
\medskip

\noindent\textit{Step 1: Proving an analog of Proposition~\ref{Prop:EstimateGFFbounded}.} Recall the definition  $P_h^{r,d} = \{ z \in \C:  D_h(z, \partial B_{r/4}(z)) \leq d \}$ in~\eqref{eq:DefProxy}.  The argument of Proposition~\ref{Prop:EstimateGFFbounded} bounded 
\[\E\left[ \int_{(A_{1})^k} \mathbbm1\{ z_1, \dots, z_k \in A_1 \cap P_h^{1, e^{-\xi x}}\} \mu_h(dz_1)\dots \mu_h(dz_k)\right]\] 
by using a Cameron-Martin shift (placing $\gamma$-log singularities at each $z_i$ and replacing $\prod \mu_h(dz_i)$ by $\prod_{i<j} |z_i - z_j|^{-\gamma^2} \prod dz_i$), then using Proposition~\ref{prop-circle-moment} to bound the integral. Recalling Remark \ref{rem-delta}, Proposition~\ref{Prop:EstimateGFFbounded} can be proved even if the exponent $\gamma^2$ is made slightly larger. Any such exponent increase will upper bound the log-singularities of $g$, hence we have the following analog of Proposition~\ref{Prop:EstimateGFFbounded}:
\[\E\left[ \int_{(A_{1})^k} g(z_1, \dots, z_k)\mathbbm1\{ z_1, \dots, z_k \in A_1 \cap P_h^{1, e^{-\xi x}}\} \mu_h(dz_1)\dots \mu_h(dz_k)\right] \lesssim e^{-c_{k,\delta} x}.\] 
\medskip

\noindent\textit{Step 2: Bounding $\E[ \tau_n^k \mathbbm1_{E'_R}]$ for each $n$.} We start with $n = 0$.
Using Lemma~\ref{lem-bound-f} and \eqref{eq-tau-k} (and noting that $\cB_1(0;D_h)\cap A_1 \subset A_1 \cap P_h^{1,1}$), we obtain that $\E[ \tau_0^k \mathbbm1_{E'_R}]$ is bounded from above by
\[
\left( \log R \right)^k \E\left[ \int_{(A_1)^k} g(z_1, \dots, z_k) \mathbbm1\{ z_1, \dots, z_k \in \cB_1(0;D_h)\} \mu_h(dz_1)\dots \mu_h(dz_k)\right] \lesssim \left( \log R \right)^k,
\]
where the last inequality follows from Step 1. Likewise, building off of Step 1, similar arguments as in Lemmas~\ref{Lem:SmallAnnuli} and~\ref{Lem:LargeAnnuli} yield
\[\E \left[\tau_n^k \mathbbm1_{E'_R}\right] \lesssim \left\{
	\begin{array}{ll}
		\left( \log R\right)^k  2^{- \frac{Q^2}{2} |n|} 2^{\alpha_\delta |n|}  & \mbox{if } n < 0 ,\\
		\left( \log R\right)^k 2^{- \frac{Q^2}{2} n}  & \mbox{if } n > 0 .
	\end{array}
\right. \]
for some arbitrarily small $\alpha_\delta >0$.

\medskip

\noindent\textit{Step 3: Bounding the upper tail of $\tau$.} By H\"older's inequality (see end of proof of Lemma~\ref{Lem:SmallAnnuli}), the above bounds on $\E \left[\tau_n^k \mathbbm1_{E'_R}\right]$ yield
\[\E \left[\tau^k \mathbbm1_{E'_R}\right] \lesssim \left(\log R\right)^k. \]
By Lemma~\ref{lem-euc-decay} (see end of section) we also have for some fixed $a > 0$ that
\[ \P[(E'_R)^c] \leq R^{-a}\]
Combining these assertions, we have
\[\P\left[ \tau > t \right] \lesssim \P\left[(E'_R)^c\right] + \E \left[\tau^k \mathbbm1_{E'_R}\right]t^{-k} \lesssim R^{-a} + \left( \log R\right)^kt^{-k}.   \]
Taking $R$ equal to some large power of $t$, we conclude that for all $p < k$ we have $\E[ \tau^p] < \infty$. Taking $k \to \infty$, we obtain Theorem~\ref{thm-LBM}. 
\end{proof}

\begin{proof}[Proof of Lemma~\ref{lem-bound-f}]
We instead prove the stronger statement
\[f(z_1, \dots, z_k, h) \leq C \prod_{i=1}^k \left( \log R  - \log |z_i - z_{i-1}| \right) \quad \text{ for all }z_1, \dots, z_k \in A_1.\]
We split the integral~\eqref{eq-f} into two parts (integrating over $t_k < R^2$ and $t_k \geq R^2$ respectively), and bound each part separately.

There exists $p > 0$ such that the following is true: Let $t \geq 1/k$ and consider a Brownian bridge of duration $t$ with endpoints $B_0, B_t$ specified in $\D$. Then this Brownian bridge stays in $\D$ with probability at most $e^{-pt}$. If $t_k \geq R^2$, then there exists some $i \in \{1, \dots, k\}$ such that $t_i - t_{i-1} \geq t_k/k \geq R^2/k$, and so $B|_{[t_{i-1}, t_i]}$ conditioned on $B_{t_{i-1}} = z_{i-1}$ and $B_{t_i} = z_i$ stays in $R\D$ with probability at most $e^{-pt_k/kR^2}$. This allows us to upper bound the integral~\eqref{eq-f} on the restricted domain with $t_k \geq R^2$:
\begin{align}
&\int_{0 \leq t_1 \leq \dots \leq t_k < \infty} \frac{k! ~~~~ dt_1 \dots dt_k}{(2\pi)^{k/2} \prod_{i=1}^k (t_i - t_{i-1})}\exp \left( -\sum_{i=1}^k \frac{|z_i - z_{i-1}|^2}{2(t_i - t_{i-1})} -\frac p {kR^2} (t_i - t_{i-1})\right)   \nonumber\\
=&\frac{k!}{(2\pi)^{k/2}} \prod_{i=1}^k \int_0^\infty  \frac1t\exp \left( -\frac{|z_i - z_{i-1}|^2}{2t} -\frac p {kR^2} t \right)  dt \nonumber = O\left(\prod_{i=1}^k \left(\log R - \log|z_i - z_{i-1}| \right) \right) \nonumber,
\end{align}
by using the bound $\int_0^{\infty} e^{-t/x} e^{-1/t} \frac{dt}{t} \leq \int_0^1 e^{-1/t} \frac{dt}{t} + \int_x^\infty e^{-t/x} \frac{dt}{t} + \int_1^x \frac{dt}{t} \leq C + \log x$ for $x \geq 1$ and a change of variable.

Now we upper bound the integral~\eqref{eq-f} on the restricted domain $0 \leq t_1 \leq \dots \leq t_k < R^2$:
\begin{align}
&\int_{0 \leq t_1 \leq \dots \leq t_k <R^2} \frac{k!}{(2\pi)^{k/2} \prod_{i=1}^k (t_i - t_{i-1})}\exp \left( -\sum_{i=1}^k \frac{|z_i - z_{i-1}|^2}{2(t_i - t_{i-1})} \right)  dt_1 \dots dt_k \nonumber\\
\leq&\frac{k!}{(2\pi)^{k/2}} \prod_{i=1}^k \int_0^{R^2}  \frac1t\exp \left( -\frac{|z_i - z_{i-1}|^2}{2t}  \right)  dt \nonumber = O\left(\prod_{i=1}^k \left(\log R - \log|z_i - z_{i-1}| \right) \right) \nonumber,
\end{align}
where the final inequality follows from $\int_0^{R^2} e^{-a/2t} \frac{dt}{t} = \int_0^{1} e^{-1/2u} \frac{du}{u} + \int_1^{R^2 a^{-2}} e^{-1/2u} \frac{du}{u} \leq C + \log R^2 a^{-2}$. 
Combining these two upper bounds, we are done. 
\end{proof}
\begin{Lem}[Polynomial tail for Euclidean diameter of $\cB_1(0; D_h)$]\label{lem-euc-decay}
Let $h$ be a whole-plane GFF with $h_1(0) = 0$. Then for all $a \in (0,Q^2/2)$, for all sufficiently large $R$ we have
\[\P\left[ \cB_1(0; D_h) \subset B_R(0)\right] \geq 1 - R^{-a}. \]
\end{Lem}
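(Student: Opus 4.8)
The plan is to reduce the statement to the one-point distance estimate $\P[D_h(0,\partial B_R(0))<1]\le R^{-a}$ and then evaluate this probability via the scaling properties of $D_h$ together with a single Gaussian tail bound. For the reduction, note that if $\cB_1(0;D_h)\not\subset B_R(0)$ then there is a point $z$ with $|z|\ge R$ and $D_h(0,z)<1$; any path from $0$ to $z$ of $D_h$-length close to $D_h(0,z)$ must cross $\partial B_R(0)$, and truncating it at the first such crossing shows $D_h(0,\partial B_R(0))<1$. Hence $\{\cB_1(0;D_h)\not\subset B_R(0)\}\subset\{D_h(0,\partial B_R(0))<1\}$, and it suffices to bound the latter probability.

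Next I would use scale invariance of the whole-plane GFF modulo additive constant together with the coordinate change axiom and Weyl scaling (applied with the constant $h_R(0)+Q\log R$) to write, setting $\tilde h:=h(R\,\cdot\,)-h_R(0)$,
\[
D_h(0,\partial B_R(0))=D_{h(R\,\cdot\,)+Q\log R}(0,\partial B_1(0))=R^{\xi Q}\,e^{\xi h_R(0)}\,Y,\qquad Y:=D_{\tilde h}(0,\partial B_1(0)),
\]
where $Y\eqd D_h(0,\partial B_1(0))$. Decomposing $h=h^1+h^2$ into its radial and lateral parts and using the independent increments of the circle-average Brownian motion $t\mapsto h_{e^{-t}}(0)$, one checks that $\tilde h|_{\overline{\D}}$, and hence $Y$, is independent of $h_R(0)\sim\mc N(0,\log R)$. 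Therefore
\[
\P[D_h(0,\partial B_R(0))<1]=\P\!\left[h_R(0)<-Q\log R-\xi^{-1}\log Y\right].
\]

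To finish I would record that $Y$ has superpolynomial lower tails, i.e.\ for every $q>0$ there is $t_0$ with $\P[Y<t]\le t^q$ for $t<t_0$: apply Proposition~\ref{prop-dist} with $U=\C$ and two concentric circles around the origin (any path from $0$ to $\partial B_1(0)$ crosses the corresponding annulus) to get $Y\ge c A^{-1}e^{\xi h_{1/4}(0)}$ off an event of probability $\le A^{-p}$, then combine with the Gaussian lower tail of the circle average $h_{1/4}(0)$, optimizing $A$ as a small power of $t$. Granting this, fix $\theta\in(0,1)$ and split according to whether $\xi^{-1}\log Y<-(1-\theta)Q\log R$ or not:
\[
\P[D_h(0,\partial B_R(0))<1]\le \P[h_R(0)<-\theta Q\log R]+\P\!\left[Y<R^{-\xi(1-\theta)Q}\right].
\]
The first term is at most $R^{-\theta^2Q^2/2}$ by the standard Gaussian tail bound (using $h_R(0)\sim\mc N(0,\log R)$), and the second is $o(R^{-a})$ once $q$ is taken large, using the lower-tail bound on $Y$ and the independence $Y\perp h_R(0)$. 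Choosing $\theta$ close enough to $1$ that $\theta^2Q^2/2>a$ gives $\P[D_h(0,\partial B_R(0))<1]\le R^{-a}$ for all sufficiently large $R$, as desired. The main obstacle is establishing the superpolynomial lower tail of $Y=D_h(0,\partial B_1(0))$; the rest is bookkeeping with the scaling relation and one Gaussian estimate, and the restriction $a<Q^2/2$ is dictated by that Gaussian estimate, which also indicates the bound is essentially optimal.
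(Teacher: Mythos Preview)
Your proof is essentially correct and uses the same ingredients as the paper (Proposition~\ref{prop-dist} for a crossing-distance lower bound, plus a Gaussian tail for $h_R(0)$). The paper is slightly more direct: it applies Proposition~\ref{prop-dist} at scale $R$ to the annulus $B_R(0)\setminus \overline{B_{R/2}(0)}$ and immediately obtains $D_h(0,\partial B_R(0))\ge R^{\xi(Q-\eps)}e^{\xi h_R(0)}$ with superpolynomially high probability, then uses the Gaussian bound $\P[h_R(0)<-(Q-\eps)\log R]\le R^{-(Q-\eps)^2/2}$. You instead rescale exactly and bound the lower tail of $Y\eqd D_h(0,\partial B_1(0))$ separately; this is the same estimate dressed differently.

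There is, however, one incorrect assertion you should delete: the claim that $Y$ (or $\tilde h|_{\overline\D}$) is independent of $h_R(0)$ is false. Indeed $\tilde h_{1/R}(0)=h_1(0)-h_R(0)=-h_R(0)$, so $\tilde h|_{\D}$ \emph{determines} $h_R(0)$. Fortunately you never actually use this independence: your union bound
\[
\P[R^{\xi Q}e^{\xi h_R(0)}Y<1]\le \P[h_R(0)<-\theta Q\log R]+\P[Y<R^{-\xi(1-\theta)Q}]
\]
is valid regardless, and the second term only requires the \emph{marginal} law of $Y$, which is indeed that of $D_h(0,\partial B_1(0))$. (In fact your lower-tail argument for $Y$ can be simplified: taking $r=1$ in Proposition~\ref{prop-dist} with $K_1=\partial B_{1/2}(0)$, $K_2=\partial B_1(0)$ gives $Y\ge A^{-1}e^{\xi h_1(0)}=A^{-1}$ with superpolynomially high probability in $A$, so no circle-average Gaussian is needed.) Remove the independence claim and the proof stands.
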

\begin{proof}
Fix $\eps > 0$ small. By Proposition~\ref{prop-dist} we have with superpolynomially high probability as $R \to \infty$ that 
\[D_h(0, \partial B_R(0)) \geq D_h(\partial B_{R/2}(0), \partial B_R(0)) \geq R^{\xi(Q - \eps)} e^{\xi h_R(0)}.  \]
By a standard Gaussian tail bound we also have
\[\P[h_R(0) > -(Q - \eps) \log R] \leq \exp \left( -\frac{(Q-\eps)^2 \log R}{2} \right) = R^{-(Q-\eps)^2/2}. \]
Combining these two bounds, we see that with probability $1-O(R^{-(Q-\eps)^2/2})$ we have $D_h(0, \partial B_R(0)) > 1$, as desired. 
\end{proof}

\paragraph*{Lower bound for LBM exit time of metric balls}

\begin{thm}
\label{Thm:NegPassage}
Recall that $\tau$ is the first exit time of the Liouville Brownian motion $(X_{t})_{t \geq 0}$ from the LQG metric ball $\mc{B}_1(0;D_h)$. For all $k \geq 1$, we have
\[
\E [\tau^{-k}] < \infty.
\]
\end{thm}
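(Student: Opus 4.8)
The plan is to show $\P[\tau<t]\le C_p\,t^{\,p}$ for every $p>0$ and $t$ small; integrating $t^{-k-1}\P[\tau<t]$ then gives $\E[\tau^{-k}]<\infty$. The mechanism is simple: while the underlying Brownian motion stays inside a Euclidean ball $B_\rho(0)\subseteq\cB_1(0;D_h)$ it has not left $\cB_1(0;D_h)$, so $\tau\ge\tau_h(0;B_\rho(0))$; the content is that even when $\cB_1(0;D_h)$ is Euclidean-small — which forces the field near $0$, hence the Liouville clock, to be large — the right-hand side is not small. Concretely, for a \emph{deterministic} $j\ge0$ the coordinate-change axiom (Axiom~IV) for $D_h$ and the analogous scaling of the Liouville clock (which follows from the construction of LBM recalled around \eqref{eq-clock} together with Lemma~\ref{lem-markovish}) give
\[\tau_h(0;B_{e^{-j}}(0))=e^{-j\gamma Q}e^{\gamma h_{e^{-j}}(0)}\,\tau_{\tilde h_j}(0;\D),\qquad \tilde h_j:=h(e^{-j}\cdot)-h_{e^{-j}}(0),\]
where by Lemma~\ref{lem-markovish} one has $\tilde h_j|_\D\eqD h|_\D$ and $\tilde h_j|_\D$ is independent of $h_{e^{-j}}(0)\sim\mc N(0,j)$. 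By the moment bounds of \cite{GRVbm} (which apply to $h$ by local absolute continuity), $\tau_{\tilde h_j}(0;\D)\eqD\tau_h(0;\D)$ has finite negative moments of all orders, i.e.\ $\P[\tau_h(0;\D)<u]\le C_p u^{\,p}$ for all $p>0$ and $u\le1$.

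Next, let $j^\star:=\min\{j\ge0:B_{e^{-j}}(0)\subseteq\cB_1(0;D_h)\}$, which is a.s.\ finite since $\cB_1(0;D_h)$ is a Euclidean-open neighbourhood of $0$. On $\{j^\star=j\}$ we have $B_{e^{-j}}(0)\subseteq\cB_1(0;D_h)$ (hence $\tau\ge\tau_h(0;B_{e^{-j}}(0))$), and, if $j\ge1$, $B_{e^{-j+1}}(0)\not\subseteq\cB_1(0;D_h)$, so $\operatorname{diam}(\overline{B_{e^{-j+1}}(0)};D_h)>1$. I would then introduce a global regularity event $\mc G_M$, $M\to\infty$, on which the side-to-side crossing distances of all annuli $A_{e^{-\ell}}(0)$ are within $M^{\pm\xi}e^{\pm\xi\ell^{1/2+\delta}}$ of $e^{-\ell\xi Q}e^{\xi h_{e^{-\ell}}(0)}$; exactly as for the events $E_{\delta,M}$ of Section~\ref{section-positive}, Proposition~\ref{prop-dist} (plus a union bound over $\ell$, using the $\ell^{1/2+\delta}$ slack) shows $\P[\mc G_M^c]$ is superpolynomially small in $M$. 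Chaining the crossing-distance upper bounds over scales $\le e^{-j}$ (as in the proof of bi-Hölder continuity \cite[Thm.~1.7]{DFGPS}) yields, on $\mc G_M$, $\operatorname{diam}(\overline{B_{e^{-j+1}}(0)};D_h)\le M^{C}e^{-j\xi Q}e^{\xi h_{e^{-j}}(0)}$ for a constant $C=C(\gamma)$; combined with $\{j^\star=j\}$ this forces $h_{e^{-j}}(0)\ge jQ-\xi^{-1}C\log M - \xi^{-1}j^{1/2+\delta}$, hence $e^{-j\gamma Q}e^{\gamma h_{e^{-j}}(0)}=\big(e^{-j\xi Q}e^{\xi h_{e^{-j}}(0)}\big)^{d_\gamma}\ge M^{-Cd_\gamma}e^{-d_\gamma\xi^{-1}j^{1/2+\delta}}$. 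Consequently, on $\{j^\star=j\}\cap\mc G_M$,
\[\tau\ \ge\ M^{-Cd_\gamma}e^{-d_\gamma\xi^{-1}j^{1/2+\delta}}\,\tau_{\tilde h_j}(0;\D),\]
so that, using $\tilde h_j|_\D\eqD h|_\D$, $\P[\tau<t,\,j^\star=j,\,\mc G_M]\le\P\big[\tau_h(0;\D)<tM^{Cd_\gamma}e^{d_\gamma\xi^{-1}j^{1/2+\delta}}\big]\le C_p\,t^{\,p}M^{Cd_\gamma p}e^{pd_\gamma\xi^{-1}j^{1/2+\delta}}$; and also $\P[\tau<t,\,j^\star=j,\,\mc G_M]\le\P[h_{e^{-j}}(0)\ge jQ-\xi^{-1}C\log M - \xi^{-1}j^{1/2+\delta}]\le e^{-jQ^2/4}$ once $j$ is large relative to $\log M$.

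Finally, choose $M=t^{-\eps}$ with $\eps=\eps(p)$ small. For the range $j<\eps'|\log t|$ (with $\eps'$ proportional to $\eps$) one drops $\{j^\star=j\}\cap\mc G_M$ and bounds $\P[\tau<t,\,j^\star=j,\,\mc G_M]\le\P[\tau_h(0;B_{e^{-j}}(0))<t]$, which by the scaling identity, the independence in Lemma~\ref{lem-markovish}, and a Gaussian computation on $h_{e^{-j}}(0)\sim\mc N(0,j)$ is at most $C_p t^{\,p}e^{jM_p}+($superpolynomially small in $t)$ for a constant $M_p$; summing over these $j$ gives $O(t^{\,p-O(\eps)-o(1)})$. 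For $\eps'|\log t|\le j\lesssim p|\log t|$ one uses $\min\!\big(C_p t^{\,p}M^{Cd_\gamma p}e^{pd_\gamma\xi^{-1}j^{1/2+\delta}},\,e^{-jQ^2/4}\big)$, whose sum over $j$ is again $O(t^{\,p-O(\eps)-o(1)})$; and for $j\gtrsim p|\log t|$ one bounds $\P[\tau<t,\,j^\star=j]\le\P[j^\star\ge j]$, which decays like $e^{-jQ^2/2+o(j)}$ by the same circle-average reasoning, contributing $O(t^{\,p})$. Together with $\P[\mc G_M^c]\le M^{-p/\eps}\le t^{\,p}$ this gives $\P[\tau<t]=O(t^{\,p-O(\eps)-o(1)})$, and since $p$ is arbitrary and $\eps=\eps(p)\to0$, we obtain $\P[\tau<t]\le C_p t^{\,p}$ for every $p$, hence $\E[\tau^{-k}]<\infty$ for all $k\ge1$. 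The main obstacle is precisely this last step: balancing the regularity-event error $\P[\mc G_M^c]$, the Gaussian tails of the circle averages, and the $j$-degradation of the constant $M^{Cd_\gamma p}e^{pd_\gamma\xi^{-1}j^{1/2+\delta}}$ in the multiscale sum; this is entirely analogous to — and lighter than — the bookkeeping carried out for the regularity events in Section~\ref{section-positive}, so I would present it succinctly and refer the reader there for the routine estimates.
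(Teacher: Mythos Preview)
Your approach is different from the paper's and contains a genuine gap at the crucial step. You claim that on the regularity event $\mc G_M$ (which only compares annulus crossing distances to their circle-average approximations) one has
\[
\operatorname{diam}\big(\overline{B_{e^{-j+1}}(0)};D_h\big)\ \le\ M^{C}\,e^{-j\xi Q}\,e^{\xi h_{e^{-j}}(0)},
\]
and hence that $\{j^\star=j\}$ forces $h_{e^{-j}}(0)\ge jQ-O(\log M)$. This inequality does not follow from $\mc G_M$. Upper bounding the $D_h$-diameter of $B_{e^{-j+1}}(0)$ requires building paths that reach \emph{arbitrary} points $z\in B_{e^{-j+1}}(0)$, hence it depends on the field at every scale $\ell\ge j$ and near every point of the ball; even restricting to paths through $0$, the chained bound is $\sum_{\ell\ge j}M\,e^{\xi\ell^{1/2+\delta}}e^{-\ell\xi Q}e^{\xi h_{e^{-\ell}}(0)}$, which is not controlled by the $\ell=j$ term without circle-average regularity you have not imposed. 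A concrete failure mode: if $h_{e^{-j}}(0)-Qj$ is very negative but $h_{e^{-\ell}}(0)-Q\ell\approx 0$ for some $\ell>j$, the diameter of $B_{e^{-j+1}}(0)$ can still exceed $1$ (so $j^\star=j$), yet $e^{-j\gamma Q}e^{\gamma h_{e^{-j}}(0)}$ is tiny and your lower bound on $\tau$ collapses. The bi-H\"older argument you invoke only yields $\operatorname{diam}\lesssim e^{-\chi j}$ for some fixed $\chi>0$, not the specific scale-$j$ circle-average form you need. Attempting to repair this by bounding $\P[j^\star=j]$ and $\P[\tau_h(0;B_{e^{-j}}(0))<t]$ separately also fails: the latter grows like $t^p e^{c(p)j}$ with $c(p)\to\infty$, which no fixed-exponent tail for $j^\star$ can absorb.

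The paper's proof sidesteps this entirely by working with the exit point $X_\tau$ rather than with $j^\star$. Since $D_h(0,X_\tau)=1$, the distance between $\partial B_{e^{-k_1}}(0)$ and $\partial B_{e^{-k_1}}(X_\tau)$ is at least $\tfrac12$ (once the small balls around $0$ and $X_\tau$ have tiny $D_h$-diameter). This distance is bounded above by a \emph{finite} sum of annulus crossing and circuit lengths centred at $0$ and at a lattice approximation $w$ of $X_\tau$, giving $\sum_k e^{-k\xi Q}e^{\xi h_{e^{-k}}(\cdot)}\gtrsim C^{-2}$; on the same regularity event the quantum passage time satisfies $\tau\gtrsim C^{-1}\sum_k e^{-k\gamma Q}e^{\gamma h_{e^{-k}}(\cdot)}$. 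Jensen's inequality with exponent $d_\gamma$ converts the first sum into the second, yielding $\tau\ge C^{-\alpha}$ for a fixed $\alpha$ with probability $1-C^{-cN}$. The point is that only finitely many scales appear (between $k_*$ and $k_1$), so no infinite tail of circle averages needs to be controlled. If you want to salvage your route, you would have to replace $j^\star$ by a scale $\ell\ge j^\star$ at which $h_{e^{-\ell}}(0)-Q\ell$ is not too negative and argue that such a scale exists with quantitative probability; this is essentially the content of Lemma~\ref{lem-large-field} in Section~\ref{sec-lower-unit}, and carrying it out correctly is substantially more work than your sketch suggests.
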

We now sketch the proof. We restrict to a regularity event on which annulus-crossing distances and the quantum time taken to cross an annulus are well approximated by field averages. We can find a collection of annuli separating $0$ from $X_\tau$. Gluing circuit and crossing paths associated to the annuli, we obtain a path from $0$ to $X_{\tau}$. Since the $D_h$-length of these is bounded from above by a circle average approximation, the condition $D_h(0, X_\tau) = 1$ gives a lower bound for a certain sum of (exponentials of) circle averages terms. Raising the exponent by a factor of $d_\gamma$ by Jensen's inequality, we get a lower bound for a circle average approximation of the quantum time spent across these annuli. Thus $\tau$ is unlikely to be very small. 

\smallskip

Consider standard Brownian motion $(B_t)_{t\geq0}$ started at the origin, and recall that Liouville Brownian motion is given by a random time-change: $X_t = B_{F^{-t}(t)}$, where the quantum clock $F$ is formally given by $F(t) = \int_0^t e^{\gamma h(B_s)} ds$ (see \eqref{eq-clock}). Consider an annulus $A_{r/e, r}(z)$ with $0 \not \in A_{r/e, r}(z)$. Define $\tau_r(z)$ to be the quantum passage time of the annulus. That is, for the case where the annulus encircles the origin, writing $t_1$ for the first time $B_t$ hits $\partial B_{r}(z)$, and $t_0$ for the last time before $t_1$ that $B_t$ hits $\partial B_{r/e}(z)$, we set $\tau_r(z) = F(t_1) - F(t_0)$, and define it analogously in the case that the annulus does not encircle the origin. 

We need the following input, which can be seen as a variant of \cite[Proposition 2.12]{GRVbm} combined with the scaling relation \cite[Equation (2.25)]{GRVbm} and which can be obtained by using the same techniques.

\begin{Prop}
\label{Prop:CrossingTimes}
For any compact set $K \subset \C$, there exists a random variable $X \geq 0$ having all negative moments such that the following is true. 
For fixed $r \in (0,1)$ and $z \in K$ such that $0 \not \in A_{r/e,r}(z)$, the quantum passage time  $\tau_r(z)$ is stochastically dominated by $r^{\gamma Q} e^{\gamma h_r(z)} X$.
\end{Prop}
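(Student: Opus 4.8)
The plan is to deduce Proposition~\ref{Prop:CrossingTimes} from the corresponding statement at unit scale --- essentially \cite[Proposition 2.12]{GRVbm} --- via the coordinate-change relation for Liouville Brownian motion \cite[Equation (2.25)]{GRVbm}. First I would set $\hat h := h(r\cdot + z) - h_r(z)$. By translation invariance of the whole-plane GFF modulo additive constant together with Lemma~\ref{lem-markovish} (applied after translating $z$ to the origin), $\hat h$ has the law of a whole-plane GFF normalized to have average zero on the unit circle, and $\hat h$ is independent of $h_r(z)$. By conformal invariance of planar Brownian motion up to time change and the scaling of the quantum clock --- under $w \mapsto rw + z$ the renormalized area element $\eps^{\gamma^2/2} e^{\gamma h_\eps(\cdot)}\, d\cdot$ transforms, after subtracting $h_r(z)$, by the factor $r^{\gamma Q}e^{\gamma h_r(z)}$ with $\gamma Q = 2 + \gamma^2/2$ --- one obtains that $\tau_r(z)$ has the same law as $r^{\gamma Q} e^{\gamma h_r(z)}\,\hat\tau$, where $\hat\tau$ is the quantum length of the single Brownian excursion across the fixed annulus $A_{1/e,1}(0)$ for the field $\hat h$, traversed by a standard Brownian motion started at $-z/r$; since $0 \notin A_{r/e,r}(z)$ this starting point lies in $\{|w| \leq 1/e\} \cup \{|w| \geq 1\}$, and $\hat\tau$ is independent of $h_r(z)$.

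Next I would invoke the unit-scale estimate. The quantity $\hat\tau$ --- the quantum length of a Brownian annulus crossing for a whole-plane GFF normalized on the unit circle --- is exactly of the type controlled by \cite[Proposition 2.12]{GRVbm} and the techniques of its proof; in particular, uniformly over the admissible starting points $-z/r$ with $z \in K$, $r \in (0,1)$, the lower tail $\P[\hat\tau \leq \eps]$ decays faster than any power of $\eps$. Hence one may choose a single random variable $X \geq 0$, having all negative moments, whose law stochastically dominates that of $\hat\tau$ for every such $(z,r)$ --- for instance, taking $X$ to dominate a supremum of crossing times over a suitable countable dense family of starting points, using monotonicity/continuity of the clock; note $X$ need only have all \emph{negative} moments, consistent with the fact that crossing times of a fixed Euclidean annulus, unlike metric-ball quantities, have only positive moments of order $< 4/\gamma^2$. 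Taking $X$ independent of $h_r(z)$ and conditioning on $h_r(z)$ then yields $\P[\tau_r(z) > t] = \E\big[\P[\hat\tau > t\,r^{-\gamma Q}e^{-\gamma h_r(z)} \mid h_r(z)]\big] \leq \P[r^{\gamma Q} e^{\gamma h_r(z)} X > t]$ for every $t > 0$, which is the asserted stochastic domination.

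The main obstacle is making the quantum-clock scaling precise in our normalization: since the Liouville Brownian motion clock is defined through the $\eps^{\gamma^2/2}$-renormalized circle-average approximations $F^\eps$ of \eqref{eq-clock}, one must verify that these transform correctly under $w \mapsto rw + z$ and that subtracting the circle average $h_r(z)$ produces exactly the $e^{\gamma h_r(z)}$ prefactor. This is the content of \cite[Equation (2.25)]{GRVbm} and rests only on the almost sure convergence of $F^\eps$ on Brownian exit times from large balls (\cite[Theorem 1.2]{B15}) together with the scale-invariance and Markov structure of the GFF recorded in Lemma~\ref{lem-markovish}. A secondary, purely bookkeeping point is to match the excursion-based definition of $\tau_r(z)$ with a crossing time of the rescaled annulus fitting the framework of \cite{GRVbm}, treating separately the case where the annulus encircles the origin ($|z| \leq r/e$) and where it does not ($|z| \geq r$); in both cases $\tau_r(z)$ is the quantum length of a single Brownian excursion across the annulus, so no probabilistic input beyond \cite{GRVbm, B15} and Lemma~\ref{lem-markovish} is needed.
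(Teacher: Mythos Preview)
Your overall strategy --- rescale by $w\mapsto rw+z$, subtract $h_r(z)$, invoke Lemma~\ref{lem-markovish} to get a fresh whole-plane GFF $\hat h$ on $\D$ independent of $h_r(z)$, and then reduce to the unit-scale negative-moment estimate for Brownian annulus crossings from \cite{GRVbm} via the clock scaling \cite[Equation (2.25)]{GRVbm} --- is exactly what the paper indicates (the paper gives no proof, only the pointer to \cite[Proposition 2.12, Equation (2.25)]{GRVbm}). The scaling computation $\tau_r(z)\stackrel{d}{=} r^{\gamma Q}e^{\gamma h_r(z)}\hat\tau$ with $\hat\tau$ independent of $h_r(z)$ is correct.

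However, you have the direction of the stochastic domination reversed. From the lower-tail input $\P[\hat\tau\le\eps]=o(\eps^p)$ for all $p$, the natural construction is a random variable $X$ with all negative moments that is \emph{stochastically dominated by} $\hat\tau$ (for instance, take $X$ with distribution function $\sup_{z,r}\P[\hat\tau_{z,r}\le t]$), and the resulting conclusion is $\P[\tau_r(z)\le t]\le \P[r^{\gamma Q}e^{\gamma h_r(z)}X\le t]$, i.e.\ $\tau_r(z)$ stochastically \emph{dominates} $r^{\gamma Q}e^{\gamma h_r(z)}X$. This is the direction actually used downstream: Corollary~\ref{cor-truncate-disk} and Lemma~\ref{Lem:GlobalDecoupling} both need the \emph{lower} bound $\tau_{e^{-k}}(z)\ge e^{-k\gamma Q}e^{\gamma h_{e^{-k}}(z)}C^{-1}$ with high probability. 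With your inequality $X\ge\hat\tau$, the condition ``$X$ has all negative moments'' is trivially satisfiable (take $X$ arbitrarily large) and conveys nothing; your final displayed inequality $\P[\tau_r(z)>t]\le\P[r^{\gamma Q}e^{\gamma h_r(z)}X>t]$ gives an upper bound on $\tau_r(z)$, which is useless for the exit-time lower bound of Theorem~\ref{Thm:NegPassage}. The printed proposition appears to have this same sign slip; your argument becomes correct once you flip the domination and the final inequality.
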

As an immediate consequence of the $r=1$ case of this proposition, we have the following.
\begin{Cor}\label{cor-truncate-disk}
The event $\{X_\tau \not \in \D \text{ and } \tau < C^{-1}\}$ is superpolynomially unlikely as $C \to \infty$. 
\end{Cor}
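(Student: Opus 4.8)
The plan is to reduce the event $\{X_\tau \notin \D,\ \tau < C^{-1}\}$ to the event that the Liouville Brownian motion leaves the unit disk after an extremely short amount of quantum time, and then to invoke the fact that the quantum time needed to cross a fixed macroscopic annulus around the origin has finite negative moments of all orders.

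First, on $\{X_\tau \notin \D\}$ one has trivially $\tau_h(0;\D) \le \tau$, since $X_\tau \notin \D$ means that $t = \tau$ belongs to the set of times $t$ with $X_t \notin \D$, and $\tau_h(0;\D)$ is the infimum of that set. Using the time-change representation $X_t = B_{F^{-1}(t)}$ and writing $t_1$ for the first time the underlying Brownian motion $(B_s)$ exits $\D = B_1(0)$, the strict monotonicity and continuity of $F$ give $\tau_h(0;\D) = F(t_1)$. If $t_0 \le t_1$ denotes the last time before $t_1$ that $B$ hits $\partial B_{1/e}(0)$, then since $F$ is nondecreasing and $F(t_0)\ge 0$ we get $F(t_1) \ge F(t_1) - F(t_0) = \tau_1(0)$, the quantum passage time of the annulus $A_{1/e,1}(0)$ (note $0 \notin A_{1/e,1}(0)$, and that this is exactly the quantity $\tau_r(z)$ of Proposition~\ref{Prop:CrossingTimes} with $r=1$, $z=0$). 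Altogether,
\[
\{X_\tau \notin \D,\ \tau < C^{-1}\} \subset \{\tau_1(0) < C^{-1}\}.
\]

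The remaining input is that $\tau_1(0)$ has all negative moments, which is precisely the $r=1$, $z=0$ case of Proposition~\ref{Prop:CrossingTimes}: it controls $\tau_1(0)$ up to the multiplicative factor $e^{\gamma h_1(0)}$, and this factor equals $1$ because $h$ is normalized so that $h_1(0) = 0$; hence $\E[\tau_1(0)^{-p}] < \infty$ for every $p>0$. (Equivalently, one may note that $\tau_h(0;\D) = \tau_h(0;B_1(0))$ has all negative moments by the quantum-time moment bounds of \cite{GRVbm} recalled above, since every negative exponent lies in $(-\infty, 4/\gamma^2)$.) Markov's inequality then yields, for every $p>0$,
\[
\P\!\left[X_\tau \notin \D,\ \tau < C^{-1}\right] \le \P\!\left[\tau_1(0) < C^{-1}\right] \le C^{-p}\,\E\!\left[\tau_1(0)^{-p}\right],
\]
and since $\E[\tau_1(0)^{-p}]$ is a finite constant and $p$ is arbitrary, the left-hand side decays faster than any power of $C^{-1}$, i.e.\ the event is superpolynomially unlikely as $C\to\infty$. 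The only non-elementary ingredient here is the negative-moment bound for $\tau_1(0)$; everything else is a deterministic manipulation of the clock $F$, so the corollary is immediate once Proposition~\ref{Prop:CrossingTimes} is available.
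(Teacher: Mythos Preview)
Your proposal is correct and follows the same route as the paper: both reduce to the $r=1$, $z=0$ case of Proposition~\ref{Prop:CrossingTimes} (noting $h_1(0)=0$), and the paper's own proof is literally just that one-line remark. Your write-up simply makes explicit the deterministic inclusion $\{X_\tau\notin\D,\ \tau<C^{-1}\}\subset\{\tau_1(0)<C^{-1}\}$ via the clock $F$, and your parenthetical alternative through the negative moments of $\tau_h(0;B_1(0))$ from \cite{GRVbm} is an equally valid shortcut.
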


Similarly to Section \ref{sec-lower-unit}, we set 
\[k_1 = \lfloor N \log C \rfloor.\]

\begin{Lem} \label{lem-holder} There exist $\gamma$-dependent constants $\chi,c > 0$ so that the following holds. Consider the event $E_C$ that each ball $B_{e^{-k_1}}(z)$ included in $2\D$ has quantum diameter at most $2 e^{-\chi k_1}$. Then, $E_C$ occurs with probability at least $1 - e^{-cN}$.
\end{Lem}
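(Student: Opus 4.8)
The plan is to reduce the statement to a union bound over a grid of Euclidean balls and to control the $D_h$-diameter of each such ball by a single circle average of $h$, using Proposition~\ref{prop-dist} together with the scaling axioms for $D_h$. Every ball $B_{e^{-k_1}}(w)$ contained in $2\D$ lies inside $B_{2e^{-k_1}}(z)$ for the nearest $z\in e^{-k_1}\Z^2$ (with $|z|<2$), and there are $O(e^{2k_1})=O(C^{2N})$ such grid points, so it suffices to bound, uniformly in $z\in e^{-k_1}\Z^2\cap 2\D$, the probability that $\diam(B_{2e^{-k_1}}(z);D_h)>2e^{-\chi k_1}$.

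First I would record a uniform diameter estimate: for $z$ in a compact set and $r\in(0,1)$, with superpolynomially high probability as $A\to\infty$ (at a rate uniform in $r$ and $z$), one has $\diam(B_r(z);D_h)\leq A\,r^{\xi Q}e^{\xi h_r(z)}$. This follows from Proposition~\ref{prop-dist} by the standard multiscale chaining argument: any two points of $B_r(z)$ are joined by concatenating crossing and circuit paths in the dyadic annuli $A_{2^{-j}r}(z')$ ($j\geq0$, $z'$ in a scale-$2^{-j}r$ grid), each of $D_h$-length at most $A_j\,(2^{-j}r)^{\xi Q}e^{\xi h_{2^{-j}r}(z')}$ for a slowly growing prefactor $A_j$; since $\xi Q>0$ and a standard estimate controls the oscillation of the circle-average field across consecutive dyadic scales (so $h_{2^{-j}r}(z')=h_r(z)+o(j)+O(1)$ on a high-probability event), the resulting sum is dominated by its coarsest term and is $\lesssim A\,r^{\xi Q}e^{\xi h_r(z)}$. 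Taking $r=e^{-k_1}$, outside an event of probability decaying faster than any power of $C$, every grid ball satisfies $\diam(B_{2e^{-k_1}}(z);D_h)\leq A\,e^{-\xi Q k_1}e^{\xi h_{e^{-k_1}}(z)}$.

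It remains to bound the circle averages. For $z$ in a compact set, $h_{e^{-k_1}}(z)$ is centered Gaussian with variance $k_1+O(1)=(N\log C)(1+o(1))$, so for any $a>0$,
\[
\P\left[h_{e^{-k_1}}(z)>a k_1\right]\leq \exp\left(-\tfrac{a^2 k_1}{2}(1+o(1))\right)=C^{-\frac{a^2 N}{2}(1+o(1))}.
\]
On the complement of this event, intersected with the diameter event above, $\diam(B_{2e^{-k_1}}(z);D_h)\leq A\,e^{-\xi(Q-a)k_1}$, which is $\leq 2e^{-\chi k_1}$ once $C$ (hence $k_1$) is large, provided $\chi<\xi(Q-a)$. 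Since $Q=\tfrac\gamma2+\tfrac2\gamma>2$, we may pick $a\in(2,Q)$ and then $\chi\in(0,\xi(Q-a))$, and a union bound over the $O(C^{2N})$ grid points gives total failure probability $\lesssim C^{2N}\cdot C^{-\frac{a^2 N}{2}(1+o(1))}=C^{-\varepsilon N}$ for some $\varepsilon>0$ and all large $C$, which is $\leq e^{-cN}$ for a suitable $\gamma$-dependent $c>0$. This is precisely the margin $Q>2$ that already drives Lemma~\ref{lem-euc-decay}.

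The main obstacle is the uniform diameter estimate $\diam(B_r(z);D_h)\lesssim A\,r^{\xi Q}e^{\xi h_r(z)}$: Proposition~\ref{prop-dist} only governs crossing distances between two fixed compact sets, so upgrading it to a diameter bound requires the chaining argument above, together with a priori control both on the oscillation of circle averages between consecutive scales and on the failure probabilities at each scale (forcing the slowly growing prefactors $A_j$ so that the union bound over scales and grids still converges). Everything else reduces to the Gaussian tail bound and union bound displayed above.
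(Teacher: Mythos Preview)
Your argument is correct, but it takes a considerably more laborious route than the paper. The paper simply invokes the H\"older continuity estimate \cite[Proposition~3.18]{DFGPS}: there exist $\chi,\alpha>0$ such that with probability at least $1-\eps^{\alpha}$ one has $D_h(u,v)\le |u-v|^{\chi}$ for all $u,v\in 2\D$ with $|u-v|\le \eps$. Taking $\eps=e^{-k_1}$ immediately gives the quantum diameter bound $2e^{-\chi k_1}$ on every ball $B_{e^{-k_1}}(z)\subset 2\D$, with failure probability $e^{-\alpha k_1}=C^{-\alpha N}\le e^{-cN}$.

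What you do instead is essentially re-derive a version of that H\"older estimate from scratch: you reduce to a grid, control the diameter of each grid ball by $A\,r^{\xi Q}e^{\xi h_r(z)}$ via a multiscale chaining of crossing/circuit paths (which is exactly how \cite{DFGPS} proves H\"older continuity), and then bound the circle averages by a Gaussian tail computation, exploiting $Q>2$ so that $a\in(2,Q)$ makes the union bound over $O(C^{2N})$ grid points succeed. This is valid and self-contained, and it makes explicit where the constant $\chi$ comes from (namely $\chi<\xi(Q-a)$), but the paper's approach is a one-line citation. Your honest flagging of the chaining step as the main obstacle is appropriate: that step is nontrivial and requires care with the oscillation of circle averages across scales, but it is standard and carried out in \cite{DFGPS}. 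As a minor remark, your final bound $C^{-\varepsilon N}$ is actually stronger than the stated $e^{-cN}$; the paper's proof likewise yields $C^{-\alpha N}$, so the lemma as stated is weaker than what either argument delivers.
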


\begin{proof}
This is an application of the H\"older estimate \cite[Proposition 3.18]{DFGPS} which implies that there exist positive constants $\chi, \alpha$ such that, as $\eps \to 0$,  with probability at least $1 - \eps^{\alpha}$,
\[
D_h(u,v) \leq |u-v|^{\chi}, \quad \forall u,v \in 2\D \text{ with } |u-v| \leq \eps.
\]
Therefore, taking $\eps = e^{-k_1}$, for $z$ such that $B_{e^{-k_1}}(z) \subset 2\D$, for all $w \in B_{e^{-k_1}}(z)$, $D_h(z,w) \leq e^{-\chi k_1}$ and the quantum diameter of that ball is bounded from above by twice this upper bound.
\end{proof}

We consider the grid $\Z_{C} := \frac{1}{100} e^{-k_1} \Z^2$. 

\begin{Lem}
\label{Lem:GlobalDecoupling}
Consider the event $F_{C}$ that for every point $z  \in \Z_C \cap 2\D$, for all $k \in [0,k_1]$, the following conditions hold. There is a circuit of $D_h$-length at most $e^{-k \xi Q } e^{\xi h_{e^{-k}}(z)} C$ in the annulus $A_{e^{-k-1},e^{-k}}(z)$, the crossing length $D_h(\partial B_{e^{-k-1}}(z), \partial B_{e^{-k+1}}(z))$ is at most $e^{-k \xi Q } e^{\xi h_{e^{-k}}(z)} C$, $\tau_{e^{-k}}(z) \geq e^{-k \gamma Q } e^{\gamma h_{e^{-k}}(z)} C^{-1}$ and, finally, $|h_{e^{-k}}(z)-h_{e^{-k+1}}(z)| \leq \xi^{-1} \log C$. Then, $F_C$ occurs with superpolynomially high probability as $C \to \infty$.

\begin{proof}
This follows from Proposition \ref{Prop:CrossingTimes} and Proposition  \ref{prop-dist} together with a union bound.
\end{proof}
\end{Lem}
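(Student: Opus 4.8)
The plan is to obtain $F_C$ by a union bound over the pairs $(z,k)$ with $z\in\Z_C\cap 2\D$ and $k$ an integer in $[0,k_1]$. Since $k_1=\lfloor N\log C\rfloor$, there are $O(e^{2k_1})=O(C^{2N})$ choices of $z$ and $O(N\log C)$ choices of $k$, so it suffices to show that for every fixed $p>0$, all sufficiently large $C$, and uniformly in $(z,k)$, each of the four conditions defining $F_C$ fails with probability at most $C^{-p}$; taking $p>2N$ then completes the argument.

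For the two distance conditions and the circle-average increment condition I would first normalize $(z,k)$ to the reference configuration with center $0$ and scale $1$. By the scale and translation invariance of the whole-plane GFF modulo additive constant, Weyl scaling (Axiom~\hyperref[axiom-weyl]{III}), and the coordinate-change relation, the field $\mathring h:=h(e^{-k}\cdot+z)-h_{e^{-k}}(z)$ is again a whole-plane GFF normalized to have mean zero on the unit circle, hence $\mathring h\eqd h$; moreover the $D_h$-length of any path $P$ lying near $z$ at scale $e^{-k}$ equals $e^{\xi(h_{e^{-k}}(z)-kQ)}$ times the $D_{\mathring h}$-length of the image of $P$ under $w\mapsto e^{k}(w-z)$, and $\mathring h_s(0)=h_{e^{-k}s}(z)-h_{e^{-k}}(z)$. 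Consequently the circuit condition becomes the existence of a circuit of $D_{\mathring h}$-length at most $C$ in the fixed annulus $A_{e^{-1},1}(0)$, the crossing-length condition becomes $D_{\mathring h}(\partial B_{e^{-1}}(0),\partial B_{e}(0))\le C$, and the increment condition becomes $|\mathring h_e(0)|\le\xi^{-1}\log C$ --- all three of which are events about a single GFF $\mathring h\eqd h$, with no residual dependence on $(z,k)$. The first two are superpolynomially likely in $C$ by Proposition~\ref{prop-dist} (for the circuit, concatenate a crossing of the upper half and a crossing of the lower half of $A_{e^{-1},1}(0)$; see also \cite[Remark~3.16]{DFGPS}); the third is the elementary Gaussian bound $\P[|\mathring h_e(0)|>\xi^{-1}\log C]\le 2\exp(-(\log C)^2/(2\xi^2))$, since $\mathring h_e(0)$ is centered Gaussian of variance $1$.

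For the quantum-time condition $\tau_{e^{-k}}(z)\ge e^{-k\gamma Q}e^{\gamma h_{e^{-k}}(z)}C^{-1}$ (imposed for those $(z,k)$ with $0\notin A_{e^{-k-1},e^{-k}}(z)$) I would use Proposition~\ref{Prop:CrossingTimes} together with its lower-tail companion: by the same techniques (and the scaling relation \cite[Eq.~(2.25)]{GRVbm}), the ratio $r^{\gamma Q}e^{\gamma h_r(z)}/\tau_r(z)$ has moments of all orders, uniformly over $r\in(0,1)$ and $z\in K$ with $0\notin A_{r/e,r}(z)$; raising to a high power and applying Markov's inequality gives the required superpolynomial decay of $\P[\tau_{e^{-k}}(z)<e^{-k\gamma Q}e^{\gamma h_{e^{-k}}(z)}C^{-1}]$, uniformly in $(z,k)$. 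Combining the four bounds with the union bound of the first paragraph proves the lemma.

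The main obstacle is precisely this uniformity over the roughly $C^{2N}$ grid points and $N\log C$ scales: the superpolynomial-in-$C$ decay of each failure probability must not deteriorate as $k\to k_1\asymp N\log C$ or as $z$ ranges over $2\D$. The normalization step makes the three ``metric/field'' events literally independent of $(z,k)$, so uniformity there is automatic; for the quantum-passage-time event the needed uniformity in $(r,z)$ is exactly what Proposition~\ref{Prop:CrossingTimes} and its lower-tail analogue supply. A minor secondary point is that Proposition~\ref{prop-dist} governs crossings rather than circuits, which is handled by decomposing the annulus into two cross-able halves as above.
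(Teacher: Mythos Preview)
Your proof is correct and follows essentially the same approach as the paper's one-line proof: a union bound over the polynomially-in-$C$ many pairs $(z,k)$, with each of the four conditions failing with superpolynomially small probability via Proposition~\ref{prop-dist}, a Gaussian tail bound, and Proposition~\ref{Prop:CrossingTimes}. Your normalization $\mathring h:=h(e^{-k}\cdot+z)-h_{e^{-k}}(z)\eqd h$ is a clean way to make the uniformity in $(z,k)$ explicit for the metric and circle-average conditions (the paper simply relies on the ``rate uniform in $r$'' clause of Proposition~\ref{prop-dist}), and you are right to note that what is needed for the quantum-time condition is the lower-tail companion of Proposition~\ref{Prop:CrossingTimes}.
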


\begin{proof}[Proof of Theorem \ref{Thm:NegPassage}]
We will show that $P[\tau > C^{-1}]$ occurs with superpolynomially high probability. By Corollary~\ref{cor-truncate-disk} and Lemmas~\ref{lem-holder} and~\ref{Lem:GlobalDecoupling}, we see that the probability of $\{\tau < C^{-1}\text{ and } X_\tau \not \in \D\} \cup E_C^c \cup F_C^c$ is at most $C^{-cN}$ for some fixed $c$. 

Now restrict to the event $\{ X_\tau \in \D\} \cap E_C \cap F_C$; we show that for some constant $\alpha$ not depending on $C,N$ we have $\tau > C^{-\alpha}$ for sufficiently large $C$, then we are done since $N$ is arbitrary.  On this event the distances $D_h(0,\partial B_{e^{-k_1}}(0))$ and $D_h(X_\tau, \partial B_{e^{-k_1}}(X_\tau))$ are small, so we have $D_h(\partial B_{e^{-k_1}}(0), \partial B_{e^{-k_1}}(X_\tau)) \geq \frac12$. Let $w \in \Z_C$ be the closest point to $X_{\tau}$, and grow the annuli centered at $0$ and $w$ until they first hit; let $k_* \in [0,k_1]$ satisfy $2e^{-k_*}\leq |w| < 2e^{-k_*+1}$. By Lemma \ref{Lem:GlobalDecoupling} we get
\[
\tau \geq \sum_{k \in [k_*, k_1]} \tau_{e^{-k}}(0) +  \tau_{e^{-k}}(w)  \geq C^{-1} \sum_{k \in [k_*, k_1]} e^{-k \gamma Q} e^{\gamma h_{e^{-k}}(0)} +  e^{-k \gamma Q} e^{\gamma h_{e^{-k}}(w)} 
\]
and, by taking an additional annulus crossing and circuit, using the circle average regularity between two annuli,
\[
\frac{1}{2} \leq D_h(\partial B_{e^{-k_1}}(0), \partial B_{e^{-k_1}}(X_\tau))  \leq  10 C^2 \sum_{k \in [k_*, k_1]} e^{-k \xi Q} e^{\xi h_{e^{-k}}(0)} +  e^{-k \xi Q} e^{\xi h_{e^{-k}}(w)}.
\]
Therefore, by raising the inequality above to the power $d_{\gamma}$ and using Jensen's inequality for the right-hand side, as well as the lower bound for $\tau$, we get
\[
\frac{1}{2^{d_{\gamma}}} \leq (10C^2)^{d_{\gamma}} k_1^{d_{\gamma}-1}  \sum_{k \in [k_*, k_1]} e^{-k \gamma Q} e^{\gamma h_{e^{-k}}(0)} +  e^{-k \gamma Q} e^{\gamma h_{e^{-k}}(w)}  \leq (10C^2)^{d_{\gamma}} k_1^{d_{\gamma}-1}   C \tau.
\]
hence $\tau \geq C^{-\alpha}$ for some fixed power $\alpha$ and $C$ large enough. Since $N$ is arbitrary ($\alpha$ does not depend on $N$), we conclude the proof of Theorem~\ref{Thm:NegPassage}.
\end{proof}

\paragraph*{Scaling relations for small balls}

Finally we explain the behavior of small ball exit times. Recall that $\tau_h(z;\mc{B}_s(z;D_h))$ is  the first time that Liouville Brownian motion started at $z$ exits the ball $\cB_s(z; D_h)$. 
\begin{thm}
Let $h$ be a whole-plane GFF normalized so $h_1(0) = 0$, and let $K\subset \C$ be any compact set. For any $\eps \in (0,1)$, there exists a constant $C_{p,\eps,K}$ so that for $C > C_{p,\eps,K}$, for all $s \in (0,1)$ and $z \in K$ we have
\eqb\label{eq-lbm-upper}
\P[\tau_h(z;\mc{B}_s(z;D_h)) \leq C s^{d_\gamma - \eps}] \geq 1 - C^p, 
\eqe
and 
\eqb\label{eq-lbm-lower}
\P[ \tau_h(z;\mc{B}_s(z;D_h)) \geq C^{-1} s^{d_\gamma}] \geq 1 - C^p. 
\eqe
\end{thm}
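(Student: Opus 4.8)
Both bounds will be obtained from the moment estimates already established, in the same way that Propositions~\ref{prop-lower-general} and~\ref{prop-upper-general} follow from the volume moment bounds. First one reduces to balls centered at the origin: writing $h = \wh h + X$ with $\wh h$ a whole-plane GFF normalized so $\wh h_1(z) = 0$ and $X = h_1(z)$ a centered Gaussian whose variance is bounded uniformly over $z$ in the compact $K$, Weyl scaling together with the scaling of $\mu_h$ and of the Liouville Brownian motion clock gives $\tau_h(z;\cB_s(z;D_h)) = e^{\gamma X}\,\tau_{\wh h}(z;\cB_{e^{-\xi X}s}(z;D_{\wh h}))$. On $\{|X| \le \gamma^{-1}\log C\}$ one has $e^{\gamma X} \in [C^{-1},C]$ and $e^{-\xi X}s \in [C^{-1/d_\gamma}s, C^{1/d_\gamma}s]$, so by monotonicity of the exit time in the ball radius, and translation invariance of the GFF modulo additive constant, the left-hand side is sandwiched (up to a bounded power of $C$) by exit times of origin-centered balls of radius $\Theta(s)$ for the standard field, while $\{|X| > \gamma^{-1}\log C\}$ is negligible uniformly in $z \in K$. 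So it suffices to prove both estimates for $z = 0$ and all $s \in (0,1)$.

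\textbf{Upper bound~\eqref{eq-lbm-upper}.} The plan is to re-run the proof of Theorem~\ref{thm-LBM} with $\cB_s(0;D_h)$ in place of $\cB_1(0;D_h)$, exactly as Lemma~\ref{Lem:UpperSmallBalls} adapts Lemma~\ref{Lem:SmallAnnuli}. Write $\tau := \tau_h(0;\cB_s(0;D_h)) = \sum_n \tau_n^{(s)}$, where $\tau_n^{(s)}$ is the quantum time spent in the Euclidean annulus $A_{2^n}$ before exiting $\cB_s(0;D_h)$. Truncating on the event that $\cB_s(0;D_h) \subset B_R(0)$ with $R$ a suitable power of $s$ (controlled as in Lemma~\ref{lem-euc-decay}), the representation~\eqref{eq-tau-k}--\eqref{eq-f} together with Lemma~\ref{lem-bound-f} bounds $\E[(\tau_n^{(s)})^k \mathbbm1_{\{\cB_s(0;D_h)\subset B_R(0)\}}]$ by a $g$-weighted version of the integral handled in the proof of Proposition~\ref{Prop:EstimateGFFbounded}; by Remark~\ref{rem-delta} that proof absorbs the extra logarithmic singularities carried by $g$, and carrying the radius $s$ through produces an extra factor $s^{k(d_\gamma - \eps)}$, as in Lemma~\ref{Lem:UpperSmallBalls}. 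Summing over $n$ with H\"older's inequality (as at the end of Lemma~\ref{Lem:SmallAnnuli}) yields $\E[\tau_h(0;\cB_s(0;D_h))^k] \le C_{k,\eps}\, s^{k(d_\gamma-\eps)}$ for every $k \ge 1$, and Markov's inequality gives $\P[\tau_h(0;\cB_s(0;D_h)) > C s^{d_\gamma - \eps}] \le C_{k,\eps} C^{-k}$, which is~\eqref{eq-lbm-upper}.

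\textbf{Lower bound~\eqref{eq-lbm-lower}.} The plan is to re-run the proof of Theorem~\ref{Thm:NegPassage} with $\cB_s(0;D_h)$ in place of $\cB_1(0;D_h)$; that argument works entirely with the full metric $D_h$, so no internal metric is involved. By Proposition~\ref{prop-dist} applied at a Euclidean scale $\delta$ (a small power of $s$) together with a Gaussian tail bound on $h_\delta(0)$, with probability $1 - C^{-cN}$ one has $D_h(0,\partial B_\delta(0)) > s$, so the Liouville Brownian motion exit point $X_\tau$ lies in $B_\delta(0)$; similarly $\cB_s(0;D_h) \subset B_R(0)$ with $R$ a power of $C$. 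On the analogue of the events of Lemmas~\ref{lem-holder} and~\ref{Lem:GlobalDecoupling} (suitably rescaled, over the $\Theta(N\log C)$ Euclidean scales between the scale of $\cB_s(0;D_h)$ and $C^{-N}$ times that scale: annulus-crossing distances and circuits via Proposition~\ref{prop-dist}, quantum crossing times via Proposition~\ref{Prop:CrossingTimes}, and oscillation of circle averages), one grows annuli about $0$ and about the grid point nearest $X_\tau$, glues circuit and crossing paths, and uses $D_h(0,X_\tau) = s$ together with Jensen's inequality (raising to the power $d_\gamma$) to convert the resulting sum of circle-average exponentials into a lower bound for $\tau$. This gives $\tau_h(0;\cB_s(0;D_h)) \ge C^{-\alpha}s^{d_\gamma}$ with probability $1 - C^{-cN}$ for a power $\alpha$ independent of $N$; replacing $C$ by $C^{1/\alpha}$ and letting $N \to \infty$ yields~\eqref{eq-lbm-lower}. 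Finally, combining~\eqref{eq-lbm-upper} and~\eqref{eq-lbm-lower} with $C = s^{-\eps'}$ for small $\eps'$ recovers the statement of Theorem~\ref{thm-exit}.

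\textbf{Main obstacle.} The individual ingredients are already in place, so the work is in the bookkeeping, and the point requiring the most care is uniformity in $s \in (0,1)$. One must verify that the regularity events in both re-runs occur with the stated probabilities uniformly over $s$ — in particular that the localization of $X_\tau$, the $\Theta(N\log C)$ annulus crossings, and the circle-average oscillations can be controlled simultaneously and $s$-independently — and that the power $\alpha$ of $C$ lost in the lower bound (and the constants in the upper bound) depend on neither $N$ nor $s$. This is the same scaling/decoupling argument already carried out for the volume in Lemmas~\ref{lem-lower-all-r} and~\ref{Lem:UpperSmallBalls}, but it has to be rechecked in the presence of the additional logarithmic singularities coming from the Liouville Brownian motion transition densities.
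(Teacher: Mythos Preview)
Your proposal is correct and, for the upper bound and the reduction from general $z\in K$ to $z=0$, follows the paper essentially verbatim: the paper also extends the positive moment argument for $\tau_h(0;\cB_1(0;D_h))$ in the manner of Lemma~\ref{Lem:UpperSmallBalls} to obtain $\E[\tau_h(0;\cB_s(0;D_h))^k]\lesssim s^{kd_\gamma-\eps}$ and then uses Markov, and it extends to $z\in K$ via the argument of Proposition~\ref{prop-lower-general}.

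The one genuine difference is the lower bound. You propose to re-run the entire proof of Theorem~\ref{Thm:NegPassage} with $\cB_s(0;D_h)$ in place of $\cB_1(0;D_h)$, tracking the $s$-dependence through the annulus bookkeeping and checking that the regularity events and the constant $\alpha$ are uniform in $s$. The paper instead avoids this re-run entirely: it simply cites the $s=1$ negative-moment bound $\E[\tau_h(0;\cB_1(0;D_h))^{-k}]<\infty$ from Theorem~\ref{Thm:NegPassage} and applies the rescaling argument of Lemma~\ref{lem-lower-all-r} (the stopping time $T$ for the circle-average process together with Lemma~\ref{lem-markovish}) to transfer this directly to $\cB_s(0;D_h)$, using that the Liouville clock scales the same way as $\mu_h$ under an additive shift of the field. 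This buys a substantial simplification: no $s$-dependent regularity events, no localization of the random Euclidean scale of $\cB_s(0;D_h)$, and no need to verify that the exponent $\alpha$ is $s$-independent. Your route would work, but the paper's is shorter and cleaner.
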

\begin{proof}
We first discuss the proofs of~\eqref{eq-lbm-upper} and~\eqref{eq-lbm-lower} for the specific case $z = 0$. 
For the $z=0$ upper bound, recall that we proved $\E[\tau_h(0; \cB_1(0; D_h))^k] <\infty$ for all $k>0$ in Theorem~\ref{thm-LBM} by adapting  the proof of Proposition~\ref{Prop:MomentsWholePlane}	. An extension of these arguments like in Lemma~\ref{Lem:UpperSmallBalls} yields $\E[\tau_h(0; \cB_s(0; D_h))^k] \lesssim s^{kd_\gamma - \eps}$ with implicit constant depending only on $k, \eps$, and hence by Markov's inequality, for all $s \in (0,1)$ and  sufficiently large $C$ that
\eqb\label{eq-lbm-upper-0}
\P[\tau_h(0;\mc{B}_s(0;D_h)) \leq C s^{d_\gamma - \eps}] \geq 1 - C^p.
\eqe
For the $z=0$ lower bound, Theorem~\ref{Thm:NegPassage} gives $\E[\tau_h(0; \cB_1(0;D_h))^{-k}]< \infty$ for all $k > 0$, and applying the rescaling argument of Lemma~\ref{lem-lower-all-r} then yields for all $s \in (0,1)$ and sufficiently large $C$ that
\eqb\label{eq-lbm-lower-0}
\P[ \tau_h(0;\mc{B}_s(0;D_h)) \geq C^{-1} s^{d_\gamma}] \geq 1 - C^p. 
\eqe

Finally, the arguments of Proposition~\ref{prop-lower-general} allow us to extend~\eqref{eq-lbm-upper-0} and~\eqref{eq-lbm-lower-0} to~\eqref{eq-lbm-upper} and~\eqref{eq-lbm-lower}.
\end{proof}

\subsection{Recovering the conformal structure from the metric measure space structure of $\gamma$-LQG}

\label{sec-recover}

The Brownian map is constructed as a random metric measure space (see \cite{LeGall13, LeGalltakagi}) and  has been proved to be the Gromov-Hausdorff limit of uniform triangulations and $2p$-angulations in \cite{LeGall07, LeGall10, LeGall13,Miermont13}. The Brownian map was later endowed with a canonical conformal structure (i.e. an embedding into a flat domain, defined up to conformal automorphism of the domain) via identification with $\sqrt{8/3}$-LQG \cite{MS15b,  MS16a, MS16b, MS16} but this construction was non-explicit. The work of \cite{GMS18} gives an explicit way to recover the conformal structure of a Brownian map from its metric measure space structure, and their proof mostly carries over directly to the general setting $\gamma \in (0,2)$, except for certain Brownian map metric ball volume estimates of Le Gall \cite{LeGall10}.  The missing ingredient for general $\gamma$ was exactly the uniform volume estimates \eqref{eq:UniformVolumeEst}(cf. \cite[Lemma 4.9]{GMS18}).

As an immediate consequence of \eqref{eq:UniformVolumeEst}  and the arguments of \cite{GMS18} (see discussion before \cite[Remark 1.3]{GMS18}), we obtain the following generalization of \cite[Theorem 1.1]{GMS18} to all $\gamma \in (0,2)$. Let $h$ be a whole-plane GFF normalized so $h_1(0) = 0$, and write $\cB^\bullet_R(0; D_h)$ for the \emph{filled} $D_h$-ball centered at $0$ with radius $R$ (i.e. the union of $\cB_R(0; D_h)$ and all $\mu_h$-finite complementary regions). Let $\mathcal P^\lambda$ be a sample from the intensity $\lambda$ Poisson point process associated to $\mu_h$. We can obtain a $D_h$-Voronoi tessellation of $\C$ into cells $\{H_{z}^\lambda\}_{z \in P^\lambda}$ by defining $H_z^\lambda = \{ w \in \C \: : \: D_h(w,z) \leq D_h(w,z')\: \forall \: z' \in P^\lambda\}$. We define a graph structure on $\mathcal P^\lambda$ by saying that $z,z' \in P^\lambda$ are adjacent if their Voronoi cells $H^\lambda_z, H^\lambda_{z'}$ intersect along their boundaries, and define $\partial P^\lambda$ to be the vertices corresponding to Voronoi cells intersecting the boundary. Let $Y^\lambda$ be a simple random walk on $\mathcal P^\lambda$ started from the point whose Voronoi cell contains $0$, extend $Y^\lambda$ from the integers to $[0,\infty)$ by interpolating along $D_h$-geodesics, and finally stop $Y^\lambda$ when it hits $\partial P^\lambda$.  
\begin{thm}[Generalization of {\cite[Theorem 1.1]{GMS18}}]\label{thm-RW}
As $\lambda \to \infty$, the conditional law of $Y^\lambda$ given $(\C, 0, D_h, \mu_h)$ converges in probability as $\lambda \to 0$ to standard Brownian motion in $\C$ started at $0$ and stopped when it hits $\partial \cB^\bullet_R(0; D_h)$ (viewed as curves modulo time parametrization).
\end{thm}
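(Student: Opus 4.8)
The plan is to run the proof of \cite[Theorem 1.1]{GMS18} essentially verbatim. As noted in \cite{GMS18}, the only ingredient of that proof which is genuinely special to $\gamma=\sqrt{8/3}$ is a uniform metric-ball volume estimate (\cite[Lemma 4.9]{GMS18}), deduced there from Le Gall's Brownian map computations \cite{LeGall10}; but this estimate is precisely the content of \eqref{eq:UniformVolumeEst}. So the first thing I would do is record what \eqref{eq:UniformVolumeEst} buys at the level of the Voronoi tessellation. Since $\{H_z^\lambda\}_{z\in\mathcal P^\lambda}$ is the Voronoi tessellation of a Poisson point process of intensity $\lambda\mu_h$, combining \eqref{eq:UniformVolumeEst} with Proposition~\ref{prop-euc-ball} and the bi-Hölder continuity of $D_h$ with respect to the Euclidean metric \cite{DFGPS} shows that, on an event of probability tending to $1$ as $\lambda\to\infty$, every Voronoi cell meeting a fixed compact set has $D_h$-diameter $\lambda^{-1/d_\gamma+o_\lambda(1)}$, every $D_h$-ball of radius $s$ about a point of that compact set contains $\lambda s^{d_\gamma+o_\lambda(1)}$ cell centers, and the cells form a locally finite, uniformly well-spread partition with no anomalously dense or sparse regions. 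This pins down the correspondence between the graph-distance scale on $\mathcal P^\lambda$ and the $D_h$-scale, which is exactly the input on which the \cite{GMS18} argument rests.

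Granting this, the remainder follows the standard two-step scheme for invariance principles for random walks on random planar maps, as carried out in \cite{GMS18}. First one establishes tightness of the conditional law of $Y^\lambda$ given $(\C,0,D_h,\mu_h)$, viewed as curves modulo time parametrization: using the cell counts above one bounds both the number of steps $Y^\lambda$ needs to cross, and the number of steps it can spend inside, any $D_h$-annulus whose modulus is bounded below, and bi-Hölder continuity transfers this control to the Euclidean topology. Then one identifies the subsequential limits. As in \cite{GMS18}, any such limit $Y$ is a continuous local martingale --- $Y^\lambda$ is a martingale up to the negligible discretization error from interpolating along $D_h$-geodesics, and the volume estimates furnish the uniform integrability needed to pass this to the limit --- hence a time-changed Brownian motion. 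The final point is that the construction of $(\mathcal P^\lambda,Y^\lambda)$ refers only to the abstract metric measure space $(\C,0,D_h,\mu_h)$ and not to the ambient conformal structure, which forces the limiting quadratic variation to be isotropic; combined with conformal invariance of planar Brownian motion this yields that $Y$ is a standard Brownian motion stopped when it first hits $\partial\cB^\bullet_R(0;D_h)$, as claimed. Theorem~\ref{thm-recover} is then recovered as in \cite{GMS18}, since this intrinsically defined walk determines the conformal structure in the limit.

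I expect the main obstacle to be not the production of any new probabilistic estimate --- Theorem~\ref{thm-main} supplies everything of that kind --- but the bookkeeping of checking that every auxiliary input used in \cite{GMS18} has a general-$\gamma$ analogue. Beyond \eqref{eq:UniformVolumeEst}, the argument relies on $D_h$ being an a.s.\ geodesic metric inducing the Euclidean topology \cite{DDDF19, GM19uniqueness}, on bi-Hölder continuity \cite{DFGPS}, on confluence of geodesics \cite{GM19confluence}, on Proposition~\ref{prop-euc-ball}, and on the side-to-side crossing distance concentration of Proposition~\ref{prop-dist}; one must also carry out the routine local absolute continuity comparisons relating the whole-plane GFF normalized as here to the quantum-cone and quantum-disk fields used in \cite{GMS18}. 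Each of these is by now available for all $\gamma\in(0,2)$, so the proof of \cite{GMS18} applies mutatis mutandis.
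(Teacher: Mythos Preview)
Your proposal is correct and follows essentially the same approach as the paper: both identify \eqref{eq:UniformVolumeEst} as the sole missing ingredient for running the \cite{GMS18} argument at general $\gamma$, and both invoke local absolute continuity to pass between the whole-plane GFF and the quantum-cone/disk fields on which \cite{GMS18} is stated. The paper's proof is a three-sentence citation to \cite[Theorem 3.3]{GMS18}, whereas you have unpacked the internal structure of that argument (tightness via cell-count bounds, martingale identification of limits, isotropy from metric-measure invariance); this is expository elaboration rather than a different route.
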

Here, the metric on curves modulo time parametrization is given as follows. For curves $\eta_j : [0,T_j] \to \C$ ($j=1,2$), we set 
\[d(\eta_1, \eta_2) = \inf_\phi \sup_{t \in [0,T_1]} |\eta_1(t) - \eta_2(\phi(t))| \]
where the infimum is over increasing homeomorphisms $\phi: [0,T_1] \to [0,T_2]$.
We remark that the convergence in Theorem~\ref{thm-RW} holds uniformly for the random walk and Brownian motion started in a compact set, and moreover holds for a range of quantum surfaces such as quantum spheres, quantum cones, quantum wedges, and quantum disks; see \cite[Theorem 3.3]{GMS18}. Consequently, the Tutte embedding of the Poisson-Voronoi tessellation of the quantum disk converges to the quantum disk as $\lambda \to \infty$ (see the proof of \cite[Theorem 1.2]{GMS18}).
\begin{proof}
Since we have the estimates~\eqref{eq:UniformVolumeEst}, the general $\gamma \in (0,2)$ version of \cite[Theorem 3.3]{GMS18} holds. In particular, Theorem~\ref{thm-RW} holds if we replace the field $h$ with that of a $0$-quantum cone. By comparing $h$ to the field of a 0-quantum cone and using local absolute continuity arguments, we obtain Theorem~\ref{thm-RW}.
\end{proof}

Notice that the construction of $Y^\lambda$ involves only the pointed metric measure space structure of $(\C, 0, D_h, \mu_h)$, so Theorem~\ref{thm-RW} roughly tells us that we can recover the conformal structure of $(\C, 0, D_h, \mu_h)$ from its metric measure space structure. The following variant of \cite[Theorem 1.2]{GMS18} makes this observation explicit, resolving a question of \cite{GM19uniqueness}.

\begin{thm}[Pointed metric measure space $(\C, 0, D_h, \mu_h)$ determines conformal structure]\label{thm-conformal}
Let $h$ be a whole-plane GFF normalized so $h_1(0) = 0$. Almost surely, given the pointed metric measure space $(\C, 0, D_h, \mu_h)$, we can recover its conformal embedding into $\C$ and hence recover $h$ (both modulo rotation and scaling).
\end{thm}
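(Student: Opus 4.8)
The plan is to deduce Theorem~\ref{thm-conformal} from Theorem~\ref{thm-RW} exactly as \cite{GMS18} deduces their Theorem~1.2 from their Theorem~1.1, using the uniform volume estimates~\eqref{eq:UniformVolumeEst} we have just established to justify every step for general $\gamma \in (0,2)$. The key point is that the entire construction of the approximating random walk $Y^\lambda$ in Section~\ref{sec-recover} — sampling a Poisson point process from $\mu_h$ of intensity $\lambda$, forming the $D_h$-Voronoi tessellation, building the adjacency graph, running simple random walk, and interpolating along $D_h$-geodesics — uses \emph{only} the pointed metric measure space $(\C,0,D_h,\mu_h)$ and an independent source of randomness; it never refers to the Euclidean coordinates. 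Therefore the conditional law of $Y^\lambda$ given $(\C,0,D_h,\mu_h)$ is a measurable function of the pointed metric measure space alone.

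First I would set this up precisely. Fix a realization of $(\C,0,D_h,\mu_h)$ and view it as an abstract pointed metric measure space (as an element of the space of such spaces with the local Gromov--Hausdorff--Prokhorov topology). The construction of $Y^\lambda$ is intrinsic to this abstract object, so its conditional law $\nu^\lambda := \mathrm{Law}(Y^\lambda \mid (\C,0,D_h,\mu_h))$ is a random probability measure on the space of curves modulo time parametrization, and this random measure is $\sigma(\C,0,D_h,\mu_h)$-measurable. By Theorem~\ref{thm-RW}, as $\lambda \to \infty$ we have $\nu^\lambda \to \nu^\infty$ in probability, where $\nu^\infty$ is the law of standard Brownian motion in $\C$ started at $0$ and stopped on hitting $\partial \cB^\bullet_R(0;D_h)$, again modulo time parametrization. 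Since a limit in probability of $\sigma(\C,0,D_h,\mu_h)$-measurable random variables is itself $\sigma(\C,0,D_h,\mu_h)$-measurable (pass to an a.s.-convergent subsequence), we conclude that $\nu^\infty$ is measurable with respect to $(\C,0,D_h,\mu_h)$. In other words, from the pointed metric measure space alone we can recover the law of Brownian motion started at $0$ in the (a priori unknown) conformal embedding.

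Next I would recover the embedding from $\nu^\infty$. The law of planar Brownian motion run until exiting a domain, viewed modulo time parametrization, determines the conformal structure of that domain: concretely, the exit distribution of this Brownian motion on $\partial\cB^\bullet_R(0;D_h)$ is harmonic measure from $0$, which determines the Riemann map from $\cB^\bullet_R(0;D_h)$ (as an abstract domain) to $\D$ up to rotation, and letting $R \to \infty$ and using that $\cup_R \cB^\bullet_R(0;D_h) = \C$ pins down the conformal embedding of $(\C,D_h)$ into $\C$ up to rotation and scaling (scaling is the residual ambiguity because we have not fixed a Euclidean scale on the abstract side; this matches the statement). This is exactly the argument in \cite{GMS18} following their Theorem~1.1, and it goes through verbatim. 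Once the conformal embedding is known, $\mu_h$ in these recovered coordinates is the $\gamma$-LQG measure of some field, and by \cite[Proposition~1.2 or the measurability results of]{BerestyckiGffLqg}, equivalently by Theorem~\ref{thm-recover}'s input from \cite{GMS18}, the field $h$ is measurably determined by $\mu_h$ together with the conformal structure (modulo the same rotation/scaling). Assembling these steps yields Theorem~\ref{thm-conformal}.

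The main obstacle is not any single estimate — the volume bounds~\eqref{eq:UniformVolumeEst} are precisely what was missing and we have them — but rather carefully checking that every measurability claim is legitimate: that the Voronoi-based random walk construction genuinely factors through the abstract pointed metric measure space (including measurable selection of geodesics used for interpolation, and the fact that tie-breaking events for Voronoi cells have probability zero), and that convergence in probability in the curve topology transfers measurability to the limit. These are the points \cite{GMS18} handle in their Section~4; I would cite their arguments and only indicate the (minor) modifications needed for a general quantum cone versus the whole-plane field, which as in the proof of Theorem~\ref{thm-RW} are dispatched by local absolute continuity.
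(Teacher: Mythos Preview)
Your proposal follows essentially the same strategy as the paper: use the intrinsic Voronoi random walk $Y^\lambda$ and Theorem~\ref{thm-RW} to recover something conformal from the pointed metric measure space, then extract the embedding and hence $h$. The paper's implementation differs from yours in two concrete respects worth noting.

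First, rather than recovering the abstract law $\nu^\infty$ of Brownian motion and then arguing that harmonic measure determines the Riemann map, the paper (following \cite{GMS18}) uses the \emph{Tutte embedding} of the adjacency graph $P^\lambda$ directly: vertices on $\partial P^\lambda$ are placed on $\partial\D$ according to random-walk hitting probabilities, interior vertices are placed by discrete harmonicity, and a second marked point $1$ is used to fix the rotation. As $\lambda\to\infty$ the pushforward of normalized counting measure converges weakly to the conformally embedded $\mu_h$ on $\cB^\bullet_R(0;D_h)$. This is the precise mechanism by which ``Brownian motion determines conformal structure'' is made effective, and it is what \cite{GMS18} actually do; your sentence ``the exit distribution \dots\ determines the Riemann map'' is morally right but is not the operative argument. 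Since you already defer to \cite{GMS18} for this step, this is not a gap, just an imprecision in your summary.

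Second, you pass over the $R\to\infty$ step. The paper handles this by rescaling the Tutte limit on $\cB^\bullet_R$ to a two-pointed measure space $(c_R\D,0,1,\mu^R)$ related to $(\C,\mu_h)$ by a conformal map $\varphi^R$ fixing $0$ and $1$, and then invokes a distortion estimate (\cite[Lemma~2.4]{sphere}) to show $\varphi^R\to\mathrm{id}$ locally uniformly, hence $\mu^R(A)\to\mu_h(A)$ for rectangles $A$. This pins down the embedded measure $\mu_h$, and \cite{BSS14} then gives $h$. Your sketch would benefit from mentioning this compatibility/limit argument explicitly, but there is no genuine obstruction.
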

\begin{proof}
To simplify the notation, suppose the two-pointed metric measure space $(\C, 0, 1, D_h, \mu_h)$ is given, then we show we can recover exactly the embedding of $\mu_h$ in $\C$ (otherwise, one can arbitrarily pick any other point from the pointed metric measure space and use that in place of 1, and only recover the embedded measure modulo rotation and scaling). Since $\mu_h$ (with its embedding in $\C$) determines $h$ \cite{BSS14} and hence $D_h$, it suffices to recover $\mu_h$.

Consider $R$ large so $1 \in \cB^\bullet_R(0; D_h)$. In the same way that \cite[Theorem 1.1]{GMS18} is used to prove \cite[Theorem 1.2]{GMS18},  we can use Theorem~\ref{thm-RW} to obtain an embedding of the two-pointed metric measure space $(\cB^\bullet_R(0; D_h), 0, 1 , D_h, \mu_h)$ into the unit disk $\D$ with the correct conformal structure and sending $0$ to $0$ and $1$ to a point in $(0,1)$. 

This is done by taking a $\lambda$-intensity Poisson-Voronoi tessellation of $(\cB^\bullet_R(0; D_h), 0,1,D_h, \mu_h)$, and embedding its adjacency graph $P^\lambda$ in $\D$ via the \emph{Tutte embedding} $\Phi^\lambda$: let $x_0, \dots, x_n$ be the vertices in $\partial P^\lambda$ in counterclockwise order with $x_0$ arbitrarily chosen, and let $z_0$ (resp. $z_1$) be the vertex corresponding to the Poisson-Voronoi cell containing 0 (resp. 1). Define the map $\wt \Phi^\lambda:P^\lambda \to \ol \D$ via $\wt \Phi^\lambda(z_0) = 0, \wt \Phi (x_0) = 1$ and $\wt \Phi^\lambda(x_j) = e^{2\pi i p_j}$ where $p_j$ is the probability that $Y^\lambda$ hits $\partial P^\lambda$ at one of the points $x_0, \dots, x_j$, and extend $\wt \Phi$ to the rest of $P^\lambda$ so it is discrete harmonic. Finally, define $\Phi^\lambda(z) = e^{i \theta} \wt \Phi^\lambda(z)$ where $\theta \in [0,2\pi)$ is chosen so $\Phi(z_1) \in \R$. 
 Taking $\lambda \to \infty$, the $\Phi^\lambda$-pushforward of the counting measure on the vertices of the embedded graph normalized by $\lambda^{-1}$ converges weakly in probability to the desired conformally embedded measure. See \cite[Section 3.3]{GMS18} for details.

Rescale this embedding (and forget the metric) to obtain an equivalent two-pointed measure space $(c_R \D, 0, 1 , \mu^R)$ with the LQG measure and conformal structure. That is, there exists a conformal map $\varphi^R : \cB^\bullet_R(0; D_h) \to c_R\D$ such that $\varphi^R(0) = 0, \varphi^R(1) = 1$, and the pushforward $(\varphi^R)^* \mu_h$ equals $\mu^R$. We emphasize that since we are only given $(\C, 0, 1, D_h, \mu_h)$ as a two-pointed metric measure space, we know neither the embedding $\cB^\bullet_R(0; D_h)\subset \C$ nor the conformal map $\varphi^R$, but we do know $c_R$ and $\mu^R$.

Now, by a simple estimate on the distortion of conformal maps \cite[Lemma 2.4]{sphere} (stated for the cylinder $\R \times [0,2\pi]$ but applicable to our setting via the map $z \mapsto e^{-z}$), we see that for any compact $K\subset \C$ we have $\lim_{R \to \infty} \sup_{z \in K} |\varphi^R(z)-z| = 0$ and $\lim_{R \to \infty} \sup_{z \in K} |(\varphi^R)^{-1}(z)-z| = 0$. Thus, for any fixed rectangle $A$, the measure of the symmetric difference $\mu_h\left(A \triangle (\varphi^R)^{-1}(A)\right)$ converges to zero as $R \to \infty$; this implies $\lim_{R \to \infty} |\mu^R(A) - \mu_h(A)| = 0$. Since $\mu^R$ is a function of the two-pointed metric measure space $(\C, 0, 1, D_h, \mu_h)$, we conclude that $\mu_h(A)$ is also. Therefore the two-pointed metric measure space $(\C, 0, 1, D_h, \mu_h)$ determines $\mu_h$ and hence $h$. 
\end{proof}

\begin{appendix}

\section{Proof of the inductive relation for small moments}

\begin{Lem}
\label{Lem:Preliminaries}
Recall $v_k(r)$ and $u_k(r)$ from~\eqref{eq:uv}. The following relation holds.
\begin{equation}
v_k(r) \leq C r^{-2} \sum_{i=1}^{k-1} {k\choose i}  (4k)^{\gamma^2 i (k-i)} r^{-\gamma^2 i (k-i)}   u_i(4r) u_{k-i}(4r).
\end{equation}
\end{Lem}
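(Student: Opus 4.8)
The plan is to run the same cluster-splitting argument used in Section~\ref{sec-euc-vol} (and mirrored in the $\star$-scale analysis of the main text), now keeping careful track of the Euclidean scales. Fix $r \in (0,1]$ and consider a configuration $K = \{z_1,\dots,z_k\} \subset \D$ contributing to $v_k(r)$, so that $r/2 \leq \diam K \leq r$. First I would cover $\D$ by $O(r^{-2})$ Euclidean balls of radius $2r$, chosen (e.g.\ centered on a fine enough grid) so that every subset of $\D$ of diameter at most $r$ is contained in one of them. By the translation invariance of the integrand $\prod_{i<j}|z_i-z_j|^{-\gamma^2}$ and of the diameter constraint, each of these $O(r^{-2})$ contributions is bounded by the single integral over $B_{2r}(0)^k$; this produces the prefactor $C r^{-2}$.

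Next I would exploit the separation distance. For any such $K$ there is a partition $K = I \cup J$ with $d(I,J) = s(K)$, and by~\eqref{eq-sep-diam} together with the lower constraint $\diam K \geq r/2$ we have $s(K) \geq \diam K / |K| \geq r/(2k)$. Writing the product over all pairs as the product over pairs inside $I$, times the product over pairs inside $J$, times the product over the $|I|\,|J|$ crossing pairs, each crossing factor is at most $s(K)^{-\gamma^2} \leq (2k/r)^{\gamma^2}$, so the crossing contribution is at most $(2k)^{\gamma^2 |I||J|} r^{-\gamma^2 |I||J|}$. Bounding the indicator that $(I,J)$ is the separation-optimal partition by $1$ and summing over all $\sum_{i=1}^{k-1}\binom{k}{i}$ ways to choose a subset of size $i$ — which only overcounts, harmlessly for an upper bound — the integral over $B_{2r}(0)^k$ factorizes into a product of an integral over $z_I \in B_{2r}(0)^i$ and an integral over $z_J \in B_{2r}(0)^{k-i}$ of the respective single-cluster kernels.

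Finally I would recognize each factor as $\int_{B_{2r}(0)^i}\prod_{a<b}|z_a - z_b|^{-\gamma^2}\,dz_I \leq u_i(4r)$, using monotonicity of the integral of a nonnegative integrand under enlarging the domain from $B_{2r}(0)$ to $4r\D$, and similarly for the $J$-factor; then absorb $(2k)^{\gamma^2 i(k-i)} \leq (4k)^{\gamma^2 i(k-i)}$ and the implicit constant from the covering into $C$. Collecting the prefactors $r^{-2}$, $\binom{k}{i}$, $(4k)^{\gamma^2 i(k-i)} r^{-\gamma^2 i(k-i)}$ and $u_i(4r)u_{k-i}(4r)$ yields exactly the claimed inequality. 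The only genuinely delicate points are the geometric bookkeeping: choosing the covering radius and the recentering so that $I$ and $J$ land inside $4r\D$, and making sure the lower bound $\diam K \geq r/2$ is invoked — it is the one place where the lower half of the indicator in the definition of $v_k(r)$ is used — in order to control $s(K)$ from below.
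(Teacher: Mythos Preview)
Your proposal is correct and follows essentially the same approach as the paper's proof: a covering of $\D$ by $O(r^{-2})$ balls to extract the $r^{-2}$ prefactor, a cluster split with separation of order $r/k$ to control the crossing factors, then an overcounting sum over all bipartitions and a factorization into the $u_i(4r)$ integrals. The only cosmetic difference is that you invoke the separation-distance bound~\eqref{eq-sep-diam} to get $s(K)\geq r/(2k)$, whereas the paper writes out the underlying pigeonhole explicitly (cutting the segment between two points at maximal distance into $k+1$ equal-width strips, one of which must be empty, yielding separation $\geq r/(4k)$); these are the same argument in slightly different packaging.
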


\begin{proof}
Set $f_k(z_1,\dots z_k) := \prod_{i<j} |z_i-z_j|^{-\gamma^2}$. Note that when $\max_{i<j} |z_i - z_j| \leq r$, the $k$ points are included in $B(z_1,r)$ which itself is included in a ball of radius $4r$ centered at at point of $r \Z^2 \cap \D$. Since $f_k$ is a function of the pairwise distance, which is translation invariant, we get
\begin{align*}
v_k(r) & = \int_{\D^k} \frac{1_{r/2 \leq \max_{i < j} |z_i-z_j| \leq r} }{\prod_{i < j} |z_i-z_j|^{\gamma^2}} dz_1 \dots dz_k \\
 & \leq C r^{-2} \int_{4 r \D^k} 1_{ r/2 \leq\max_{i < j} |z_i-z_j|} f_k(z_1,\dots, z_k) dz_1 \dots dz_k
\end{align*}
Then, take two points at distance $r/2$ in $4r \D$, say $z$ and $w$ among $\lbrace z_1, \dots, z_k \rbrace$. We cut $k+1$ orthogonal sections of same width to the segment $[z,w]$. At least one should be empty and this separates two clusters of points, $I = \lbrace z_{p_1}, \dots , z_{p_i} \rbrace$ and $J = \lbrace z_{q_1}, \dots , z_{q_{k-i}} \rbrace$ for some $1 \leq i \leq k-1$. All points between the two clusters $I$ and $J$ are separated by $|z-w|/(k+1) \geq r/4k$.  We decouple $f_k(z_1, \dots, z_k)$ for two clusters $I$ and $J$ of size $i$ and $k-i$ by $f_k(z_1, \dots, z_k) \leq  (4k)^{\gamma^2 i (k-i)} r^{-\gamma^2 i (k-i)} f_i(I) f_{k-i}(J)$. In particular, splitting over the possibles cases we get
\[
v_k(r)\le C r^{-2} \sum_{i=1}^{k-1}  \sum_{I} (4k)^{\gamma^2 i (k-i)} r^{-\gamma^2 i (k-i)}   \int_{4 r \D^k} f_i(I) f_{k-i}(J) dz_1 \dots dz_k,
\]
where for each $i$, $I$  ranges over all subsets of $\{z_1\dots, z_k \}$ with $i$ elements. 
This gives 
\[
v_k(r)\le C r^{-2} \sum_{i=1}^{k-1} {k\choose i}  (4k)^{\gamma^2 i (k-i)} r^{-\gamma^2 i (k-i)}   u_i(4r) u_{k-i}(4r).
\]
and completes the proof.
\end{proof}

\section{Whole-plane GFF and $\star$-scale invariant field}

\label{Sec:Whole-plane-Star-scale}

In this section we recall some properties of $\star$-scale invariant fields and explain that the whole-plane GFF modulo constants can be seen as a $\star$-scale invariant field. 

We will denote by $\mc{S}(\C)$ the space of space of Schwartz functions and by $L^{2}(\C)$ the space of square integrable functions, on $\C$.  For $f,g \in L^2(\C)$, let $\langle f, g \rangle$ stands for the $L^2(\C)$ inner product. Furthermore, $*$ denotes the convolution operator.

\paragraph*{$\star$-scale invariant field $\phi$} We introduce here the field $\phi  = \sum_{k \geq 1} \phi_k$ we work with in Section \ref{Sec:InductiveStarScale}. The notation and definition are close to the one in \cite[Section 2.1]{DF18} and we refer the reader to this Section for more details.

Consider $k$, a smooth, radially symmetric and nonnegative bump function supported in $B_{1/(2e)}(0)$, such that $k$ is normalized in $L^2(\C)$. We set $c = k * k$ which has therefore compact support included in $B_{1/e}(0)$ and satisfies $c(0) =1$. We consider a space-time white noise $\xi(dx,dt)$ on $\C \times [0,\infty)$ and define the random Schwartz distribution
\[
\phi(x) := \int_0^1 \int_{\C} k \left( \frac{x-y}{t} \right) t^{-3/2} \xi(dy,dt).
\]
The covariance kernel of $\phi$ is given by $\E ( \phi(x) \phi(x')) = \int_0^1 c ( \frac{x-x'}{t} ) \frac{dt}{t}$. We decompose $\phi = \sum_{k \geq 1} \phi_k$ where $\phi_k(x) := \int_{e^{-k}}^{e^{-(k-1)}} \int_{\C} k \left( \frac{x-y}{t} \right) t^{-3/2} \xi(dy,dt)$ and whose covariance kernel is given by $C_k(x,x'):=\int_{e^{-k}}^{e^{-(k-1)}} c ( \frac{x-x'}{t} ) \frac{dt}{t}$. Note that $C_k(x,x') = C_1(e^{(k-1)} x, e^{(k-1)} x')$ and that if $|x-x'| \geq e^{-1}$, $C_1(x,x') = 0$ hence $\phi_k$ has finite range dependence with range of dependence $e^{-k}$. Note also that the pointwise variance of $\phi_{0,n} := \sum_{1 \leq k \leq n} \phi_k$ is equal to $n$.

\begin{Lem}
\label{Lem:GradientEst} There exists $C,c >0$ such that for all $k \geq 0$, $x > 0$, $\Pro (e^{-k} \norme{ \nabla \phi_{0,k}}_{e^{-k} S} \geq x) \leq C e^{-c x^2}$, where $S$ denotes the unit square $[0,1] \times [0,1]$.
\end{Lem}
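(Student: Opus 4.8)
\textbf{Proof plan for Lemma~\ref{Lem:GradientEst}.}

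The plan is to bound the Gaussian supremum $e^{-k}\norme{\nabla\phi_{0,k}}_{e^{-k}S}$ via the Borell--TIS inequality, so the work reduces to two estimates: a uniform-in-$k$ bound on the \emph{variance parameter} $\sigma_k^2 := \sup_{x\in e^{-k}S}\Var\bigl(e^{-k}\partial_i\phi_{0,k}(x)\bigr)$ (for $i=1,2$), and a bound on the \emph{expected supremum} $\E\bigl[e^{-k}\norme{\nabla\phi_{0,k}}_{e^{-k}S}\bigr]$. First I would record the scaling identity coming from the $\star$-scale structure: since $C_j(x,x')=C_1(e^{j-1}x,e^{j-1}x')$ and $\phi_{0,k}=\sum_{j=1}^k\phi_j$, one has $(\phi_{0,k}(e^{-k}u))_{u}\eqd$ a field whose covariance at points $u,u'\in S$ is $\sum_{j=1}^k C_1(e^{j-1-k}u,e^{j-1-k}u')=\sum_{m=0}^{k-1}C_1(e^{-m}u,e^{-m}u')$. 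Differentiating in $u$ brings down a factor $e^{-k}$ on each derivative, so $e^{-k}\nabla_x\phi_{0,k}(e^{-k}u)=\nabla_u[\phi_{0,k}(e^{-k}\cdot)](u)$, i.e. the rescaled gradient field on $S$ has covariance kernel $\sum_{m\ge 0}\nabla_u\nabla_{u'}C_1(e^{-m}u,e^{-m}u')$, an \emph{absolutely convergent} sum of smooth terms because $c=k*k$ is smooth and supported in $B_{1/e}(0)$, so only the first few values of $m$ even contribute on $S\times S$ and each term is bounded in $C^2$. This gives $\sigma_k^2\le\sigma^2<\infty$ uniformly in $k$, and likewise uniform $C^1$ control of the kernel of the rescaled gradient field on $S$.

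Next, for the expected supremum: the rescaled gradient field $G_k(u):=\nabla_u[\phi_{0,k}(e^{-k}\cdot)](u)$ on the compact set $S$ is a centered Gaussian field whose canonical metric $d_k(u,u')^2=\E|G_k(u)-G_k(u')|^2$ is, by the uniform $C^1$ bound on its (smooth) kernel, dominated by $L^2|u-u'|^2$ for a constant $L$ independent of $k$. Hence Dudley's entropy bound (or simply the standard estimate for Gaussian fields that are Lipschitz in the canonical metric on a bounded set) yields $\E\norme{G_k}_S\le M$ for a constant $M$ independent of $k$. Then Borell--TIS gives, for $x>M$,
\[
\Pro\bigl(\norme{G_k}_S\ge x\bigr)\le \Pro\bigl(\norme{G_k}_S\ge \E\norme{G_k}_S+(x-M)\bigr)\le \exp\!\Bigl(-\frac{(x-M)^2}{2\sigma^2}\Bigr),
\]
and for $0<x\le M$ the bound $\Pro(\cdots)\le 1\le C e^{-cx^2}$ holds trivially after enlarging $C$; combining, $\Pro(\norme{G_k}_S\ge x)\le Ce^{-cx^2}$ for all $k\ge0$ and $x>0$ with $C,c$ depending only on $\sigma,M$ (hence only on the fixed bump function $k$). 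Since $e^{-k}\norme{\nabla\phi_{0,k}}_{e^{-k}S}=\norme{G_k}_S$ in law, this is the claim.

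The main obstacle is purely the uniform-in-$k$ control of the two Gaussian parameters; once the $\star$-scale scaling is used to move everything onto the fixed square $S$, the point is that the gradient kernel is a \emph{finite} sum of smooth, uniformly bounded pieces (thanks to the compact support of $c$ inside $B_{1/e}(0)$, which kills all but $O(1)$ terms $m$ on $S\times S$) — so there is no genuine difficulty, only bookkeeping to check that differentiating the truncated white-noise integral is legitimate (it is, since $\phi_{0,k}$ is a finite-time stochastic integral against white noise with a smooth compactly supported kernel, hence a.s. smooth) and that the constants do not degenerate as $k\to\infty$. A small technical care point is the regime $k=0$, where $\phi_{0,0}=0$ and the statement is vacuous, and small $k$, which are covered by the same estimate. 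I would present the scaling identity as a displayed equation, invoke Borell--TIS and Dudley by citation to \cite{DF18} or a standard Gaussian-process reference, and conclude.
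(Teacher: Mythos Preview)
Your overall strategy (rescale to the unit square, apply Borell--TIS with the variance and expected supremum controlled uniformly in $k$ via Dudley) is sound and gives the lemma, but it is \emph{not} the route the paper takes. The paper instead works layer by layer: Fernique's theorem gives $\P(\norme{\nabla\phi_1}_S\ge x)\le Ce^{-cx^2}$, and scaling transfers this to each $e^{-\ell}\norme{\nabla\phi_\ell}_{e^{-\ell}S}$. Then on the smallest square $e^{-k}S\subset e^{-\ell}S$ the triangle inequality yields $e^{-k}\norme{\nabla\phi_{0,k}}_{e^{-k}S}\le\sum_{\ell\le k}e^{-(k-\ell)}X_\ell$ with $X_\ell:=e^{-\ell}\norme{\nabla\phi_\ell}_{e^{-\ell}S}$ independent and identically distributed sub-Gaussian, and a Laplace-functional computation for a geometrically weighted i.i.d.\ sum finishes. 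Your approach treats the aggregate field at once; the paper exploits the independence of layers. Both are short; the paper's version is slightly more elementary in that it needs only Fernique and an exponential-moment bound, while yours needs Borell--TIS plus an entropy/Dudley step.

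One point in your write-up needs correction. You justify the uniform-in-$k$ bound on the rescaled gradient kernel by saying that the compact support of $c$ in $B_{1/e}(0)$ means ``only the first few values of $m$ even contribute on $S\times S$''. This is false: on the diagonal $u=u'$ (and nearby), $e^{-m}(u-u')$ lies in the support of $c$ for \emph{every} $m$, so every term of the sum is nonzero. The correct reason the gradient covariance $\sum_m \nabla_u\nabla_{u'}C_1(e^{-m}u,e^{-m}u')$ is uniformly bounded (and $C^1$) is the chain-rule factor: differentiating $C_1(e^{-m}u,e^{-m}u')$ in $u$ and $u'$ produces a factor $e^{-2m}$, so the series is dominated by $\sum_m e^{-2m}\norme{\nabla^2 C_1}_\infty<\infty$ (and similarly with an extra $e^{-m}$ for $C^1$ control). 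With this fix, your bounds on $\sigma_k^2$ and $\E\norme{G_k}_S$ go through and Borell--TIS concludes as you wrote.
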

\begin{proof}
This is essentially the argument as in the proof of Lemma 10.1 in \cite{DF18} which we recall. By Fernique's theorem, $\Pro (\norme{ \nabla \phi_1}_{S} \geq x) \leq C e^{-c x^2}$. Therefore, by scaling, $\Pro (e^{-\ell} \norme{ \nabla \phi_\ell}_{ e^{-\ell }S} \geq x) \leq C e^{-c x^2}$ for $\ell \geq 1$. By setting $X_\ell := e^{-\ell } \norme{\nabla \phi_\ell }_{e^{-\ell } S}$, by the triangle inequality and since $e^{-k} S \subset e^{-\ell } S$ for $\ell  \leq k$, $e^{-k} \norme{ \nabla \phi_{0,k}}_{e^{-k} S} \leq \sum_{0\leq \ell  \leq k} e^{-(k - \ell )} X_\ell $. By inspecting the Laplace functional, and using that the $X_\ell $'s are independent and identically distributed, we conclude the proof of the Lemma.
\end{proof}

\paragraph*{Whole-plane GFF} We explain here why $\int_0^{\infty} k (\frac{x-y}{t}) t^{-3/2} \xi(dy,dt)$ is a whole-plane GFF modulo constants. Set $\phi_{\eps}(x) = \int_{\eps}^{\eps^{-1}} \int_{\C} k \left( \frac{x-y}{t} \right)t^{-3/2} \xi(dy,dt)$ and take $f \in \mc{S}(\C)$ such that $\int_{\C} f dx = 0$. Writing $C_{\eps}(x) := \int_{\eps}^{\eps^{-1}} c\left( \frac{x}{t} \right) \frac{dt}{t} = \int_{\eps}^{\eps^{-1}} c_t(x) \frac{dt}{t}$ with $c_t(\cdot) = c(\cdot /t)$, we have
\[
\E \left( \langle \phi_{\eps} , f \rangle^2 \right) = \int_{\C \times \C} f(x) C_{\eps}(x-y) f(y) dx dy  =  \frac{1}{(2\pi)^2} \int_{\R^2} \hat{C}_{\eps}(\xi) | \hat{f}(\xi)|^2 d \xi
\]
where our convention for the Fourier transform is $\hat g (\xi) := \int_{\C} g(x) e^{- i \xi \cdot x}$.

We compute the Fourier transform $\hat{C}_{\eps}(\xi) = \int_{\eps}^{\eps^{-1}} \hat{c}_t(\xi) \frac{dt}{t} = \int_{\eps}^{\eps^{-1}} t \hat{c}(t \xi) dt $ and since $c = k * k $, $\hat{c} = \hat{k}^2$, then $ \hat{C}_{\eps}(\xi) = \int_{\eps}^{\eps^{-1}} t \hat{k}(t \xi)^2 dt = \norme{\xi}^{-2} \int_{\eps \norme{\xi}}^{\eps^{-1} \norme{\xi}} u \hat{k}(u)^2 du$. By monotone convergence, we get
\begin{align*}
\E \left( \langle \phi_{\eps} , f \rangle^2 \right) & =  \frac{1}{(2\pi)^2} \int_{\R^2} \norme{\xi}^{-2} \int_{\eps \norme{\xi}}^{\eps^{-1} \norme{\xi}} u \hat{k}(u)^2 du | \hat{f}(\xi)|^2 d \xi \\
& \underset{\eps \to 0}{\rightarrow} \left( \int_{0}^{\infty} u \hat k(u)^2 du \right) \times \frac{1}{(2\pi)^2} \int_{\R^2} \norme{\xi}^{-2}  | \hat{f}(\xi)|^2 d\xi.
\end{align*}
Since $\hat{k}$ is radially symmetric and $k$ is normalized in $L^2$, by Plancherel's theorem $\int_0^{\infty} u \hat{k}(u)^2 du =2\pi$. Furthermore, by setting $g(x) = \int_{\C} \log | x-y| f(y) dy$ we get $\Delta g = 2\pi f$ and in Fourier modes, $- \norme{\xi}^2 \hat{g}(\xi) = 2 \pi \hat{f}(\xi)$ hence, by Plancherel's theorem,
\begin{align*}
\int_{\C^2} f(x) (- \log |x-y|) f(y) dx dy & = - \int_{\C} f(x) g(x) dx = \frac{-1}{(2\pi)^2} \int_{\R^2} \hat{f}(\xi) \hat{g}(\xi) d\xi \\
& = \frac{1}{2\pi} \int_{\R^2} \norme{\xi}^{-2} |\hat{f}(\xi)|^2 d\xi.
\end{align*}
Note that this term is finite because under the assumption $\int_{\C} f dx = 0$, we have $\hat{f}(0) = 0$ so the above singularity at the origin is compensated by the first term in the development of $\hat{f}$. Altogether, we get 
\[
\E \left( \langle \phi_{\eps} , f \rangle^2 \right)  \underset{\eps \to 0}{\rightarrow} \int_{\C^2} f(x) (-\log |x-y|) f(y) dx dy
\]
Hence the convergence of the characteristic functionals: $\E (e^{i \langle \phi_{\eps} , f \rangle}) = e^{- \frac{1}{2} \E \left( \langle \phi_{\eps} , f \rangle^2 \right)} \underset{\eps \to 0}{ \rightarrow} e^{- \frac{1}{2} \E \left( \langle h , f \rangle^2 \right)}$.

The following lemma will be useful when working with the whole plane GFF \emph{not} modulo additive constant.
\begin{Lem}
\label{Lem:CouplingFields}
There exists a coupling of the whole-plane GFF $h$ normalized such that $h_1(0)=0$ and the $\star$-scale invariant field $\phi$ such that the difference $h-\phi$ is a continuous field.
\end{Lem}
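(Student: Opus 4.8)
The plan is to realize $h$ and $\phi$ on a common probability space as two "truncations'' of the same space-time white noise. Recall from the construction above that $\phi=\int_0^1\int_\C k(\tfrac{x-y}{t})t^{-3/2}\,\xi(dy,dt)$ for a space-time white noise $\xi$ on $\C\times[0,\infty)$. First I would introduce the coarse field $g:=\int_1^\infty\int_\C k(\tfrac{x-y}{t})t^{-3/2}\,\xi(dy,dt)$, which is a well-defined random distribution \emph{modulo additive constant}: its pointwise variance diverges, but for any Schwartz $f$ with $\int_\C f=0$ one has, exactly as in the computation of Appendix~\ref{Sec:Whole-plane-Star-scale}, $\Var\langle g,f\rangle=\int_1^\infty\big(\iint f(x)f(x')c(\tfrac{x-x'}{t})\,dx\,dx'\big)\tfrac{dt}{t}<\infty$, the condition $\hat f(0)=0$ absorbing the logarithmic divergence at $t=\infty$. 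By additivity of the white-noise integral, the equivalence class $\mathring H:=\phi+g$ coincides modulo constants with $\lim_{\eps\to0}\int_\eps^{\eps^{-1}}\int_\C k(\tfrac{x-y}{t})t^{-3/2}\,\xi(dy,dt)$, which by the computation in Appendix~\ref{Sec:Whole-plane-Star-scale} is a whole-plane GFF modulo additive constant. Let $h$ be the representative of $\mathring H$ normalized so that $h_1(0)=0$ (any representative, minus its circle average on $\partial\D$); then $h$ has the law of a whole-plane GFF with $h_1(0)=0$, and $(h,\phi)$ is our candidate coupling.

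Next I would identify $h-\phi$. Writing $g^0$ for the canonical representative of $g$ with vanishing circle average on $\partial\D$, we have $h-\phi=g^0-\phi_1(0)$, where $\phi_1(0)=\dashint_{\partial\D}\phi$ is an a.s.\ finite centered Gaussian: its variance is $\dashint_{\partial\D}\dashint_{\partial\D}C_{0,\infty}(w,w')\,dw\,dw'$, which is finite since $C_{0,\infty}(w,w')=-\log|w-w'|+q(w-w')$ with $q$ continuous (Proposition~\ref{Prop:StarScaleProp}) and the double circle average of $-\log|\cdot|$ vanishes. Hence it suffices to show $g^0$ admits a continuous modification. This is where the actual content lies, and is the step I expect to be the (mild) main obstacle: one must argue that the field $g$, built only from "large scales'' $t\ge1$, becomes \emph{regular} once its additive constant is pinned down — in other words, that the logarithmic singularity of the covariance sits entirely in the fine field $\phi$.

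To carry this out, observe that the covariance of $g^0$ is $K(x,x')=\tilde C(x,x')-\dashint_{\partial\D}\tilde C(x,w)\,dw-\dashint_{\partial\D}\tilde C(w,x')\,dw+\dashint_{\partial\D}\dashint_{\partial\D}\tilde C(w,w')\,dw\,dw'$, where $\tilde C(x,x')=\int_1^\infty c(\tfrac{x-x'}{t})\tfrac{dt}{t}$ is the formally-infinite-on-the-diagonal kernel of $g$. Although $\tilde C$ is not finite on the diagonal, its first (and all higher) spatial derivatives are: differentiating under the integral sign, $\nabla_x\tilde C(x,x')=\int_1^\infty t^{-2}(\nabla c)(\tfrac{x-x'}{t})\,dt$ converges absolutely and is smooth, and likewise in higher order. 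Consequently $\tilde C(x,x')-\tilde C(w,x')$ is smooth in $(x,x',w)$, and averaging over $w,w'\in\partial\D$ shows $K$ is a smooth function on $\C\times\C$; a Gaussian field with smooth covariance kernel admits a $C^\infty$ (in particular continuous) modification. Alternatively and more quantitatively, $\Var(g(x)-g(x'))=2\int_1^\infty\big[1-c(\tfrac{x-x'}{t})\big]\tfrac{dt}{t}=O(|x-x'|^2)$ for $|x-x'|\le1$ by Taylor-expanding $c$ at the origin (using $c(0)=1$ and $\nabla c(0)=0$ by radial symmetry), giving local Hölder continuity of $g^0$ and hence continuity via Kolmogorov's criterion. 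Either way, $h-\phi=g^0-\phi_1(0)$ is continuous, which finishes the proof.
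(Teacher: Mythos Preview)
Your proof is correct and follows essentially the same approach as the paper: both couple $h$ and $\phi$ through the same space--time white noise, writing $h$ (modulo constants) as $\phi$ plus the coarse field built from scales $t\ge 1$, and then argue that this coarse field is continuous once its additive constant is fixed. The paper phrases the continuity step as ``$\nabla\phi_{-\infty,0}$ is a continuous Gaussian vector by Kolmogorov,'' which is exactly your second argument via $\Var(g(x)-g(x'))=O(|x-x'|^2)$; your additional smoothness-of-$K$ route is a nice alternative but not in the paper.
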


\begin{proof}
Recall the notation $\phi_{k,\ell} = \int_{e^{-\ell}}^{e^{-k}} k (\frac{x-y}{t}) t^{-3/2} \xi(dy,dt)$. We know $\phi_{-\infty,\infty}$ is a whole-plane GFF modulo constant. The fine field $\phi = \phi_{0,\infty}$ is a well-defined Schwartz distribution. Also, the gradient field $\nabla \phi_{-\infty,0}$ is a well-defined continuous Gaussian vector (this can be checked by inspecting the covariance kernel and applying the Kolmogorov continuity theorem). Thus, $\phi_{-\infty, 0}$ is well defined modulo additive constant, so $\phi_{L}:=``\phi_{-\infty,0} - \dashint_{\partial B_1(0)} \phi_{-\infty,0}$'' is a well-defined continuous Gaussian field, independent of $\phi$. By setting $g := \phi_L - \dashint_{\partial B_1(0)} \phi$, we get that $h:= \phi + g$ is a whole-plane GFF normalized such that $h_1(0) = 0$.
\end{proof}

\section{Volume of small balls in the Brownian map}\label{sec-brownian-map}
We do not use any material in this section in our proofs, but include it to facilitate an easier comparison between our argument in Section~\ref{section-positive} and the analogous result for the Brownian map  case.
Le Gall obtained the following uniform estimate on the volume of small balls in the Brownian map. For $\beta \in (0,1)$, there exists a random $K_{\beta} > 0$ such that for every $r>0$, the volume of any ball of radius $r$ in the Brownian map is bounded from above by $K_\beta r^{4-\beta}$.  Our proof of the finiteness of LQG ball volume positive moments (Section~\ref{section-positive}) shares some similarities with his only at a very high level; no explicit formulas are available in our framework, and the techniques are very different. We discuss some of the arguments used in the Brownian map setting and we refer the reader to \cite{LeGallSnakes, LeGallW, LeGall07, LeGall10} for details. This estimate was used in the proofs of the uniqueness of the Brownian map \cite{Miermont13, LeGall13}. 

\paragraph*{Tree of Brownian paths} A binary marked tree is a pair $\theta = (\tau, (h_v)_{v \in \tau})$ where $\tau$ is a binary plane tree and where for $v \in \tau$, $h_v$ is the length of the branch associated to $v$. We denote by $\Lambda_k(d \theta)$ the uniform measure on the set of binary marked trees with $k$ leaves (uniform measure over binary plane trees and Lebesgue measures for the length of the branches). $I(\theta)$ and $L(\theta)$ will denote respectively the internal nodes and leaves of $\theta$. One can define a Brownian motion on such a tree: the process is a standard Brownian motion over a branch, and after an intersection, the two processes evolve independently conditioning on the value at the node. We will denote by $P_x^{\theta}$ this process, started from the root of the tree with initial value $x$. Similarly, instead of using a Brownian motion, one can consider a 9-dimensional Bessel process and we will denote it by $Q_x^{\theta}$.

Similarly, for trees given by a contour function $(h(s))_{s \leq \sigma}$ with lifetime $\sigma$, one can associate the so-called Brownian snake given by the process $(W_s)_{s \leq \sigma}$ of Brownian type path (for each $s$, $W_s$ is a Brownian type path with lifetime $h(s)$, its last value is denoted by $\widehat{W}_s$ and corresponds to the Brownian label above the point of the tree corresponding to $s$). We can add another level of randomness by taking $h$ given by a Brownian type excursion:  $\mb{N}_0$ is the measure associated to the unconditioned lifetime It\^o excursion, $\ol{\mb{N}}_0$  is also associated to the unconditioned lifetime Ito excursion but the Brownian labels are conditioned to stay positive. 

\paragraph*{Explicit formulas}

The following explicit formula (see \cite{LeGallSnakes}, Proposition IV.2), relates the objects of the previous paragraph.  For $p \geq 1$, $x \in \R$  and $F$ a symmetric nonnegative measurable function on $W^p$, where $W$ denotes the space of finite continuous paths,
\begin{equation}
\label{eq:ExplicitBrownian}
\mb{N}_x \left[ \int_{(0,\sigma)^p} F(W_{s_1}, \dots, W_{s_p}) ds_1 \dots ds_p \right] = 2^{p-1} p!  \int  \Lambda_p (d\theta) P_x^{\theta} \left[ F((w^{(a)})_{a \in L(\theta)}) \right].
\end{equation}
Here, $w$ is the tree-indexed Brownian motion with law $P^\theta_x$ and $w^{(a)}$ the restriction of $w$ to the path joining $a$ to the root, and $\mb N_x$ is the measure $\mb{N}_0$ where each Brownian snake has its labels incremented by $x$. 
This formula involves combining the branching structure of certain discrete trees with spatial displacements. It relies on nice Markovian properties, in particular on specific properties of the It\^o measure. The proof of the uniform volume bound for metric ball is based on an explicit formula obtained in \cite{LeGallW} for the finite-dimensional marginal distributions of the Brownian tree under $\ol{\mb{N}}_0$,
\begin{multline}
\label{eq:ExplicitBessel}
\ol{\mb{N}}_0 \left[\int_{(0,\sigma)^p} F(W_{s_1}, \dots, W_{s_p}) ds_1 \dots ds_p\right]  \\
= 2^{p-1} p!  \int  \Lambda_p (d\theta) Q_0^{\theta} \left[ F((\ol{w}^{(a)})_{a \in L(\theta)} \prod_{b \in I(\theta) } \ol{V}_b^4  \prod_{c \in L(\theta)} \ol{V}_c^{-4} \right].
\end{multline}
Here, we write $\ol w$ and $\ol w^{(a)}$ for the nine-dimensional Bessel process counterparts of $w$ and $w^{(a)}$, and $\ol V_v$ for the value of the Bessel process at the vertex $v$. 
Because of the conditioning of $\ol{\mathbb{N}}_0$, the spatial displacements are given by nine-dimensional Bessel processes rather than linear Brownian motions. To derive such a formula, in \cite{LeGallW} the authors generalize \eqref{eq:ExplicitBrownian} to functionals including the range of labels and lifetime $\sigma$ and then use results on absolute continuity relations between Bessel processes, which are consequences of the Girsanov theorem (note that integrals over time of Brownian motions are integral over branches of trees of Brownian motion).

\paragraph*{Positive moment estimates}  In the proof of the upper bound on small ball volumes of the Brownian map in \cite{LeGall10}, a key estimate is to show that, for $k \geq 1$, $c_k < \infty$ where
\begin{multline}
\label{eq:VolumeBMball}
c_k := \ol{\mb{N}}_0 \left[ \left(\int_0^{\sigma} 1_{\{ \widehat{W}_s \leq 1 \}} ds \right)^k\right]   \\ 
= 2^{k-1} k!  
\int Q_0^{\theta} \left[ ( \prod_{a \in I(\theta) } \ol{V}_a^4 )  ( \prod_{b \in L(\theta)} \ol{V}_b^{-4} 1_{\ol{V}_b \leq 1})\right] \Lambda_k(d\theta) =: 2^{k-1} k! \tilde{d}_k.
\end{multline}
Note that the second inequality is obtained by using \eqref{eq:ExplicitBessel} with $F(W_{s_1}, \dots, W_{s_k}) = 1_{\widehat{W}_{s_1} \leq 1}, \dots, 1_{\widehat{W}_{s_k} \leq 1}$. The proof works by induction, introducing an additional parameter to take care of the value of the label at the splitting node in the branching structure,  by setting
\[
\tilde{d}_k(r) := \int Q_r^{\theta} \left[\left( \prod_{a \in I(\theta) } \ol{V}_a^4 \right)  \left( \prod_{b \in L(\theta)} \ol{V}_b^{-4} 1_{\ol{V}_b \leq 1}\right) \right] \Lambda_k(d\theta).
\]
In this framework, the base case and inductive relation are quite straightforward because of the exact underlying branching structure. 
Let $R$ denote a 9-dimensional Bessel process that starts from $r$. 
The base case corresponds to 
\begin{equation}
\label{eq:BWbasecase}
\tilde{d}_1(r) = \E\left[ \int_0^{\infty} R_t^{-4} 1_{\{ R_t \leq 1 \}} dt \right] = c \int_{\R^9} | r - z |^{-7} |z |^{-4} 1_{\{ |z| \leq 1 \}} dz
\end{equation}
and the inductive relation states
\begin{equation}
\label{eq:BWinductive}
\tilde{d}_{\ell}(r) = \E \left[ \int_0^{\infty} R_t^4 \left( \sum_{j=1}^{\ell-1} \tilde{d}_j(R_t)  \tilde{d}_{\ell -j}(R_t)  \right) \right].
\end{equation}
Now, one can easily derive the bounds $\tilde{d}_1(r) \leq M r^{-2} \wedge r^{-7}$ and for $j \geq 2$ $\tilde{d}_j(r) \leq M_j 1 \wedge r^{-7}$. We underline that the exact branching structure of the framework is expressed through the equality \eqref{eq:BWinductive}.

\paragraph*{Comparison}

Let us compare our proof of the finiteness of positive moments with the one in the Brownian map setting. In our setup, no nice branching structure for distances is known. Furthermore, by working with a given embedding or a restriction to a specific domain, we have to carry in the analysis information about the Euclidean domain, including an additional layer of difficulty.  

In the case of the Brownian map, when one considers the ``volume'' associated with \eqref{eq:VolumeBMball} thanks to the explicit formulas \eqref{eq:ExplicitBrownian} and \eqref{eq:ExplicitBessel}, one ends up with branching Bessel processes on uniform trees. In our framework, analogous observables of ``distances'' are not well understood so far.  Instead, circle averages processes are tractable. They evolve as correlated Brownian motions. These are a good proxy for the metric because of the superconcentration of side-to-side crossing distances. Furthermore, when one weights the distribution with singularities (after a Cameron-Martin argument), these Brownian motions are shifted by drifts. (Note that the passage from \eqref{eq:ExplicitBrownian} to \eqref{eq:ExplicitBessel} uses Girsanov.)

Similarities can be seen as the level of induction where the value of the Bessel process at the first node can be compared with the value of the circle average of the field in at the first branching as well in our hierarchical decomposition. Therefore, Lemma \ref{Lem:InductiveRelation}  is similar to~\eqref{eq:BWinductive} and Proposition \ref{prop-circle-moment}  to~\eqref{eq:VolumeBMball}.

\end{appendix}

\bibliographystyle{abbrv}
\bibliography{biblio}

\end{document}